\newtheorem{proposition}{Proposition}[section]
\newtheorem{lemma}[proposition]{Lemma}
\newtheorem{corollary}[proposition]{Corollary}
\newtheorem{theorem}[proposition]{Theorem}
\theoremstyle{definition}
\newtheorem{definition}[proposition]{Definition}
\theoremstyle{remark}
\newtheorem{remark}[proposition]{Remark}
\newcommand{\thlabel}[1]{\label{th:#1}}
\newcommand{\thref}[1]{Theorem~\ref{th:#1}}
\newcommand{\selabel}[1]{\label{se:#1}}
\newcommand{\seref}[1]{Section~\ref{se:#1}}
\newcommand{\lelabel}[1]{\label{le:#1}}
\newcommand{\leref}[1]{Lemma~\ref{le:#1}}
\newcommand{\prlabel}[1]{\label{pr:#1}}
\newcommand{\prref}[1]{Proposition~\ref{pr:#1}}
\newcommand{\colabel}[1]{\label{co:#1}}
\newcommand{\coref}[1]{Corollary~\ref{co:#1}}
\newcommand{\relabel}[1]{\label{re:#1}}
\newcommand{\reref}[1]{Remark~\ref{re:#1}}
\newcommand{\delabel}[1]{\label{de:#1}}
\newcommand{\deref}[1]{Definition~\ref{de:#1}}
\newcommand{\eqlabel}[1]{\label{eq:#1}}
\newcommand{\equref}[1]{(\ref{eq:#1})}
\def\equuref#1#2{(\ref{eq:#1}.#2)}
\def\ra{\rightarrow}
\def\Id{{\rm Id}}
\newcommand{\gbrbox}{\mbox{$\gbr\gvac{-1}\gnot{\hspace*{-4mm}\Box}$}}
\def\ot{\otimes}
\def\va{\varepsilon}
\def\un{\underline}
\def\mf{\mathfrak}
\def\va{\varepsilon}
\def\ra{\rightarrow}
\def\cal{\mathcal}
\def\un{\underline}
\newcommand{\Cc}{\cal C}
\def\equal#1{\smash{\mathop{=}\limits^{#1}}}
\def\equalupdown#1#2{\smash{\mathop{=}\limits^{#1}\limits_{#2}}}
 \newcommand{\gbeg}[2]{
   \unitlength=1pt
   \grrow = #2
   \grcolumn = 0
   \grcalca = #1
   \grcalcb = #2
   \multiply \grcalca by \factor
   \grwidth = \grcalca
   \multiply \grcalcb by \factor
   \begin{minipage}{\grcalca pt}
   \begin{picture}(\grcalca,\grcalcb)
   \advance \grcalcb by -\factor
   \put(0, \grcalcb){\line(1,0){\grwidth}} }
 \newcommand{\gend}{
   \put(0, \factor){\line(1,0){\grwidth}}
   \end{picture}
   {\vskip2.5ex}
   \end{minipage} }
 \newcommand{\gnl}{
   \advance \grrow by -1
   \grcolumn = 0}
 \newcommand{\gvac}[1]{       
   \advance \grcolumn by #1} 
 \newcommand{\gcl}[1]{
   \grcalca = \grcolumn
   \multiply \grcalca by \factor
   \advance \grcalca by \hfactor
   \grcalcb = \grrow
   \multiply \grcalcb by \factor
   \grcalcc = #1
   \multiply \grcalcc by \factor
   \put(\grcalca,\grcalcb) {\line(0,-1){\grcalcc}} 
   \advance \grcolumn by 1}
 \newcommand{\gcn}[4]{
   \grcalca = \grcolumn
   \multiply \grcalca by \factor
   \grcalci = #3
   \multiply \grcalci by \hfactor
   \advance \grcalca by \grcalci
   \grcalcb = \grcolumn
   \multiply \grcalcb by \factor 
   \grcalci = #3
   \advance \grcalci by #4
   \multiply \grcalci by \qfactor
   \advance \grcalcb by \grcalci
   \grcalcc = \grcolumn
   \multiply \grcalcc by \factor
   \grcalci = #4
   \multiply \grcalci by \hfactor
   \advance \grcalcc by \grcalci
   \grcalcd = \grrow
   \multiply \grcalcd by \factor 
   \grcalce = \grrow
   \multiply \grcalce by \factor 
   \grcalci = #2
   \multiply \grcalci by \tfactor
   \advance \grcalce by -\grcalci
   \grcalcf = \grrow
   \multiply \grcalcf by \factor 
   \grcalci = #2
   \multiply \grcalci by \hfactor
   \advance \grcalcf by -\grcalci
   \grcalcg = \grrow
   \multiply \grcalcg by \factor 
   \grcalci = #2
   \multiply \grcalci by \tfactor
   \multiply \grcalci by 2
   \advance \grcalcg by -\grcalci
   \grcalch = \grrow
   \advance \grcalch by -#2
   \multiply \grcalch by \factor 
   \qbezier(\grcalca,\grcalcd)(\grcalca,\grcalce)(\grcalcb,\grcalcf) 
   \qbezier(\grcalcb,\grcalcf)(\grcalcc,\grcalcg)(\grcalcc,\grcalch) 
   \advance \grcolumn by #1}
 \newcommand{\gnot}[1]{
   \grcalca = \grcolumn
   \multiply \grcalca by \factor
   \advance \grcalca by \hfactor
   \grcalcb = \grrow
   \multiply \grcalcb by \factor
   \advance \grcalcb by -\hfactor
   \put(\grcalca,\grcalcb) {\makebox(0,0){$\scriptstyle #1$}} }
 \newcommand{\got}[2]{
   \grcalca = \grcolumn
   \multiply \grcalca by \factor
   \grcalcc = #1
   \multiply \grcalcc by \hfactor
   \advance \grcalca by \grcalcc
   \grcalcb = \grrow
   \multiply \grcalcb by \factor
   \advance \grcalcb by -\tfactor
   \advance \grcalcb by -\tfactor
   \put(\grcalca,\grcalcb){\makebox(0,0)[b]{$#2$}}
   \advance \grcolumn by #1}
 \newcommand{\gob}[2]{
   \grcalca = \grcolumn
   \multiply \grcalca by \factor
   \grcalcc = #1
   \multiply \grcalcc by \hfactor
   \advance \grcalca by \grcalcc
   \put(\grcalca,0){\makebox(0,0)[b]{$#2$}}
   \advance \grcolumn by #1}
 \newcommand{\gmu}{  
   \grcalca = \grcolumn
   \advance \grcalca by 1
   \multiply \grcalca by \factor
   \grcalcb = \grrow
   \multiply \grcalcb by \factor
   \grcalcc = \factor
   \advance \grcalcc by \hfactor
   \put(\grcalca,\grcalcb){\oval(\factor,\grcalcc)[b]}
   \advance \grcalcb by -\hfactor
   \advance \grcalcb by -\qfactor
   \put(\grcalca,\grcalcb) {\line(0,-1){\qfactor}} 
   \advance \grcolumn by 2}
 \newcommand{\gcmu}{   
   \grcalca = \grcolumn
   \advance \grcalca by 1
   \multiply \grcalca by \factor
   \grcalcb = \grrow
   \advance \grcalcb by -1
   \multiply \grcalcb by \factor
   \grcalcc = \factor
   \advance \grcalcc by \hfactor
   \put(\grcalca,\grcalcb){\oval(\factor,\grcalcc)[t]}
   \advance \grcalcb by \factor
   \put(\grcalca,\grcalcb) {\line(0,-1){\qfactor}} 
   \advance \grcolumn by 2}
 \newcommand{\glm}{
   \grcalca = \grcolumn
   \multiply \grcalca by \factor
   \advance \grcalca by \hfactor
   \grcalcb = \grcalca
   \advance \grcalcb by \factor
   \grcalcc = \grrow
   \multiply \grcalcc by \factor
   \grcalcd = \grcalcc
   \advance \grcalcd by -\tfactor
   \grcalce = \grcalcd
   \advance \grcalce by -\tfactor
   \put(\grcalca, \grcalcc){\line(0,-1){\tfactor}}
   \put(\grcalca, \grcalcd){\line(1,0){\factor}}
   \put(\grcalca, \grcalcd){\line(3,-1){\factor}}
   \put(\grcalcb, \grcalcc){\line(0,-1){\factor}}
   \advance \grcolumn by 2}
 \newcommand{\grm}{
   \grcalcb = \grcolumn
   \multiply \grcalcb by \factor
   \advance \grcalcb by \hfactor
   \grcalca = \grcalcb
   \advance \grcalca by \factor
   \grcalcc = \grrow
   \multiply \grcalcc by \factor
   \grcalcd = \grcalcc
   \advance \grcalcd by -\tfactor
   \grcalce = \grcalcd
   \advance \grcalce by -\tfactor
   \put(\grcalca, \grcalcc){\line(0,-1){\tfactor}}
   \put(\grcalca, \grcalcd){\line(-1,0){\factor}}
   \put(\grcalca, \grcalcd){\line(-3,-1){\factor}}
   \put(\grcalcb, \grcalcc){\line(0,-1){\factor}}
   \advance \grcolumn by 2}
 \newcommand{\glcm}{
   \grcalca = \grcolumn
   \multiply \grcalca by \factor
   \advance \grcalca by \hfactor
   \grcalcb = \grcalca
   \advance \grcalcb by \factor
   \grcalcc = \grrow
   \advance \grcalcc by -1
   \multiply \grcalcc by \factor
   \grcalcd = \grcalcc
   \advance \grcalcd by \tfactor
   \grcalce = \grcalcd
   \advance \grcalce by \tfactor
   \put(\grcalca, \grcalcc){\line(0,1){\tfactor}}
   \put(\grcalca, \grcalcd){\line(1,0){\factor}}
   \put(\grcalca, \grcalcd){\line(3,1){\factor}}
   \put(\grcalcb, \grcalcc){\line(0,1){\factor}}
   \advance \grcolumn by 2}
 \newcommand{\grcm}{
   \grcalcb = \grcolumn
   \multiply \grcalcb by \factor
   \advance \grcalcb by \hfactor
   \grcalca = \grcalcb
   \advance \grcalca by \factor
   \grcalcc = \grrow
   \advance \grcalcc by -1
   \multiply \grcalcc by \factor
   \grcalcd = \grcalcc
   \advance \grcalcd by \tfactor
   \grcalce = \grcalcd
   \advance \grcalce by \tfactor
   \put(\grcalca, \grcalcc){\line(0,1){\tfactor}}
   \put(\grcalca, \grcalcd){\line(-1,0){\factor}}
   \put(\grcalca, \grcalcd){\line(-3,1){\factor}}
   \put(\grcalcb, \grcalcc){\line(0,1){\factor}}
   \advance \grcolumn by 2}
 \newcommand{\gwmu}[1]{    
   \grcalca = \grcolumn
   \multiply \grcalca by \factor
   \grcalcd = \hfactor
   \multiply \grcalcd by #1
   \advance \grcalca by \grcalcd
   \grcalcb = \grrow
   \multiply \grcalcb by \factor
   \grcalcc = \factor
   \advance \grcalcc by \hfactor
   \grcalcd = #1
   \advance \grcalcd by -1
   \multiply \grcalcd by \factor
   \put(\grcalca,\grcalcb){\oval(\grcalcd,\grcalcc)[b]}
   \advance \grcalcb by -\hfactor
   \advance \grcalcb by -\qfactor
   \put(\grcalca,\grcalcb) {\line(0,-1){\qfactor}} 
   \advance \grcolumn by #1}
 \newcommand{\gwcm}[1]{   
   \grcalca = \grcolumn
   \multiply \grcalca by \factor
   \grcalcd = \hfactor
   \multiply \grcalcd by #1
   \advance \grcalca by \grcalcd
   \grcalcb = \grrow
   \advance \grcalcb by -1
   \multiply \grcalcb by \factor
   \grcalcc = \factor
   \advance \grcalcc by \hfactor
   \grcalcd = #1
   \advance \grcalcd by -1
   \multiply \grcalcd by \factor
   \put(\grcalca,\grcalcb){\oval(\grcalcd,\grcalcc)[t]}
   \advance \grcalcb by \factor
   \put(\grcalca,\grcalcb) {\line(0,-1){\qfactor}} 
   \advance \grcolumn by #1}
 \newcommand{\gwmuc}[1]{    
   \grcalca = \grcolumn
   \multiply \grcalca by \factor
   \advance \grcalca by \hfactor
   \grcalcb = \grrow
   \multiply \grcalcb by \factor
   \grcalcc = #1
   \advance \grcalcc by -1
   \multiply \grcalcc by \factor
   \put(\grcalca,\grcalcb){\line(1,0){\grcalcc}}
   \advance \grcalca by -\hfactor
   \grcalcd = \hfactor
   \multiply \grcalcd by #1
   \advance \grcalca by \grcalcd
   \grcalcc = \factor
   \advance \grcalcc by \hfactor
   \grcalcd = #1
   \advance \grcalcd by -1
   \multiply \grcalcd by \factor
   \put(\grcalca,\grcalcb){\oval(\grcalcd,\grcalcc)[b]}
   \advance \grcalcb by -\hfactor
   \advance \grcalcb by -\qfactor
   \put(\grcalca,\grcalcb) {\line(0,-1){\qfactor}} 
   \advance \grcolumn by #1}
 \newcommand{\gwcmc}[1]{   
   \grcalca = \grcolumn
   \multiply \grcalca by \factor
   \advance \grcalca by \hfactor
   \grcalcb = \grrow
   \multiply \grcalcb by \factor
   \advance \grcalcb by -\factor
   \grcalcc = #1
   \advance \grcalcc by -1
   \multiply \grcalcc by \factor
   \put(\grcalca,\grcalcb){\line(1,0){\grcalcc}}
   \grcalcd = #1
   \advance \grcalcd by -1
   \multiply \grcalcd by \hfactor
   \advance \grcalca by \grcalcd
   \grcalcc = \factor
   \advance \grcalcc by \hfactor
   \grcalcd = #1
   \advance \grcalcd by -1
   \multiply \grcalcd by \factor
   \put(\grcalca,\grcalcb){\oval(\grcalcd,\grcalcc)[t]}
   \advance \grcalcb by \factor
   \put(\grcalca,\grcalcb) {\line(0,-1){\qfactor}} 
   \advance \grcolumn by #1}
 \newcommand{\gev}{  
   \grcalca = \grcolumn
   \advance \grcalca by 1
   \multiply \grcalca by \factor
   \grcalcb = \grrow
   \multiply \grcalcb by \factor
   \grcalcc = \factor
   \advance \grcalcc by \hfactor
   \put(\grcalca,\grcalcb){\oval(\factor,\grcalcc)[b]}
   \advance \grcolumn by 2}
 \newcommand{\gdb}{   
   \grcalca = \grcolumn
   \advance \grcalca by 1
   \multiply \grcalca by \factor
   \grcalcb = \grrow
   \advance \grcalcb by -1
   \multiply \grcalcb by \factor
   \grcalcc = \factor
   \advance \grcalcc by \hfactor
   \put(\grcalca,\grcalcb){\oval(\factor,\grcalcc)[t]}
   \advance \grcolumn by 2}
 \newcommand{\gwev}[1]{    
   \grcalca = \grcolumn
   \multiply \grcalca by \factor
   \grcalcd = \hfactor
   \multiply \grcalcd by #1
   \advance \grcalca by \grcalcd
   \grcalcb = \grrow
   \multiply \grcalcb by \factor
   \grcalcc = \factor
   \advance \grcalcc by \hfactor
   \grcalcd = #1
   \advance \grcalcd by -1
   \multiply \grcalcd by \factor
   \put(\grcalca,\grcalcb){\oval(\grcalcd,\grcalcc)[b]}
   \advance \grcolumn by #1}
 \newcommand{\gwdb}[1]{   
   \grcalca = \grcolumn
   \multiply \grcalca by \factor
   \grcalcd = \hfactor
   \multiply \grcalcd by #1
   \advance \grcalca by \grcalcd
   \grcalcb = \grrow
   \advance \grcalcb by -1
   \multiply \grcalcb by \factor
   \grcalcc = \factor
   \advance \grcalcc by \hfactor
   \grcalcd = #1
   \advance \grcalcd by -1
   \multiply \grcalcd by \factor
   \put(\grcalca,\grcalcb){\oval(\grcalcd,\grcalcc)[t]}
   \advance \grcolumn by #1}
 \newcommand{\gbr}{
   \grcalca = \grcolumn
   \multiply \grcalca by \factor
   \advance \grcalca by \hfactor
   \grcalcb = \grcalca
   \advance \grcalcb by \hfactor
   \grcalcc = \grcalca
   \advance \grcalcc by \factor
   \grcalcd = \grrow
   \multiply \grcalcd by \factor
   \grcalce = \grcalcd
   \advance \grcalce by -\tfactor
   \grcalcf = \grcalcd
   \advance \grcalcf by -\hfactor
   \grcalcg = \grcalce
   \advance \grcalcg by -\tfactor
   \grcalch = \grcalcd
   \advance \grcalch by -\factor
   \qbezier(\grcalca,\grcalcd)(\grcalca,\grcalce)(\grcalcb,\grcalcf) 
   \qbezier(\grcalcb,\grcalcf)(\grcalcc,\grcalcg)(\grcalcc,\grcalch) 
   \advance \grcalcf by -\dfactor
   \advance \grcalcb by -\sfactor
   \qbezier(\grcalca,\grcalch)(\grcalca,\grcalcg)(\grcalcb,\grcalcf) 
   \advance \grcalcf by \sfactor
   \advance \grcalcb by \tfactor
   \qbezier(\grcalcc,\grcalcd)(\grcalcc,\grcalce)(\grcalcb,\grcalcf) 
   \advance \grcolumn by 2}
 \newcommand{\gibr}{
   \grcalca = \grcolumn
   \multiply \grcalca by \factor
   \advance \grcalca by \hfactor
   \grcalcb = \grcalca
   \advance \grcalcb by \hfactor
   \grcalcc = \grcalca
   \advance \grcalcc by \factor
   \grcalcd = \grrow
   \multiply \grcalcd by \factor
   \grcalce = \grcalcd
   \advance \grcalce by -\tfactor
   \grcalcf = \grcalcd
   \advance \grcalcf by -\hfactor
   \grcalcg = \grcalce
   \advance \grcalcg by -\tfactor
   \grcalch = \grcalcd
   \advance \grcalch by -\factor
   \qbezier(\grcalcc,\grcalcd)(\grcalcc,\grcalce)(\grcalcb,\grcalcf) 
   \qbezier(\grcalcb,\grcalcf)(\grcalca,\grcalcg)(\grcalca,\grcalch) 
   \advance \grcalcf by -\dfactor
   \advance \grcalcb by \sfactor
   \qbezier(\grcalcc,\grcalch)(\grcalcc,\grcalcg)(\grcalcb,\grcalcf) 
   \advance \grcalcf by \sfactor
   \advance \grcalcb by -\tfactor
   \qbezier(\grcalca,\grcalcd)(\grcalca,\grcalce)(\grcalcb,\grcalcf) 
   \advance \grcolumn by 2}
 \newcommand{\gbrc}{
   \grcalca = \grcolumn
   \multiply \grcalca by \factor
   \advance \grcalca by \hfactor
   \grcalcb = \grcalca
   \advance \grcalcb by \hfactor
   \grcalcc = \grcalca
   \advance \grcalcc by \factor
   \grcalcd = \grrow
   \multiply \grcalcd by \factor
   \grcalce = \grcalcd
   \advance \grcalce by -\tfactor
   \grcalcf = \grcalcd
   \advance \grcalcf by -\hfactor
   \grcalcg = \grcalce
   \advance \grcalcg by -\tfactor
   \grcalch = \grcalcd
   \advance \grcalch by -\factor
   \put(\grcalcb,\grcalcf){\circle{\hfactor}}
   \qbezier(\grcalca,\grcalcd)(\grcalca,\grcalce)(\grcalcb,\grcalcf) 
   \qbezier(\grcalcb,\grcalcf)(\grcalcc,\grcalcg)(\grcalcc,\grcalch) 
   \advance \grcalcf by -\dfactor
   \advance \grcalcb by -\sfactor
   \qbezier(\grcalca,\grcalch)(\grcalca,\grcalcg)(\grcalcb,\grcalcf) 
   \advance \grcalcf by \sfactor
   \advance \grcalcb by \tfactor
   \qbezier(\grcalcc,\grcalcd)(\grcalcc,\grcalce)(\grcalcb,\grcalcf) 
   \advance \grcolumn by 2}
 \newcommand{\gibrc}{
   \grcalca = \grcolumn
   \multiply \grcalca by \factor
   \advance \grcalca by \hfactor
   \grcalcb = \grcalca
   \advance \grcalcb by \hfactor
   \grcalcc = \grcalca
   \advance \grcalcc by \factor
   \grcalcd = \grrow
   \multiply \grcalcd by \factor
   \grcalce = \grcalcd
   \advance \grcalce by -\tfactor
   \grcalcf = \grcalcd
   \advance \grcalcf by -\hfactor
   \grcalcg = \grcalce
   \advance \grcalcg by -\tfactor
   \grcalch = \grcalcd
   \advance \grcalch by -\factor
   \put(\grcalcb,\grcalcf){\circle{\hfactor}}
   \qbezier(\grcalcc,\grcalcd)(\grcalcc,\grcalce)(\grcalcb,\grcalcf) 
   \qbezier(\grcalcb,\grcalcf)(\grcalca,\grcalcg)(\grcalca,\grcalch) 
   \advance \grcalcf by -\dfactor
   \advance \grcalcb by \sfactor
   \qbezier(\grcalcc,\grcalch)(\grcalcc,\grcalcg)(\grcalcb,\grcalcf) 
   \advance \grcalcf by \sfactor
   \advance \grcalcb by -\tfactor
   \qbezier(\grcalca,\grcalcd)(\grcalca,\grcalce)(\grcalcb,\grcalcf) 
   \advance \grcolumn by 2} 
 \newcommand{\gu}[1]{
   \grcalca = \grcolumn
   \multiply \grcalca by \factor
   \grcalcd = \hfactor
   \multiply \grcalcd by #1
   \advance \grcalca by \grcalcd
   \grcalcb = \grrow
   \advance \grcalcb by -1
   \multiply \grcalcb by \factor
   \put(\grcalca,\grcalcb) {\line(0,1){\hfactor}} 
   \advance \grcalcb by \hfactor
   \put(\grcalca,\grcalcb) {\circle*{3}}
   \advance \grcolumn by #1}
 \newcommand{\gcu}[1]{
   \grcalca = \grcolumn
   \multiply \grcalca by \factor
   \grcalcd = \hfactor
   \multiply \grcalcd by #1
   \advance \grcalca by \grcalcd
   \grcalcb = \grrow
   \multiply \grcalcb by \factor
   \put(\grcalca,\grcalcb) {\line(0,-1){\hfactor}} 
   \advance \grcalcb by -\hfactor
   \put(\grcalca,\grcalcb) {\circle*{3}}
   \advance \grcolumn by #1}
 \newcommand{\gmp}[1]{
   \grcalca = \grcolumn
   \multiply \grcalca by \factor
   \advance \grcalca by \hfactor
   \grcalcb = \grrow
   \multiply \grcalcb by \factor
   \put(\grcalca,\grcalcb) {\line(0,-1){\dfactor}} 
   \advance \grcalcb by -\factor
   \put(\grcalca,\grcalcb) {\line(0,1){\dfactor}} 
   \advance \grcalcb by \hfactor
   \grcalcc = \factor
   \advance \grcalcc by -\qfactor
   \put(\grcalca,\grcalcb) {\circle{\grcalcc}}
   \put(\grcalca,\grcalcb) {\makebox(0,0){$\scriptstyle #1$}}
   \advance \grcolumn by 1}
 \newcommand{\gbmp}[1]{
   \grcalca = \grcolumn
   \multiply \grcalca by \factor
   \advance \grcalca by \hfactor
   \grcalcb = \grrow
   \multiply \grcalcb by \factor
   \put(\grcalca,\grcalcb) {\line(0,-1){\dfactor}} 
   \advance \grcalcb by -\factor
   \put(\grcalca,\grcalcb) {\line(0,1){\dfactor}} 
   \advance \grcalca by -\hfactor
   \advance \grcalca by \dfactor
   \advance \grcalcb by \dfactor
   \grcalcc = \factor
   \advance \grcalcc by -\sfactor
   \put(\grcalca,\grcalcb) {\framebox(\grcalcc,\grcalcc){$\scriptstyle #1$}}
   \advance \grcolumn by 1}
 \newcommand{\gbmpt}[1]{
   \grcalca = \grcolumn
   \multiply \grcalca by \factor
   \advance \grcalca by \hfactor
   \grcalcb = \grrow
   \multiply \grcalcb by \factor
   \put(\grcalca,\grcalcb) {\line(0,-1){\dfactor}} 
   \advance \grcalcb by -\factor
   \advance \grcalca by -\hfactor
   \advance \grcalca by \dfactor
   \advance \grcalcb by \dfactor
   \grcalcc = \factor
   \advance \grcalcc by -\sfactor
   \put(\grcalca,\grcalcb) {\framebox(\grcalcc,\grcalcc){$\scriptstyle #1$}}
   \advance \grcolumn by 1}
 \newcommand{\gbmpb}[1]{
   \grcalca = \grcolumn
   \multiply \grcalca by \factor
   \advance \grcalca by \hfactor
   \grcalcb = \grrow
   \multiply \grcalcb by \factor
   \advance \grcalcb by -\factor
   \put(\grcalca,\grcalcb) {\line(0,1){\dfactor}} 
   \advance \grcalca by -\hfactor
   \advance \grcalca by \dfactor
   \advance \grcalcb by \dfactor
   \grcalcc = \factor
   \advance \grcalcc by -\sfactor
   \put(\grcalca,\grcalcb) {\framebox(\grcalcc,\grcalcc){$\scriptstyle #1$}}
   \advance \grcolumn by 1}
 \newcommand{\gbmpn}[1]{
   \grcalca = \grcolumn
   \multiply \grcalca by \factor
   \advance \grcalca by \hfactor
   \grcalcb = \grrow
   \multiply \grcalcb by \factor
   \advance \grcalcb by -\factor
   \advance \grcalca by -\hfactor
   \advance \grcalca by \dfactor
   \advance \grcalcb by \dfactor
   \grcalcc = \factor
   \advance \grcalcc by -\sfactor
   \put(\grcalca,\grcalcb) {\framebox(\grcalcc,\grcalcc){$\scriptstyle #1$}}
   \advance \grcolumn by 1}
 \newcommand{\glmptb}{    
   \grcalca = \grcolumn
   \multiply \grcalca by \factor
   \advance \grcalca by \hfactor
   \grcalcb = \grrow
   \multiply \grcalcb by \factor
   \put(\grcalca,\grcalcb) {\line(0,-1){\dfactor}} 
   \advance \grcalcb by -\factor
   \put(\grcalca,\grcalcb) {\line(0,1){\dfactor}} 
   \advance \grcalca by -\hfactor
   \advance \grcalca by \dfactor
   \advance \grcalcb by \dfactor
   \put(\grcalca,\grcalcb) {\line(1,0){\factor}} 
   \advance \grcalcb by \factor
   \advance \grcalcb by -\sfactor
   \put(\grcalca,\grcalcb) {\line(1,0){\factor}} 
   \grcalcc = \factor
   \advance \grcalcc by -\sfactor
   \put(\grcalca,\grcalcb) {\line(0,-1){\grcalcc}} 
   \advance \grcolumn by 1}
 \newcommand{\glmpt}{    
   \grcalca = \grcolumn
   \multiply \grcalca by \factor
   \advance \grcalca by \hfactor
   \grcalcb = \grrow
   \multiply \grcalcb by \factor
   \put(\grcalca,\grcalcb) {\line(0,-1){\dfactor}} 
   \advance \grcalca by -\hfactor
   \advance \grcalca by \dfactor
   \advance \grcalcb by -\dfactor
   \put(\grcalca,\grcalcb) {\line(1,0){\factor}} 
   \advance \grcalcb by -\factor
   \advance \grcalcb by \sfactor
   \put(\grcalca,\grcalcb) {\line(1,0){\factor}} 
   \grcalcc = \factor
   \advance \grcalcc by -\sfactor
   \put(\grcalca,\grcalcb) {\line(0,1){\grcalcc}} 
   \advance \grcolumn by 1}
 \newcommand{\glmpb}{    
   \grcalca = \grcolumn
   \multiply \grcalca by \factor
   \advance \grcalca by \hfactor
   \grcalcb = \grrow
   \multiply \grcalcb by \factor
   \advance \grcalcb by -\factor
   \put(\grcalca,\grcalcb) {\line(0,1){\dfactor}} 
   \advance \grcalca by -\hfactor
   \advance \grcalca by \dfactor
   \advance \grcalcb by \dfactor
   \put(\grcalca,\grcalcb) {\line(1,0){\factor}} 
   \advance \grcalcb by \factor
   \advance \grcalcb by -\sfactor
   \put(\grcalca,\grcalcb) {\line(1,0){\factor}} 
   \grcalcc = \factor
   \advance \grcalcc by -\sfactor
   \put(\grcalca,\grcalcb) {\line(0,-1){\grcalcc}} 
   \advance \grcolumn by 1}
 \newcommand{\glmp}{    
   \grcalca = \grcolumn
   \multiply \grcalca by \factor
   \advance \grcalca by \dfactor
   \grcalcb = \grrow
   \multiply \grcalcb by \factor
   \advance \grcalcb by -\dfactor
   \put(\grcalca,\grcalcb) {\line(1,0){\factor}} 
   \advance \grcalcb by -\factor
   \advance \grcalcb by \sfactor
   \put(\grcalca,\grcalcb) {\line(1,0){\factor}} 
   \grcalcc = \factor
   \advance \grcalcc by -\sfactor
   \put(\grcalca,\grcalcb) {\line(0,1){\grcalcc}} 
   \advance \grcolumn by 1}
 \newcommand{\gcmptb}{    
   \grcalca = \grcolumn
   \multiply \grcalca by \factor
   \advance \grcalca by \hfactor
   \grcalcb = \grrow
   \multiply \grcalcb by \factor
   \put(\grcalca,\grcalcb) {\line(0,-1){\dfactor}} 
   \advance \grcalcb by -\factor
   \put(\grcalca,\grcalcb) {\line(0,1){\dfactor}} 
   \advance \grcalca by -\hfactor
   \advance \grcalcb by \dfactor
   \put(\grcalca,\grcalcb) {\line(1,0){\factor}} 
   \advance \grcalcb by \factor
   \advance \grcalcb by -\sfactor
   \put(\grcalca,\grcalcb) {\line(1,0){\factor}} 
   \advance \grcolumn by 1}
 \newcommand{\gcmpt}{    
   \grcalca = \grcolumn
   \multiply \grcalca by \factor
   \advance \grcalca by \hfactor
   \grcalcb = \grrow
   \multiply \grcalcb by \factor
   \put(\grcalca,\grcalcb) {\line(0,-1){\dfactor}} 
   \advance \grcalcb by -\factor
   \advance \grcalca by -\hfactor
   \advance \grcalcb by \dfactor
   \put(\grcalca,\grcalcb) {\line(1,0){\factor}} 
   \advance \grcalcb by \factor
   \advance \grcalcb by -\sfactor
   \put(\grcalca,\grcalcb) {\line(1,0){\factor}} 
   \advance \grcolumn by 1}
 \newcommand{\gcmpb}{    
   \grcalca = \grcolumn
   \multiply \grcalca by \factor
   \advance \grcalca by \hfactor
   \grcalcb = \grrow
   \multiply \grcalcb by \factor
   \advance \grcalcb by -\factor
   \put(\grcalca,\grcalcb) {\line(0,1){\dfactor}} 
   \advance \grcalca by -\hfactor
   \advance \grcalcb by \dfactor
   \put(\grcalca,\grcalcb) {\line(1,0){\factor}} 
   \advance \grcalcb by \factor
   \advance \grcalcb by -\sfactor
   \put(\grcalca,\grcalcb) {\line(1,0){\factor}} 
   \advance \grcolumn by 1}
 \newcommand{\gcmp}{    
   \grcalca = \grcolumn
   \multiply \grcalca by \factor
   \grcalcb = \grrow
   \multiply \grcalcb by \factor
   \advance \grcalcb by -\factor
   \advance \grcalcb by \dfactor
   \put(\grcalca,\grcalcb) {\line(1,0){\factor}} 
   \advance \grcalcb by \factor
   \advance \grcalcb by -\sfactor
   \put(\grcalca,\grcalcb) {\line(1,0){\factor}} 
   \advance \grcolumn by 1}
 \newcommand{\grmptb}{    
   \grcalca = \grcolumn
   \multiply \grcalca by \factor
   \advance \grcalca by \hfactor
   \grcalcb = \grrow
   \multiply \grcalcb by \factor
   \put(\grcalca,\grcalcb) {\line(0,-1){\dfactor}} 
   \advance \grcalcb by -\factor
   \put(\grcalca,\grcalcb) {\line(0,1){\dfactor}} 
   \advance \grcalca by \hfactor
   \advance \grcalca by -\dfactor
   \advance \grcalcb by \dfactor
   \put(\grcalca,\grcalcb) {\line(-1,0){\factor}} 
   \advance \grcalcb by \factor
   \advance \grcalcb by -\sfactor
   \put(\grcalca,\grcalcb) {\line(-1,0){\factor}} 
   \grcalcc = \factor
   \advance \grcalcc by -\sfactor
   \put(\grcalca,\grcalcb) {\line(0,-1){\grcalcc}} 
   \advance \grcolumn by 1}
 \newcommand{\grmpt}{    
   \grcalca = \grcolumn
   \multiply \grcalca by \factor
   \advance \grcalca by \hfactor
   \grcalcb = \grrow
   \multiply \grcalcb by \factor
   \put(\grcalca,\grcalcb) {\line(0,-1){\dfactor}} 
   \advance \grcalca by \hfactor
   \advance \grcalca by -\dfactor
   \advance \grcalcb by -\dfactor
   \put(\grcalca,\grcalcb) {\line(-1,0){\factor}} 
   \advance \grcalcb by -\factor
   \advance \grcalcb by \sfactor
   \put(\grcalca,\grcalcb) {\line(-1,0){\factor}} 
   \grcalcc = \factor
   \advance \grcalcc by -\sfactor
   \put(\grcalca,\grcalcb) {\line(0,1){\grcalcc}} 
   \advance \grcolumn by 1}
 \newcommand{\grmpb}{    
   \grcalca = \grcolumn
   \multiply \grcalca by \factor
   \advance \grcalca by \hfactor
   \grcalcb = \grrow
   \multiply \grcalcb by \factor
   \advance \grcalcb by -\factor
   \put(\grcalca,\grcalcb) {\line(0,1){\dfactor}} 
   \advance \grcalca by \hfactor
   \advance \grcalca by -\dfactor
   \advance \grcalcb by \dfactor
   \put(\grcalca,\grcalcb) {\line(-1,0){\factor}} 
   \advance \grcalcb by \factor
   \advance \grcalcb by -\sfactor
   \put(\grcalca,\grcalcb) {\line(-1,0){\factor}} 
   \grcalcc = \factor
   \advance \grcalcc by -\sfactor
   \put(\grcalca,\grcalcb) {\line(0,-1){\grcalcc}} 
   \advance \grcolumn by 1}
 \newcommand{\grmp}{    
   \grcalca = \grcolumn
   \multiply \grcalca by \factor
   \advance \grcalca by \factor
   \advance \grcalca by -\dfactor
   \grcalcb = \grrow
   \multiply \grcalcb by \factor
   \advance \grcalcb by -\dfactor
   \put(\grcalca,\grcalcb) {\line(-1,0){\factor}} 
   \advance \grcalcb by -\factor
   \advance \grcalcb by \sfactor
   \put(\grcalca,\grcalcb) {\line(-1,0){\factor}} 
   \grcalcc = \factor
   \advance \grcalcc by -\sfactor
   \put(\grcalca,\grcalcb) {\line(0,1){\grcalcc}} 
   \advance \grcolumn by 1}
\newcommand{\gsy}{
   \grcalca = \grcolumn
   \multiply \grcalca by \factor
   \advance \grcalca by \hfactor
   \grcalcb = \grcalca
   \advance \grcalcb by \hfactor
   \grcalcc = \grcalca
   \advance \grcalcc by \factor
   \grcalcd = \grrow
   \multiply \grcalcd by \factor
   \grcalce = \grcalcd
   \advance \grcalce by -\tfactor
   \grcalcf = \grcalcd
   \advance \grcalcf by -\hfactor
   \grcalcg = \grcalce
   \advance \grcalcg by -\tfactor
   \grcalch = \grcalcd
   \advance \grcalch by -\factor
   \qbezier(\grcalcc,\grcalcd)(\grcalcc,\grcalce)(\grcalcb,\grcalcf) 
   \qbezier(\grcalcb,\grcalcf)(\grcalca,\grcalcg)(\grcalca,\grcalch) 
   \advance \grcalcf by -\dfactor
   \advance \grcalcb by \sfactor
   \qbezier(\grcalcc,\grcalch)(\grcalcc,\grcalcg)(\grcalcb,\grcalcf) 
   \qbezier(\grcalca,\grcalcd)(\grcalca,\grcalce)(\grcalcb,\grcalcf) 
   \advance \grcolumn by 2}
 \newcommand{\gwmuh}[3]{    
   \grcalca = \grcolumn
   \multiply \grcalca by \factor
   \grcalcb = #2
   \advance \grcalcb by #3
   \multiply \grcalcb by \qfactor
   \advance \grcalca by \grcalcb
   \grcalcb = \grrow
   \multiply \grcalcb by \factor
   \grcalcc = #3
   \advance \grcalcc by -#2
   \multiply \grcalcc by \hfactor
   \grcalcd = \factor
   \advance \grcalcd by \hfactor
   \put(\grcalca,\grcalcb){\oval(\grcalcc,\grcalcd)[b]}
   \grcalca = \grcolumn
   \multiply \grcalca by \factor
   \grcalcc = #1
   \multiply \grcalcc by \hfactor
   \advance \grcalca by \grcalcc
   \advance \grcalcb by -\hfactor
   \advance \grcalcb by -\qfactor
   \put(\grcalca,\grcalcb) {\line(0,-1){\qfactor}} 
   \advance \grcolumn by #1}
 \newcommand{\gwcmh}[3]{   
   \grcalca = \grcolumn
   \multiply \grcalca by \factor
   \grcalcb = #2
   \advance \grcalcb by #3
   \multiply \grcalcb by \qfactor
   \advance \grcalca by \grcalcb
   \grcalcb = \grrow
   \advance \grcalcb by -1
   \multiply \grcalcb by \factor
   \grcalcc = #3
   \advance \grcalcc by -#2
   \multiply \grcalcc by \hfactor
   \grcalcd = \factor
   \advance \grcalcd by \hfactor
   \put(\grcalca,\grcalcb){\oval(\grcalcc,\grcalcd)[t]}
   \grcalca = \grcolumn
   \multiply \grcalca by \factor
   \grcalcc = #1
   \multiply \grcalcc by \hfactor
   \advance \grcalca by \grcalcc
   \advance \grcalcb by \factor
   \put(\grcalca,\grcalcb) {\line(0,-1){\qfactor}} 
   \advance \grcolumn by #1}
 \newcommand{\gsbox}[1]{
   \grcalca = \grcolumn
   \multiply \grcalca by \factor
   \grcalcb = \grrow
   \multiply \grcalcb by \factor
   \advance \grcalcb by -\factor
   \grcalcc = #1
   \multiply \grcalcc by \factor
   \grcalcd = \factor
   \put(\grcalca,\grcalcb){\framebox(\grcalcc,\grcalcd){}}}
\begin{document}
\title[Cross product Hopf algebras]
{On cross product Hopf algebras}
\author{D. Bulacu}
\address{Faculty of Mathematics and Informatics, University
of Bucharest, Str. Academiei 14, RO-010014 Bucharest 1, Romania}
\email{daniel.bulacu@fmi.unibuc.ro}
\author{S. Caenepeel}
\address{Faculty of Engineering, 
Vrije Universiteit Brussel, B-1050 Brussels, Belgium}
\email{scaenepe@vub.ac.be}
\author{B. Torrecillas}
\address{Department of Algebra and Analysis\\
Universidad de Almer\'{\i}a\\
E-04071 Almer\'{\i}a, Spain}
\email{btorreci@ual.es}
\thanks{\rm 
The first author was supported by the strategic grant POSDRU/89/1.5/S/58852, Project 
``Postdoctoral program for training scientific researchers" cofinanced by the European Social Fund within the 
Sectorial Operational Program Human Resources Development 2007 - 2013.
The second author was supported by research project G.0117.10  
``Equivariant Brauer groups and Galois deformations'' from
FWO-Vlaanderen. The third author was partially supported by FQM 3128 from Junta 
Andaluc'a MTM2008-03339 from MCI.\\ 
The first author thanks the Vrije Universiteit Brussel and the Universidad 
de Almer\'{\i}a for their hospitality.
The authors also thank Bodo Pareigis for sharing his ``diagrams" program.}

\begin{abstract}
Let $A$ and $B$ be algebras and coalgebras in a braided monoidal category $\Cc$,
and suppose that we have a cross product algebra and a cross coproduct coalgebra
structure on $A\ot B$. We present necessary and sufficient conditions for $A\ot B$ to be
a bialgebra, and sufficient conditions for $A\ot B$ to be a Hopf algebra.
We discuss when such a cross product Hopf algebra is a double cross (co)product,
a biproduct, or, more generally, a smash (co)product Hopf algebra. In each of these cases,
we provide an explicit description of the associated Hopf algebra projection.       
\end{abstract}
\maketitle
\section*{Introduction}\selabel{intro}
\setcounter{equation}{0}
Given algebras $A$ and $B$ in a monoidal category, and a local braiding between them, this
is a morphism $\psi:\ B\ot A\to A\ot B$ satisfying four properties, we can construct a new algebra
$A\#_\psi B$ with underlying object $A\ot B$, called cross product algebra. If $\Cc$ is braided,
then the tensor product algebra and the smash product algebra are special cases.
A dual construction is possible: given two coalgebras $A$ and $B$, and a morphism
$\phi:\ A\ot B\to B\ot A$ satisfying appropriate conditions, we can form the cross product coalgebra
$A\#^\phi B$.\\
Cross product bialgebras where introduced independently in \cite{cimz} (in the category of
vector spaces) and in \cite{bespdrab1} (in a general braided monoidal category). The
construction generalizes biproduct bialgebras \cite{rad} and double cross (co)product bialgebras 
\cite{majbip, maj}. It can be summarized easily: given algebras and coalgebras $A$ and $B$,
and local braidings $\psi$ and $\phi$, we can consider $A\#_\psi^\phi B$, with underlying
algebra $A\#_\psi B$ and underlying coalgebra $A\#^\phi B$. If this is a bialgebra, then
we call $A\#_\psi^\phi B$ a cross product bialgebra. Cross product bialgebras can be
characterized using injections and projections, see \cite[Prop. 2.2]{bespdrab1},
\cite[Theorem 4.3]{cimz} or \prref{DrBespExtVers}.\\
If $A\#_\psi B$ is a cross product algebra, and $A$ and $B$ are augmented, then $A$
is a left $B$-module, and $B$ is a right $A$-module. Similarly, if $A\#^\phi B$ is a cross product algebra, and $A$ and $B$ are coaugmented, then $A$ is a left $B$-comodule, and $B$ is a right $A$-comodule, we will recall
these constructions in Lemmas \ref{le:action} and \ref{le:coaction}.
In \cite{bespdrab1}, an attempt was made to characterize cross product bialgebras in terms of these
actions and coactions. A Hopf datum consists of a pair of algebras and coalgebras $A$ and $B$
that act and coact on each other as above, satisfying a list of compatibility conditions, that we will
refer to as the {\sl Bespalov-Drabant} list \cite[Def. 2.5]{bespdrab1}. If $A\#_\psi^\phi B$ is a cross
product bialgebra, then $A$ and $B$ together with the actions and coactions from 
Lemmas \ref{le:action} and \ref{le:coaction} form a Hopf pair, \cite[Prop. 2.7]{bespdrab1}. Conversely, if we have a Hopf pair,
then we can find $\psi$ and $\phi$ such that $A\#_\psi^\phi B$ is cross product algebra and coalgebra,
but we are not able to show that it is a bialgebra, see  \cite[Prop. 2.6]{bespdrab1}. Roughly stated,
the Bespalov-Drabant list is a list of necessary conditions but we do not know whether it is also sufficient.\\
The main motivation of this paper was to fill in this gap: in Sections \ref{se:crossprodbialg}
and \ref{se:crossprodvsHopfdatum},
we will present some alternatives to the Bespalov-Drabant list, consisting of necessary
and sufficient conditions. Our first main result is \thref{firstsetequivcond}, in which we provide
a set of lists of necessary and sufficient conditions, in terms of the local braidings $\phi$ and $\psi$.
Another set, now in terms of the actions and coactions, will be given in \thref{crossprobialasactandcoact}.\\
As we have already
mentioned, smash product algebras are special cases of cross product algebras, and they can be
characterized, see \seref{smash}. In \seref{smashcross(co)prodbialgs}, we first show that 
a cross product bialgebra is a smash cross product bialgebra if and only if $\psi$ satisfies a (left) normality condition,
see \deref{normal}. In this situation, the necessary and sufficient conditions from
Theorems \ref{th:firstsetequivcond} and \ref{th:crossprobialasactandcoact} take a more elegant form,
see \thref{strsmashcrossprodHa}.
We have a dual version, characterizing smash cross coproduct
bialgebras (with cross product coalgebra as underlying coalgebra), and a combination of the
two versions yields
a characterization of Radford's biproducts, see \coref{Radford}: a cross product bialgebra is a
Radford biproduct if $\psi$ is conormal and $\phi$ is normal. In \thref{strsmashcrossprodHa},
we also present sufficient conditions for a smash cross product bialgebra to be a Hopf algebra.
All this results have a left and right version; combining the left and right version, we have the following
interesting application, see \coref{6.7}: a cross 
product bialgebra is a double cross product in the sense of Majid  if and only if $\phi$  is  left and right normal.
In this situation, $\phi$ coincides with the braiding of $A$ and $B$. Otherwise stated: Majid's
double cross product bialgebras are precisely the cross product bialgebras for which the underlying
coalgebra is the cotensor coalgebra. 
Consequently, in the category of sets any cross product Hopf algebra is a bicrossed product of groups 
in the sense of \cite{tak}, see \coref{6.8}.\\
We have already mentioned that cross product bialgebras can be
characterized using injections and projections. The aim of \seref{strHopfalgwithprof}
is to study this characterization in the case of smash cross product algebras.
The structure of Hopf algebras with a projection was described completely by Radford in \cite{rad}:
if $H$ and $B$ are Hopf algebras, and there exist Hopf algebra maps $i:\ B\to H$ and
$\pi:\ H\to B$ such that $\pi i=\Id_B$, then $H$ is isomorphic to a biproduct Hopf algebra.
Several generalizations of this result have appeared in the literature. 
In \cite{schwproj}, the condition on $\pi$ is relaxed:
if $\pi$ is a left $B$-linear coalgebra map then $H$ is isomorphic to a smash product coalgebra, with an
algebra structure given by a complicated formula that does not imply in general that $H$ is
isomorphic to a crossed product bialgebra. The situation where $\pi$ is a right $B$-linear coalgebra morphism
was studied with different methods in \cite{amst}. The situation where 
$\pi$ is a Hopf algebra morphism and $i$ is a coalgebra morphism is studied in \cite{bcm},
and the case where $\pi$ is a morphism of bialgebras and $i$ is a $B$-bicolinear algebra map
is studied in \cite{amstAMS}.\\
With these examples in mind, we have been looking for the appropriate projection context on a
Hopf algebra, that ensures that the Hopf algebra is isomorpic to a smash cross product Hopf algebra.
Here the idea is the following. If $H=A\#_\psi^\phi B$ is a cross product coalgebra,
then we have algebra morphisms $i,j$ and coalgebra morphisms $p,\pi$, as in
\cite[Prop. 2.2]{bespdrab1}, \prref{DrBespExtVers}. If $H$ is a smash cross product bialgebra,
then $\pi$ is a bialgebra morphism, and 
$(A,p,j)$ can be reconstructed from $(B,\pi,i)$: $(A,j)$ is
the equalizer of a certain pair of morphisms, see \leref{scpbextprop}. Conversely, if we have
a bialgebra $B$, and a bialgebra map $\pi:\ H\to B$ and an algebra map $i:\ B\to H$
such that $\pi$ is left inverse of $i$, then we can construct $(A,j)$ as an equalizer, and
show that $A$ is an algebra and a coalgebra, and $j$ has a left inverse $p$, see \leref{coalgstrA}.
The definition of the coalgebra structure on $A$ requires the fact that $B$ is a Hopf algebra. 
At this point, we can explain 
why we have to restrict attention to smash cross product Hopf algebra, that is, the case where
$B$ is a bialgebra. In the general case where $H$ is a cross product Hopf algebra, and 
$B$ is only an algebra and a coalgebra, one could simply
require the existence of a convolution inverse of ${\rm Id}_B$. But this does not work, as we need
in the construction that the antipode is an anti-algebra and an anti-coalgebra map. We
also show that $(A,p)$ is a coequalizer.
The main result is \thref{strofHopfwithcertproj}, characterizing smash cross product
Hopf algebras in terms of projections. As a special case, we recover Radford's result that
$H$ can be written as a biproduct Hopf algebra if and only if we have a split Hopf algebra
map $\pi:\ H\to B$, see \coref{biprodfromproj}. As another application, we characterize
double cross coproduct Hopf algebras in terms of projections, see \coref{wpdoublecrossprod}.
This improves \cite[Theorem 2.15]{amst}.
We end with a sketch of the dual theory, characterizing smash cross coproduct Hopf algebras.
\section{Preliminary results}\selabel{prelimres}
\setcounter{equation}{0}
We assume that the reader is familiar with the basic theory of braided monoidal categories, and refer
to \cite{bulacu,kas,maj} for more details. Throughout this paper, $\Cc$ will be a braided monoidal category
with tensor product $\ot : \Cc\times \Cc\ra \Cc$, 
unit object $\un{1}$ and braiding $c: \ot\ra \ot\circ \tau$. Here $\tau: \Cc\times \Cc\ra \Cc\times \Cc$ is the twist functor.
We will assume implicitly that the monoidal category $\Cc$ is strict, that is, the associativity and unit constraints
are all identity morphisms in $\Cc$. Our results will remain valid in arbitrary monoidal categories, since
every monoidal category is monoidal equivalent to a strict one, see for example \cite{bulacu, kas}.\\
For $X,Y\in \Cc$, we write $c_{X, Y}=\gbeg{2}{3}
\got{1}{X}\got{1}{Y}\gnl
\gbr\gnl
\gob{1}{Y}\gob{1}{X}
\gend
$ 
and $c^{-1}_{X, Y}=
\gbeg{2}{3}
\got{1}{Y}\got{1}{X}\gnl
\gibr\gnl
\gob{1}{X}\gob{1}{Y}
\gend
$. Recall that $c$ satisfies  
\begin{equation}\label{braiding}
c_{X, Y\ot Z}=
\gbeg{3}{4}
\got{1}{X}\got{1}{Y}\got{1}{Z}\gnl
\gbr\gcl{1}\gnl
\gcl{1}\gbr\gnl
\gob{1}{Y}\gob{1}{Z}\gob{1}{X}
\gend
\mbox{\hspace{4mm}and\hspace{4mm}}
c_{X\ot Y, Z}=
\gbeg{3}{4}
\got{1}{X}\got{1}{Y}\got{1}{Z}\gnl
\gcl{1}\gbr\gnl
\gbr\gcl{1}\gnl
\gob{1}{Z}\gob{1}{X}\gob{1}{Y}
\gend
\hspace{2mm},
\end{equation}
for all $X, Y, Z\in\Cc$. The naturality of $c$ can be expressed as follows:
\[
\gbeg{2}{4}
\got{1}{M}\got{1}{N}\gnl
\gbr\gnl
\gmp{g}\gmp{f}\gnl
\gob{1}{V}\gob{1}{U}
\gend =
\gbeg{2}{4}
\got{1}{M}\got{1}{N}\gnl
\gmp{f}\gmp{g}\gnl
\gbr\gnl
\gob{1}{V}\gob{1}{U}
\gend 
\hspace{2mm},    
\]
for any $f:M\ra U$, $g: N\ra V$ in ${\cal C}$. In particular, for a morphism
$
\gbeg{3}{3} 
\got{1}{X}\gvac{1}\got{1}{Y}\gnl
\gwmu{3}\gnl
\gvac{1}\gob{1}{Z}
\gend
$
between $X\ot Y$ and $Z$ in $\Cc$, and an object $T\in \Cc$, we have
\begin{equation}\eqlabel{nat1cup}
\gbeg{4}{5}
\got{1}{T}\got{1}{X}\gvac{1}\got{1}{Y}\gnl
\gcl{1}\gwmu{3}\gnl\
\gcl{1}\gcn{1}{1}{3}{1}\gnl
\gbr\gnl
\gob{1}{Z}\gob{1}{T}
\gend
=
\gbeg{4}{5}
\gvac{1}\got{1}{T}\got{1}{X}\got{1}{Y}\gnl
\gvac{1}\gbr\gcl{1}\gnl
\gvac{1}\gcn{1}{1}{1}{-1}\gbr\gnl
\gwmu{3}\gcl{1}\gnl
\gvac{1}\gob{1}{Z}\gvac{1}\gob{1}{T}
\gend
\hspace{2mm}\mbox{and}\hspace{2mm}
\gbeg{4}{5}
\got{1}{X}\gvac{1}\got{1}{Y}\got{1}{T}\gnl
\gwmu{3}\gcl{1}\gnl
\gvac{1}\gcn{1}{1}{1}{3}\gvac{1}\gcl{1}\gnl
\gvac{2}\gbr\gnl
\gvac{2}\gob{1}{T}\gob{1}{Z}\gnl
\gend 
=
\gbeg{4}{5}
\got{1}{X}\got{1}{Y}\got{1}{T}\gnl
\gcl{1}\gbr\gnl
\gbr\gcn{1}{1}{1}{3}\gnl
\gcl{1}\gwmu{3}\gnl
\gob{1}{T}\gvac{1}\gob{1}{Z}
\gend
\hspace{2mm}.
\end{equation}
In a similar way, we have for a morphism $
\gbeg{3}{3}
\gvac{1}\got{1}{X}\gnl
\gwcm{3}\gnl
\gob{1}{Y}\gvac{1}\gob{1}{Z}\gnl
\gend
$ 
between $X$ and $Y\ot Z$ that
\begin{equation}\eqlabel{nat2cup}
\gbeg{4}{5}
\got{1}{X}\got{1}{T}\gnl
\gbr\gnl
\gcl{1}\gcn{1}{1}{1}{3}\gnl
\gcl{1}\gwcm{3}\gnl
\gob{1}{T}\gob{1}{Y}\gvac{1}\gob{1}{Z}
\gend
=
\gbeg{4}{5}
\gvac{1}\got{1}{X}\gvac{1}\got{1}{T}\gnl
\gwcm{3}\gcl{1}\gnl
\gcn{1}{1}{1}{3}\gvac{1}\gbr\gnl
\gvac{1}\gbr\gcl{1}\gnl
\gvac{1}\gob{1}{T}\gob{1}{Y}\gob{1}{Z}
\gend
\hspace{2mm}\mbox{and}\hspace{2mm}
\gbeg{4}{4}
\gvac{1}\got{1}{T}\got{1}{X}\gnl
\gvac{1}\gbr\gnl
\gwcm{3}\gcn{1}{1}{-1}{1}\gnl
\gob{1}{Y}\gvac{1}\gob{1}{Z}\gob{1}{T}
\gend 
=
\gbeg{4}{5}
\got{1}{T}\gvac{1}\got{1}{X}\gnl
\gcl{1}\gwcm{3}\gnl
\gbr\gvac{1}\gcn{1}{1}{1}{-1}\gnl
\gcl{1}\gbr\gnl
\gob{1}{Y}\gob{1}{Z}\gob{1}{T}
\gend
\hspace{2mm}.
\end{equation}
$c_{\un{1}, X}:\ \un{1}\ot X=X\to X\ot\un{1}=X$ and $c_{X,\un{1}}:\ X\ot\un{1}=X\to \un{1}\ot X=X$
are equal to identity morphism of $
\Id_X=
\gbeg{1}{3}
\got{1}{X}\gnl
\gcl{1}\gnl
\gob{1}{X}
\gend
$,
see \cite[Prop. XIII.1.2]{kas}.\\
Let us now recall the notions of algebra and coalgebra in a monoidal category $\Cc$,
and of bialgebra and Hopf algebra in a braided monoidal category $\Cc$.
An algebra in $\Cc$ is a triple $(A, \un{m}_A, \un{\eta}_A)$, where $A\in \Cc$, and
$\un{m}_A=\gbeg{2}{3} 
\got{1}{A}\got{1}{A}\gnl
\gmu\gnl
\gob{2}{A}
\gend :\ A\ot A\ra A$
and
$\un{\eta}_A=\gbeg{1}{3}
\got{1}{\un{1}}\gnl
\gu{1}\gnl
\gob{1}{A}
\gend:\ \un{1}\ra A$
are morphisms in $\Cc$ satisfying the associativity and unit conditions
$\un{m}_A\circ (\un{m }_A\ot {\rm Id}_A)=
\un {m}_A\circ ({\rm Id}_A\ot\un{m}_A)$
and
$\un{m}_A\circ (\un{\eta }_A\ot {\rm Id}_A)=
\un {m}_A\circ ({\rm Id}_A\ot\un{\eta}_A)={\rm Id}_A$.\\
A coalgebra in $\Cc$ is a triple $(B, \un{\Delta}_B, \un{\varepsilon}_B)$, where $B\in \Cc$, and
$\un{\Delta}_B=\gbeg{2}{3}
\got{2}{B}\gnl
\gcmu\gnl
\gob{1}{B}\gob{1}{B}
\gend:\ B\ra B\ot B$
and
$\un{\varepsilon}_B=\gbeg{1}{3}
\got{1}{B}\gnl
\gcu{1}\gnl
\gob{1}{\un{1}}
\gend :\ B\ra \un{1}$, satisfying appropriate coassociativity and counit conditions.\\
A bialgebra in $\Cc$ is a fivetuple $(B, \un{m}_B, \un{\eta }_B, \un {\Delta }_B, \un {\va }_B)$, such
that $(B, \un{m}_B, \un{\eta}_B)$ is an algebra and $(B, \un{\Delta}_B, \un{\va}_B)$ is a coalgebra
such that $\un{\Delta}_B:\ B\to B\ot B$ and $\un{\va}_B: B\ra \un{1}$ are algebra morphisms.
$B\ot B$ has the tensor product algebra structure (using the braiding on $\Cc$), and
$\un{1}$ is an algebra, with both the multiplication and unit map equal to the identity on $\un{1}$.
For later reference, we give explicit formulas for the axioms of a bialgebra $B$: $\un{\va}_B\un{\eta}_B=\Id_{\un{1}}$, and
\begin{eqnarray}
&&
\gbeg{3}{5}
\got{1}{B}\got{1}{B}\got{1}{B}\gnl
\gmu\gcl{2}\gnl
\gvac{1}\gcn{1}{1}{0}{1}\gnl
\gvac{1}\gmu\gnl
\gob{4}{B}
\gend =
\gbeg{3}{5}
\got{1}{B}\got{1}{B}\got{1}{B}\gnl
\gcl{2}\gmu\gnl
\gvac{1}\gcn{1}{1}{2}{1}\gnl
\gmu\gnl
\gob{2}{B}
\gend
\hspace{2mm},\hspace{2mm}
\gbeg{2}{4}
\got{1}{B}\gnl
\gcl{1}\gu{1}\gnl
\gmu\gnl
\gob{2}{B}
\gend =
\gbeg{1}{3}
\got{1}{B}\gnl
\gcl{1}\gnl
\gob{1}{B}
\gend =
\gbeg{2}{4}
\got{3}{B}\gnl
\gu{1}\gcl{1}\gnl
\gmu\gnl
\gob{2}{B}
\gend
\hspace{2mm},\hspace{2mm}
\gbeg{3}{5}
\got{4}{B}\gnl
\gvac{1}\gcmu\gnl
\gvac{1}\gcn{1}{1}{1}{0}\gcl{1}\gnl
\gcmu\gcl{1}\gnl
\gob{1}{B}\gob{1}B\gob{1}{B}
\gend =
\gbeg{3}{5}
\got{2}{B}\gnl
\gcmu\gnl
\gcl{2}\gcn{1}{1}{1}{2}\gnl
\gvac{1}\gcmu\gnl
\gob{1}B\gob{1}{B}\gob{1}{B}
\gend 
\hspace{2mm},
\nonumber\\
&&\eqlabel{braidedbialgebra}\\
&&
\gbeg{2}{4}
\got{2}{B}\gnl
\gcmu\gnl
\gcl{1}\gcu{1}\gnl
\gob{1}{B}
\gend =
\gbeg{1}{3}
\got{1}{B}\gnl
\gcl{1}\gnl
\gob{1}{B}
\gend =
\gbeg{2}{4}
\got{2}{B}\gnl
\gcmu\gnl
\gcu{1}\gcl{1}\gnl
\gob{3}{B}
\gend
\hspace{1mm},\hspace{1mm}
\gbeg{2}{5}
\got{1}{B}\got{1}{B}\gnl
\gmu\gnl
\gcn{1}{1}{2}{1}\gnl
\gcu{1}\gnl
\gob{2}{\un{1}}
\gend
=
\gbeg{2}{3}
\got{1}{B}\got{1}{B}\gnl
\gcu{1}\gcu{1}\gnl
\gob{2}{\un{1}}
\gend
\hspace{1mm},\hspace{1mm}
\gbeg{2}{5}
\got{2}{\un{1}}\gnl
\gu{1}\gnl
\gcn{1}{1}{1}{2}\gnl
\gcmu\gnl
\gob{1}{B}\gob{1}{B}
\gend
=
\gbeg{2}{3}
\got{2}{\un{1}}\gnl
\gu{1}\gu{1}\gnl
\gob{1}{B}\gob{1}{B}
\gend
\hspace{1mm},\hspace{1mm}
\gbeg{2}{4}
\got{1}{B}\got{1}{B}\gnl
\gmu\gnl
\gcmu\gnl
\gob{1}{B}\gob{1}{B}
\gend = 
\gbeg{4}{5}
\got{2}{B}\got{2}{B}\gnl
\gcmu\gcmu\gnl
\gcl{1}\gbr\gcl{1}\gnl
\gmu\gmu\gnl
\gob{2}{B}\gob{2}{B}
\gend
\mbox{\hspace{2mm}.}
\nonumber
\end{eqnarray}
For a bialgebra $B$, we can introduce the category of left $B$-modules ${}_B\Cc$ and the category
of left $B$-comodules ${}^B\Cc$. The left $B$-action on $X\in {}_B\Cc$ is denoted by
$
\gbeg{2}{3}
\got{1}{B}\got{1}{X}\gnl
\glm\gnl
\gvac{1}\gob{1}{X}
\gend
$, and the left $B$-coaction on $X\in {}^B\Cc$ by
$
\gbeg{2}{3}
\gvac{1}\got{1}{X}\gnl
\glcm\gnl
\gob{1}{B}\gob{1}{X}
\gend
$.
${}_B\Cc$ and ${}^B\Cc$ are monoidal categories; for $X, Y\in
{}_B\Cc$ (resp. ${}^B\Cc$), then $X\ot Y$ is a left $B$-module (resp. 
left $B$-comodule) via the action (resp. coaction)
\[
\gbeg{4}{5}
\got{2}{B}\got{1}{X}\got{1}{Y}\gnl
\gcmu\gcl{1}\gcl{1}\gnl
\gcl{1}\gbr\gcl{1}\gnl
\glm\glm\gnl
\gvac{1}\gob{1}{X}\gvac{1}\gob{1}{Y}
\gend
\hspace{3mm}
\left(\mbox{resp.}
\hspace{3mm}
\gbeg{4}{5}
\gvac{1}\got{1}{X}\gvac{1}\got{1}{Y}\gnl
\glcm\glcm\gnl
\gcl{1}\gbr\gcl{1}\gnl
\gmu\gcl{1}\gcl{1}\gnl
\gob{2}{B}\gob{1}{X}\gob{1}{Y}
\gend
\right)
\hspace{2mm}.
\]
A Hopf algebra in a braided monoidal category $\Cc$ is a bialgebra 
$B$ in $\Cc$ together with a morphism $\un {S}: B\ra B$ in $\Cc$ 
(the antipode) satisfying the axioms
\begin{equation}\eqlabel{braidedantipode}
\gbeg{2}{5}
\got{2}{B}\gnl
\gcmu\gnl
\gmp{\un{S}}\gcl{1}\gnl
\gmu\gnl
\gob{2}{B}
\gend =
\gbeg{1}{4}
\got{1}{B}\gnl
\gcu{1}\gnl
\gu{1}\gnl
\gob{1}{B}
\gend =
\gbeg{2}{5}
\got{2}{B}\gnl
\gcmu\gnl
\gcl{1}\gmp{\un{S}}\gnl
\gmu\gnl
\gob{2}{B}
\gend
\hspace{2mm}.
\end{equation}

It is well-known, see \cite[Lemma 2.3]{m3}, that the antipode $\un {S}$ of a Hopf algebra 
$B$ in a braided monoidal category $\Cc$ is an anti-algebra and anti-coalgebra morphism, 
in the sense that 
\begin{equation}\eqlabel{antiac}
\mbox{(a)\hspace{3mm}}
\gbeg{2}{5}
\got{1}{B}\got{1}{B}\gnl
\gmu\gnl
\gcn{1}{1}{2}{1}\gnl
\gmp{\un{S}}\gnl
\gob{1}{B}
\gend =
\gbeg{2}{5}
\got{1}{B}\got{1}{B}\gnl
\gbr\gnl
\gmp{\un{S}}\gmp{\un{S}}\gnl
\gmu\gnl
\gob{2}{B}
\gend
\hspace{2mm},\hspace{2mm}
\gbeg{1}{4}
\got{1}{\un{1}}\gnl
\gu{1}\gnl
\gmp{\un{S}}\gnl
\gob{1}{B}
\gend =
\gbeg{1}{3}
\got{1}{\un{1}}\gnl
\gu{1}\gnl
\gob{1}{B}
\gend
\mbox{\hspace{5mm}and\hspace{5mm}(b)\hspace{3mm}}
\gbeg{2}{5}
\got{1}{B}\gnl
\gmp{\un{S}}\gnl
\gcn{1}{1}{1}{2}\gnl
\gcmu\gnl
\gob{1}{B}\gob{1}{B}
\gend =
\gbeg{2}{5}
\got{2}{B}\gnl
\gcmu\gnl
\gmp{\un{S}}\gmp{\un{S}}\gnl
\gbr\gnl
\gob{1}{B}\gob{1}{B}
\gend
\hspace{2mm},\hspace{2mm}
\gbeg{1}{4}
\got{1}{B}\gnl
\gmp{\un{S}}\gnl
\gcu{1}\gnl
\gob{1}{\un{1}}
\gend = 
\gbeg{1}{3}
\got{1}{B}\gnl
\gcu{1}\gnl
\gob{1}{\un{1}}
\gend
\hspace{2mm}.
\end{equation}

\section{Cross product algebras and coalgebras}\selabel{crossprodalgandcoalg}
\setcounter{equation}{0}
Let $A$ and $B$ be algebras and coalgebras in $\Cc$, but not necessarily bialgebras. Consider morphisms
$\psi=
\gbeg{2}{3}
\got{1}{B}\got{1}{A}\gnl
\gbrc\gnl
\gob{1}{A}\gob{1}{B}
\gend:\ B\ot A\to A\ot B
$
and
$\phi=
\gbeg{2}{3}
\got{1}{A}\got{1}{B}\gnl
\gbrbox\gnl
\gob{1}{B}\gob{1}{A}
\gend:\ A\ot B\to B\ot A
$ 
in $\Cc$. $\psi$ and $\phi$ can be used to define a multiplication and a comultiplication on $A\ot B$:
$$\gbeg{4}{4}
\got{1}{A}\got{1}{B}\got{1}{A}\got{1}{B}\gnl
\gcl{1}\gbrc\gcl{1}\gnl
\gmu\gmu\gnl
\gob{2}{A}\gob{2}{B}
\gend\hspace*{1cm};\hspace*{1cm}
\gbeg{4}{4}
\got{2}{A}\got{2}{B}\gnl
\gcmu\gcmu\gnl
\gcl{1}\gbrbox\gvac{2}\gcl{1}\gnl
\gob{1}{A}\gob{1}{B}\gob{1}{A}\gob{1}{B}
\gend
\hspace{2mm}.$$
$A\#_\psi B$ is $A\ot B$ together with the multiplication induced by $\psi$, and with unit map
$\un{\eta}_A\ot\un{\eta}_B$; $A\#^\phi B$ is  $A\ot B$ together with the comultiplication
induced by $\phi$, and with counit map
$\un{\va}_A\ot\un{\va}_B$. If $A\#_\psi B$ is an algebra in $\Cc$, then we say that
$A\#_\psi B$ is a cross product algebra of $A$ and $B$; if  $A\#^\phi B$ is a coalgebra in $\Cc$, then we say that
$A\#^\phi B$ is a cross product coalgebra of $A$ and $B$. $A\ot B$ together with the multiplication
induced by $\psi$, the comultiplication induced by $\phi$, unit $\un{\eta}_A\ot\un{\eta}_B$ and counit
$\un{\va}_A\ot\un{\va}_B$ will be denoted by $A\#_\psi^\phi B$.
If $A\#_\psi^\phi B$ is a bialgebra in $\Cc$, then we 
call it a cross product bialgebra.\\
It is known, see for example \cite[Theorem 2.5]{cimz} in the case where $\Cc$ is the category
of vector spaces, that $A\#_\psi B$ is a cross product algebra if and only
the four following relations hold: 
\begin{eqnarray}
&&\nonumber 
\hspace{2mm}\mbox{(a)}\hspace{2mm}
\gbeg{3}{5}
\got{1}{B}\got{1}{B}\got{1}{A}\gnl
\gmu\gcl{1}\gnl
\gcn{1}{1}{2}{3}\gvac{1}\gcl{1}\gnl
\gvac{1}\gbrc\gnl
\gvac{1}\gob{1}{A}\gob{1}{B}
\gend = 
\gbeg{3}{5}
\got{1}{B}\got{1}{B}\got{1}{A}\gnl
\gcl{1}\gbrc\gnl
\gbrc\gcl{1}\gnl
\gcl{1}\gmu\gnl
\gob{1}{A}\gob{2}{B}
\gend
\hspace{2mm},\hspace{2mm}\mbox{(b)}\hspace{2mm}
\gbeg{3}{5}
\got{1}{B}\got{1}{A}\got{1}{A}\gnl
\gcl{1}\gmu\gnl
\gcl{1}\gcn{1}{1}{2}{1}\gnl
\gbrc\gnl
\gob{1}{A}\gob{1}{B}
\gend =
\gbeg{3}{5}
\got{1}{B}\got{1}{A}\got{1}{A}\gnl
\gbrc\gcl{1}\gnl
\gcl{1}\gbrc\gnl
\gmu\gcl{1}\gnl
\gob{2}{A}\gob{1}{B}
\gend \\
&&\eqlabel{crossprodalg}\\
&&\nonumber
\hspace{2mm}\mbox{(c)}\hspace{2mm}
\gbeg{2}{4}
\got{1}{B}\gnl
\gcl{1}\gu{1}\gnl
\gbrc\gnl
\gob{1}{A}\gob{1}{B}
\gend =
\gbeg{2}{3}
\got{3}{B}\gnl
\gu{1}\gcl{1}\gnl
\gob{1}{A}\gob{1}{B}
\gend
\hspace{2mm},\hspace{2mm}\mbox{(d)}\hspace{2mm}
\gbeg{2}{4}
\got{3}{A}\gnl
\gu{1}\gcl{1}\gnl
\gbrc\gnl
\gob{1}{A}\gob{1}{B}
\gend =
\gbeg{2}{3}
\got{1}{A}\gnl
\gcl{1}\gu{1}\gnl
\gob{1}{A}\gob{1}{B} 
\gend
\hspace{1mm}.
\end{eqnarray}
This can be restated in the language of monoidal categories. For an algebra $A$ in $\Cc$, we
consider the category ${\mf T}_A$ of right transfer morphisms through $A$.
The objects are pairs $(X, \psi_{X,A})$ with $X\in \Cc$ 
and $\psi_{X,A}:\ X\ot A\ra A\ot X$ a morphism in $\Cc$ such that 
\[
\gbeg{3}{5}
\got{1}{X}\got{1}{A}\got{1}{A}\gnl
\gbrc\gcl{1}\gnl
\gcl{1}\gbrc\gnl
\gmu\gcl{1}\gnl
\gob{2}{A}\gob{1}{X}
\gend =
\gbeg{3}{5}
\got{1}{X}\got{1}{A}\got{1}{A}\gnl
\gcl{2}\gmu\gnl
\gvac{1}\gcn{1}{1}{2}{1}\gnl
\gbrc\gnl
\gob{1}{A}\gob{1}{X}
\gend
\mbox{~~and~~}
\gbeg{2}{4}
\got{1}{X}\gnl
\gcl{1}\gu{1}\gnl
\gbrc\gnl
\gob{1}{A}\gob{2}{X}
\gend =
\gbeg{2}{3}
\got{3}{X}\gnl
\gu{1}\gcl{1}\gnl
\gob{1}{A}\gob{1}{X}
\gend
\hspace{2mm}.
\]
A morphism in ${\mf T}_A$ between $(X, \psi_{X, A})$ and $(Y, \psi_{Y, A})$ 
is a morphism $\mu:\ X\ra Y$ in $\Cc$ such that 
$(\Id_A\ot \mu)\circ \psi_{X, A}=\psi_{Y, A}\circ (\mu\ot\Id_A)$.
${\mf T}_A$ is a strict monoidal category, with unit object $(\un{1}, \Id_A)$ and tensor product
\[
(X, \psi_{X, A})\un{\ot}(Y, \psi_{Y, A})=(X\ot Y, \psi_{X\ot Y, A}),~~{\rm with}~~ 
\psi_{X\ot Y, A}:=
\gbeg{3}{4}
\got{1}{X}\got{1}{Y}\got{1}{A}\gnl
\gcl{1}\gbrc\gnl
\gbrc\gcl{1}\gnl
\gob{1}{A}\gob{1}{X}\gob{1}{Y}
\gend\hspace{2mm}.
\]
The category ${}_A{\mf T}$ of left transfer morphisms throught $A$ is defined in a similar way,
and is also a strict monoidal category. Then we have the following result, going back to \cite{Tambara},
see also \cite[Sec. 4]{Schauenburg2003}.

\begin{proposition}\prlabel{spa}
Let $A$ and $B$ be algebras in a strict monoidal category $\Cc$, and
$\psi: B\ot A\ra A\ot B$ a morphism in $\Cc$. Then the following assertions are equivalent:
\begin{itemize}
\item[(i)] $A\#_\psi B$ is a cross product algebra;
\item[(ii)] $(B, \psi)$ is an algebra in ${\mf T}_A$;
\item[(iii)] $(A, \psi)$ is an algebra in ${}_B{\mf T}$.
\end{itemize}
\end{proposition}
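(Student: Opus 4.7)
The plan is a routine unpacking of definitions. As recorded in the paragraph preceding the proposition (see \cite[Theorem 2.5]{cimz} for the case where $\Cc$ is the category of vector spaces, or by direct verification), (i) is equivalent to the simultaneous validity of the four relations \equuref{crossprodalg}{a}--\equuref{crossprodalg}{d}. It therefore suffices to show that each of (ii) and (iii) is also equivalent to this conjunction.

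For (i)$\Leftrightarrow$(ii), I first note that $(B,\psi)$ is an object of ${\mf T}_A$ exactly when $\psi$ satisfies the two axioms displayed in the definition of ${\mf T}_A$, with $X=B$; matching the diagrams, these are precisely \equuref{crossprodalg}{b} (compatibility with $\un{m}_A$) and \equuref{crossprodalg}{d} (compatibility with $\un{\eta}_A$). Promoting $(B,\psi)$ to an algebra in the strict monoidal category ${\mf T}_A$ requires in addition that $\un{m}_B$ and $\un{\eta}_B$ be morphisms in ${\mf T}_A$. The first amounts to the identity $(\Id_A\ot\un{m}_B)\circ\psi_{B\ot B, A}=\psi\circ(\un{m}_B\ot\Id_A)$, and substituting the explicit formula for $\psi_{B\ot B,A}$ given in the definition of the tensor product in ${\mf T}_A$ turns it into \equuref{crossprodalg}{a}. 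The second amounts to $(\Id_A\ot\un{\eta}_B)\circ\psi_{\un{1},A}=\psi\circ(\un{\eta}_B\ot\Id_A)$ with $\psi_{\un{1},A}=\Id_A$, which is exactly \equuref{crossprodalg}{c}.

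For (i)$\Leftrightarrow$(iii), the argument is mirror-symmetric. Unpacking the analogous defining axioms of ${}_B{\mf T}$, which dualize the role of the fixed algebra from $A$ to $B$, the object conditions for $(A,\psi)$ now encode compatibility of $\psi$ with $\un{m}_B$ and $\un{\eta}_B$, and therefore reproduce \equuref{crossprodalg}{a} and \equuref{crossprodalg}{c}. The requirements that $\un{m}_A$ and $\un{\eta}_A$ be morphisms in ${}_B{\mf T}$, with the tensor product $(A,\psi)\un{\ot}(A,\psi)$ equipped with the left transfer morphism built dually to the formula for $\psi_{X\ot Y, A}$, then furnish \equuref{crossprodalg}{b} and \equuref{crossprodalg}{d} respectively.

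The only potential obstacle is purely organizational bookkeeping: making sure that each of the four compatibility axioms is matched to the correct relation among (a)--(d), and that the formula for the transfer morphism on a tensor product is read off correctly. There is no substantive difficulty, since all the verifications reduce to immediate graphical manipulations, using at most the naturality relations \equref{nat1cup}--\equref{nat2cup}.
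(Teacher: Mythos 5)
Your proposal follows exactly the paper's route: unpack the object axioms and the algebra axioms in the transfer categories and match them against the four relations \equref{crossprodalg}. The substance is fine, but you have swapped the labels (c) and (d) throughout. Relation \equuref{crossprodalg}{c} is $\psi\circ(\Id_B\ot\un{\eta}_A)=\un{\eta}_A\ot\Id_B$ (compatibility with the unit of $A$), while \equuref{crossprodalg}{d} is $\psi\circ(\un{\eta}_B\ot\Id_A)=\Id_A\ot\un{\eta}_B$ (compatibility with the unit of $B$). Hence $(B,\psi)\in{\mf T}_A$ is equivalent to \equuref{crossprodalg}{b} \emph{and} \equuref{crossprodalg}{c}, and it is the conditions that $\un{m}_B$ and $\un{\eta}_B$ be morphisms in ${\mf T}_A$ that give \equuref{crossprodalg}{a} and \equuref{crossprodalg}{d}; dually, $(A,\psi)\in{}_B{\mf T}$ is equivalent to \equuref{crossprodalg}{a} and \equuref{crossprodalg}{d}, with the algebra structure of $A$ in ${}_B{\mf T}$ accounting for \equuref{crossprodalg}{b} and \equuref{crossprodalg}{c} --- the opposite pairing to the one you state. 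Note that the explicit formula you wrote for the $\un{\eta}_B$-morphism condition, $(\Id_A\ot\un{\eta}_B)\circ\psi_{\un{1},A}=\psi\circ(\un{\eta}_B\ot\Id_A)$, is indeed correct but is \equuref{crossprodalg}{d}, not \equuref{crossprodalg}{c}. Since each equivalence ultimately uses the conjunction of all four relations, your final conclusion is unaffected; only the intermediate identifications (which you yourself flagged as the bookkeeping risk) need to be corrected as above.
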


\begin{proof}
Observe $(B, \psi)\in {\mf T}_A$ is equivalent to (\ref{eq:crossprodalg}.b-c);
if these hold, then (\ref{eq:crossprodalg}.a) and (\ref{eq:crossprodalg}.d)
mean precisely that $(B, \psi)$ is an algebra 
in ${\mf T}_A$. This proves the equivalence of (i) and (ii). The equivalence of (i)
and (iii) can be proved in a similar way: (\ref{eq:crossprodalg}.a) and (\ref{eq:crossprodalg}.d)
 are equivalent to $(A, \psi)\in {}_B{\mf T}$, and then the
two other conditions mean that $(A, \psi)$ is an algebra in ${}_B{\mf T}$.
\end{proof}

Recall that an augmented algebra is a pair $(B,\un{\varepsilon}_B)$, where $B$ is an algebra,
and $\un{\varepsilon}_B:\ B\to \un{1}$ is an algebra morphism.

\begin{lemma}\lelabel{action}
Let $A\#_\psi B$ be a cross product algebra. If $(B,\un{\varepsilon}_B)$ is an augmented
algebra then $A\in {}_B\Cc$ via \equuref{action1}{a}.
 If $(A,\un{\varepsilon}_A)$ is an augmented
algebra then $B\in\Cc_A$ via \equuref{action1}{b}.
\begin{equation}\eqlabel{action1}
(a)~~~\gbeg{2}{3}
\got{1}{B}\got{1}{A}\gnl
\glm\gnl
\gvac{1}\gob{1}{A}
\gend
:=
\gbeg{2}{4}
\got{1}{B}\got{1}{A}\gnl
\gbrc\gnl
\gcl{1}\gcu{1}\gnl
\gob{1}{A}
\gend ~~~;~~~(b)~~~
\gbeg{2}{3}
\got{1}{B}\got{1}{A}\gnl
\grm\gnl
\gob{1}{B}
\gend
:=
\gbeg{2}{4}
\got{1}{B}\got{1}{A}\gnl
\gbrc\gnl
\gcu{1}\gcl{1}\gnl
\gvac{1}\gob{1}{B}
\gend 
\end{equation}
\end{lemma}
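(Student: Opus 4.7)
The plan is to verify the two left $B$-module axioms for $\lambda := (\Id_A \ot \un{\va}_B)\circ \psi \colon B\ot A \to A$ (part (a)), and then obtain part (b) by a completely parallel argument on the other side. The only ingredients needed are the four cross product algebra axioms \equref{crossprodalg} together with the consequences of $\un{\va}_B$ being an algebra morphism, namely $\un{\va}_B\circ \un{\eta}_B = \Id_{\un{1}}$ and $\un{\va}_B\circ \un{m}_B = \un{\va}_B \ot \un{\va}_B$.

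For the unit axiom $\lambda\circ(\un{\eta}_B \ot \Id_A) = \Id_A$, I would post-compose (\ref{eq:crossprodalg}.d) with $\Id_A \ot \un{\va}_B$. That axiom states $\psi\circ(\un{\eta}_B \ot \Id_A)=\Id_A \ot \un{\eta}_B$, and applying $\Id_A\ot\un{\va}_B$ to the right hand side collapses it to $\Id_A$ via $\un{\va}_B\circ\un{\eta}_B=\Id_{\un{1}}$.

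For associativity $\lambda\circ(\un{m}_B \ot \Id_A) = \lambda\circ(\Id_B \ot \lambda)$, I would post-compose the hexagon (\ref{eq:crossprodalg}.a) with $\Id_A \ot \un{\va}_B$. On the left-hand side, the identity $\un{\va}_B\circ \un{m}_B = \un{\va}_B \ot \un{\va}_B$ immediately produces $\lambda\circ(\un{m}_B \ot \Id_A) = (\Id_A \ot \un{\va}_B \ot \un{\va}_B)\circ(\psi \ot \Id_B)\circ(\Id_B \ot \psi)$. On the right-hand side, bifunctoriality of $\ot$ lets me slide one copy of $\un{\va}_B$ across $\psi \ot \Id_B$ (since it sits on a tensor factor disjoint from $\psi$), rewriting the composite as $(\Id_A\ot\un{\va}_B)\circ \psi \circ (\Id_B \ot (\Id_A\ot\un{\va}_B)\circ\psi) = \lambda\circ(\Id_B \ot \lambda)$. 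The counit of $A$ plays no role here.

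Part (b) follows by the mirror argument: define $\rho := (\un{\va}_A \ot \Id_B)\circ \psi$, use (\ref{eq:crossprodalg}.c) together with $\un{\va}_A\circ\un{\eta}_A = \Id_{\un{1}}$ for the unit axiom, and use (\ref{eq:crossprodalg}.b) together with $\un{\va}_A\circ\un{m}_A = \un{\va}_A \ot \un{\va}_A$ for associativity, again invoking bifunctoriality at the end. I do not anticipate any real obstacle; the only point requiring attention is the bifunctoriality step that converts the iterated-$\psi$ side of the hexagon into an iterated-action, which is what makes the computation go through without any compatibility assumption between $\psi$ and $\un{\va}_B$ beyond what is forced by $A\#_\psi B$ being an algebra.
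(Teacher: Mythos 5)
Your proof is correct and follows essentially the same route as the paper: the paper's proof also obtains the left $B$-module structure by composing (\ref{eq:crossprodalg}.a) and (\ref{eq:crossprodalg}.d) to the left with $\Id_A\ot\un{\va}_B$, using that $\un{\va}_B$ is an algebra morphism, with the right $A$-module statement handled by the mirror argument. Your write-up just spells out the bifunctoriality step and the dual case more explicitly than the paper does.
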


\begin{proof}
Composing (\ref{eq:crossprodalg}.a) and (\ref{eq:crossprodalg}.d)
to the left with $\Id_A\ot \un{\va}_B$, we find that $A$ is a left $B$-module.
\end{proof}

For further reference, we record the dual results. We leave it to the reader to introduce the monoidal categories 
${}^A{\mf T}$ and ${\mf T}^A$ of left and right transfer morphisms
through the coalgebra $A$.

\begin{proposition}\prlabel{spadual}
Let $A$ and $B$ be coalgebras, and let $\phi:\ A\ot B\to B\ot A$ be a morphism in $\Cc$.
Then
the following statements are equivalent:\\
1) $A\#^\phi B$ is a cross product coalgebra;\\
2) the following relations hold:
\begin{eqnarray}
&&\nonumber 
\hspace{2mm}\mbox{(a)}\hspace{2mm}
\gbeg{3}{5}
\gvac{1}\got{1}{A}\got{1}{B}\gnl
\gvac{1}\gbrbox\gnl
\gvac{1}\gcn{1}{1}{1}{0}\gcl{1}\gnl
\gcmu\gcl{1}\gnl
\gob{1}{B}\gob{1}{B}\gob{1}{A}
\gend =
\gbeg{3}{5}
\got{1}{A}\got{2}{B}\gnl
\gcl{1}\gcmu\gnl
\gbrbox\gvac{2}\gcl{1}\gnl
\gcl{1}\gbrbox\gnl
\gob{1}{B}\gob{1}{B}\gob{1}{A}\gnl
\gend
\hspace{2mm},\hspace{2mm}\mbox{(b)}\hspace{2mm}
\gbeg{3}{5}
\got{1}{A}\got{1}{B}\gnl
\gbrbox\gnl
\gcl{1}\gvac{1}\gcn{1}{1}{-1}{0}\gnl
\gcl{1}\gcmu\gnl
\gob{1}{B}\gob{1}{A}\gob{1}{A}
\gend =
\gbeg{3}{5}
\got{2}{A}\got{1}{B}\gnl
\gcmu\gcl{1}\gnl
\gcl{1}\gbrbox\gnl
\gbr\gvac{-1}\gnot{\hspace*{-4mm}\Box}\gvac{1}\gcl{1}\gnl
\gob{1}{B}\gob{1}{A}\gob{1}{A}
\gend\hspace{1mm},\\
&&\eqlabel{crossprodcoalg}\\
&&\nonumber
\hspace{2mm}\mbox{(c)}\hspace{2mm}
\gbeg{2}{4}
\got{1}{A}\got{1}{B}\gnl
\gbrbox\gnl
\gcl{1}\gcu{1}\gnl
\gob{1}{B}
\gend =
\gbeg{2}{3}
\got{1}{A}\got{1}{B}\gnl
\gcu{1}\gcl{1}\gnl
\gvac{1}\gob{1}{B}
\gend
\hspace{2mm},\hspace{2mm}\mbox{(d)}\hspace{2mm}
\gbeg{2}{4}
\got{1}{A}\got{1}{B}\gnl
\gbrbox\gnl
\gcu{1}\gcl{1}\gnl
\gvac{1}\gob{1}{A}
\gend =
\gbeg{2}{3}
\got{1}{A}\got{1}{B}\gnl
\gcl{1}\gcu{1}\gnl
\gob{1}{A}
\gend\hspace{1mm};
\end{eqnarray}
3) $(B, \phi)$ is a coalgebra in ${}^A{\mf T}$;\\
4) $(A, \phi)$ is a coalgebra in ${\mf T}^B$.
\end{proposition}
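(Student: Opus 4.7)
The plan is to dualize the proof of \prref{spa}. The key dictionary is that each of the four identities in \equref{crossprodcoalg} is exactly one compatibility of $\phi$ with a coalgebra structure morphism: (\ref{eq:crossprodcoalg}.a) with $\und_B$, (\ref{eq:crossprodcoalg}.b) with $\und_A$, (\ref{eq:crossprodcoalg}.c) with $\un{\va}_A$, and (\ref{eq:crossprodcoalg}.d) with $\un{\va}_B$. With this dictionary in hand, the three characterizations (2), (3), (4) become different groupings of the same four axioms, and (1) $\Leftrightarrow$ (2) is the coalgebra version of the cross-product-algebra result recalled before \prref{spa}.

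For (1) $\Leftrightarrow$ (2), I would argue by a direct diagram chase. Assuming \equref{crossprodcoalg}, coassociativity and the counit axiom of $\und_{A\#^\phi B} = (\Id_A \ot \phi \ot \Id_B) \circ (\und_A \ot \und_B)$ follow by pushing $\phi$ past $\und_B$ using (\ref{eq:crossprodcoalg}.a), pushing $\phi$ past $\und_A$ using (\ref{eq:crossprodcoalg}.b), and applying the two counit identities (\ref{eq:crossprodcoalg}.c) and (\ref{eq:crossprodcoalg}.d). Conversely, each of the four conditions can be recovered from the coalgebra axioms of $A\#^\phi B$ by composing with suitable tensor products of $\un{\va}_A, \un{\va}_B, \und_A, \und_B$ and exploiting the coalgebra axioms of $A$ and $B$ separately. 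This is precisely dual to the standard argument for cross product algebras (\cite[Theorem 2.5]{cimz}) and presents no new technical difficulty.

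For (2) $\Leftrightarrow$ (3), I would unpack the category ${}^A{\mf T}$: its objects are pairs $(X, \phi_{A,X} : A \ot X \to X \ot A)$ compatible with $\und_A$ and $\un{\va}_A$, and for $X = B$ with $\phi_{A,B} = \phi$ these compatibilities read off as exactly (\ref{eq:crossprodcoalg}.b) and (\ref{eq:crossprodcoalg}.c). Given $(B,\phi) \in {}^A{\mf T}$, the remaining conditions (\ref{eq:crossprodcoalg}.a) and (\ref{eq:crossprodcoalg}.d) say precisely that $\und_B : (B,\phi) \to (B,\phi) \un{\ot} (B,\phi)$ and $\un{\va}_B : (B,\phi) \to (\un{1}, \Id_A)$ are morphisms in ${}^A{\mf T}$. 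Combined with coassociativity and the counit axiom already holding in $\Cc$, this is the definition of $(B, \phi)$ being a coalgebra in ${}^A{\mf T}$. The equivalence (2) $\Leftrightarrow$ (4) is mirror-symmetric: (\ref{eq:crossprodcoalg}.a) and (\ref{eq:crossprodcoalg}.d) express $(A, \phi) \in {\mf T}^B$, and then (\ref{eq:crossprodcoalg}.b) and (\ref{eq:crossprodcoalg}.c) say that $\und_A$ and $\un{\va}_A$ are morphisms in ${\mf T}^B$.

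The main obstacle is purely bookkeeping, namely matching each of the four displayed identities to its correct structural compatibility and recognizing the three characterizations as regroupings of the same four axioms. No genuinely new idea beyond the dualization of \prref{spa} is required.
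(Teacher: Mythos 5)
Your proposal is correct and follows exactly the route the paper intends: the paper records \prref{spadual} without proof as the formal dual of \prref{spa}, and your argument is precisely that dualization, with the right matching of (\ref{eq:crossprodcoalg}.b-c) to membership in ${}^A{\mf T}$ (resp.\ (\ref{eq:crossprodcoalg}.a,d) to membership in ${\mf T}^B$) and the remaining two conditions to $\un{\Delta}$ and $\un{\va}$ being morphisms there, together with the standard dual of the \cite{cimz} computation for (1)$\Leftrightarrow$(2).
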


A coaugmented coalgebra is a pair $(B,\un{\eta}_B)$, where $B$ is a coalgebra,
and $\un{\eta}_B:\ \un{1}\to B$ is a coalgebra morphism.

\begin{lemma}\lelabel{coaction}
Assume that $A\#^\phi B$ is a cross product coalgebra. If $(B,\un{\eta}_B)$ is a
coaugmented coalgebra, then $A\in {}^B\Cc$ via \equuref{coaction1}{a}.
If $(A,\un{\eta}_A)$ is a
coaugmented coalgebra, then $B\in \Cc^A$ via \equuref{coaction1}{b}.
\begin{equation}\eqlabel{coaction1}
(a)~~~\gbeg{2}{3}
\gvac{1}\got{1}{A}\gnl
\glcm\gnl
\gob{1}{B}\gob{1}{A}
\gend
:=
\gbeg{2}{4}
\got{1}{A}\gnl
\gcl{1}\gu{1}\gnl
\gbrbox\gnl
\gob{1}{B}\gob{1}{A}
\gend~~~;~~~
(b)~~~\gbeg{2}{3}
\got{1}{B}\gnl
\grcm\gnl
\gob{1}{B}\gob{1}{A}
\gend
:=
\gbeg{2}{4}
\gvac{1}\got{1}{A}\gnl
\gu{1}\gcl{1}\gnl
\gbrbox\gnl
\gob{1}{B}\gob{1}{A}
\gend.
\end{equation}
\end{lemma}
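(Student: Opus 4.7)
The plan is to argue by direct coalgebra duality with \leref{action}. For part (a), I propose to show that the coaction $\l_A := \phi \circ (\Id_A \ot \un{\eta}_B)$ from \equuref{coaction1}{a} satisfies coassociativity and the counit property.

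For coassociativity $(\und_B \ot \Id_A) \circ \l_A = (\Id_B \ot \l_A) \circ \l_A$, I would take relation \equuref{crossprodcoalg}{a} — viewed as an equality of morphisms $A \ot B \to B \ot B \ot A$ — and precompose both sides at the input with $\Id_A \ot \un{\eta}_B$. The left-hand side immediately gives $(\und_B \ot \Id_A) \circ \l_A$. On the right-hand side, the factor $(\Id_A \ot \und_B) \circ (\Id_A \ot \un{\eta}_B)$ collapses to $\Id_A \ot \un{\eta}_B \ot \un{\eta}_B$, because $\un{\eta}_B$ is a coalgebra morphism; the remaining composition then reorganizes into two nested applications of $\l_A$, producing $(\Id_B \ot \l_A) \circ \l_A$.

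For the counit axiom $(\une_B \ot \Id_A) \circ \l_A = \Id_A$, I would precompose \equuref{crossprodcoalg}{d} with $\Id_A \ot \un{\eta}_B$ and use the other half of the coalgebra-morphism property, $\une_B \circ \un{\eta}_B = \Id_{\un{1}}$. Part (b) is entirely symmetric: $\r_B := \phi \circ (\un{\eta}_A \ot \Id_B)$ inherits its coassociativity from \equuref{crossprodcoalg}{b} (using $\und_A \circ \un{\eta}_A = \un{\eta}_A \ot \un{\eta}_A$) and its counit property from \equuref{crossprodcoalg}{c} (using $\une_A \circ \un{\eta}_A = \Id_{\un{1}}$).

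There is no genuine obstacle here: the entire argument is the diagrammatic reflection of the one-line proof of \leref{action}, which post-composes (\ref{eq:crossprodalg}.a) and (\ref{eq:crossprodalg}.d) with $\Id_A \ot \une_B$. The only non-formal ingredient beyond mechanical duality is the coalgebra-morphism property of $\un{\eta}_B$ (resp.\ $\un{\eta}_A$), which is exactly what the coaugmentation hypothesis supplies.
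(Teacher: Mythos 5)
Your proof is correct and is exactly the argument the paper intends: Lemma \ref{le:coaction} is stated as the dual of Lemma \ref{le:action}, whose one-line proof (composing (\ref{eq:crossprodalg}.a) and (\ref{eq:crossprodalg}.d) with $\Id_A\ot\un{\va}_B$) you dualize faithfully by precomposing (\ref{eq:crossprodcoalg}.a,d), resp. (\ref{eq:crossprodcoalg}.b,c), with $\Id_A\ot\un{\eta}_B$, resp. $\un{\eta}_A\ot\Id_B$, and using the coaugmentation hypothesis. No gaps.
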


\section{Smash product algebras and coalgebras}\selabel{smash}
\setcounter{equation}{0}
These are particular examples of cross product
algebras and coalgebras. Assume that  $B$ is a bialgebra, so that ${}_B\Cc$, the category of left 
$B$-representations, and ${}^B\Cc$, the category of left $B$-corepresentations, 
are monoidal categories.\\
For an algebra $A$ in ${}_B\Cc$, we have a cross product algebra
$A\#_\psi B$, with $
\psi =\gbeg{3}{5}
\got{2}{B}\got{1}{A}\gnl
\gcmu\gcl{1}\gnl
\gcl{1}\gbr\gnl
\glm\gcl{1}\gnl
\gvac{1}\gob{1}{A}\gob{1}{B}
\gend
$,
where the left $B$-action on $A$ is $
\gbeg{2}{3}
\got{1}{B}\got{1}{A}\gnl
\glm\gnl
\gvac{1}\gob{1}{A}
\gend
$. This algebra is called the left smash product algebra of $A$ and $B$.\\
In a similar way, for a coalgebra $A$ in ${}_B\Cc$, we have a cross product
coalgebra $A\#^\phi B$, with
$
\phi=\gbeg{3}{5}
\gvac{1}\got{1}{A}\got{1}{B}\gnl
\glcm\gcl{1}\gnl
\gcl{1}\gbr\gnl
\gmu\gcl{1}\gnl
\gob{2}{B}\gob{1}{A}
\gend
$, where the left $B$-coaction on $A$ is
$
\gbeg{2}{3}
\gvac{1}\got{1}{A}\gnl
\glcm\gnl
\gob{1}{B}\gob{1}{A}
\gend
$. This coalgebra is called a left smash product coalgebra.\\
We remark that right smash product algebras and coalgebras can be considered as well.\\
Assume that $B$ is a bialgebra, and that $A\#_\psi B$ is a cross product algebra.
In \prref{crossprodissmashprodalg}, we discuss when $A\#_\psi B$ is a
smash product algebra.

\begin{proposition}\prlabel{crossprodissmashprodalg}
Let $B$ be a bialgebra, and let $A$ be an algebra, and consider $\psi: B\ot A\to
A\ot B$ such that $A\#_\psi B$ is a cross product algebra.
$A\#_\psi B$ is a left smash product algebra if and only if 
\begin{equation}\eqlabel{psismashprod}
\psi=\gbeg{3}{6}
\got{2}{B}\got{1}{A}\gnl
\gcmu\gcl{1}\gnl
\gcl{1}\gbr\gnl
\gbrc\gcl{1}\gnl
\gcl{1}\gcu{1}\gcl{1}\gnl
\gob{1}{A}\gvac{1}\gob{1}{B}
\gend\hspace{1mm}.
\end{equation}
Moreover, the full subcategory ${}_B{\mf T}'$ of ${}_B{\mf T}$, with objects of the form $(X,\psi)$, where $\psi$
satisfies \equref{psismashprod}, with $A$ replaced by $X$, is a monoidal subcategory of ${}_B{\mf T}$ that is monoidal
isomorphic to ${}_B\Cc$.
\end{proposition}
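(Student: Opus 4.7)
The plan is to establish the equivalence by recovering a $B$-action on $A$ from $\psi$, and then to identify smash-product transfer morphisms with $B$-modules.

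For the forward implication, assume $A\#_\psi B$ is a left smash product algebra, so that $A$ carries some left $B$-action $\tr'$ through which $\psi$ is given by the smash formula. I would post-compose that formula with $\Id_A\ot \un{\va}_B$; the counit axiom of $\un{\Delta}_B$ collapses the comultiplication strand, giving $(\Id_A\ot \un{\va}_B)\circ \psi = \tr'$. Hence $\tr' = \tr$, where $\tr := (\Id_A\ot \un{\va}_B)\circ \psi$ is well defined as an action by \leref{action} (since $B$ is augmented via $\un{\va}_B$), and substituting $\tr$ back into the smash formula produces precisely the displayed equation for $\psi$. For the backward implication, set $\tr := (\Id_A\ot \un{\va}_B)\circ \psi$. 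By \leref{action}, this is a left $B$-action on $A$. I then verify that $(A,\un{m}_A,\un{\eta}_A)$ is an algebra in ${}_B\Cc$: the $B$-linearity of $\un{m}_A$ is obtained by post-composing \equuref{crossprodalg}{b} with $\Id_A\ot \un{\va}_B$ and invoking naturality of the braiding, and the $B$-linearity of $\un{\eta}_A$ similarly follows from \equuref{crossprodalg}{d}. Once $A$ is a $B$-module algebra, plugging $\tr$ into the smash product formula recovers the assumed expression for $\psi$, so $A\#_\psi B$ coincides with the left smash product.

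For the monoidal isomorphism, define $F:{}_B\Cc\to {}_B{\mf T}'$ by $F(X)=(X,\psi_X)$ with $\psi_X$ the smash-product transfer morphism (acting as the identity on underlying morphisms), and $G:{}_B{\mf T}'\to {}_B\Cc$ by $G(X,\psi) = X$ equipped with action $(\Id_X\ot \un{\va}_B)\circ \psi$. The arguments above yield $FG=\Id$ and $GF=\Id$ on objects, and well-definedness on morphisms reduces to the statement that a morphism $f:X\to Y$ in $\Cc$ is $B$-linear iff $(f\ot \Id_B)\circ \psi_X = \psi_Y\circ (\Id_B\ot f)$, which is immediate from the explicit shape of $\psi_X$ and $\psi_Y$. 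Monoidality on objects amounts to the identity $\psi_X\,\un{\ot}\,\psi_Y = \psi_{X\ot Y}$, where $X\ot Y$ carries the diagonal $B$-action, which I would verify directly from the definition of $\un{\ot}$ in ${}_B{\mf T}$ using the multiplicativity of $\un{\Delta}_B$ together with naturality of the braiding; compatibility with units is trivial since $(\un{1},\Id)$ corresponds to the trivial $B$-module via $\un{\va}_B$.

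The main obstacle is the module-algebra verification in the backward direction: while the cross product axioms strongly suggest the compatibility, one must carefully trace $\un{\va}_B$ through the braidings and comultiplications appearing in \equuref{crossprodalg}{b} and \equuref{crossprodalg}{d} to extract the clean $B$-linearity identities for $\un{m}_A$ and $\un{\eta}_A$. Once this compatibility is in hand, both the equivalence and the construction of the inverse monoidal equivalence are essentially a bookkeeping verification.
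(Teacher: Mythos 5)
Your proposal is correct and follows essentially the same route as the paper: the action is recovered as $(\Id_A\ot\un{\va}_B)\circ\psi$, the two implications are obtained by composing the cross-product axioms with the counit and substituting \equref{psismashprod}, and the monoidal isomorphism uses exactly the paper's pair of functors, with closure of ${}_B{\mf T}'$ and strict monoidality both reduced to the identity $\psi_X\,\un{\ot}\,\psi_Y=\psi_{X\ot Y}$. Two small repairs: the $B$-linearity of $\un{\eta}_A$ follows from \equuref{crossprodalg}{c} rather than \equuref{crossprodalg}{d} (the latter only yields the unit axiom $1_B\tr a=a$ of the action), and in the module-algebra check composing \equuref{crossprodalg}{b} with $\Id_A\ot\un{\va}_B$ is not enough by itself --- you must also substitute \equref{psismashprod} for the leftover $\psi$; likewise the well-definedness of $F$ on objects (that the smash morphism $\psi_X$ of a mere $B$-module satisfies the ${}_B{\mf T}$-conditions) is a routine verification that should be recorded, as the paper does explicitly.
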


\begin{proof}
Assume first that
$A\#_\psi B$ is a smash product algebra. Then $A$ is an algebra in ${}_B\Cc$ and 
\begin{equation}\eqlabel{x1}
\gbeg{4}{4}
\got{1}{A}\got{1}{B}\got{1}{A}\got{1}{B}\gnl
\gcl{1}\gbrc\gcl{1}\gnl
\gmu\gmu\gnl
\gob{2}{A}\gob{2}{B}
\gend
= 
\gbeg{5}{7}
\got{1}{A}\got{2}{B}\got{1}{A}\got{1}{B}\gnl
\gcl{1}\gcmu\gcl{1}\gcl{1}\gnl
\gcl{1}\gcl{1}\gbr\gcl{1}\gnl
\gcl{1}\glm\gcl{1}\gcl{1}\gnl
\gcl{1}\gvac{1}\gcn{1}{1}{1}{-1}\gcl{1}\gcl{1}\gnl
\gmu\gvac{1}\gmu\gnl
\gob{2}{A}\gvac{1}\gob{2}{B}
\gend\hspace{1mm}.
\end{equation}
Composing \equref{x1} to the right with $\un{\eta}_A\ot \Id_{B\ot A}\ot \un{\eta}_B$,
we obtain that
\[
\gbeg{2}{3}
\got{1}{B}\got{1}{A}\gnl
\gbrc\gnl
\gob{1}{A}\gob{1}{B}
\gend
= 
\gbeg{3}{5}
\got{2}{B}\got{1}{A}\gnl
\gcmu\gcl{1}\gnl
\gcl{1}\gbr\gnl
\glm\gcl{1}\gnl
\gvac{1}\gob{1}{A}\gob{1}{B}
\gend
\hspace{2mm},\mbox{ hence }\hspace{2mm}
\gbeg{2}{4}
\got{1}{B}\got{1}{A}\gnl
\gbrc\gnl
\gcl{1}\gcu{1}\gnl
\gob{1}{A}\gob{1}{B}
\gend 
=
\gbeg{2}{3}
\got{1}{B}\got{1}{A}\gnl
\glm\gnl
\gvac{1}\gob{1}{A}
\gend~,
\]
and this implies \equref{psismashprod}.\\
Conversely, assume that $A\#_\psi B$ is a cross product algebra
and that $\psi$ satisfies \equref{psismashprod}. We know that $A\in 
{}_B\Cc$, with left $B$-action (\ref{eq:action1}.a).
$A$ is an algebra in ${}_B\Cc$ since 
\[
\gbeg{4}{8}
\got{2}{B}\got{1}{A}\got{1}{A}\gnl
\gcmu\gcl{1}\gcl{1}\gnl
\gcl{1}\gbr\gcl{1}\gnl
\gbrc\gbrc\gnl
\gcl{1}\gcu{1}\gcl{1}\gcu{1}\gnl
\gcn{1}{1}{1}{3}\gvac{1}\gcl{1}\gnl
\gvac{1}\gmu\gnl
\gvac{1}\gob{2}{A}
\gend
\hspace{2mm}
\equal{\equref{psismashprod}}
\hspace{2mm}
\gbeg{3}{5}
\got{1}{B}\got{1}{A}\got{1}{A}\gnl
\gbrc\gcl{1}\gnl
\gcl{1}\gbrc\gnl
\gmu\gcu{1}\gnl
\gob{2}{A}
\gend
\hspace{2mm}
\equal{\equref{crossprodalg}}
\hspace{2mm}
\gbeg{3}{6}
\got{1}{B}\got{1}{A}\got{1}{A}\gnl
\gcl{1}\gmu\gnl
\gcl{1}\gcn{1}{1}{2}{1}\gnl
\gbrc\gnl
\gcl{1}\gcu{1}\gnl
\gob{1}{A}
\gend
\hspace{2mm}\mbox{and}\hspace{2mm}
\gbeg{2}{5}
\got{1}{B}\gnl
\gcl{1}\gu{1}\gnl
\gbrc\gnl
\gcl{1}\gcu{1}\gnl
\gob{1}{A}
\gend
\hspace{2mm}
\equal{\equref{crossprodalg}}
\hspace{2mm}
\gbeg{2}{3}
\gvac{1}\got{1}{B}\gnl
\gu{1}\gcu{1}\gnl
\gob{1}{A}
\gend
\hspace{1mm}.
\]
The multiplication on the smash product algebra is 
\[
\gbeg{5}{7}
\got{1}{A}\got{2}{B}\got{1}{A}\got{1}{B}\gnl
\gcl{1}\gcmu\gcl{1}\gcl{1}\gnl
\gcl{1}\gcl{1}\gbr\gcl{1}\gnl
\gcl{1}\glm\gmu\gnl
\gcl{1}\gvac{1}\gcn{1}{1}{1}{-1}\gcn{1}{2}{2}{2}\gnl
\gmu\gnl
\gob{2}{A}\gvac{1}\gob{2}{B}
\gend
\hspace{2mm}=\hspace{2mm}
\gbeg{5}{6}
\got{1}{A}\got{2}{B}\got{1}{A}\got{1}{B}\gnl
\gcl{1}\gcmu\gcl{1}\gcl{1}\gnl
\gcl{1}\gcl{1}\gbr\gcl{1}\gnl
\gcl{1}\gbrc\gcl{1}\gcl{1}\gnl
\gmu\gcu{1}\gmu\gnl
\gob{2}{A}\gvac{1}\gob{2}{B}
\gend
\hspace{2mm}
\equal{\equref{psismashprod}}
\hspace{2mm}
\gbeg{4}{4}
\got{1}{A}\got{1}{B}\got{1}{A}\got{1}{B}\gnl
\gcl{1}\gbrc\gcl{1}\gnl
\gmu\gmu\gnl
\gob{2}{A}\gob{2}{B}
\gend
\hspace{1mm},
\]
and coincides with the multiplication on the cross product algebra $A\#_\psi B$.
This finishes the proof of the first statement.\\
We next show that ${}_B{\mf T}'$ is closed under the tensor product:
if $(X, \psi_{B, X}),~(Y, \psi_{B, Y})\in {}_B{\mf T}'$, then
$(X, \psi_{B, X})\un{\ot}(Y, \psi_{B, Y})\in {}_B{\mf T}'$, since
\[
\gbeg{4}{8}
\got{2}{B}\got{1}{X}\got{1}{Y}\gnl
\gcmu\gcl{1}\gcl{1}\gnl
\gcl{1}\gbr\gcl{1}\gnl
\gcl{1}\gcl{1}\gbr\gnl
\gbrc\gcl{1}\gcl{1}\gnl
\gcl{1}\gbrc\gcl{1}\gnl
\gcl{1}\gcl{1}\gcu{1}\gcl{1}\gnl
\gob{1}{X}\gob{1}{Y}\gvac{1}\gob{1}{B}
\gend
\hspace{1mm}
\equal{\equref{psismashprod}}
\hspace{1mm}
\gbeg{5}{8}
\gvac{1}\got{2}{B}\got{1}{X}\got{1}{Y}\gnl
\gvac{1}\gcmu\gcl{1}\gcl{1}\gnl
\gvac{1}\gcn{1}{1}{1}{0}\gbr\gcl{1}\gnl
\gcmu\gcl{1}\gbr\gnl
\gcl{1}\gbr\gcl{1}\gcl{1}\gnl
\gbrc\gbrc\gcl{1}\gnl
\gcl{1}\gcu{1}\gcl{1}\gcu{1}\gcl{1}\gnl
\gob{1}{X}\gvac{1}\gob{1}{Y}\gvac{1}\gob{1}{B}
\gend
\hspace{1mm}
=
\hspace{1mm}
\gbeg{5}{9}
\got{2}{B}\gvac{1}\got{1}{X}\got{1}{Y}\gnl
\gcmu\gvac{1}\gcl{1}\gcl{1}\gnl
\gcl{1}\gcn{1}{1}{1}{2}\gvac{1}\gcl{1}\gcl{1}\gnl
\gcl{1}\gcmu\gcl{1}\gcl{1}\gnl
\gcl{1}\gcl{1}\gbr\gcl{1}\gnl
\gcl{1}\gbr\gbr\gnl
\gbrc\gbrc\gcl{1}\gnl
\gcl{1}\gcu{1}\gcl{1}\gcu{1}\gcl{1}\gnl
\gob{1}{X}\gvac{1}\gob{1}{Y}\gvac{1}\gob{1}{B}
\gend
\hspace{1mm}
\equal{\equref{nat2cup}}
\hspace{1mm}
\gbeg{5}{9}
\got{2}{B}\got{1}{X}\gvac{1}\got{1}{Y}\gnl
\gcmu\gcl{1}\gvac{1}\gcl{1}\gnl
\gcl{1}\gbr\gvac{1}\gcl{1}\gnl
\gcl{1}\gcl{1}\gcn{1}{1}{1}{2}\gvac{1}\gcl{1}\gnl
\gcl{1}\gcl{1}\gcmu\gcl{1}\gnl
\gbrc\gcl{1}\gbr\gnl
\gcl{1}\gcu{1}\gbrc\gcl{1}\gnl
\gcl{1}\gvac{1}\gcl{1}\gcu{1}\gcl{1}\gnl
\gob{1}{X}\gvac{1}\gob{1}{Y}\gvac{1}\gob{1}{B}
\gend
\hspace{1mm}
\equal{\equref{psismashprod}}
\hspace{1mm}
\gbeg{3}{4}
\got{1}{B}\got{1}{X}\got{1}{Y}\gnl
\gbrc\gcl{1}\gnl
\gcl{1}\gbrc\gnl
\gob{1}{X}\gob{1}{Y}\gob{1}{B}
\gend
\hspace{1mm}.
\]
Finally, we will construct a monoidal isomorphism $F:\ 
{}_B{\mf T}'\to {}_B\Cc$. Take $(X,\psi)\in {}_B{\mf T}'$.
In the first part of the proof, we have seen that $X\in {}_B\Cc$ via
the $B$-action
$
\gbeg{2}{3}
\got{1}{B}\got{1}{X}\gnl
\glm\gnl
\gvac{1}\gob{1}{X}
\gend
=
\gbeg{2}{4}
\got{1}{B}\got{1}{X}\gnl
\gbrc\gnl
\gcl{1}\gcu{1}\gnl
\gob{1}{X}\gob{1}{}
\gend 
$,
and this defines $F$ at the level of objects. At the level of morphisms, $F$ acts
as the identity.
Now we define a functor $G:\ {}_B\Cc\to {}_B{\mf T}'$. Take a left $B$-module $X$, and
let
$\psi=\gbeg{2}{3}
\got{1}{B}\got{1}{X}\gnl
\gbrc\gnl
\gob{1}{X}\gob{1}{B}
\gend 
=:
\gbeg{3}{5}
\got{2}{B}\got{1}{X}\gnl
\gcmu\gcl{1}\gnl
\gcl{1}\gbr\gnl
\glm\gcl{1}\gnl
\gvac{1}\gob{1}{X}\gob{1}{B}
\gend  
$. Then $
\gbeg{2}{4}
\got{1}{B}\got{1}{X}\gnl
\gbrc\gnl
\gcl{1}\gcu{1}\gnl
\gob{1}{X}
\gend
=
\gbeg{2}{3}
\got{1}{B}\got{1}{X}\gnl
\glm\gnl
\gvac{1}\gob{1}{X}
\gend
$, and therefore $\psi$ satisfies \equref{psismashprod}. $(X, \psi_{B, X})$ is an object of ${}_B{\mf T}$ since
\[
\gbeg{3}{6}
\got{1}{B}\got{1}{B}\got{1}{X}\gnl
\gmu\gcl{1}\gnl
\gcmu\gcl{1}\gnl
\gcl{1}\gbr\gnl
\glm\gcl{1}\gnl
\gvac{1}\gob{1}{X}\gob{1}{B}
\gend
=
\gbeg{5}{8}
\got{2}{B}\got{2}{B}\got{1}{X}\gnl
\gcmu\gcmu\gcl{1}\gnl
\gcl{1}\gbr\gcl{1}\gcl{1}\gnl
\gmu\gmu\gcl{1}\gnl
\gcn{1}{1}{2}{5}\gvac{2}\gcn{1}{1}{0}{1}\gcl{1}\gnl
\gvac{2}\gcl{1}\gbr\gnl
\gvac{2}\glm\gcl{1}\gnl
\gvac{3}\gob{1}{X}\gob{1}{B}
\gend
=
\gbeg{5}{10}
\got{2}{B}\got{2}{B}\got{1}{X}\gnl
\gcmu\gcmu\gcl{1}\gnl
\gcl{1}\gbr\gcl{1}\gcl{1}\gnl
\gcl{1}\gcl{1}\gmu\gcl{1}\gnl
\gcl{1}\gcl{1}\gvac{1}\gcn{1}{1}{0}{-1}\gcn{1}{1}{1}{-1}\gnl
\gcl{1}\gcl{1}\gbr\gnl
\gcl{1}\glm\gcl{1}\gnl
\gcl{1}\gvac{1}\gcn{1}{1}{1}{-1}\gcl{1}\gnl
\glm\gvac{1}\gcl{1}\gnl
\gvac{1}\gob{1}{X}\gvac{1}\gob{1}{B}
\gend
\equal{\equref{nat1cup}}
\gbeg{5}{8}
\got{2}{B}\got{2}{B}\got{1}{X}\gnl
\gcmu\gcmu\gcl{1}\gnl
\gcl{1}\gbr\gcl{1}\gcl{1}\gnl
\gcl{1}\gcl{1}\gcl{1}\gbr\gnl
\gcl{1}\gcl{1}\gbr\gcl{1}\gnl
\gcn{1}{1}{1}{3}\glm\gmu\gnl
\gvac{1}\glm\gcn{1}{1}{2}{2}\gnl
\gvac{2}\gob{1}{X}\gob{2}{B}
\gend
\equal{\equref{nat1cup}}
\gbeg{5}{8}
\got{2}{B}\got{2}{B}\got{1}{X}\gnl
\gcmu\gcmu\gcl{1}\gnl
\gcl{1}\gcl{1}\gcl{1}\gbr\gnl
\gcl{1}\gcl{1}\glm\gcl{1}\gnl
\gcl{1}\gcl{1}\gvac{1}\gcn{1}{1}{1}{-1}\gcn{1}{1}{1}{-1}\gnl
\gcl{1}\gbr\gcl{1}\gnl
\glm\gmu\gnl
\gvac{1}\gob{1}{X}\gob{2}{B}
\gend
\] 
and 
$$
\gbeg{3}{7}
\gvac{2}\got{1}{X}\gnl
\gvac{1}\gu{1}\gcl{2}\gnl
\gvac{1}\gcn{1}{1}{1}{0}\gnl
\gcmu\gcl{1}\gnl
\gcl{1}\gbr\gnl
\glm\gcl{1}\gnl
\gvac{1}\gob{1}{X}\gob{1}{B}
\gend  
=\gbeg{3}{4}
\gvac{1}\got{1}{X}\gnl
\gu{1}\gcl{1}\gnl
\glm\gu{1}\gnl
\gvac{1}\gob{1}{X}\gob{1}{B}
\gend
=
\gbeg{2}{3}
\got{1}{X}\gnl
\gcl{1}\gu{1}\gnl
\gob{1}{X}\gob{1}{B}
\gend~.$$
We conclude that $(X, \psi_{B, X})\in {}_B{\mf T}'$, and we define $G(X)=(X, \psi_{B, X})$.
At the level of morphisms, $G$ acts as the identity. Using \equref{psismashprod}, we can
show that $F$ and $G$ are inverses. Finally, using the coassociativity of the comultiplication
on $B$ and \equref{nat2cup}, we can prove that
\[
\gbeg{5}{8}
\got{2}{B}\got{1}{X}\got{1}{Y}\gnl
\gcmu\gcl{1}\gcl{1}\gnl
\gcl{1}\gbr\gcl{1}\gnl
\gcl{1}\gcl{1}\gcn{1}{1}{1}{2}\gcn{1}{1}{1}{3}\gnl
\glm\gcmu\gcl{1}\gnl
\gvac{1}\gcl{1}\gcl{1}\gbr\gnl
\gvac{1}\gcl{1}\glm\gcl{1}\gnl
\gvac{1}\gob{1}{X}\gvac{1}\gob{1}{Y}\gob{1}{B}
\gend
=
\gbeg{5}{7}
\gvac{1}\got{2}{B}\got{1}{X}\got{1}{Y}\gnl
\gvac{1}\gcmu\gcl{1}\gcl{1}\gnl
\gvac{1}\gcn{1}{1}{1}{0}\gbr\gcl{1}\gnl
\gcmu\gcl{1}\gbr\gnl
\gcl{1}\gbr\gcl{1}\gcl{1}\gnl
\glm\glm\gcl{1}\gnl
\gvac{1}\gob{1}{X}\gvac{1}\gob{1}{Y}\gob{1}{B}
\gend\hspace{1mm},
\]
and this implies that $F$ is a strictly monoidal functor.
\end{proof}

We end this Section with the dual version of \prref{crossprodissmashprodalg}. Verification of the details
is left to the reader.

\begin{proposition}\prlabel{crossprodissmashprodcoalg}
Let $B$ be a bialgebra, and let $A$ be a coalgebra. Assume that $\phi:\ A\ot B\to B\ot A$ is such that
$A\#^\phi B$ is a cross product coalgebra. $A\#^\phi B$ 
is a smash product coalgebra if and only if 
\begin{equation}\eqlabel{phismashprod}
\phi=
\gbeg{3}{6}
\got{1}{A}\gvac{1}\got{1}{B}\gnl
\gcl{1}\gu{1}\gcl{1}\gnl
\gbrbox\gvac{2}\gcl{1}\gnl
\gcl{1}\gbr\gnl
\gmu\gcl{1}\gnl
\gob{2}{B}\gob{1}{A}
\gend\hspace{1mm}.
\end{equation}
The full subcategory of ${\mf T}^B$ consisting of objects 
$(X, \phi)$ satisfying \equref{phismashprod}, with $A$ replaced by $X$, is strictly monoidal 
and can be identified to $\Cc^B$ as a monoidal category.
\end{proposition}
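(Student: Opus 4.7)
The plan is to mirror the proof of \prref{crossprodissmashprodalg} line-by-line, dualizing every multiplication to a comultiplication, every unit to a counit, and reversing the direction in which auxiliary morphisms are composed. Since all the work has already been done in the algebra case, I expect no genuinely new ideas, only bookkeeping about arrow directions.

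For the forward implication, I would assume $A\#^\phi B$ is a left smash product coalgebra. Then $A$ is a coalgebra in ${}^B\Cc$ with some left coaction
$\gbeg{2}{3}\gvac{1}\got{1}{A}\gnl\glcm\gnl\gob{1}{B}\gob{1}{A}\gend$,
and the comultiplication of $A\#^\phi B$ admits two expressions: the smash coproduct formula and the cross product formula built from $\phi$. Setting them equal gives the coalgebra-analog of \equref{x1}. To solve for $\phi$, I would postcompose with $\un{\va}_A\ot \Id_B\ot \Id_A\ot \un{\va}_B$ (the dual of what is done after \equref{x1}, now acting on outputs rather than inputs). The two outer counits annihilate the comultiplications on $A$ and $B$ by \equref{braidedbialgebra}, leaving exactly $\phi$ on the right side and the right-hand side of \equref{phismashprod} on the left, after identifying the coaction via \equuref{coaction1}{a}.

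For the converse, assume $\phi$ has the form in \equref{phismashprod}. Define the left coaction of $B$ on $A$ by \equuref{coaction1}{a}. Using \equref{crossprodcoalg} together with \equref{nat2cup} and the dual of the verifications made for $A\in{}_B\Cc$ in the algebra case, I would check that $A$ is a coalgebra in ${}^B\Cc$: coassociativity and the counit condition for the coaction follow by composing the cross product coalgebra axioms with $\un{\eta}_A$, and compatibility of the coaction with $\un{\Delta}_A$ and $\un{\va}_A$ follows from \equref{crossprodcoalg}(a--b). Then a direct diagrammatic computation, inserting \equref{phismashprod} into the cross product comultiplication and using coassociativity of $\un{\Delta}_B$, rewrites that comultiplication as the smash coproduct.

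Finally, for the monoidal isomorphism, I would define a functor $F$ from the full subcategory of ${\mf T}^B$ cut out by \equref{phismashprod} to ${}^B\Cc$, sending $(X,\phi)$ to $X$ equipped with the coaction obtained from $\phi$ by $\un{\eta}_X$ (the dual of the procedure in the algebra case), and acting as the identity on morphisms; and an inverse $G$ sending a comodule $X$ to $(X,\phi)$ with $\phi$ given by \equref{phismashprod}. Closure of this subcategory under the tensor product of ${\mf T}^B$ is verified by a diagrammatic calculation dual to the three-diagram chain in the proof of \prref{crossprodissmashprodalg}, using \equref{nat1cup} in place of \equref{nat2cup}. The main obstacle I anticipate is not conceptual but notational: keeping consistent track of where counits are applied (top vs.\ bottom of diagrams) and of which comodule convention (left vs.\ right) is in force, since the statement phrases the comodule category as $\Cc^B$ while the smash coproduct in \seref{smash} is built from a left $B$-coaction, so some attention is required to confirm these agree under the conventions of the paper.
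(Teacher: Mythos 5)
Your plan coincides with the paper's own treatment: the paper gives no separate argument for \prref{crossprodissmashprodcoalg}, stating only that it is the dual of \prref{crossprodissmashprodalg} with verification left to the reader, and your step-by-step dualization (counits applied to outputs in place of units on inputs, \equref{nat1cup} in place of \equref{nat2cup}, the coaction $\phi\circ(\Id_X\ot\un{\eta}_B)$ of \equuref{coaction1}{a} in place of the action $(\Id_X\ot\un{\va}_B)\circ\psi$) is exactly that verification. Two cosmetic remarks: the coaction is induced by the unit $\un{\eta}_B$ of $B$ (your ``$\un{\eta}_X$'' is a slip, since $X$ is a bare object), and the category written $\Cc^B$ in the statement is, under the conventions of \leref{coaction}, the left-comodule category ${}^B\Cc$ attached to this left coaction, so the convention issue you flag is a notational inconsistency in the paper rather than a mathematical obstacle.
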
  

\section{Cross product bialgebras}\selabel{crossprodbialg}
\setcounter{equation}{0}
Suppose that $A$ and $B$ are algebras and coalgebras, and that we have morphisms
$\psi:\ B\ot A\to A\ot B$ and $\phi:\ A\ot B\to B\ot A$ such that $A\#_\psi B$ is a cross product
algebra and $A\#^\phi B$ is a cross product coalgebra. Then we will call $(A,B,\psi,\phi)$
a {\sl cross product algebra-coalgebra datum}. In \cite[Sec. 2]{bespdrab1}, 
$(A,B,\psi,\phi)$ is a called a {\sl bialgebra admissible tuple}, or a BAT, if 
$A\#_\psi^\phi B$ is a cross product bialgebra. Take a cross product algebra-coalgebra datum
$(A,B,\psi,\phi)$. We will produce a list of properties that are satisfied if
$(A,B,\psi,\phi)$ is an admissible tuple; otherwise stated, we will make a list of necessary
conditions for  $A\#_\psi^\phi B$ being a cross product bialgebra.
Then we will identify subsets of this list of properties
that guarantee that $(A,B,\psi,\phi)$ is a bialgebra admissible tuple, in other words,
sets of necessary and sufficient conditions for  $A\#_\psi^\phi B$ being a cross product bialgebra.
The results will be summarized in  \thref{firstsetequivcond}.\\

$A\#_\psi^\phi B$ is a cross product bialgebra if and only if the comultiplication and counit
are algebra maps; these conditions come down to the following equalities:
\begin{eqnarray}
&&
\hspace{2mm}\mbox{(a)}\hspace{2mm}
\gbeg{4}{6}
\got{1}{A}\got{1}{B}\got{1}{A}\got{1}{B}\gnl
\gcl{1}\gbrc\gcl{1}\gnl
\gmu\gmu\gnl
\gcmu\gcmu\gnl
\gcl{1}\gbrbox\gvac{2}\gcl{1}\gnl
\gob{1}{A}\gob{1}{B}\gob{1}{A}\gob{1}{B} 
\gend
=
\gbeg{8}{9}
\got{2}{A}\got{2}{B}\got{2}{A}\got{2}{B}\gnl
\gcmu\gcmu\gcmu\gcmu\gnl
\gcl{1}\gbrbox\gvac{2}\gcl{1}\gcl{1}\gbrbox\gvac{2}\gcl{1}\gnl
\gcl{1}\gcl{1}\gcl{1}\gbr\gcl{1}\gcl{1}\gcl{1}\gnl
\gcl{1}\gcl{1}\gbr\gbr\gcl{1}\gcl{1}\gnl
\gcl{1}\gcl{1}\gcl{1}\gbr\gcl{1}\gcl{1}\gcl{1}\gnl
\gcl{1}\gbrc\gcl{1}\gcl{1}\gbrc\gcl{1}\gnl
\gmu\gmu\gmu\gmu\gnl
\gob{2}{A}\gob{2}{B}\gob{2}{A}\gob{2}{B}
\gend 
\hspace{2mm},\hspace{2mm}\mbox{(b)}\hspace{2mm}
\gbeg{4}{6}
\gvac{1}\got{2}{\un{1}}\gnl
\gvac{1}\gu{1}\gvac{1}\gu{1}\gnl
\gvac{1}\gcn{1}{1}{1}{0}\gcn{1}{1}{3}{2}\gnl
\gcmu\gcmu\gnl
\gcl{1}\gbrbox\gvac{2}\gcl{1}\gnl
\gob{1}{A}\gob{1}{B}\gob{1}{A}\gob{1}{B}
\gend
=
\gbeg{4}{3}
\gvac{1}\got{2}{\un{1}}\gnl
\gu{1}\gu{1}\gu{1}\gu{1}\gnl
\gob{1}{A}\gob{1}{B}\gob{1}{A}\gob{1}{B}
\gend
\hspace{1mm},\nonumber \\
&&
\eqlabel{crossbialgcond} \\
&&
\hspace{2mm}\mbox{(c)}\hspace{2mm}
\gbeg{4}{6}
\got{1}{A}\got{1}{B}\got{1}{A}\got{1}{B}\gnl
\gcl{1}\gbrc\gcl{1}\gnl
\gmu\gmu\gnl
\gcn{1}{1}{2}{1}\gvac{1}\gcn{1}{1}{2}{1}\gnl
\gcu{1}\gvac{1}\gcu{1}\gnl
\gvac{1}\gob{2}{\un{1}}
\gend
=
\gbeg{4}{3}
\got{1}{A}\got{1}{B}\got{1}{A}\got{1}{B}\gnl
\gcu{1}\gcu{1}\gcu{1}\gcu{1}\gnl
\gvac{1}\gob{2}{\un{1}}
\gend
\hspace{2mm}\mbox{and}\hspace{2mm}\mbox{(d)}\hspace{2mm}
\gbeg{2}{4}
\got{1}{\un{1}}\got{1}{\un{1}}\gnl
\gu{1}\gu{1}\gnl
\gcu{1}\gcu{1}\gnl
\gob{1}{\un{1}}\gob{1}{\un{1}}
\gend 
=
\gbeg{2}{3}
\got{1}{\un{1}}\got{1}{\un{1}}\gnl
\gcl{1}\gcl{1}\gnl
\gob{1}{\un{1}}\gob{1}{\un{1}}
\gend
\hspace{2mm}.
\nonumber
\end{eqnarray}
Note that the first composition at the left hand side of (\ref{eq:crossbialgcond}.d) is the composition
of the counit and the unit of $A$, and the second one is the 
composition of the counit and the unit of $B$. Using (\ref{eq:crossbialgcond}.d) and
the counit conditions (\ref{eq:crossprodcoalg}.c-d), we can see that (\ref{eq:crossbialgcond}.b)
is equivalent to
\begin{equation}\eqlabel{comultunitcomp}
\mbox{(a)}\hspace{2mm}\gbeg{2}{5}
\got{2}{\un{1}}\gnl
\gu{1}\gnl
\gcn{1}{1}{1}{2}\gnl
\gcmu\gnl
\gob{1}{A}\gob{1}{A}
\gend
=
\gbeg{2}{3}
\got{2}{\un{1}}\gnl
\gu{1}\gu{1}\gnl
\gob{1}{A}\gob{1}{A}
\gend
\hspace{2mm},\hspace{2mm}\mbox{(b)}\hspace{2mm}
\gbeg{2}{5}
\got{2}{\un{1}}\gnl
\gu{1}\gnl
\gcn{1}{1}{1}{2}\gnl
\gcmu\gnl
\gob{1}{B}\gob{1}{B}
\gend
=
\gbeg{2}{3}
\got{2}{\un{1}}\gnl
\gu{1}\gu{1}\gnl
\gob{1}{B}\gob{1}{B}
\gend
\hspace{2mm},\hspace{2mm}\mbox{(c)}\hspace{2mm}
\gbeg{2}{4}
\got{2}{\un{1}}\gnl
\gu{1}\gu{1}\gnl
\gbrbox\gnl
\gob{1}{B}\gob{1}{A}
\gend 
=
\gbeg{2}{3}
\got{2}{\un{1}}\gnl
\gu{1}\gu{1}\gnl
\gob{1}{B}\gob{1}{A}
\gend
\hspace{1mm}.
\end{equation}
In a similar way, (\ref{eq:crossbialgcond}.c) is equivalent to
\begin{equation}\eqlabel{multcounitcomp}
\mbox{(a)}\hspace{2mm}
\gbeg{2}{5}
\got{1}{A}\got{1}{A}\gnl
\gmu\gnl
\gcn{1}{1}{2}{1}\gnl
\gcu{1}\gnl
\gob{2}{\un{1}}
\gend
=
\gbeg{2}{3}
\got{1}{A}\got{1}{A}\gnl
\gcu{1}\gcu{1}\gnl
\gob{2}{\un{1}}
\gend
\hspace{2mm},\hspace{2mm}\mbox{(b)}\hspace{2mm}
\gbeg{2}{5}
\got{1}{B}\got{1}{B}\gnl
\gmu\gnl
\gcn{1}{1}{2}{1}\gnl
\gcu{1}\gnl
\gob{2}{\un{1}}
\gend
=
\gbeg{2}{3}
\got{1}{B}\got{1}{B}\gnl
\gcu{1}\gcu{1}\gnl
\gob{2}{\un{1}}
\gend
\hspace{2mm},\hspace{2mm}\mbox{(c)}\hspace{2mm}
\gbeg{3}{4}
\got{1}{B}\got{1}{A}\gnl
\gbrc\gnl
\gcu{1}\gcu{1}\gnl
\gob{2}{\un{1}}
\gend
=
\gbeg{2}{3}
\got{1}{B}\got{1}{A}\gnl
\gcu{1}\gcu{1}\gnl
\gob{2}{\un{1}}
\gend
\hspace{1mm}.
\end{equation}
(\ref{eq:crossbialgcond}.d) is equivalent to 
$\un{\va}_A\circ \un{\eta}_A=\Id_{\un{1}}=\un{\va}_B\circ \un{\eta}_B$;
this follows from the observation that both compositions are invertible 
idempotents of ${\rm End}_\Cc({\un{1}})$.\\ 

We can now formulate a first list of properties of bialgebra admissible tuples.

\begin{proposition}\prlabel{4.1}
A bialgebra admissible tuple $(A,B,\psi,\phi)$ 
satisfies the following properties.
\begin{eqnarray}
&&
\hspace{2mm}\mbox{\rm (a)}\hspace{2mm}
\gbeg{3}{5}
\got{1}{A}\got{1}{A}\gnl
\gmu\gnl
\gcmu\gu{1}\gnl
\gcl{1}\gbrbox\gnl
\gob{1}{A}\gob{1}{B}\gob{1}{A}
\gend 
=
\gbeg{6}{7}
\got{2}{A}\gvac{1}\got{2}{A}\gnl
\gcmu\gu{1}\gcmu\gu{1}\gnl
\gcl{1}\gbrbox\gvac{2}\gcl{1}\gbrbox\gnl
\gcl{1}\gcl{1}\gbr\gcl{1}\gcl{1}\gnl
\gcl{1}\gbrc\gbr\gcl{1}\gnl
\gmu\gmu\gmu\gnl
\gob{2}{A}\gob{2}{B}\gob{2}{A}
\gend
\hspace{2mm},\hspace{2mm}\mbox{\rm (b)}\hspace{2mm}
\gbeg{3}{5}
\gvac{1}\got{1}{B}\got{1}{B}\gnl
\gvac{1}\gmu\gnl
\gu{1}\gcmu\gnl
\gbrbox\gvac{2}\gcl{1}\gnl
\gob{1}{B}\gob{1}{A}\gob{1}{B}
\gend
=
\gbeg{6}{7}
\gvac{1}\got{2}{B}\gvac{1}\got{2}{B}\gnl
\gu{1}\gcmu\gu{1}\gcmu\gnl
\gbrbox\gvac{2}\gcl{1}\gbrbox\gvac{2}\gcl{3}\gnl
\gcl{1}\gcl{1}\gbr\gcl{1}\gnl
\gcl{1}\gbr\gbrc\gnl
\gmu\gmu\gmu\gnl
\gob{2}{B}\gob{2}{A}\gob{2}{B}
\gend
\hspace{2mm},\nonumber\\
&&
\hspace{2mm}\mbox{\rm (c)}\hspace{2mm}
\gbeg{2}{3}
\got{1}{A}\got{1}{B}\gnl
\gbrbox\gnl
\gob{1}{B}\gob{1}{A}
\gend
=
\gbeg{4}{6}
\got{1}{A}\gvac{2}\got{1}{B}\gnl
\gcl{1}\gu{1}\gu{1}\gcl{1}\gnl
\gbr\gvac{-1}\gnot{\hspace*{-4mm}\Box}\gvac{1}
\gbr\gvac{-1}\gnot{\hspace*{-4mm}\Box}\gnl
\gcl{1}\gbr\gcl{1}\gnl
\gmu\gmu\gnl
\gob{2}{B}\gob{2}{A}
\gend
\hspace{2mm},\hspace{2mm}\mbox{\rm (d)}\hspace{2mm}
\gbeg{4}{6}
\gvac{1}\got{1}{B}\got{1}{A}\gnl
\gvac{1}\gbrc\gnl
\gvac{1}\gcn{1}{1}{1}{0}\gcn{1}{1}{1}{2}\gnl
\gcmu\gcmu\gnl
\gcl{1}\gbrbox\gvac{2}\gcl{1}\gnl
\gob{1}{A}\gob{1}{B}\gob{1}{A}\gob{1}{B}
\gend
=
\gbeg{6}{7}
\gvac{1}\got{2}{B}\got{2}{A}\gnl
\gu{1}\gcmu\gcmu\gu{1}\gnl
\gbrbox\gvac{2}\gbr\gbrbox\gnl
\gcl{1}\gbr\gbr\gcl{1}\gnl
\gbrc\gbr\gbrc\gnl
\gcl{1}\gmu\gmu\gcl{1}\gnl
\gob{1}{A}\gob{2}{B}\gob{2}{A}\gob{1}{B}
\gend
\hspace{2mm},\nonumber\\
&&
\eqlabel{neccconds}\\
&&
\hspace{2mm}\mbox{\rm (e)}\hspace{2mm}
\gbeg{3}{5}
\got{1}{A}\got{1}{B}\got{1}{A}\gnl
\gcl{1}\gbrc\gnl
\gmu\gcu{1}\gnl
\gcmu\gnl
\gob{1}{A}\gob{1}{A}
\gend
=
\gbeg{6}{7}
\got{2}{A}\got{2}{B}\got{2}{A}\gnl
\gcmu\gcmu\gcmu\gnl
\gcl{1}\gbrbox\gvac{2}\gbr\gcl{1}\gnl
\gcl{1}\gcl{1}\gbr\gbrc\gnl
\gcl{1}\gbrc\gmu\gcu{1}\gnl
\gmu\gcu{1}\gcn{1}{1}{2}{2}\gnl
\gob{2}{A}\gvac{1}\gob{2}{A}
\gend
\hspace{2mm},\hspace{2mm}\mbox{\rm (f)}\hspace{2mm}
\gbeg{3}{5}
\got{1}{B}\got{1}{A}\got{1}{B}\gnl
\gbrc\gcl{1}\gnl
\gcu{1}\gmu\gnl
\gvac{1}\gcmu\gnl
\gvac{1}\gob{1}{B}\gob{1}{B}
\gend
=
\gbeg{6}{7}
\got{2}{B}\got{2}{A}\got{2}{B}\gnl
\gcmu\gcmu\gcmu\gnl
\gcl{1}\gbr\gbrbox\gvac{2}\gcl{1}\gnl
\gbrc\gbr\gcl{1}\gcl{1}\gnl
\gcu{1}\gmu\gbrc\gcl{1}\gnl
\gvac{1}\gcn{1}{1}{2}{2}\gvac{1}\gcu{1}\gmu\gnl
\gvac{1}\gob{2}{B}\gvac{1}\gob{2}{B}
\gend
\hspace{2mm},
\nonumber\\
&&
\hspace{2mm}\mbox{\rm (g)}\hspace{2mm}
\gbeg{2}{3}
\got{1}{B}\got{1}{A}\gnl
\gbrc\gnl
\gob{1}{A}\gob{1}{B}
\gend
=
\gbeg{4}{6}
\got{2}{B}\got{2}{A}\gnl
\gcmu\gcmu\gnl
\gcl{1}\gbr\gcl{1}\gnl
\gbrc\gbrc\gnl
\gcl{1}\gcu{1}\gcu{1}\gcl{1}\gnl
\gob{1}{A}\gvac{2}\gob{1}{B}
\gend
\hspace{2mm}\mbox{\rm and}\hspace{2mm}\mbox{\rm (h)}\hspace{2mm}
\gbeg{4}{6}
\got{1}{A}\got{1}{B}\got{1}{A}\got{1}{B}\gnl
\gcl{1}\gbrc\gcl{1}\gnl
\gmu\gmu\gnl
\gcn{1}{1}{2}{3}\gcn{1}{1}{4}{3}\gnl
\gvac{1}\gbrbox\gnl
\gvac{1}\gob{1}{B}\gob{1}{A}
\gend
=\gbeg{6}{7}
\got{1}{A}\got{2}{B}\got{2}{A}\got{1}{B}\gnl
\gcl{1}\gcmu\gcmu\gcl{1}\gnl
\gbrbox\gvac{2}\gbr\gbrbox\gnl
\gcl{1}\gbr\gbr\gcl{1}\gnl
\gbrc\gbr\gbrc\gnl
\gcu{1}\gmu\gmu\gcu{1}\gnl
\gvac{1}\gob{2}{B}\gob{2}{A}
\gend
\hspace{2mm}.
\nonumber
\end{eqnarray}
\end{proposition}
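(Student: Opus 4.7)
The plan is to derive each of the eight identities (a)--(h) from the main bialgebra compatibility condition (\ref{eq:crossbialgcond}.a), together with its counit and unit versions (\ref{eq:crossbialgcond}.b--d), by substituting the units $\un{\eta}_A,\un{\eta}_B$ for some of the four inputs of (\ref{eq:crossbialgcond}.a), and/or post-composing with the counits $\un{\va}_A,\un{\va}_B$ at some of the four outputs. The simplifications will use the unital properties of $\psi$ given by (\ref{eq:crossprodalg}.c--d), the counital properties of $\phi$ given by (\ref{eq:crossprodcoalg}.c--d), and the preliminary identities (\ref{eq:comultunitcomp}) and (\ref{eq:multcounitcomp}) that control how units pass through the comultiplications and through $\phi$, and how counits pass through the multiplications and through $\psi$.

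Concretely, the eight identities will be obtained by the following capping recipes. For (a), feed $\un{\eta}_B$ at both $B$-inputs of (\ref{eq:crossbialgcond}.a) and apply $\un{\va}_B$ at the fourth output; (b) is dual, with the roles of $A$ and $B$ swapped. For (c), feed $\un{\eta}_B$ and $\un{\eta}_A$ at the middle two inputs and apply $\un{\va}_A, \un{\va}_B$ at the outer two outputs; the unitality of $\psi$ and counitality of $\phi$ then collapse the LHS of (\ref{eq:crossbialgcond}.a) to $\phi$ itself. For (g), do the dual: feed $\un{\eta}_A, \un{\eta}_B$ at the outer inputs and $\un{\va}_B, \un{\va}_A$ at the middle outputs, so that the LHS collapses to $\psi$. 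For (d), insert $\un{\eta}_A, \un{\eta}_B$ at the outer inputs with no output caps; for (h), apply $\un{\va}_A, \un{\va}_B$ at the outer outputs with no input caps. Finally, for (e) and (f), which are mutually dual, cap one input and two outputs: for (e), feed $\un{\eta}_B$ at the fourth input and $\un{\va}_B$ at the two $B$-outputs, while for (f) do the $A$-analogue. In every case the LHS of (\ref{eq:crossbialgcond}.a) simplifies, via the unit/counit axioms of $A$ and $B$ together with the cited preliminary identities, to the stated LHS of the identity in question; the RHS simplifies analogously to the stated RHS, using in addition the naturality of the braiding (\ref{eq:nat1cup})--(\ref{eq:nat2cup}) to bring the diagrams into the desired form.

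The main obstacle will be purely diagrammatic bookkeeping. Identities (d), (h), (e) and (f) are the most involved, because fewer slots are capped and the corresponding reduced diagrams still involve much of the original eight-strand structure on the RHS of (\ref{eq:crossbialgcond}.a); for (a), (b), (c) and (g) the simplifications are much more dramatic. However, every step of every simplification is either a cancellation coming from one of the listed preliminary identities, an application of a counit/unit axiom for $A$ or $B$, or an instance of the naturality of the braiding $c$, so there is no conceptual difficulty---only careful tracking of the strands.
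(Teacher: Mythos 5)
Your proposal is correct and takes essentially the same route as the paper: each identity is obtained by capping (\ref{eq:crossbialgcond}.a) with units at selected inputs and counits at selected outputs, and your capping recipes for (a)--(h) coincide with the ones the paper uses (the paper merely organizes them through the intermediate equations \equref{4.5}, \equref{4.6}, \equref{multcomultleftact} and \equref{4.8}). The simplifications you invoke --- (\ref{eq:crossprodalg}.c-d), (\ref{eq:crossprodcoalg}.c-d), \equref{comultunitcomp}, \equref{multcounitcomp}, $\un{\va}\un{\eta}=\Id_{\un{1}}$ and naturality of the braiding --- are exactly those used in the paper's proof.
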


\begin{proof}
Compose (\ref{eq:crossbialgcond}.a) to the right with $\Id_A\ot \un{\eta}_B\ot \Id_{A\ot B}$ and to the left 
with $\Id_{A\ot B\ot A}\ot \un{\va}_B$. Applying (\ref{eq:crossprodalg}.d), (\ref{eq:comultunitcomp}.b)
and (\ref{eq:multcounitcomp}.b), and the fact that 
$\un{\va}_B\un{\eta}_B=\Id_{\un{1}}$ we obtain 
\begin{equation}\eqlabel{4.5}
\gbeg{3}{5}
\got{1}{A}\got{1}{A}\got{1}{B}\gnl
\gmu\gcl{2}\gnl
\gcmu\gnl
\gcl{1}\gbrbox\gnl
\gob{1}{A}\gob{1}{B}\gob{1}{A}
\gend 
=
\gbeg{6}{7}
\got{2}{A}\gvac{1}\got{2}{A}\got{1}{B}\gnl
\gcmu\gu{1}\gcmu\gcl{1}\gnl
\gcl{1}\gbrbox\gvac{2}\gcl{1}\gbrbox\gnl
\gcl{1}\gcl{1}\gbr\gcl{1}\gcl{1}\gnl
\gcl{1}\gbrc\gbr\gcl{1}\gnl
\gmu\gmu\gmu\gnl
\gob{2}{A}\gob{2}{B}\gob{2}{A}
\gend\hspace{1mm}.
\end{equation}
We find (\ref{eq:neccconds}.a) after we compose \equref{4.5} 
to the right with $\Id_{A\ot A}\ot \un{\eta}_B$. Composing \equref{4.5} 
to the right with 
$\Id_A\ot \un{\eta}_A\ot \Id_B$ and to the left with $\un{\va}_A\ot \Id_{B\ot A}$,
and with the help of (\ref{eq:comultunitcomp}.a) and (\ref{eq:crossprodalg}.c),
we deduce (\ref{eq:neccconds}.c).\\
Now compose (\ref{eq:crossbialgcond}.a) to the right with
$\Id_{A\ot B}\ot \un{\eta}_A\ot \Id_B$ and to the left with 
$\un{\va}_A\ot \Id_{B\ot A\ot B}$. Combining the resulting equation with
(\ref{eq:crossprodalg}.c), we obtain that
\begin{equation}\eqlabel{4.6}
\gbeg{3}{5}
\got{1}{A}\got{1}{B}\got{1}{B}\gnl
\gcl{1}\gmu\gnl
\gcl{1}\gcmu\gnl
\gbrbox\gvac{2}\gcl{1}\gnl
\gob{1}{B}\gob{1}{A}\gob{1}{B}
\gend
=
\gbeg{6}{7}
\got{1}{A}\got{2}{B}\gvac{1}\got{2}{B}\gnl
\gcl{1}\gcmu\gu{1}\gcmu\gnl
\gbrbox\gvac{2}\gcl{1}\gbrbox\gvac{2}\gcl{3}\gnl
\gcl{1}\gcl{1}\gbr\gcl{1}\gnl
\gcl{1}\gbr\gbrc\gnl
\gmu\gmu\gmu\gnl
\gob{2}{B}\gob{2}{A}\gob{2}{B}
\gend
\hspace{2mm}.
\end{equation} 
After we compose \equref{4.6} to the right with $\un{\eta}_A\ot \Id_{B\ot B}$,
we obtain (\ref{eq:neccconds}.b).\\
Compose (\ref{eq:crossbialgcond}.a)
to the right with $\Id_{A\ot B\ot A}\ot \un{\eta}_B$ 
and to the left with $\Id_A\ot \un{\va}_B\ot \Id_{A\ot B}$. Then by 
(\ref{eq:crossprodcoalg}.d), we obtain that
\begin{equation}\eqlabel{multcomultleftact}
\gbeg{3}{5}
\got{1}{A}\got{1}{B}\got{1}{A}\gnl
\gcl{1}\gbrc\gnl
\gmu\gcl{1}\gnl
\gcmu\gcl{1}\gnl
\gob{1}{A}\gob{1}{A}\gob{1}{B}
\gend
=
\gbeg{6}{7}
\got{2}{A}\got{2}{B}\got{2}{A}\gnl
\gcmu\gcmu\gcmu\gnl
\gcl{1}\gbrbox\gvac{2}\gbr\gcl{1}\gnl
\gcl{1}\gcl{1}\gbr\gbrc\gnl
\gcl{1}\gbrc\gcl{1}\gcl{1}\gcl{1}\gnl
\gmu\gcu{1}\gmu\gcl{1}\gnl
\gob{2}{A}\gvac{1}\gob{2}{A}\gob{1}{B}
\gend\hspace{1mm}.
\end{equation}
Composing \equref{multcomultleftact} to the right with $\un{\eta}_A\ot \Id_{B\ot A}$ and to 
the left with $\Id_A\ot \un{\va}_A\ot \Id_B$, and using (\ref{eq:crossprodcoalg}.c) and
(\ref{eq:multcounitcomp}.a), we find (\ref{eq:neccconds}.g). Composing
\equref{multcomultleftact} to the left with $\Id_{A\ot A}\ot \un{\va}_B$, we find
(\ref{eq:neccconds}.e).\\
Now compose (\ref{eq:crossbialgcond}.a) to the left with $\Id_{A\ot B}\ot \un{\va}_A\ot \Id_B$ 
and to the right with $\un{\eta}_A\ot \Id_{B\ot A\ot B}$. This gives
\begin{equation}\eqlabel{4.8}
\gbeg{3}{5}
\got{1}{B}\got{1}{A}\got{1}{B}\gnl
\gbrc\gcl{1}\gnl
\gcl{1}\gmu\gnl
\gcl{1}\gcmu\gnl
\gob{1}{A}\gob{1}{B}\gob{1}{B}
\gend
=
\gbeg{6}{7}
\got{2}{B}\got{2}{A}\got{2}{B}\gnl
\gcmu\gcmu\gcmu\gnl
\gcl{1}\gbr\gbrbox\gvac{2}\gcl{1}\gnl
\gbrc\gbr\gcl{1}\gcl{1}\gnl
\gcl{1}\gmu\gbrc\gcl{1}\gnl
\gcl{1}\gcn{1}{1}{2}{2}\gvac{1}\gcu{1}\gmu\gnl
\gob{1}{A}\gob{2}{B}\gvac{1}\gob{2}{B}
\gend
\hspace{2mm}.
\end{equation}
Composing \equref{4.8} to the left with $\un{\va}_A\ot \Id_{B\ot B}$, 
we obtain (\ref{eq:neccconds}.f).\\
(\ref{eq:neccconds}.d) follows after we compose (\ref{eq:crossbialgcond}.a)
to the right with $\un{\eta}_A\ot \Id_{B\ot A}\ot \un{\eta}_B$, and then use
(\ref{eq:comultunitcomp}.a-b). Finally, (\ref{eq:neccconds}.h) follows after we compose 
(\ref{eq:crossbialgcond}.a) to the left with 
$\un{\va}_A\ot \Id_{B\ot A}\ot \un{\va}_B$, and then use (\ref{eq:multcounitcomp}.a-b).
\end{proof}

Observe that we could have skipped half of the proof: (\ref{eq:neccconds}.e-h) follow
from (\ref{eq:neccconds}.a-d) using duality arguments.\\
Applying \prref{4.1}, we find some more properties of bialgebra admissible
tuples. They deserve a separate formulation for two reasons: they appear also in
the Bespalov-Drabant list, and they play a key role in the formulation of
\thref{firstsetequivcond}.

\begin{corollary}\colabel{BDCond}
If $A\#_\psi^\phi B$ is a cross product bialgebra then the following equalities hold:
\begin{eqnarray}
&&\hspace{2mm}\mbox{\rm (a)}\hspace{2mm}
\gbeg{2}{4}
\got{1}{A}\got{1}{A}\gnl
\gmu\gnl
\gcmu\gnl
\gob{1}{A}\gob{1}{A}
\gend
=\gbeg{5}{7}
\got{2}{A}\gvac{1}\got{2}{A}\gnl
\gcmu\gu{1}\gcmu\gnl
\gcl{1}\gbrbox\gvac{2}\gcl{1}\gcl{1}\gnl
\gcl{1}\gcl{1}\gbr\gcl{1}\gnl
\gcl{1}\gbrc\gcl{1}\gcl{1}\gnl
\gmu\gcu{1}\gmu\gnl
\gob{2}{A}\gvac{1}\gob{2}{A}
\gend
\hspace{2mm},\hspace{2mm}\mbox{\rm (b)}\hspace{2mm}
\gbeg{2}{4}
\got{1}{B}\got{1}{B}\gnl
\gmu\gnl
\gcmu\gnl
\gob{1}{B}\gob{1}{B}
\gend
=
\gbeg{5}{7}
\got{2}{B}\gvac{1}\got{2}{B}\gnl
\gcmu\gu{1}\gcmu\gnl
\gcl{1}\gcl{1}\gbrbox\gvac{2}\gcl{1}\gnl
\gcl{1}\gbr\gcl{1}\gcl{1}\gnl
\gcl{1}\gcl{1}\gbrc\gcl{1}\gnl
\gmu\gcu{1}\gmu\gnl
\gob{2}{B}\gvac{1}\gob{2}{B}
\gend
\hspace{2mm},
\nonumber\\
&&
\mbox{\rm (algebra-coalgebra compatibility)}
\nonumber\\
&&
\hspace{2mm}\mbox{\rm (c)}\hspace{2mm}
\gbeg{3}{5}
\got{1}{A}\got{1}{A}\gnl
\gmu\gnl
\gcn{1}{1}{2}{3}\gvac{1}\gu{1}\gnl
\gvac{1}\gbrbox\gnl
\gvac{1}\gob{1}{B}\gob{1}{A}
\gend
=
\gbeg{5}{7}
\got{1}{A}\gvac{1}\got{2}{A}\gnl
\gcl{1}\gu{1}\gcmu\gnl
\gbrbox\gvac{2}\gcl{1}\gcl{1}\gu{1}\gnl
\gcl{1}\gbr\gbrbox\gnl
\gbrc\gbr\gcl{1}\gnl
\gcu{1}\gmu\gmu\gnl
\gvac{1}\gob{2}{B}\gob{2}{A}
\gend
\hspace{2mm},\hspace{2mm}\mbox{\rm (d)}\hspace{2mm}
\gbeg{3}{5}
\gvac{1}\got{1}{B}\got{1}{B}\gnl
\gvac{1}\gmu\gnl
\gu{1}\gcn{1}{1}{2}{1}\gnl
\gbrbox\gnl
\gob{1}{B}\gob{1}{A}
\gend
=
\gbeg{5}{7}
\gvac{1}\got{2}{B}\gvac{1}\got{1}{B}\gnl
\gu{1}\gcmu\gu{1}\gcl{1}\gnl
\gbrbox\gvac{2}\gcl{1}\gbrbox\gnl
\gcl{1}\gcl{1}\gbr\gcl{1}\gnl
\gcl{1}\gbr\gbrc\gnl
\gmu\gmu\gcu{1}\gnl
\gob{2}{B}\gob{2}{A}
\gend
\hspace{2mm},\eqlabel{BespDrabComp}\\
&&
\mbox{\rm (comodule-algebra compatibility)}
\nonumber\\
&&
\hspace{2mm}\mbox{\rm (e)}\hspace{2mm}
\gbeg{2}{5}
\got{1}{B}\got{1}{A}\gnl
\gbrc\gnl
\gcn{1}{1}{1}{2}\gcu{1}\gnl
\gcmu\gnl
\gob{1}{A}\gob{1}{A}
\gend
=
\gbeg{5}{7}
\gvac{1}\got{2}{B}\got{2}{A}\gnl
\gu{1}\gcmu\gcmu\gnl
\gbrbox\gvac{2}\gbr\gcl{1}\gnl
\gcl{1}\gbr\gbrc\gnl
\gbrc\gcl{1}\gcl{1}\gcu{1}\gnl
\gcl{1}\gcu{1}\gmu\gnl
\gob{1}{A}\gvac{1}\gob{2}{A}
\gend
\hspace{2mm}\mbox{\rm and}\hspace{2mm}\mbox{\rm (f)}\hspace{2mm}
\gbeg{3}{5}
\got{1}{B}\got{1}{A}\gnl
\gbrc\gnl
\gcu{1}\gcn{1}{1}{1}{2}\gnl
\gvac{1}\gcmu\gnl
\gvac{1}\gob{1}{B}\gob{1}{B}
\gend
=
\gbeg{5}{7}
\got{2}{B}\got{2}{A}\gnl
\gcmu\gcmu\gu{1}\gnl
\gcl{1}\gbr\gbrbox\gnl
\gbrc\gbr\gcl{1}\gnl
\gcu{1}\gcl{1}\gcl{1}\gbrc\gnl
\gvac{1}\gmu\gcu{1}\gcl{1}\gnl
\gvac{1}\gob{2}{B}\gvac{1}\gob{1}{B}
\gend
\hspace{2mm}.
\nonumber\\
&&
\mbox{\rm (module-coalgebra compatibility)}
\nonumber
\end{eqnarray}
\end{corollary}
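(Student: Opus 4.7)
My plan is to derive each of the six equalities as a specialization of one of the eight conditions in \prref{4.1}, obtained by pre- or post-composing with an appropriate unit or counit map. All simplifications rely only on the axioms (\ref{eq:crossprodalg}) and (\ref{eq:crossprodcoalg}) of a cross product algebra and coalgebra, together with the unit/counit identities (\ref{eq:comultunitcomp}) and (\ref{eq:multcounitcomp}) already derived in \seref{crossprodbialg}. Concretely, for the algebra-coalgebra compatibility (\ref{eq:BespDrabComp}.a) I would pre-compose (\ref{eq:neccconds}.e) with $\Id_A\ot\un{\eta}_B\ot\Id_A$. On the left, (\ref{eq:crossprodalg}.c) turns $\psi\circ(\un{\eta}_B\ot\Id_A)$ into $\Id_A\ot\un{\eta}_B$, and then the unit axiom of $\un{m}_A$ combined with $\un{\va}_B\un{\eta}_B=\Id_{\un{1}}$ collapses the composite to $\un{\Delta}_A\un{m}_A$. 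On the right, (\ref{eq:comultunitcomp}.b) splits $\un{\Delta}_B\un{\eta}_B$ as $\un{\eta}_B\ot\un{\eta}_B$, and the displayed diagram drops out. Equation (\ref{eq:BespDrabComp}.b) is the mirror image: pre-compose (\ref{eq:neccconds}.f) with $\Id_B\ot\un{\eta}_A\ot\Id_B$ and use (\ref{eq:comultunitcomp}.a) in place of (\ref{eq:comultunitcomp}.b).

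For the comodule-algebra compatibilities (\ref{eq:BespDrabComp}.c) and (\ref{eq:BespDrabComp}.d) I would post-compose (\ref{eq:neccconds}.a) with $\un{\va}_A\ot\Id_{B\ot A}$ and (\ref{eq:neccconds}.b) with $\Id_{B\ot A}\ot\un{\va}_B$, respectively. In the first case, the counit identity $(\un{\va}_A\ot\Id_A)\un{\Delta}_A=\Id_A$ converts the left-hand side of (\ref{eq:neccconds}.a) into $\phi\circ(\un{m}_A\ot\un{\eta}_B)$, which is exactly the left-hand side of (\ref{eq:BespDrabComp}.c); applying the same counit together with (\ref{eq:multcounitcomp}.a) on the right-hand side removes one of the $\un{\Delta}_A$-copies, leaving the claimed expression. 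Equation (\ref{eq:BespDrabComp}.d) is obtained symmetrically, using $(\Id_B\ot\un{\va}_B)\un{\Delta}_B=\Id_B$ and (\ref{eq:multcounitcomp}.b).

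Finally, the module-coalgebra compatibilities (\ref{eq:BespDrabComp}.e) and (\ref{eq:BespDrabComp}.f) arise from (\ref{eq:neccconds}.e) pre-composed with $\un{\eta}_A\ot\Id_{B\ot A}$, and (\ref{eq:neccconds}.f) pre-composed with $\Id_{B\ot A}\ot\un{\eta}_B$. For (\ref{eq:BespDrabComp}.e) the unit axiom of $\un{m}_A$ reduces the left-hand side to $\un{\Delta}_A\circ(\Id_A\ot\un{\va}_B)\circ\psi$, that is, $\un{\Delta}_A$ composed with the left $B$-action on $A$ from \leref{action}; on the right, (\ref{eq:comultunitcomp}.a) turns $\un{\Delta}_A\un{\eta}_A$ into $\un{\eta}_A\ot\un{\eta}_A$, and the residual $\psi$- and $\phi$-boxes line up with the claimed expression. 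The case (\ref{eq:BespDrabComp}.f) is dual. I do not foresee a substantial obstacle here: the proof is pure diagrammatic bookkeeping, and the only point requiring attention is the order in which the unit/counit cancellations are applied on the right-hand sides so that the remaining $\phi$- and $\psi$-boxes match up as in \prref{4.1}.
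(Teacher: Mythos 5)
Your proof is correct and takes essentially the same route as the paper: each of the six identities is obtained by specializing one of the eight conditions of \prref{4.1} with a unit or counit morphism and then simplifying via \equref{comultunitcomp}, \equref{multcounitcomp}, (\ref{eq:crossprodalg}.c,d), (\ref{eq:crossprodcoalg}.c,d), naturality of the braiding and $\un{\va}_X\un{\eta}_X=\Id_{\un{1}}$. The only (harmless) deviation is that for (a), (b), (e), (f) you pre-compose (\ref{eq:neccconds}.e,f) with units, whereas the paper post-composes (\ref{eq:neccconds}.a), (\ref{eq:neccconds}.b) and (\ref{eq:neccconds}.d) with counits; both specializations produce exactly the displayed diagrams, and your derivations of (c) and (d) coincide with the paper's.
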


\begin{proof}
(\ref{eq:BespDrabComp}.a) follows after we compose (\ref{eq:neccconds}.a) to the left with
$\Id_A\ot\un{\va}_B\ot \Id_A$, and (\ref{eq:BespDrabComp}.b) follows after we compose
(\ref{eq:neccconds}.b) to the left with $\Id_B\ot \un{\va}_A\ot \Id_B$.
In a similar way,
(\ref{eq:BespDrabComp}.c) follows after we compose (\ref{eq:neccconds}.a) to the left with
$\un{\va}_A\ot \Id_{B\ot A}$, and 
(\ref{eq:BespDrabComp}.d) follows after we compose (\ref{eq:neccconds}.b) to the left with
$\Id_{B\ot A}\ot \un{\va}_B$. Finally, (\ref{eq:BespDrabComp}.e) follows after we compose 
(\ref{eq:neccconds}.d) to the left with $\Id_A\ot \un{\va}_B\ot \Id_A\ot \un{\va}_B$,
and (\ref{eq:BespDrabComp}.f) follows after we compose 
(\ref{eq:neccconds}.d) to the left with $\un{\va}_A\ot \Id_B\ot \un{\va}_A\ot \Id_B$. Note that
in all these computations, we have to use freely the relations \equref{comultunitcomp} and
\equref{multcounitcomp}, and the fact that 
$\un{\va}_A\un{\eta}_A=\Id_{\un{1}}=\un{\va}_B\un{\eta}_B$. 
\end{proof}
 
\begin{proposition}\prlabel{4.5}
Let $(A,B,\psi,\phi)$ be a crossed product algebra-coalgebra datum.
Assume that  (\ref{eq:neccconds}.a-d)
or (\ref{eq:neccconds}.e-h) holds. Then (\ref{eq:crossbialgcond}.a)
holds, that is, the comultiplication on
$A\#_\psi^\phi B$ is multiplicative.   
\end{proposition}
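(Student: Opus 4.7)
The plan is to verify the multiplicativity condition (\ref{eq:crossbialgcond}.a) by a direct string-diagram computation from (\ref{eq:neccconds}.a-d). The alternative case where (\ref{eq:neccconds}.e-h) holds follows by the symmetric (``mirror'') argument: those four identities are obtained from (\ref{eq:neccconds}.a-d) by systematically interchanging multiplications with comultiplications and units with counits, and the whole derivation below dualizes.

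Expanding the LHS of (\ref{eq:crossbialgcond}.a) gives the composite
$(\mathrm{id}_A\ot\phi\ot \mathrm{id}_B)\circ(\Delta_A\ot\Delta_B)\circ(m_A\ot m_B)\circ(\mathrm{id}_A\ot\psi\ot \mathrm{id}_B)$,
which uses one copy of $\phi$ and one of $\psi$, whereas the RHS uses two copies of each. The crucial first move is to invoke (\ref{eq:neccconds}.d) on the sub-block $(\Delta_A\ot\Delta_B)\circ\psi$: this swaps the single $\psi$ for two copies placed lower and introduces two new $\phi$'s higher up, together with appropriate braidings, leaving each input $A$- and $B$-factor already partially comultiplied. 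In the resulting picture one can recognize blocks of the form $(\mathrm{id}_A\ot\phi)\circ(\Delta_A\ot \un{\eta}_B)\circ m_A$ and $(\phi\ot \mathrm{id}_B)\circ(\un{\eta}_A\ot\Delta_B)\circ m_B$, to which (\ref{eq:neccconds}.a) and (\ref{eq:neccconds}.b) respectively apply. These further duplicate the remaining coproducts so that each of the four inputs $a,b,a',b'$ is comultiplied in its own right. Any residual bare $\phi$ is then unfolded via (\ref{eq:neccconds}.c) into the combination of a $\phi(-,\un{\eta}_B)$ piece and a $\phi(\un{\eta}_A,-)$ piece, which is the form appearing in the RHS.

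The remaining work is to reorganize the diagram using the naturality of the braiding $c$ (cf. \equref{nat1cup} and \equref{nat2cup}), the coassociativity of $\Delta_A,\Delta_B$, the associativity of $m_A,m_B$, the unit and counit axioms, together with the cross product axioms (\ref{eq:crossprodalg}) and (\ref{eq:crossprodcoalg}) satisfied by $\psi$ and $\phi$, until the expression coincides with the RHS of (\ref{eq:crossbialgcond}.a).

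The principal obstacle is purely combinatorial: each intermediate diagram carries many strands, and the four identities must be invoked at precisely the right positions in precisely the right order, so that the ancillary units produced by (\ref{eq:neccconds}.a-c) eventually cancel against counits or fuse with braidings already present. No individual step is conceptually deep, but orchestrating the entire chain of substitutions — in particular verifying that after each move the sub-diagram next targeted still has the exact shape required to apply the next identity — is what makes the verification lengthy and delicate.
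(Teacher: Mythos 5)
Your toolkit is the right one (the identities (\ref{eq:neccconds}.a--d), naturality of the braiding, (co)associativity, the cross product axioms \equref{crossprodalg} and \equref{crossprodcoalg}, and duality for the case (\ref{eq:neccconds}.e--h)), but the plan breaks down at its announced crucial first step. The left-hand side of (\ref{eq:crossbialgcond}.a) is $(\Id_A\ot\phi\ot\Id_B)(\un{\Delta}_A\ot\un{\Delta}_B)(\un{m}_A\ot\un{m}_B)(\Id_A\ot\psi\ot\Id_B)$: the single $\psi$ is separated from the comultiplications by $\un{m}_A\ot\un{m}_B$, and the two outer strands also feed into those multiplications. Hence the composite contains no sub-diagram of the form $(\Id_A\ot\phi\ot\Id_B)(\un{\Delta}_A\ot\un{\Delta}_B)\psi$, which is exactly the left-hand side of (\ref{eq:neccconds}.d); that relation is just the restriction of multiplicativity to products of the type $(1\# b)(a\#1)$, and the whole content of the proposition is to recover the general case from such restricted data. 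So (\ref{eq:neccconds}.d) cannot be invoked first. The workable order is essentially the reverse of yours: one first expands the single $\phi$ by (\ref{eq:neccconds}.c), which creates the blocks $(\Id_A\ot\phi)(\un{\Delta}_A\ot\un{\eta}_B)\un{m}_A$ and $(\phi\ot\Id_B)(\un{\eta}_A\ot\un{\Delta}_B)\un{m}_B$, i.e.\ precisely the left-hand sides of (\ref{eq:neccconds}.a) and (\ref{eq:neccconds}.b); it is these two relations that carry the comultiplications past the multiplications. Only after applying (a), (b), naturality \equref{nat1cup}, and (c) again in the contracting direction does a configuration $(\Id_A\ot\phi\ot\Id_B)(\un{\Delta}_A\ot\un{\Delta}_B)\psi$ appear, and that is the point where (\ref{eq:neccconds}.d) enters; the computation is then finished with associativity, the relations \equref{crossprodalg}, more naturality, and a final use of (\ref{eq:neccconds}.c) to re-assemble full copies of $\phi$. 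Note also that your remark that the unit-restricted pieces $\phi(\cdot\ot\un{\eta}_B)$, $\phi(\un{\eta}_A\ot\cdot)$ ``are the form appearing in the RHS'' is backwards: the right-hand side of (\ref{eq:crossbialgcond}.a) contains full $\phi$'s, so (\ref{eq:neccconds}.c) is used there to contract, not to unfold.

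Since you yourself locate the entire difficulty in checking that each targeted sub-diagram has exactly the shape required by the next identity, and your very first target does not have the required shape, this is a genuine gap rather than a cosmetic one: the proposed chain of substitutions cannot start, and finding the correct order in which (c), (a), (b) and only then (d) are applied is essentially the content of the proof. The dualization remark for (\ref{eq:neccconds}.e--h) is fine and agrees with the paper.
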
 

\begin{proof}
We only prove the first assertion; the proof of the second one is similar, and can also be
obtained by duality arguments. The first assertion follows from
the following computation
\begin{eqnarray*}
&&\hspace*{-5mm}
\gbeg{4}{6}
\got{1}{A}\got{1}{B}\got{1}{A}\got{1}{B}\gnl
\gcl{1}\gbrc\gcl{1}\gnl
\gmu\gmu\gnl
\gcmu\gcmu\gnl
\gcl{1}\gbrbox\gvac{2}\gcl{1}\gnl
\gob{1}{A}\gob{1}{B}\gob{1}{A}\gob{1}{B} 
\gend
\equal{(\ref{eq:neccconds}.c)}
\gbeg{6}{9}
\got{1}{A}\got{1}{B}\got{1}{A}\got{1}{B}\gnl
\gcl{1}\gbrc\gcl{1}\gnl
\gmu\gmu\gnl
\gcmu\gvac{1}\gcn{1}{1}{0}{4}\gnl
\gcl{1}\gcl{1}\gu{1}\gu{1}\gcmu\gnl
\gcl{1}\gbrbox\gvac{2}\gbr\gvac{-1}\gnot{\hspace*{-4mm}\Box}\gvac{1}\gcl{1}\gnl
\gcl{1}\gcl{1}\gbr\gcl{1}\gcl{1}\gnl
\gcl{1}\gmu\gmu\gcl{1}\gnl
\gob{1}{A}\gob{2}{B}\gob{2}{A}\gob{1}{B}
\gend
\equal{(\ref{eq:neccconds}.a)}
\gbeg{9}{12}
\got{2}{A}\gvac{1}\got{1}{B}\got{1}{A}\gvac{3}\got{1}{B}\gnl
\gcmu\gvac{1}\gbrc\gvac{3}\gcl{1}\gnl
\gcl{1}\gcl{1}\gvac{1}\gcn{1}{1}{1}{2}\gcn{1}{1}{1}{7}\gvac{3}\gcl{1}\gnl
\gcl{1}\gcl{1}\gu{1}\gcmu\gu{1}\gvac{1}\gmu\gnl
\gcl{1}\gbrbox\gvac{2}\gcl{1}\gbrbox\gvac{2}\gu{1}\gcmu\gnl
\gcl{1}\gcl{1}\gbr\gcl{1}\gcl{1}\gbrbox\gvac{2}\gcl{1}\gnl
\gcl{1}\gbrc\gbr\gcl{1}\gcl{1}\gcl{1}\gcl{1}\gnl
\gmu\gmu\gmu\gcl{1}\gcl{1}\gcl{1}\gnl
\gcn{1}{3}{2}{2}\gvac{1}\gcn{1}{2}{2}{5}\gvac{1}\gcn{1}{1}{2}{3}\gvac{1}\gcl{1}\gcl{1}\gcl{1}\gnl
\gvac{5}\gbr\gcl{1}\gcl{1}\gnl
\gvac{4}\gmu\gmu\gcl{1}\gnl
\gob{2}{A}\gvac{2}\gob{2}{B}\gob{2}{A}\gob{1}{B}
\gend\\
&&\hspace*{-2mm}
\equal{\equref{nat1cup}}
\gbeg{9}{12}
\got{2}{A}\gvac{2}\got{1}{B}\got{1}{A}\gvac{2}\got{1}{B}\gnl
\gcmu\gvac{2}\gbrc\gvac{2}\gcl{1}\gnl
\gcl{1}\gcl{1}\gu{1}\gvac{1}\gcn{1}{1}{1}{0}\gcn{1}{1}{1}{5}\gvac{2}\gcl{1}\gnl
\gcl{1}\gbrbox\gvac{2}\gcmu\gu{1}\gvac{1}\gmu\gnl
\gcl{1}\gcl{1}\gbr\gbrbox\gvac{2}\gu{1}\gcmu\gnl
\gcl{1}\gbrc\gbr\gcl{1}\gbrbox\gvac{2}\gcl{1}\gnl
\gcl{1}\gcl{1}\gcl{1}\gcl{1}\gcl{1}\gbr\gcl{1}\gcl{1}\gnl
\gmu\gcl{1}\gcl{1}\gbr\gcl{1}\gcl{1}\gcl{1}\gnl
\gcn{1}{3}{2}{2}\gvac{1}\gcl{1}\gmu\gmu\gcl{1}\gcl{1}\gnl
\gvac{2}\gcl{1}\gcn{1}{1}{2}{1}\gvac{1}\gcn{1}{1}{2}{3}\gvac{1}\gcl{1}\gcl{1}\gnl
\gvac{2}\gmu\gvac{2}\gmu\gcl{1}\gnl
\gob{2}{A}\gob{2}{B}\gvac{2}\gob{2}{A}\gob{1}{B}
\gend
\equal{\equref{nat1cup}}
\gbeg{9}{12}
\got{2}{A}\gvac{2}\got{1}{B}\got{1}{A}\gvac{2}\got{1}{B}\gnl
\gcmu\gvac{2}\gbrc\gvac{2}\gcl{1}\gnl
\gcl{1}\gcl{1}\gu{1}\gvac{1}\gcn{1}{1}{1}{0}\gcn{1}{1}{1}{5}\gvac{2}\gcl{1}\gnl
\gcl{1}\gbrbox\gvac{2}\gcmu\gu{1}\gvac{1}\gmu\gnl
\gcl{1}\gcl{1}\gbr\gbrbox\gvac{2}\gu{1}\gcmu\gnl
\gcl{1}\gbrc\gcl{1}\gcl{1}\gcl{1}\gbrbox\gvac{2}\gcl{1}\gnl
\gmu\gcl{1}\gcl{1}\gcl{1}\gbr\gcl{1}\gcl{1}\gnl
\gcn{1}{4}{2}{2}\gvac{1}\gcl{1}\gcl{1}\gmu\gmu\gcl{1}\gnl
\gvac{2}\gcl{1}\gcl{1}\gcn{1}{1}{2}{1}\gvac{1}\gcn{1}{2}{2}{-1}\gvac{1}\gcl{2}\gnl
\gvac{2}\gcl{1}\gbr\gnl
\gvac{2}\gmu\gmu\gvac{2}\gcl{1}\gnl
\gob{2}{A}\gob{2}{B}\gob{2}{A}\gvac{2}\gob{1}{B}
\gend\\
&&\hspace*{-2mm}
\equal{(\ref{eq:neccconds}.b)}
\gbeg{12}{14}
\got{2}{A}\gvac{2}\got{1}{B}\got{1}{A}\gvac{4}\got{2}{B}\gnl
\gcmu\gvac{2}\gbrc\gvac{4}\gcmu\gnl
\gcl{1}\gcl{1}\gu{1}\gvac{1}\gcn{1}{1}{1}{0}\gcn{1}{1}{1}{6}\gvac{3}\gu{1}\gcl{1}\gcl{1}\gnl
\gcl{1}\gbrbox\gvac{2}\gcmu\gu{1}\gu{1}\gcmu\gbrbox\gvac{2}\gcl{1}\gnl
\gcl{1}\gcl{1}\gbr\gbrbox\gvac{2}\gbr\gvac{-1}\gnot{\hspace*{-4mm}\Box}\gvac{1}\gbr\gcl{1}\gcl{1}\gnl
\gcl{1}\gbrc\gcl{1}\gcl{1}\gcl{1}\gcl{1}\gbr\gbrc\gcl{1}\gnl
\gmu\gcl{1}\gcl{1}\gcl{1}\gcl{1}\gmu\gmu\gmu\gnl
\gcn{1}{6}{2}{2}\gvac{1}\gcl{1}\gcl{1}\gcl{1}\gcl{1}\gcn{1}{1}{2}{1}\gcn{1}{2}{4}{1}\gcn{1}{6}{6}{6}\gnl
\gvac{2}\gcl{1}\gcl{1}\gcl{1}\gbr\gnl
\gvac{2}\gcl{1}\gcl{1}\gmu\gmu\gnl
\gvac{2}\gcl{1}\gcl{1}\gcn{1}{1}{2}{1}\gvac{1}\gcn{1}{2}{2}{-1}\gnl
\gvac{2}\gcl{1}\gbr\gnl
\gvac{2}\gmu\gmu\gnl
\gob{2}{A}\gob{2}{B}\gob{2}{A}\gvac{4}\gob{2}{B}
\gend
\equal{\equref{nat1cup}}
\gbeg{12}{14}
\got{2}{A}\gvac{2}\got{1}{B}\got{1}{A}\gvac{4}\got{2}{B}\gnl
\gcmu\gvac{2}\gbrc\gvac{4}\gcmu\gnl
\gcl{1}\gcl{1}\gu{1}\gvac{1}\gcn{1}{1}{1}{0}\gcn{1}{1}{1}{6}\gvac{3}\gu{1}\gcl{1}\gcl{1}\gnl
\gcl{1}\gbrbox\gvac{2}\gcmu\gu{1}\gu{1}\gcmu\gbrbox\gvac{2}\gcl{1}\gnl
\gcl{1}\gcl{1}\gbr\gbrbox\gvac{2}\gbr\gvac{-1}\gnot{\hspace*{-4mm}\Box}\gvac{1}\gbr\gcl{1}\gcl{1}\gnl
\gcl{1}\gbrc\gcl{1}\gcl{1}\gbr\gbr\gbrc\gcl{1}\gnl
\gmu\gcl{1}\gcl{1}\gcl{1}\gcl{1}\gbr\gcl{1}\gcl{1}\gmu\gnl
\gcn{1}{6}{2}{2}\gvac{1}\gcl{1}\gcl{1}\gmu\gcl{1}\gmu\gcl{1}\gcn{1}{6}{2}{2}\gnl
\gvac{2}\gcl{1}\gcl{1}\gcn{1}{1}{2}{3}\gvac{1}\gcl{1}\gcn{1}{1}{2}{3}\gvac{1}\gcl{1}\gnl
\gvac{2}\gcl{1}\gcl{1}\gvac{1}\gmu\gvac{1}\gmu\gnl
\gvac{2}\gcl{1}\gcl{1}\gcn{1}{1}{4}{1}\gvac{3}\gcn{1}{2}{2}{-5}\gnl
\gvac{2}\gcl{1}\gbr\gnl
\gvac{2}\gmu\gmu\gnl
\gob{2}{A}\gob{2}{B}\gob{2}{A}\gvac{4}\gob{2}{B}
\gend\\
&&\hspace*{-2mm}
\equalupdown{\equref{nat1cup}}{(\ref{eq:neccconds}.c)}
\gbeg{10}{11}
\got{2}{A}\gvac{2}\got{1}{B}\got{1}{A}\gvac{2}\got{2}{B}\gnl
\gcmu\gu{1}\gvac{1}\gbrc\gvac{2}\gcmu\gnl
\gcl{1}\gbrbox\gvac{3}\gcn{1}{1}{1}{0}\gcn{1}{1}{1}{2}\gvac{1}\gu{1}\gcl{1}\gcl{1}\gnl
\gcl{1}\gcl{1}\gcl{1}\gcmu\gcmu\gbrbox\gvac{2}\gcl{1}\gnl
\gcl{1}\gcl{1}\gbr\gbrbox\gvac{2}\gbr\gcl{1}\gcl{1}\gnl
\gcl{1}\gbrc\gcl{1}\gcl{1}\gbr\gbrc\gcl{1}\gnl
\gmu\gcl{1}\gcl{1}\gmu\gmu\gmu\gnl
\gcn{1}{3}{2}{2}\gvac{1}\gcl{1}\gcl{1}\gcn{1}{1}{2}{1}\gvac{1}\gcn{1}{2}{2}{-1}\gcn{1}{3}{4}{4}\gnl
\gvac{2}\gcl{1}\gbr\gnl
\gvac{2}\gmu\gmu\gnl
\gob{2}{A}\gob{2}{B}\gob{2}{A}\gvac{2}\gob{2}{B}
\gend
\equalupdown{(\ref{eq:neccconds}.d)}{\equref{nat1cup}}
\gbeg{12}{15}
\got{2}{A}\gvac{2}\got{2}{B}\got{2}{A}\gvac{2}\got{2}{B}\gnl
\gcmu\gu{1}\gu{1}\gcmu\gcmu\gu{1}\gu{1}\gcmu\gnl
\gcl{1}\gbrbox\gvac{2}\gbr\gvac{-1}\gnot{\hspace*{-4mm}\Box}\gvac{1}\gbr\gbrbox
\gvac{2}\gbr\gvac{-1}\gnot{\hspace*{-4mm}\Box}\gvac{1}\gcl{1}\gnl
\gcl{1}\gcl{1}\gcl{1}\gcl{1}\gbr\gbr\gcl{1}\gcl{1}\gcl{1}\gcl{1}\gnl
\gcl{1}\gcl{1}\gcl{1}\gbrc\gbr\gbrc\gcl{1}\gcl{1}\gcl{1}\gnl
\gcl{1}\gcl{1}\gbr\gmu\gmu\gbr\gcl{1}\gcl{1}\gnl
\gcl{1}\gbrc\gcl{1}\gcn{1}{1}{2}{1}\gvac{1}\gcn{1}{1}{2}{3}\gvac{1}\gcl{1}\gbrc\gcl{1}\gnl
\gmu\gcl{1}\gbr\gvac{2}\gbr\gcl{1}\gmu\gnl
\gcn{1}{6}{2}{2}\gvac{1}\gcl{1}\gcl{1}\gcn{1}{1}{1}{3}\gvac{2}\gcn{1}{1}{1}{-1}\gmu\gcn{1}{6}{2}{2}\gnl
\gvac{2}\gcl{1}\gcl{1}\gvac{1}\gbr\gvac{1}\gcn{1}{1}{2}{-1}\gnl
\gvac{2}\gcl{1}\gcl{1}\gvac{1}\gcn{1}{1}{1}{-1}\gmu\gnl
\gvac{2}\gcl{1}\gmu\gvac{1}\gcn{1}{3}{2}{2}\gnl
\gvac{2}\gcl{1}\gcn{1}{1}{2}{1}\gnl
\gvac{2}\gmu\gnl
\gob{2}{A}\gob{2}{B}\gvac{2}\gob{2}{A}\gvac{2}\gob{2}{B}
\gend\\
&&
\equal{\equref{nat1cup}}
\gbeg{12}{14}
\got{2}{A}\gvac{2}\got{2}{B}\got{2}{A}\gvac{2}\got{2}{B}\gnl
\gcmu\gu{1}\gu{1}\gcmu\gcmu\gu{1}\gu{1}\gcmu\gnl
\gcl{1}\gbrbox\gvac{2}\gbr\gvac{-1}\gnot{\hspace*{-4mm}\Box}\gvac{1}\gbr\gbrbox\gvac{2}
\gbr\gvac{-1}\gnot{\hspace*{-4mm}\Box}\gvac{1}\gcl{1}\gnl
\gcl{1}\gcl{1}\gbr\gbr\gbr\gcl{1}\gcl{1}\gcl{1}\gcl{1}\gnl
\gcl{1}\gcl{1}\gcl{1}\gbr\gbr\gbrc\gcl{1}\gcl{1}\gcl{1}\gnl
\gcl{1}\gcl{1}\gbrc\gbr\gmu\gbr\gcl{1}\gcl{1}\gnl
\gcl{1}\gbrc\gmu\gcl{1}\gcn{1}{1}{2}{3}\gvac{1}\gcl{1}\gbrc\gcl{1}\gnl
\gmu\gcl{1}\gcn{1}{3}{2}{5}\gvac{1}\gcl{1}\gvac{1}\gbr\gcl{1}\gcl{1}\gcl{1}\gnl
\gcn{1}{5}{2}{2}\gvac{1}\gcl{1}\gvac{2}\gcn{1}{1}{1}{3}\gvac{1}\gcl{1}\gmu\gmu\gnl
\gvac{2}\gcl{1}\gvac{3}\gbr\gvac{1}\gcn{1}{1}{0}{-1}\gcn{1}{4}{2}{2}\gnl
\gvac{2}\gcn{1}{2}{1}{5}\gvac{2}\gmu\gmu\gnl
\gvac{5}\gcn{1}{1}{2}{1}\gvac{1}\gcn{1}{2}{2}{2}\gnl
\gvac{4}\gmu\gnl
\gob{2}{A}\gvac{2}\gob{2}{B}\gvac{1}\gob{2}{A}\gvac{1}\gob{2}{A}
\gend
\equalupdown{\equref{crossprodalg}}{\equref{nat1cup}}
\gbeg{12}{13}
\got{2}{A}\gvac{2}\got{2}{B}\got{2}{A}\gvac{2}\got{2}{B}\gnl
\gcmu\gu{1}\gu{1}\gcmu\gcmu\gu{1}\gu{1}\gcmu\gnl
\gcl{1}\gbrbox\gvac{2}\gbr\gvac{-1}\gnot{\hspace*{-4mm}\Box}\gvac{1}\gbr\gbrbox\gvac{2}
\gbr\gvac{-1}\gnot{\hspace*{-4mm}\Box}\gvac{1}\gcl{1}\gnl
\gcl{1}\gcl{1}\gbr\gbr\gbr\gcl{1}\gcl{1}\gcl{1}\gcl{1}\gnl
\gcl{1}\gmu\gbr\gbr\gbrc\gcl{1}\gcl{1}\gcl{1}\gnl
\gcl{1}\gcn{1}{1}{2}{3}\gvac{1}\gcl{1}\gcl{1}\gcl{1}\gmu\gbr\gcl{1}\gcl{1}\gnl
\gcn{1}{1}{1}{3}\gvac{1}\gbrc\gcl{1}\gcl{1}\gcn{1}{1}{2}{3}\gvac{1}\gcl{1}\gbrc\gcl{1}\gnl
\gvac{1}\gmu\gcl{1}\gcl{1}\gcn{1}{1}{1}{3}\gvac{1}\gbr\gcl{1}\gcl{1}\gcl{1}\gnl
\gvac{1}\gcn{1}{4}{2}{2}\gvac{1}\gcl{1}\gcl{1}\gvac{1}\gmu\gmu\gmu\gnl
\gvac{3}\gcl{1}\gcn{1}{1}{1}{3}\gvac{1}\gcn{1}{1}{2}{1}\gvac{1}\gcn{1}{2}{2}{-1}\gvac{1}\gcn{1}{3}{2}{2}\gnl
\gvac{3}\gcn{1}{1}{1}{3}\gvac{1}\gbr\gnl
\gvac{4}\gmu\gmu\gnl
\gvac{1}\gob{2}{A}\gvac{1}\gob{2}{B}\gob{2}{A}\gvac{2}\gob{2}{B}
\gend\\
&&
\equal{\equref{nat1cup}}
\gbeg{12}{13}
\got{2}{A}\gvac{2}\got{2}{B}\got{2}{A}\gvac{2}\got{2}{B}\gnl
\gcmu\gu{1}\gu{1}\gcmu\gcmu\gu{1}\gu{1}\gcmu\gnl
\gcl{1}\gbrbox\gvac{2}\gbr\gvac{-1}\gnot{\hspace*{-4mm}\Box}\gvac{1}\gbr\gbrbox\gvac{2}
\gbr\gvac{-1}\gnot{\hspace*{-4mm}\Box}\gvac{1}\gcl{1}\gnl
\gcl{1}\gcl{1}\gbr\gbr\gbr\gbr\gcl{1}\gcl{1}\gnl
\gcl{1}\gmu\gbr\gbr\gbr\gcl{1}\gcl{1}\gcl{1}\gnl
\gcl{1}\gcn{1}{1}{2}{3}\gvac{1}\gcl{1}\gcl{1}\gcl{1}\gcl{1}\gcl{1}\gbrc\gcl{1}\gcl{1}\gnl
\gcn{1}{1}{1}{3}\gvac{1}\gbrc\gcl{1}\gcl{1}\gbr\gcl{1}\gbrc\gcl{1}\gnl
\gvac{1}\gmu\gcl{1}\gcl{1}\gmu\gcl{1}\gmu\gmu\gnl
\gvac{1}\gcn{1}{4}{2}{2}\gvac{1}\gcl{1}\gcl{1}\gcn{1}{1}{2}{1}\gvac{1}\gcl{1}\gcn{1}{1}{2}{1}\gcn{1}{4}{4}{4}\gnl
\gvac{3}\gcl{1}\gbr\gvac{1}\gmu\gnl
\gvac{3}\gmu\gcn{1}{1}{1}{3}\gvac{1}\gcn{1}{1}{2}{1}\gnl
\gvac{3}\gcn{1}{1}{2}{2}\gvac{2}\gmu\gnl
\gvac{1}\gob{2}{A}\gob{2}{B}\gvac{1}\gob{2}{A}\gvac{2}\gob{2}{B}
\gend
\equalupdown{\equref{crossprodalg}}{\equref{nat1cup}\times 2}
\gbeg{12}{12}
\got{2}{A}\gvac{2}\got{2}{B}\got{2}{A}\gvac{2}\got{2}{B}\gnl
\gcmu\gu{1}\gu{1}\gcmu\gcmu\gu{1}\gu{1}\gcmu\gnl
\gcl{1}\gbrbox\gvac{2}\gbr\gvac{-1}\gnot{\hspace*{-4mm}\Box}\gvac{1}\gbr\gbrbox\gvac{2}
\gbr\gvac{-1}\gnot{\hspace*{-4mm}\Box}\gvac{1}\gcl{1}\gnl
\gcl{1}\gcl{1}\gbr\gbr\gcl{1}\gcl{1}\gbr\gcl{1}\gcl{1}\gnl
\gcl{1}\gmu\gbr\gcl{1}\gcl{1}\gmu\gmu\gcl{1}\gnl
\gcl{1}\gcn{1}{1}{2}{3}\gvac{1}\gcl{1}\gcl{1}\gcl{1}\gcl{1}\gcn{1}{1}{2}{1}\gvac{1}\gcn{1}{2}{2}{-1}\gvac{1}\gcl{1}\gnl
\gcn{1}{1}{1}{3}\gvac{1}\gbrc\gcl{1}\gcl{1}\gbr\gvac{3}\gcn{1}{2}{1}{-3}\gnl
\gvac{1}\gmu\gcl{1}\gcl{1}\gbr\gbrc\gnl
\gvac{1}\gcn{1}{3}{2}{2}\gvac{1}\gcl{1}\gbr\gmu\gmu\gnl
\gvac{3}\gmu\gcl{1}\gcn{1}{1}{2}{1}\gvac{1}\gcn{1}{2}{2}{2}\gnl
\gvac{3}\gcn{1}{1}{2}{2}\gvac{1}\gmu\gnl
\gvac{1}\gob{2}{A}\gob{2}{B}\gob{2}{A}\gvac{1}\gob{2}{B}
\gend\\
&&
\equalupdown{\equref{nat1cup}}{(\ref{eq:neccconds}.c)}
\gbeg{10}{10}
\got{2}{A}\gvac{2}\got{2}{B}\got{2}{A}\got{2}{B}\gnl
\gcmu\gu{1}\gu{1}\gcmu\gcmu\gcmu\gnl
\gcl{1}\gbrbox\gvac{2}\gbr\gvac{-1}\gnot{\hspace*{-4mm}\Box}\gvac{1}\gbr\gbrbox\gvac{2}\gcl{1}\gnl
\gcl{1}\gcl{1}\gbr\gbr\gbr\gcl{1}\gcl{1}\gnl
\gcl{1}\gmu\gbr\gcl{1}\gcl{1}\gbrc\gcl{1}\gnl
\gcl{1}\gcn{1}{1}{2}{3}\gvac{1}\gcl{1}\gmu\gcl{1}\gcl{1}\gmu\gnl
\gcn{1}{1}{1}{3}\gvac{1}\gbrc\gcn{1}{1}{2}{3}\gvac{1}\gcl{1}\gcl{1}\gcn{1}{3}{2}{2}\gnl
\gvac{1}\gmu\gvac{1}\gcn{1}{1}{-1}{1}\gbr\gcl{1}\gnl
\gvac{1}\gcn{1}{1}{2}{2}\gvac{2}\gmu\gmu\gnl
\gvac{1}\gob{2}{A}\gvac{1}\gob{2}{B}\gob{2}{A}\gvac{1}\gob{1}{B}
\gend
\equalupdown{\equref{nat1cup}}{(\ref{eq:neccconds}.c)}
\gbeg{8}{9}
\got{2}{A}\got{2}{B}\got{2}{A}\got{2}{B}\gnl
\gcmu\gcmu\gcmu\gcmu\gnl
\gcl{1}\gbrbox\gvac{2}\gcl{1}\gcl{1}\gbrbox\gvac{2}\gcl{1}\gnl
\gcl{1}\gcl{1}\gcl{1}\gbr\gcl{1}\gcl{1}\gcl{1}\gnl
\gcl{1}\gcl{1}\gbr\gbr\gcl{1}\gcl{1}\gnl
\gcl{1}\gcl{1}\gcl{1}\gbr\gcl{1}\gcl{1}\gcl{1}\gnl
\gcl{1}\gbrc\gcl{1}\gcl{1}\gbrc\gcl{1}\gnl
\gmu\gmu\gmu\gmu\gnl
\gob{2}{A}\gob{2}{B}\gob{2}{A}\gob{2}{B}
\gend 
\hspace{2mm},
\end{eqnarray*}
as desired. At some steps, we used the associativity of 
$\un{m}_A$ and $\un{m}_B$, and the naturality of 
the braiding.
\end{proof}

\begin{corollary}\colabel{firstequivcond}
Let $(A,B,\psi,\phi)$ be a cross product algebra-coalgebra datum.
 Then the following assertions are equivalent:
\begin{itemize}
\item[(i)] $(A,B,\psi,\phi)$ is a bialgebra admissible tuple, that is,
$A\#_\psi^\phi B$ is a cross product bialgebra;
\item[(ii)] $\un{\va}_X\un{\eta}_X=\Id_{\un{1}}$ for $X\in \{A, B\}$, 
and \equref{comultunitcomp}, \equref{multcounitcomp} 
and (\ref{eq:neccconds}.a-d) hold;
\item[(iii)] $\un{\va}_X\un{\eta}_X=\Id_{\un{1}}$ for $X\in \{A, B\}$, 
and \equref{comultunitcomp}, \equref{multcounitcomp} 
and (\ref{eq:neccconds}.e-h) hold.
\end{itemize}
\end{corollary}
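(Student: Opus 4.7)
The plan is to exploit the groundwork already laid in Propositions~\ref{pr:4.1} and~\ref{pr:4.5}, so that the corollary becomes largely a bookkeeping exercise about which of the four bialgebra compatibility equations \equref{crossbialgcond}(a)-(d) each hypothesis encodes.

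First I would handle the implications (i)$\Rightarrow$(ii) and (i)$\Rightarrow$(iii). The conditions (\ref{eq:neccconds}.a-d), respectively (\ref{eq:neccconds}.e-h), are immediate from \prref{4.1}. For the remaining pieces, the discussion preceding \prref{4.1} already observes that, modulo the identity $\un{\va}_X\un{\eta}_X = \Id_{\un{1}}$ for $X\in\{A,B\}$ (which is (\ref{eq:crossbialgcond}.d)), the axiom (\ref{eq:crossbialgcond}.b) is equivalent to \equref{comultunitcomp}, and the axiom (\ref{eq:crossbialgcond}.c) is equivalent to \equref{multcounitcomp}. Finally, (\ref{eq:crossbialgcond}.d) itself is equivalent to the two scalar identities $\un{\va}_A\un{\eta}_A = \Id_{\un{1}} = \un{\va}_B\un{\eta}_B$, since each composition is an invertible idempotent of $\mathrm{End}_{\Cc}(\un{1})$. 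So (i) forces every ingredient of (ii) and (iii).

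For the converses (ii)$\Rightarrow$(i) and (iii)$\Rightarrow$(i), I would first observe that by the same equivalences used above, the unit/counit hypotheses of (ii) or (iii) deliver directly the three axioms (\ref{eq:crossbialgcond}.b), (\ref{eq:crossbialgcond}.c), (\ref{eq:crossbialgcond}.d) for the bialgebra structure on $A\#_\psi^\phi B$. What remains to be verified is the multiplicativity of the comultiplication, i.e.\ (\ref{eq:crossbialgcond}.a). But this is exactly the content of \prref{4.5}: either of the two sets (\ref{eq:neccconds}.a-d) or (\ref{eq:neccconds}.e-h) suffices. Thus every axiom of a cross product bialgebra is in hand, which is (i).

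The potentially worrying step is ensuring that the proof of \prref{4.5} actually used \emph{only} the data guaranteed by (ii) (respectively (iii)) and the cross product algebra-coalgebra datum hypothesis, with no tacit appeal to (\ref{eq:crossbialgcond}.a-d). Inspecting its proof confirms that the ingredients are exclusively the defining axioms \equref{crossprodalg} and \equref{crossprodcoalg} of the cross product algebra and coalgebra, the naturality identities \equref{nat1cup}-\equref{nat2cup}, and the four relations (\ref{eq:neccconds}.a-d) (with the dual quadruple for the second assertion). Consequently no circularity arises, and the chain of implications closes as described.
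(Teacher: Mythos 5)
Your proposal is correct and follows essentially the same route as the paper, which states this corollary without a separate proof precisely because it is the combination of the discussion of (\ref{eq:crossbialgcond}.b-d) around \equref{comultunitcomp}--\equref{multcounitcomp}, \prref{4.1} for the forward implications, and \prref{4.5} for the converses. Your extra check that \prref{4.5} uses only the cross product algebra-coalgebra datum, naturality, and (\ref{eq:neccconds}.a-d) (resp.\ e-h), with no hidden appeal to (\ref{eq:crossbialgcond}.a), is exactly the point that makes the argument non-circular.
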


\coref{firstequivcond} is a first list of sets of necessary and sufficient conditions
for a cross product algebra-coalgebra datum being a bialgebra admissible tuple.
Before we can extend this list, we need another Lemma.

\begin{lemma}\lelabel{someechivcond}
Let $(A, B, \psi, \phi)$ be a cross product algebra-coalgebra datum and assume that 
$\un{\va}_X\un{\eta}_X=\Id_{\un{1}}$, for $X\in \{A, B\}$, and that 
(\ref{eq:comultunitcomp}-\ref{eq:multcounitcomp}) hold.\\
(i) If (\ref{eq:neccconds}.g) holds then 
(\ref{eq:neccconds}.a) is equivalent to 
(\ref{eq:BespDrabComp}.a,c), and (\ref{eq:neccconds}.b) is equivalent to 
(\ref{eq:BespDrabComp}.b,d).\\
(ii) If (\ref{eq:neccconds}.c) holds then (\ref{eq:neccconds}.e) is equivalent to 
(\ref{eq:BespDrabComp}.a,e), and (\ref{eq:neccconds}.f) is equivalent to 
(\ref{eq:BespDrabComp}.b,f). 
\end{lemma}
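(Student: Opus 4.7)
Each of the four equivalences has an easy direction and a substantive direction. For the easy direction, each (\ref{eq:neccconds}) identity implies the corresponding (\ref{eq:BespDrabComp}) identities by composition with suitable units and counits, exactly as in the proof of \coref{BDCond}. For instance, in part (i), composing (\ref{eq:neccconds}.a) to the left with $\Id_A\ot \un{\va}_B\ot \Id_A$ yields (\ref{eq:BespDrabComp}.a), while composition with $\un{\va}_A\ot \Id_{B\ot A}$ yields (\ref{eq:BespDrabComp}.c). In part (ii), composing (\ref{eq:neccconds}.e) on the right with $\Id_A\ot \un{\eta}_B\ot \Id_A$, and invoking (\ref{eq:crossprodalg}.d) together with $\un{\va}_B\un{\eta}_B=\Id_{\un{1}}$, reduces it to (\ref{eq:BespDrabComp}.a); composition with $\un{\eta}_A\ot \Id_{B\ot A}$ together with the unit law for $\un{m}_A$ reduces it to (\ref{eq:BespDrabComp}.e). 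The standing hypotheses \equref{comultunitcomp}, \equref{multcounitcomp}, and $\un{\va}_X\un{\eta}_X=\Id_{\un{1}}$ are used freely throughout.

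The substantive converse of part (i) proceeds as follows. Assuming (\ref{eq:BespDrabComp}.a), (\ref{eq:BespDrabComp}.c), and (\ref{eq:neccconds}.g), we derive (\ref{eq:neccconds}.a) by a direct diagram chase. Start from the right-hand side of (\ref{eq:neccconds}.a) and use the naturality of the braiding \equref{nat1cup}--\equref{nat2cup} together with the cross-product axioms \equref{crossprodalg}, \equref{crossprodcoalg} to rearrange strands and expose a subblock matching the right-hand side of (\ref{eq:BespDrabComp}.a); replace it by $\un{\Delta}_A\circ \un{m}_A$. Next, identify in the resulting figure a subblock of the shape of the right-hand side of (\ref{eq:BespDrabComp}.c) --- that is, $\phi\circ(\un{m}_A\ot \un{\eta}_B)$ --- and collapse it. What remains is a combination of $\un{\Delta}_B$, $c$, $\psi$ and counits that coincides with the right-hand side of (\ref{eq:neccconds}.g); applying (\ref{eq:neccconds}.g) in reverse fuses this combination into a single $\psi$, producing precisely the left-hand side of (\ref{eq:neccconds}.a). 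The equivalence (\ref{eq:neccconds}.b)~$\Leftrightarrow$~(\ref{eq:BespDrabComp}.b,d) is obtained by the same strategy with the roles of $A$, $B$ and multiplication/comultiplication interchanged.

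Part (ii) follows the same plan, with (\ref{eq:neccconds}.c) --- which expresses $\phi$ through comultiplications, units, braiding, $\phi$ itself, and multiplications --- playing the role that (\ref{eq:neccconds}.g) played in part (i). Under (\ref{eq:BespDrabComp}.a), (\ref{eq:BespDrabComp}.e), and (\ref{eq:neccconds}.c), one rewrites the right-hand side of (\ref{eq:neccconds}.e) so as to expose successively the patterns of (\ref{eq:BespDrabComp}.a) and of (\ref{eq:BespDrabComp}.e), collapsing each in turn, and then absorbs the residual $\phi$-branches by means of (\ref{eq:neccconds}.c); the argument for (\ref{eq:neccconds}.f)~$\Leftrightarrow$~(\ref{eq:BespDrabComp}.b,f) is dual. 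The main obstacle throughout will be the diagrammatic bookkeeping: the relevant diagrams contain six to eight parallel strands, and the rewrites must be applied in precisely the right positions and order, much in the spirit of the lengthy computation already carried out in the proof of \prref{4.5}.
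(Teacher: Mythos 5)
Your overall strategy is the paper's: the easy directions are obtained by composing with units and counits exactly as in \coref{BDCond} (and your remark that in part (ii) one must compose \equuref{neccconds}{e} on the right with $\un{\eta}_B$ resp. $\un{\eta}_A$, using \equuref{crossprodalg}{d}, \equref{comultunitcomp} and $\un{\va}_B\un{\eta}_B=\Id_{\un{1}}$, is correct and is the right substitute for the left-composition used in \coref{BDCond}); the converse is a graphical chase using precisely \equuref{BespDrabComp}{a}, \equuref{BespDrabComp}{c}, \equuref{neccconds}{g}, naturality and the cross product (co)algebra axioms --- the same ingredients as the paper's displayed computation.

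However, the converse chase does not go through in the order you describe. Starting from the right-hand side of \equuref{neccconds}{a}, you cannot ``expose'' the right-hand sides of \equuref{BespDrabComp}{a} or \equuref{BespDrabComp}{c} by naturality and \equref{crossprodalg}, \equref{crossprodcoalg} alone: both of those patterns contain a counit $\un{\va}_B$ applied to a $\psi$- resp.\ $\phi$-output, while no counit occurs anywhere in the right-hand side of \equuref{neccconds}{a}; the counits only become available after the single $\psi$ there has been expanded via \equuref{neccconds}{g}. Moreover your final step cannot be as stated: fusing a $\un{\Delta}$--$c$--$\psi$--counit block into one $\psi$ produces the \emph{right}-hand side of \equuref{neccconds}{a} (which contains $\psi$), not its left-hand side, which contains $\phi$ and no $\psi$ at all. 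In other words, the rule order (a), then (c), then (g) belongs to the chase that starts from the \emph{left}-hand side, and this is exactly what the paper does: expand $\un{\Delta}_A\un{m}_A$ by \equuref{BespDrabComp}{a}, expand $\phi\circ(\un{m}_A\ot\un{\eta}_B)$ by \equuref{BespDrabComp}{c}, rearrange using \equref{nat2cup} and \equref{crossprodcoalg}, and finally contract the two-$\psi$-with-counits block by \equuref{neccconds}{g}. If you prefer to start from the right-hand side, the steps must be run in the reverse order (expand $\psi$ by (g) first, then collapse the (c)- and (a)-patterns). With that correction --- and noting in passing that \equuref{neccconds}{c} expresses $\phi$ through units rather than comultiplications, being the dual of \equuref{neccconds}{g} --- your plan coincides with the paper's proof, including the treatment of the remaining three equivalences.
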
 

\begin{proof}
We will prove the first statement of (i), the proof of all the other assertions is similar.
If (\ref{eq:neccconds}.a) holds, then (\ref{eq:BespDrabComp}.a), resp. (\ref{eq:BespDrabComp}.c)
 follows after we compose
(\ref{eq:neccconds}.a) to the left with $\Id_A\ot \un{\va}_B\ot \Id_A$, resp. 
$\un{\va}_A\ot \Id_{B\ot A}$, see the proof of \coref{BDCond}. The proof of the converse implication
follows from our next computation:
\begin{eqnarray*}
&&\hspace*{-2mm}
\gbeg{3}{5}
\got{1}{A}\got{1}{A}\gnl
\gmu\gnl
\gcmu\gu{1}\gnl
\gcl{1}\gbrbox\gnl
\gob{1}{A}\gob{1}{B}\gob{1}{A}
\gend
\equal{(\ref{eq:BespDrabComp}.a)}
\gbeg{5}{8}
\got{2}{A}\gvac{1}\got{2}{A}\gnl
\gcmu\gu{1}\gcmu\gnl
\gcl{1}\gbrbox\gvac{2}\gcl{1}\gcl{1}\gnl
\gcl{1}\gcl{1}\gbr\gcl{1}\gnl
\gcl{1}\gbrc\gmu\gnl
\gmu\gcu{1}\gcn{1}{1}{2}{1}\gu{1}\gnl
\gcn{1}{1}{2}{2}\gvac{2}\gbrbox\gnl
\gob{2}{A}\gvac{1}\gob{1}{B}\gob{1}{A}
\gend
\equal{(\ref{eq:BespDrabComp}.c)}
\gbeg{8}{11}
\got{2}{A}\gvac{3}\got{2}{A}\gnl
\gcmu\gu{1}\gvac{2}\gcmu\gu{1}\gnl
\gcl{1}\gbrbox\gvac{4}\gcn{1}{1}{1}{0}\gbr\gvac{-1}\gnot{\hspace*{-4mm}\Box}\gnl
\gcl{1}\gcl{1}\gcl{1}\gvac{1}\gcmu\gcl{1}\gcl{1}\gnl
\gcl{1}\gcl{1}\gcl{1}\gvac{1}\gcn{1}{1}{1}{-1}\gcl{1}\gcl{1}\gcl{1}\gnl
\gcl{1}\gcl{1}\gbr\gu{1}\gcl{1}\gcl{1}\gcl{1}\gnl
\gcl{1}\gbrc\gbrbox\gvac{2}\gcl{1}\gcl{1}\gcl{1}\gnl
\gmu\gcu{1}\gcl{1}\gbr\gcl{1}\gcl{1}\gnl
\gcn{1}{2}{2}{2}\gvac{2}\gbrc\gbr\gcl{1}\gnl
\gvac{3}\gcu{1}\gmu\gmu\gnl
\gob{2}{A}\gvac{2}\gob{2}{B}\gob{2}{A}
\gend\\
&&
\equalupdown{\equref{nat2cup}}{\equref{crossprodcoalg}}
\gbeg{8}{9}
\gvac{1}\got{2}{A}\gvac{2}\got{2}{A}\gnl
\gvac{1}\gcmu\gu{1}\gvac{1}\gcmu\gu{1}\gnl
\gvac{1}\gcl{1}\gbrbox\gvac{3}\gcn{1}{1}{1}{0}\gbrbox\gnl
\gvac{1}\gcn{1}{1}{1}{-1}\gcn{1}{1}{1}{0}\gcl{1}\gcmu\gcl{1}\gcl{1}\gnl
\gcl{1}\gcmu\gbr\gcl{1}\gcl{1}\gcl{1}\gnl
\gcl{1}\gcl{1}\gbr\gbr\gcl{1}\gcl{1}\gnl
\gcl{1}\gbrc\gbrc\gbr\gcl{1}\gnl
\gmu\gcu{1}\gcu{1}\gmu\gmu\gnl
\gob{2}{A}\gvac{2}\gob{2}{B}\gob{2}{A}
\gend
\equal{\equref{nat2cup}}
\gbeg{8}{11}
\gvac{1}\got{2}{A}\gvac{2}\got{2}{A}\gnl
\gvac{1}\gcmu\gu{1}\gvac{1}\gcmu\gu{1}\gnl
\gvac{1}\gcl{1}\gbrbox\gvac{3}\gcl{1}\gbr\gvac{-1}\gnot{\hspace*{-4mm}\Box}\gnl
\gvac{1}\gcn{1}{1}{1}{-1}\gcn{1}{1}{1}{0}\gcn{1}{1}{1}{3}\gvac{1}\gcl{1}\gcl{1}\gcl{1}\gnl
\gcl{1}\gcmu\gvac{1}\gbr\gcl{1}\gcl{1}\gnl
\gcl{1}\gcl{1}\gcl{1}\gvac{1}\gcn{1}{1}{1}{0}\gbr\gcl{1}\gnl
\gcl{1}\gcl{1}\gcl{1}\gcmu\gcl{1}\gmu\gnl
\gcl{1}\gcl{1}\gbr\gcl{1}\gcl{1}\gcn{1}{3}{2}{2}\gnl
\gcl{1}\gbrc\gbrc\gcl{1}\gnl
\gmu\gcu{1}\gcu{1}\gmu\gnl
\gob{2}{A}\gvac{2}\gob{2}{B}\gob{2}{A}
\gend
\equal{(\ref{eq:neccconds}.g)}
\gbeg{6}{7}
\got{2}{A}\gvac{1}\got{2}{A}\gnl
\gcmu\gu{1}\gcmu\gu{1}\gnl
\gcl{1}\gbrbox\gvac{2}\gcl{1}\gbrbox\gnl
\gcl{1}\gcl{1}\gbr\gcl{1}\gcl{1}\gnl
\gcl{1}\gbrc\gbr\gcl{1}\gnl
\gmu\gmu\gmu\gnl
\gob{2}{A}\gob{2}{B}\gob{2}{A}
\gend
\hspace{2mm}.
\end{eqnarray*}
\end{proof}

\thref{firstsetequivcond} is the main result of this Section. Two new 
conditions will appear, namely
\begin{equation}\eqlabel{twoanothYDconds}
\hspace{1mm}\mbox{\rm (a)}\hspace{1mm}
\gbeg{4}{6}
\got{1}{A}\got{1}{B}\got{1}{A}\gnl
\gcl{1}\gbrc\gnl
\gmu\gcn{1}{1}{1}{2}\gnl
\gcn{1}{1}{2}{3}\gvac{1}\gcmu\gnl
\gvac{1}\gbrbox\gvac{2}\gcl{1}\gnl
\gvac{1}\gob{1}{B}\gob{1}{A}\gob{1}{B}
\gend
=
\gbeg{6}{7}
\got{1}{A}\got{2}{B}\got{2}{A}\gnl
\gcl{1}\gcmu\gcmu\gu{1}\gnl
\gbrbox\gvac{2}\gbr\gbrbox\gnl
\gcl{1}\gbr\gbr\gcl{1}\gnl
\gbrc\gbr\gbrc\gnl
\gcu{1}\gmu\gmu\gcl{1}\gnl
\gvac{1}\gob{2}{B}\gob{2}{A}\gob{1}{B}
\gend
\hspace{1mm}\mbox{\rm or (b)}\hspace{1mm}
\gbeg{4}{6}
\gvac{1}\got{1}{B}\got{1}{A}\got{1}{B}\gnl
\gvac{1}\gbrc\gcl{1}\gnl
\gvac{1}\gcn{1}{1}{1}{0}\gmu\gnl
\gcmu\gcn{1}{1}{2}{1}\gnl
\gcl{1}\gbrbox\gnl
\gob{1}{A}\gob{1}{B}\gob{1}{A}
\gend
=
\gbeg{6}{7}
\gvac{1}\got{2}{B}\got{2}{A}\got{1}{B}\gnl
\gu{1}\gcmu\gcmu\gcl{1}\gnl
\gbrbox\gvac{2}\gbr\gbrbox\gnl
\gcl{1}\gbr\gbr\gcl{1}\gnl
\gbrc\gbr\gbrc\gnl
\gcl{1}\gmu\gmu\gcu{1}\gnl
\gob{1}{A}\gob{2}{B}\gob{2}{A}
\gend
\hspace{1mm}.
\end{equation}

\begin{theorem}\thlabel{firstsetequivcond}
Let $(A, B, \psi, \phi)$ be a cross product algebra-coalgebra datum. 
Then the following assertions are equivalent,
\begin{itemize}
\item[(i)] $A\#_\psi^\phi B$ is a cross product bialgebra;
\item[(ii)] $\un{\va}_X\un{\eta}_X=\Id_{\un{1}}$ for $X\in \{A, B\}$, 
and  \equref{comultunitcomp}, \equref{multcounitcomp} 
and (\ref{eq:neccconds}.a-d) hold;
\item[(iii)] $\un{\va}_X\un{\eta}_X=\Id_{\un{1}}$ for $X\in \{A, B\}$, 
and  \equref{comultunitcomp}, \equref{multcounitcomp} 
and (\ref{eq:neccconds}.e-h) hold;
\item[(iv)] $\un{\va}_X\un{\eta}_X=\Id_{\un{1}}$ for $X\in \{A, B\}$, 
and  \equref{comultunitcomp}, \equref{multcounitcomp}, 
(\ref{eq:neccconds}.c,d,g) and (\ref{eq:BespDrabComp}.a-d) hold;
\item[(v)] $\un{\va}_X\un{\eta}_X=\Id_{\un{1}}$ for $X\in \{A, B\}$, 
and \equref{comultunitcomp}, \equref{multcounitcomp}, 
(\ref{eq:neccconds}.c,g,h) and (\ref{eq:BespDrabComp}.a,b,e,f) hold;
\item[(vi)] $\un{\va}_X\un{\eta}_X=\Id_{\un{1}}$ for $X\in \{A, B\}$, 
and \equref{comultunitcomp}, \equref{multcounitcomp}, 
(\ref{eq:neccconds}.c,g), (\ref{eq:twoanothYDconds}.a) and 
(\ref{eq:BespDrabComp}.a,b,d,e) hold;
\item[(vii)] $\un{\va}_X\un{\eta}_X=\Id_{\un{1}}$ for $X\in \{A, B\}$, 
and \equref{comultunitcomp}, \equref{multcounitcomp}, 
(\ref{eq:neccconds}.c,g), (\ref{eq:twoanothYDconds}.b) and 
(\ref{eq:BespDrabComp}.a,b,c,f) hold.
\end{itemize}
\end{theorem}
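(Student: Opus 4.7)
The equivalence $(i) \Leftrightarrow (ii) \Leftrightarrow (iii)$ is the content of Corollary~\ref{co:firstequivcond}; it remains to show that each of $(iv), (v), (vi), (vii)$ is equivalent to these. The forward implications $(i) \Rightarrow (iv), (v), (vi), (vii)$ will follow from Proposition~\ref{pr:4.1} and Corollary~\ref{co:BDCond}, once we additionally verify that the two new conditions \equuref{twoanothYDconds}{a} and \equuref{twoanothYDconds}{b} are themselves consequences of the bialgebra property.

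For $(ii) \Leftrightarrow (iv)$ the implication $(ii) \Rightarrow (iv)$ is automatic via $(ii) \Rightarrow (i)$ and the aforementioned consequences. For the reverse, the key input is Lemma~\ref{le:someechivcond}(i): granted \equuref{neccconds}{g}, the pair \equuref{BespDrabComp}{a}, \equuref{BespDrabComp}{c} yields \equuref{neccconds}{a} while the pair \equuref{BespDrabComp}{b}, \equuref{BespDrabComp}{d} yields \equuref{neccconds}{b}; together with \equuref{neccconds}{c}, \equuref{neccconds}{d} from $(iv)$, this recovers (\ref{eq:neccconds}.a--d), hence $(ii)$. The equivalence $(iii) \Leftrightarrow (v)$ is obtained dually via Lemma~\ref{le:someechivcond}(ii): \equuref{neccconds}{c} combined with \equuref{BespDrabComp}{a}, \equuref{BespDrabComp}{e} gives \equuref{neccconds}{e}, and combined with \equuref{BespDrabComp}{b}, \equuref{BespDrabComp}{f} gives \equuref{neccconds}{f}; together with \equuref{neccconds}{g}, \equuref{neccconds}{h}, this produces $(iii)$.

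For $(i) \Rightarrow (vi)$ and $(i) \Rightarrow (vii)$, one must verify that \equuref{twoanothYDconds}{a} and \equuref{twoanothYDconds}{b} are consequences of $(i)$. Following the template of Proposition~\ref{pr:4.1}, the plan is to compose the multiplicativity axiom \equuref{crossbialgcond}{a} with suitable unit and counit morphisms, chosen so as to reduce the four $A\otimes B$ inputs to three and the four outputs to three, and then simplify using \equuref{crossprodalg}{c}, \equuref{crossprodalg}{d}, \equuref{crossprodcoalg}{c}, \equuref{crossprodcoalg}{d}, \equref{comultunitcomp} and \equref{multcounitcomp}. The dual manipulation will yield \equuref{twoanothYDconds}{b}.

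The main obstacle, and the bulk of the work, is the reverse direction $(vi) \Rightarrow (i)$ (and dually $(vii) \Rightarrow (i)$). From the hypotheses of $(vi)$, Lemma~\ref{le:someechivcond}(i) combined with \equuref{neccconds}{g} and \equuref{BespDrabComp}{b}, \equuref{BespDrabComp}{d} already delivers \equuref{neccconds}{b}, while Lemma~\ref{le:someechivcond}(ii) combined with \equuref{neccconds}{c} and \equuref{BespDrabComp}{a}, \equuref{BespDrabComp}{e} delivers \equuref{neccconds}{e}. The remaining task is to extract \equuref{neccconds}{a} and \equuref{neccconds}{d}, which together with the conditions already in hand yield $(ii)$. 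The critical step is to exploit \equuref{twoanothYDconds}{a}, a three-input relation, together with \equuref{BespDrabComp}{d}, \equuref{BespDrabComp}{e} and the naturality of the braiding, to produce \equuref{neccconds}{d}: composing \equuref{twoanothYDconds}{a} against $\un{\eta}_A$ in the leftmost $A$-slot will collapse the multiplication on $A$ via the unit axiom, and the resulting two-input identity should match the right-hand side of \equuref{neccconds}{d} after applying \equref{nat1cup} and \equref{nat2cup}. A parallel diagram chase then produces \equuref{BespDrabComp}{c}, from which Lemma~\ref{le:someechivcond}(i) yields \equuref{neccconds}{a}, completing $(vi) \Rightarrow (ii)$. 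The case $(vii) \Rightarrow (iii)$ is handled dually, with \equuref{twoanothYDconds}{b} and \equuref{neccconds}{h} replacing \equuref{twoanothYDconds}{a} and \equuref{neccconds}{d}. The principal difficulty throughout lies in the combinatorial complexity of these graphical calculations, where one must carefully track the interaction of the braiding with the multiple comultiplications, multiplications, units and counits that appear on both sides of each diagram.
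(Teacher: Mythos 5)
Your overall architecture matches the paper's: the equivalence of (i), (ii), (iii) via \coref{firstequivcond}, the equivalences $(ii)\Leftrightarrow(iv)$ and $(iii)\Leftrightarrow(v)$ via \leref{someechivcond}, and the forward implications from \prref{4.1} and \coref{BDCond}. Your route to \equuref{twoanothYDconds}{a} in the forward direction (compose \equuref{crossbialgcond}{a} on the right with $\Id_{A\ot B\ot A}\ot\un{\eta}_B$ and on the left with $\un{\va}_A\ot\Id_{B\ot A\ot B}$, then use the counit axiom and $\un{\Delta}_B\un{\eta}_B=\un{\eta}_B\ot\un{\eta}_B$) is legitimate and in fact shorter than the paper, which instead derives \equuref{twoanothYDconds}{a} from the conditions in (ii) by a long graphical computation.

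The genuine gap is in your treatment of the hard implication $(vi)\Rightarrow(ii)$ (and its dual $(vii)\Rightarrow(iii)$), namely the recovery of \equuref{neccconds}{d}. Precomposing \equuref{twoanothYDconds}{a} with $\un{\eta}_A$ in the leftmost $A$-slot produces a morphism $B\ot A\to B\ot A\ot B$ with only three output factors, whereas \equuref{neccconds}{d} is an identity between morphisms $B\ot A\to A\ot B\ot A\ot B$: what you obtain is exactly \equuref{neccconds}{d} postcomposed with $\un{\va}_A\ot\Id_{B\ot A\ot B}$, i.e.\ a strictly weaker, counit-truncated statement, and there is no way to reconstruct the missing $A$-output (which carries the full $\un{\Delta}_A$ of the $A$-leg of $\psi$) from it. In the paper's proof this is precisely where the length of the argument sits: one first rewrites $\psi$ on the left-hand side of \equuref{neccconds}{d} using \equuref{neccconds}{g} (which expresses $\psi$ through $\un{\Delta}_B\ot\un{\Delta}_A$, the braiding, $\psi\ot\psi$ and counits, thereby keeping a genuine $\un{\Delta}_A$ in play), then applies \equuref{BespDrabComp}{e}, naturality \equref{nat2cup}, and only then \equuref{twoanothYDconds}{a} with a nontrivial object in its first input, followed by \equref{crossprodcoalg} and \equuref{neccconds}{g} again. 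Your $\un{\eta}_A$-shortcut discards exactly the information this chain is designed to preserve, so the critical step as you describe it would fail; the rest of your reduction (getting \equuref{BespDrabComp}{c}, \equuref{BespDrabComp}{f} from \equuref{twoanothYDconds}{a} by unit/counit compositions and then \equuref{neccconds}{a}, \equuref{neccconds}{b} from \leref{someechivcond}) is sound.
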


\begin{proof}
We have already seen in \coref{firstequivcond} that (i), (ii) and (iii) are equivalent.
The equivalences $(ii)\Leftrightarrow (iv)$ and $(iii)\Leftrightarrow (v)$ follow 
from \leref{someechivcond}.\\
$\un{(ii)\Rightarrow (vi)}$. We have seen in the proof of 
\coref{BDCond} 
that (\ref{eq:neccconds}.a) implies (\ref{eq:BespDrabComp}.a,c), and that 
(\ref{eq:neccconds}.b) implies (\ref{eq:BespDrabComp}.b,d). 
 (\ref{eq:neccconds}.g) follows after we compose 
 (\ref{eq:neccconds}.d) to the left with $\un{\va}_A\ot \un{\va}_B\ot \Id_{A\ot B}$.
 It remains to be shown that (\ref{eq:twoanothYDconds}.a) holds. To this end,
 we compute that
 \begin{eqnarray*}
&&\hspace*{-2mm}
\gbeg{4}{6}
\got{1}{A}\got{1}{B}\got{1}{A}\gnl
\gcl{1}\gbrc\gnl
\gmu\gcn{1}{1}{1}{2}\gnl
\gcn{1}{1}{2}{3}\gvac{1}\gcmu\gnl
\gvac{1}\gbrbox\gvac{2}\gcl{1}\gnl
\gvac{1}\gob{1}{B}\gob{1}{A}\gob{1}{B}
\gend
\equal{(\ref{eq:neccconds}.c)}
\gbeg{5}{8}
\got{1}{A}\got{1}{B}\got{1}{A}\gnl
\gcl{1}\gbrc\gnl
\gmu\gcn{1}{1}{1}{4}\gnl
\gcn{1}{1}{2}{1}\gu{1}\gu{1}\gcmu\gnl
\gbrbox\gvac{2}\gbr\gvac{-1}\gnot{\hspace*{-4mm}\Box}\gvac{1}\gcl{1}\gnl
\gcl{1}\gbr\gcl{1}\gcl{1}\gnl
\gmu\gmu\gcl{1}\gnl
\gob{2}{B}\gob{2}{A}\gob{1}{B}
\gend
\equal{(\ref{eq:BespDrabComp}.c)}
\gbeg{8}{13}
\got{1}{A}\gvac{3}\got{1}{B}\got{1}{A}\gnl
\gcl{1}\gvac{3}\gbrc\gnl
\gcl{1}\gu{1}\gvac{2}\gcn{1}{1}{1}{-2}\gcn{1}{1}{1}{4}\gnl
\gbrbox\gvac{2}\gcmu\gu{1}\gu{1}\gcmu\gnl
\gcl{1}\gbr\gbrbox\gvac{2}\gbr\gvac{-1}\gnot{\hspace*{-4mm}\Box}\gvac{1}\gcl{1}\gnl
\gbrc\gbr\gcl{1}\gcl{1}\gcl{1}\gcl{1}\gnl
\gcu{1}\gcl{1}\gcl{1}\gmu\gcl{1}\gcl{1}\gcl{1}\gnl
\gvac{1}\gcl{1}\gcl{1}\gcn{1}{1}{2}{3}\gvac{1}\gcl{1}\gcl{1}\gcl{1}\gnl
\gvac{1}\gcl{1}\gcn{1}{1}{1}{3}\gvac{1}\gbr\gcl{1}\gcl{1}\gnl
\gvac{1}\gcn{1}{2}{1}{3}\gvac{1}\gmu\gmu\gcl{3}\gnl
\gvac{3}\gcn{1}{1}{2}{1}\gvac{1}\gcn{1}{2}{2}{2}\gnl
\gvac{2}\gmu\gnl
\gvac{2}\gob{2}{B}\gvac{1}\gob{2}{A}\gob{1}{B}
\gend\\
&&
\equal{\equref{nat1cup}}
\gbeg{8}{11}
\got{1}{A}\gvac{3}\got{1}{B}\got{1}{A}\gnl
\gcl{1}\gvac{3}\gbrc\gnl
\gcl{1}\gu{1}\gvac{2}\gcn{1}{1}{1}{-2}\gcn{1}{1}{1}{4}\gnl
\gbrbox\gvac{2}\gcmu\gu{1}\gu{1}\gcmu\gnl
\gcl{1}\gbr\gbrbox\gvac{2}\gbr\gvac{-1}\gnot{\hspace*{-4mm}\Box}\gvac{1}\gcl{1}\gnl
\gbrc\gbr\gbr\gcl{1}\gcl{1}\gnl
\gcu{1}\gcl{1}\gcl{1}\gbr\gmu\gcl{1}\gnl
\gvac{1}\gcl{1}\gmu\gcl{1}\gcn{1}{1}{2}{1}\gvac{1}\gcl{3}\gnl
\gvac{1}\gcl{1}\gcn{1}{1}{2}{1}\gvac{1}\gmu\gnl
\gvac{1}\gmu\gvac{1}\gcn{1}{1}{2}{2}\gnl
\gvac{1}\gob{2}{B}\gvac{1}\gob{2}{A}\gvac{1}\gob{1}{B}
\gend
\equalupdown{\equref{nat1cup}}{(\ref{eq:neccconds}.c)}
\gbeg{6}{8}
\got{1}{A}\gvac{2}\got{1}{B}\got{1}{A}\gnl
\gcl{1}\gvac{2}\gbrc\gnl
\gcl{1}\gu{1}\gvac{1}\gcn{1}{1}{1}{0}\gcn{1}{1}{1}{2}\gnl
\gbrbox\gvac{2}\gcmu\gcmu\gnl
\gcl{1}\gbr\gbrbox\gvac{2}\gcl{1}\gnl
\gbrc\gbr\gcl{1}\gcl{1}\gnl
\gcu{1}\gmu\gmu\gcl{1}\gnl
\gvac{1}\gob{2}{B}\gob{2}{A}\gob{1}{B}
\gend
\equal{(\ref{eq:neccconds}.d)}
\gbeg{8}{12}
\got{1}{A}\gvac{2}\got{2}{B}\got{2}{A}\gnl
\gcl{1}\gvac{1}\gu{1}\gcmu\gcmu\gu{1}\gnl
\gcl{1}\gvac{1}\gbrbox\gvac{2}\gbr\gbrbox\gnl
\gcl{1}\gu{1}\gcl{1}\gbr\gbr\gcl{1}\gnl
\gbrbox\gvac{2}\gbrc\gbr\gbrc\gnl
\gcl{1}\gbr\gmu\gcl{1}\gcl{1}\gcl{1}\gnl
\gbrc\gcl{1}\gcn{1}{1}{2}{1}\gvac{1}\gcl{1}\gcl{1}\gcl{1}\gnl
\gcu{1}\gcl{1}\gbr\gvac{1}\gcn{1}{1}{1}{-1}\gcl{1}\gcl{1}\gnl
\gvac{1}\gmu\gmu\gvac{1}\gcl{1}\gcl{1}\gnl
\gvac{1}\gcn{1}{2}{2}{2}\gvac{1}\gcn{1}{1}{2}{3}\gvac{2}\gcn{1}{1}{1}{-1}\gcl{2}\gnl
\gvac{4}\gmu\gnl
\gvac{1}\gob{2}{B}\gvac{1}\gob{2}{A}\gvac{1}\gob{1}{A}
\gend\\
&&
\equal{\equref{nat1cup}\times 2}
\gbeg{8}{12}
\got{1}{A}\gvac{2}\got{2}{B}\got{2}{A}\gnl
\gcl{1}\gvac{1}\gu{1}\gcmu\gcmu\gu{1}\gnl
\gcl{1}\gvac{1}\gbrbox\gvac{2}\gbr\gbrbox\gnl
\gcl{1}\gu{1}\gcl{1}\gbr\gbr\gcl{1}\gnl
\gbrbox\gvac{2}\gbrc\gcl{1}\gcl{1}\gbrc\gnl
\gcl{1}\gbr\gcl{1}\gcl{1}\gcl{1}\gcl{1}\gcl{1}\gnl
\gbrc\gbr\gcl{1}\gcl{1}\gcl{1}\gcl{1}\gnl
\gcu{1}\gmu\gmu\gcl{1}\gcl{1}\gcl{1}\gnl
\gvac{1}\gcn{1}{1}{2}{5}\gvac{1}\gcn{1}{1}{2}{3}\gvac{1}\gcl{1}\gcl{1}\gcl{1}\gnl
\gvac{3}\gcl{1}\gbr\gcl{1}\gcl{1}\gnl
\gvac{3}\gmu\gmu\gcl{1}\gnl
\gvac{3}\gob{2}{B}\gob{2}{A}\gob{1}{B}
\gend
\equal{\equref{crossprodalg}}
\gbeg{8}{10}
\got{1}{A}\gvac{2}\got{2}{B}\got{2}{A}\gnl
\gcl{1}\gu{1}\gu{1}\gcmu\gcmu\gu{1}\gnl
\gbrbox\gvac{2}\gbr\gvac{-1}\gnot{\hspace*{-4mm}\Box}\gvac{1}\gbr\gbrbox\gnl
\gcl{1}\gbr\gbr\gbr\gcl{1}\gnl
\gmu\gbr\gcl{1}\gcl{1}\gbrc\gnl
\gcn{1}{1}{2}{3}\gvac{1}\gcl{1}\gmu\gcl{1}\gcl{1}\gcl{1}\gnl
\gvac{1}\gbrc\gcn{1}{1}{2}{3}\gvac{1}\gcl{1}\gcl{1}\gcl{1}\gnl
\gvac{1}\gcu{1}\gcn{1}{1}{1}{3}\gvac{1}\gbr\gcl{1}\gcl{1}\gnl
\gvac{3}\gmu\gmu\gcl{1}\gnl
\gvac{3}\gob{2}{B}\gob{2}{A}\gob{1}{B}
\gend
\equalupdown{\equref{nat1cup}}{(\ref{eq:neccconds}.c)}
\gbeg{6}{7}
\got{1}{A}\got{2}{B}\got{2}{A}\gnl
\gcl{1}\gcmu\gcmu\gu{1}\gnl
\gbrbox\gvac{2}\gbr\gbrbox\gnl
\gcl{1}\gbr\gbr\gcl{1}\gnl
\gbrc\gbr\gbrc\gnl
\gcu{1}\gmu\gmu\gcl{1}\gnl
\gvac{1}\gob{2}{B}\gob{2}{A}\gob{1}{B}
\gend
\hspace{1mm},
\end{eqnarray*}
as needed. In the last but one equality we also applied the 
naturality of the braiding to the morphism $\psi$.\\
$\un{(vi)\Rightarrow (ii)}$. It is easy to see that (\ref{eq:twoanothYDconds}.a) implies 
(\ref{eq:BespDrabComp}.c,f). We know from \leref{someechivcond} that
(\ref{eq:BespDrabComp}.a,c) 
imply (\ref{eq:neccconds}.a), and that (\ref{eq:BespDrabComp}.b,d)  imply (\ref{eq:neccconds}.b).
(\ref{eq:neccconds}.d) can be proved as follows:
\begin{eqnarray*}
&&\hspace*{-2mm}
\gbeg{4}{6}
\gvac{1}\got{1}{B}\got{1}{A}\gnl
\gvac{1}\gbrc\gnl
\gvac{1}\gcn{1}{1}{1}{0}\gcn{1}{1}{1}{2}\gnl
\gcmu\gcmu\gnl
\gcl{1}\gbrbox\gvac{2}\gcl{1}\gnl
\gob{1}{A}\gob{1}{B}\gob{1}{A}\gob{1}{B}
\gend
\equal{(\ref{eq:neccconds}.g)}
\gbeg{4}{9}
\got{2}{B}\got{2}{A}\gnl
\gcmu\gcmu\gnl
\gcl{1}\gbr\gcl{1}\gnl
\gbrc\gbrc\gnl
\gcl{1}\gcu{1}\gcu{1}\gcl{1}\gnl
\gcn{1}{1}{1}{2}\gvac{2}\gcn{1}{1}{1}{0}\gnl
\gcmu\gcmu\gnl
\gcl{1}\gbrbox\gvac{2}\gcl{1}\gnl
\gob{1}{A}\gob{1}{B}\gob{1}{A}\gob{1}{B}
\gend
\equal{(\ref{eq:BespDrabComp}.e)}
\gbeg{7}{11}
\gvac{3}\got{2}{B}\got{2}{A}\gnl
\gvac{3}\gcmu\gcmu\gnl
\gvac{3}\gcn{1}{1}{1}{-2}\gbr\gcl{1}\gnl
\gu{1}\gcmu\gvac{1}\gcn{1}{1}{1}{0}\gbrc\gnl
\gbrbox\gvac{2}\gcl{1}\gcmu\gcu{1}\gcn{1}{1}{1}{0}\gnl
\gcl{1}\gcl{1}\gbr\gcl{1}\gcmu\gnl
\gcl{1}\gbr\gbrc\gcl{1}\gcl{1}\gnl
\gbrc\gmu\gcu{1}\gcl{1}\gcl{1}\gnl
\gcl{2}\gcu{1}\gcn{1}{1}{2}{3}\gvac{2}\gcn{1}{1}{1}{-1}\gcl{2}\gnl
\gvac{3}\gbrbox\gnl
\gob{1}{A}\gvac{2}\gob{1}{B}\gob{1}{A}\gvac{1}\gob{1}{B}
\gend
\equal{\equref{nat2cup}}
\gbeg{7}{11}
\gvac{1}\got{2}{B}\gvac{1}\got{2}{A}\gnl
\gvac{1}\gcmu\gvac{1}\gcmu\gnl
\gvac{1}\gcl{1}\gcn{1}{1}{1}{2}\gvac{1}\gcl{1}\gcn{1}{1}{1}{2}\gnl
\gu{1}\gcl{1}\gcmu\gcl{1}\gcmu\gnl
\gbrbox\gvac{2}\gcl{1}\gbr\gcl{1}\gcl{1}\gnl
\gcl{1}\gcl{1}\gbr\gbr\gcl{1}\gnl
\gcl{1}\gbr\gbrc\gbrc\gnl
\gbrc\gmu\gcu{1}\gcu{1}\gcn{1}{1}{1}{0}\gnl
\gcl{1}\gcu{1}\gcn{1}{1}{2}{5}\gvac{2}\gcmu\gnl
\gcl{1}\gvac{3}\gbrbox\gvac{2}\gcl{1}\gnl
\gob{1}{A}\gvac{3}\gob{1}{B}\gob{1}{A}\gob{1}{B}
\gend\\
&&
\equalupdown{\equref{nat2cup}}{(\ref{eq:neccconds}.g)}
\gbeg{6}{9}
\gvac{1}\got{2}{B}\got{2}{A}\gnl
\gu{1}\gcmu\gcmu\gnl
\gbrbox\gvac{2}\gbr\gcl{1}\gnl
\gcl{1}\gbr\gbrc\gnl
\gbrc\gmu\gcl{1}\gnl
\gcl{1}\gcu{1}\gcn{1}{2}{2}{3}\gvac{1}\gcn{1}{1}{1}{2}\gnl
\gcl{1}\gvac{3}\gcmu\gnl
\gcl{1}\gvac{2}\gbrbox\gvac{2}\gcl{1}\gnl
\gob{1}{A}\gvac{2}\gob{1}{B}\gob{1}{A}\gob{1}{B}
\gend
\equal{(\ref{eq:twoanothYDconds}.a)}
\gbeg{8}{10}
\gvac{1}\got{2}{B}\got{2}{A}\gnl
\gu{1}\gcmu\gcmu\gnl
\gbrbox\gvac{2}\gbr\gcn{1}{1}{1}{4}\gnl
\gcl{1}\gbr\gcn{1}{1}{1}{2}\gvac{1}\gcmu\gu{1}\gnl
\gbrc\gcl{1}\gcmu\gcl{1}\gbrbox\gnl
\gcl{1}\gcu{1}\gbrbox\gvac{2}\gbr\gcl{1}\gcl{1}\gnl
\gcl{1}\gvac{1}\gcl{1}\gbr\gbr\gcl{1}\gnl
\gcl{1}\gvac{1}\gbrc\gbr\gbrc\gnl
\gcl{1}\gvac{1}\gcu{1}\gmu\gmu\gcl{1}\gnl
\gob{1}{A}\gvac{2}\gob{2}{B}\gob{2}{A}\gob{1}{B}
\gend
\equal{\equref{nat2cup}}
\gbeg{8}{11}
\gvac{2}\got{2}{B}\gvac{1}\got{2}{A}\gnl
\gvac{2}\gcmu\gvac{1}\gcmu\gnl
\gvac{2}\gcn{1}{1}{1}{0}\gcl{1}\gvac{1}\gcn{1}{1}{1}{0}\gcl{1}\gnl
\gu{1}\gcmu\gcl{1}\gcmu\gcl{1}\gu{1}\gnl
\gbrbox\gvac{2}\gcl{1}\gbr\gcl{1}\gbrbox\gnl
\gcl{1}\gbrbox\gvac{2}\gcl{1}\gcl{1}\gcl{1}\gcl{1}\gcl{1}\gnl
\gcl{1}\gcl{1}\gbr\gbr\gcl{1}\gcl{1}\gnl
\gcl{1}\gbr\gbr\gbr\gcl{1}\gnl
\gbrc\gbrc\gbr\gbrc\gnl
\gcl{1}\gcu{1}\gcu{1}\gmu\gmu\gcl{1}\gnl
\gob{1}{A}\gvac{2}\gob{2}{B}\gob{2}{A}\gob{1}{B}
\gend\\
&&
\equalupdown{\equref{crossprodcoalg}}{\equref{nat2cup}}
\gbeg{8}{11}
\gvac{2}\got{2}{B}\gvac{1}\got{2}{A}\gnl
\gvac{1}\gu{1}\gcmu\gvac{1}\gcmu\gu{1}\gnl
\gvac{1}\gbrbox\gvac{2}\gcn{1}{1}{1}{3}\gvac{1}\gcl{1}\gbrbox\gnl
\gvac{1}\gcn{1}{1}{1}{0}\gcl{1}\gvac{1}\gbr\gcl{1}\gcl{1}\gnl
\gcmu\gcl{1}\gvac{1}\gcn{1}{1}{1}{0}\gbr\gcl{1}\gnl
\gcl{1}\gcl{1}\gcl{1}\gcmu\gcl{1}\gbrc\gnl
\gcl{1}\gcl{1}\gbr\gcl{1}\gcl{1}\gcl{1}\gcl{1}\gnl
\gcl{1}\gbr\gbr\gcl{1}\gcl{1}\gcl{1}\gnl
\gbrc\gbrc\gbr\gcl{1}\gcl{1}\gnl
\gcl{1}\gcu{1}\gcu{1}\gmu\gmu\gcl{1}\gnl
\gob{1}{A}\gvac{2}\gob{2}{B}\gob{2}{A}\gob{1}{B}
\gend
\equal{\equref{nat2cup}}
\gbeg{8}{12}
\gvac{3}\got{2}{B}\got{2}{A}\gnl
\gvac{3}\gcmu\gcmu\gnl
\gvac{1}\gu{1}\gcn{1}{1}{3}{1}\gvac{1}\gcl{1}\gcl{1}\gcl{1}\gu{1}\gnl
\gvac{1}\gbrbox\gvac{3}\gbr\gbrbox\gnl
\gvac{1}\gcn{1}{1}{1}{0}\gcn{1}{1}{1}{3}\gvac{1}\gcl{1}\gcl{1}\gcl{1}\gcl{1}\gnl
\gcmu\gvac{1}\gbr\gbr\gcl{1}\gnl
\gcl{1}\gcl{1}\gvac{1}\gcn{1}{1}{1}{0}\gcl{1}\gcl{1}\gcl{1}\gcl{1}\gnl
\gcl{1}\gcl{1}\gcmu\gbr\gbrc\gnl
\gcl{1}\gbr\gcl{1}\gcl{1}\gmu\gcl{3}\gnl
\gbrc\gbrc\gcl{1}\gcn{1}{2}{2}{2}\gnl
\gcl{1}\gcu{1}\gcu{1}\gmu\gnl
\gob{1}{A}\gvac{2}\gob{2}{B}\gob{2}{A}\gob{1}{B}
\gend
\equal{(\ref{eq:neccconds}.g)}
\gbeg{6}{7}
\gvac{1}\got{2}{B}\got{2}{A}\gnl
\gu{1}\gcmu\gcmu\gu{1}\gnl
\gbrbox\gvac{2}\gbr\gbrbox\gnl
\gcl{1}\gbr\gbr\gcl{1}\gnl
\gbrc\gbr\gbrc\gnl
\gcl{1}\gmu\gmu\gcl{1}\gnl
\gob{1}{A}\gob{2}{B}\gob{2}{A}\gob{1}{B}
\gend
\hspace{2mm}.
\end{eqnarray*}
Finally, we observe that the proof of $(iii)\Leftrightarrow (vii)$ is similar to the 
proof of $(ii)\Leftrightarrow (vi)$.
 \end{proof} 
 
 If $C$ is a coalgebra and $\mathbb{A}$ is an algebra,
 then ${\rm Hom}_\Cc(C, \mathbb{A})$ is a monoid, with the convolution
 $f * g =\un{m}_{\mathbb{A}}(f\ot g)\un{\Delta}_C$ as multiplication, and unit $\un{\eta}_{\mathbb{A}}\un{\va}_C$.
 We will now discuss some sufficient conditions for a cross product bialgebra to be a Hopf
 algebra.
 
 \begin{proposition}\prlabel{whenacrossprodisHA}
 Let $A\times_\psi^\phi B$ be a cross product bialgebra, and assume that $\Id_A$
 and $\Id_B$ are convolution invertible. Then $A\times_\psi^\phi B$ is a Hopf algebra.
 \end{proposition}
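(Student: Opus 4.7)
The plan is to use the standard principle that a bialgebra is a Hopf algebra precisely when its identity morphism is convolution invertible in its convolution endomorphism monoid. Set $H:=A\#_\psi^\phi B$; I shall construct a convolution inverse of $\Id_H$ in $\mathrm{End}_\Cc(H)$ from the convolution inverses $\bar{S}_A$ of $\Id_A$ and $\bar{S}_B$ of $\Id_B$.

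First I would introduce the canonical injections $j_A=\Id_A\ot\un{\eta}_B\colon A\to H$ and $j_B=\un{\eta}_A\ot\Id_B\colon B\to H$, which are algebra morphisms by (\ref{eq:crossprodalg}.c,d), together with the canonical projections $p_A=\Id_A\ot\un{\va}_B\colon H\to A$ and $p_B=\un{\va}_A\ot\Id_B\colon H\to B$, which are coalgebra morphisms by (\ref{eq:crossprodcoalg}.c,d). A short calculation using the counit compatibility (\ref{eq:crossprodcoalg}.c) then yields the reconstruction identity
\[
\Id_H \;=\; (j_Ap_A)*(j_Bp_B)
\]
in the convolution monoid $\mathrm{End}_\Cc(H)$.

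The key observation is that, because $j_A$ is an algebra morphism and $p_A$ is a coalgebra morphism, for any $f,g\in\mathrm{End}_\Cc(A)$ one has
\[
(j_Afp_A)*(j_Agp_A) \;=\; j_A\,\un{m}_A(f\ot g)(p_A\ot p_A)\un{\Delta}_H \;=\; j_A(f*g)p_A,
\]
and moreover $j_A\un{\eta}_A\un{\va}_Ap_A=\un{\eta}_H\un{\va}_H$. In other words, $f\mapsto j_Afp_A$ is a unital morphism of convolution monoids $\mathrm{End}_\Cc(A)\to\mathrm{End}_\Cc(H)$, and applying it to $f=\Id_A$, $g=\bar{S}_A$ shows that $j_Ap_A$ is convolution invertible in $\mathrm{End}_\Cc(H)$ with inverse $j_A\bar{S}_Ap_A$. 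Symmetrically, $j_Bp_B$ is convolution invertible with inverse $j_B\bar{S}_Bp_B$.

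Hence $\Id_H=(j_Ap_A)*(j_Bp_B)$ is a convolution product of two invertible elements and so is itself convolution invertible, giving the antipode
\[
\un{S}_H \;=\; (j_B\bar{S}_Bp_B)*(j_A\bar{S}_Ap_A).
\]
I foresee no serious obstacle here: everything rests on the algebra/coalgebra morphism properties of $j_X$ and $p_X$ guaranteed by Propositions \ref{pr:spa} and \ref{pr:spadual}, together with the counit axioms for $\psi$ and $\phi$.
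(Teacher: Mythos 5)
Your argument is correct, but it takes a genuinely different route from the paper. The paper simply exhibits the candidate antipode $\psi\circ(\un{s}\ot\un{S})\circ\phi$ and verifies both identities of \equref{braidedantipode} by two direct graphical computations, using \equref{crossprodalg}, \equref{crossprodcoalg} and the convolution-inverse properties of $\un{S}$ and $\un{s}$. You instead work abstractly in the convolution monoid ${\rm End}_\Cc(H)$: the factorization $\Id_H=(j_Ap_A)*(j_Bp_B)$ is exactly the identity $\zeta\circ\zeta^{-1}=\Id_H$ underlying \prref{DrBespExtVers} (it needs, besides (\ref{eq:crossprodcoalg}.c), also one of (\ref{eq:crossprodalg}.c,d) to reduce $\psi(\un{\eta}_B\ot\un{\eta}_A)$, and the order of the two convolution factors is essential -- the reversed product is $\psi\phi$ up to counits, not the identity -- but you chose the right order), and the observation that $f\mapsto j_Xfp_X$ is a unital morphism of convolution monoids, because $j_X$ is an algebra map and $p_X$ a coalgebra map, is standard and valid verbatim in any braided monoidal category. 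This yields convolution invertibility of $\Id_H$, hence the antipode, without any graphical verification; moreover, if you unwind your inverse $(j_B\un{s}p_B)*(j_A\un{S}p_A)$ using the counit axioms and (\ref{eq:crossprodalg}.c,d), you recover precisely the paper's explicit formula $\psi\circ(\un{s}\ot\un{S})\circ\phi$. So your approach buys a shorter, more conceptual proof and explains where the antipode formula comes from, while the paper's computation has the merit of displaying the antipode explicitly from the outset and of being entirely self-contained at the level of the defining relations.
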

 
 \begin{proof}
Let $\un{S}$ be the convolution inverse of $\Id_A$, and $\un{s}$ the convolution inverse of $\Id_B$.
We then claim that 
$
\gbeg{2}{5}
\got{1}{A}\got{1}{B}\gnl
\gbrbox\gnl
\gmp{\un{s}}\gmp{\un{S}}\gnl
\gbrc\gnl
\gob{1}{A}\gob{1}{B}
\gend
$ 
is the antipode for $A\times_\psi^\phi B$. Indeed, we have
\[
\gbeg{4}{9}
\got{2}{A}\got{2}{B}\gnl
\gcmu\gcmu\gnl
\gcl{1}\gbrbox\gvac{2}\gcl{1}\gnl
\gbrbox\gvac{2}\gcl{1}\gcl{1}\gnl
\gmp{\un{s}}\gmp{\un{S}}\gcl{1}\gcl{1}\gnl
\gbrc\gcl{1}\gcl{1}\gnl
\gcl{1}\gbrc\gcl{1}\gnl
\gmu\gmu\gnl
\gob{2}{A}\gob{2}{B}
\gend
\equalupdown{\equref{crossprodalg}}{\equref{crossprodcoalg}}
\gbeg{4}{11}
\got{1}{A}\got{2}{B}\gnl
\gcl{1}\gcmu\gnl
\gbrbox\gvac{2}\gcl{1}\gnl
\gcl{1}\gcn{1}{1}{1}{2}\gcn{1}{1}{1}{3}\gnl
\gmp{\un{s}}\gcmu\gcl{1}\gnl
\gcl{1}\gmp{\un{S}}\gcl{1}\gcl{1}\gnl
\gcl{1}\gmu\gcl{1}\gnl
\gcn{1}{1}{1}{3}\gcn{1}{1}{2}{3}\gvac{1}\gcl{1}\gnl
\gvac{1}\gbrc\gcl{1}\gnl
\gvac{1}\gcl{1}\gmu\gnl
\gvac{1}\gob{1}{A}\gob{2}{B}
\gend
\equal{\equref{braidedantipode}}
\gbeg{3}{8}
\got{1}{A}\got{2}{B}\gnl
\gcl{1}\gcmu\gnl
\gbrbox\gvac{2}\gcl{1}\gnl
\gcl{1}\gcu{1}\gcl{1}\gnl
\gmp{\un{s}}\gu{1}\gcl{1}\gnl
\gbrc\gcl{1}\gnl
\gcl{1}\gmu\gnl
\gob{1}{A}\gob{2}{B}
\gend
\equalupdown{\equref{crossprodalg}}{\equref{crossprodcoalg}}
\gbeg{3}{5}
\got{1}{A}\got{2}{B}\gnl
\gcu{1}\gcmu\gnl
\gvac{1}\gmp{\un{s}}\gcl{1}\gnl
\gu{1}\gmu\gnl
\gob{1}{A}\gob{2}{B}
\gend
\equal{\equref{braidedantipode}}
\gbeg{2}{4}
\got{1}{A}\got{1}{B}\gnl
\gcu{1}\gcu{1}\gnl
\gu{1}\gu{1}\gnl
\gob{1}{A}\gob{1}{B}
\gend\hspace{1mm},
\]
and
\[
\gbeg{4}{9}
\got{2}{A}\got{2}{B}\gnl
\gcmu\gcmu\gnl
\gcl{1}\gbrbox\gvac{2}\gcl{1}\gnl
\gcl{1}\gcl{1}\gbrbox\gnl
\gcl{1}\gcl{1}\gmp{\un{s}}\gmp{\un{S}}\gnl
\gcl{1}\gcl{1}\gbrc\gnl
\gcl{1}\gbrc\gcl{1}\gnl
\gmu\gmu\gnl
\gob{2}{A}\gob{2}{A}
\gend
\equalupdown{\equref{crossprodalg}}{\equref{crossprodcoalg}}
\gbeg{4}{11}
\got{2}{A}\got{1}{B}\gnl
\gcmu\gcl{1}\gnl
\gcl{1}\gbrbox\gnl
\gcl{1}\gcn{1}{1}{1}{2}\gcn{1}{1}{1}{3}\gnl
\gcl{1}\gcmu\gcl{1}\gnl
\gcl{1}\gcl{1}\gmp{\un{s}}\gmp{\un{S}}\gnl
\gcl{1}\gmu\gcl{1}\gnl
\gcn{1}{1}{1}{3}\gcn{1}{1}{2}{3}\gvac{1}\gcl{1}\gnl
\gvac{1}\gcl{1}\gbrc\gnl
\gvac{1}\gmu\gcl{1}\gnl
\gvac{1}\gob{1}{A}\gob{2}{B}
\gend
\equal{\equref{braidedantipode}}
\gbeg{3}{9}
\got{2}{A}\got{1}{B}\gnl
\gcmu\gcl{1}\gnl
\gcl{1}\gbrbox\gnl
\gcl{1}\gcu{1}\gcl{1}\gnl
\gcl{1}\gvac{1}\gmp{\un{S}}\gnl
\gcl{1}\gu{1}\gcl{1}\gnl
\gcl{1}\gbrc\gnl
\gmu\gcl{1}\gnl
\gob{2}{A}\gob{1}{B}
\gend
\equalupdown{\equref{crossprodalg}}{\equref{crossprodcoalg}}
\gbeg{3}{5}
\got{2}{A}\got{1}{A}\gnl
\gcmu\gcu{1}\gnl
\gcl{1}\gmp{\un{S}}\gnl
\gmu\gu{1}\gnl
\gob{2}{A}\gob{1}{B}
\gend
\equal{\equref{braidedantipode}}
\gbeg{2}{4}
\got{1}{A}\got{1}{B}\gnl
\gcu{1}\gcu{1}\gnl
\gu{1}\gu{1}\gnl
\gob{1}{A}\gob{1}{B}
\gend\hspace{1mm}.
\]
\end{proof}

\begin{remark}\relabel{antpartneccconds}
Some of the sufficient conditions in \prref{whenacrossprodisHA} are also necessary. More precisely, if 
$A\times_\psi^\phi B$ admits 
$
\gbeg{2}{5}
\got{1}{A}\got{1}{B}\gnl
\gcl{1}\gcl{1}\gnl
\gsbox{2}\gnl
\gcl{1}\gcl{1}\gnl
\gob{1}{A}\gob{1}{B}
\gend
$ 
as antipode, then \equref{braidedantipode} specializes to 
\begin{equation}\eqlabel{defantcpHa}
\gbeg{4}{7}
\got{2}{A}\got{2}{B}\gnl
\gcmu\gcmu\gnl
\gcl{1}\gbrbox\gvac{2}\gcl{1}\gnl
\gsbox{2}\gvac{2}\gcl{1}\gcl{1}\gnl
\gcl{1}\gbrc\gcl{1}\gnl
\gmu\gmu\gnl
\gob{2}{A}\gob{2}{B}
\gend
=
\gbeg{2}{4}
\got{1}{A}\got{1}{B}\gnl
\gcu{1}\gcu{1}\gnl
\gu{1}\gu{1}\gnl
\gob{1}{A}\gob{1}{B}
\gend
=
\gbeg{4}{7}
\got{2}{A}\got{2}{B}\gnl
\gcmu\gcmu\gnl
\gcl{1}\gbrbox\gvac{2}\gcl{1}\gnl
\gcl{1}\gcl{1}\gsbox{2}\gnl
\gcl{1}\gbrc\gcl{1}\gnl
\gmu\gmu\gnl
\gob{2}{A}\gob{2}{B}
\gend
\hspace{1mm}.
\end{equation}
If we compose the first equality to the left with $\un{\va}_A\ot \Id_B$ and to the right with $\un{\eta}_A\ot \Id_B$, 
and the second equality to the left with $\Id_A\ot \un{\va}_B$ and to the right with $\Id_A\ot \un{\eta}_B$, 
we deduce that 
\begin{equation}\eqlabel{deriveddefantcpHa}
\gbeg{4}{7}
\gvac{2}\got{2}{B}\gnl
\gvac{1}\gu{1}\gcmu\gnl
\gu{1}\gbrbox\gvac{2}\gcl{1}\gnl
\gsbox{2}\gvac{2}\gcl{1}\gcl{1}\gnl
\gcu{1}\gbrc\gcl{1}\gnl
\gvac{1}\gcu{1}\gmu\gnl
\gvac{2}\gob{2}{B}
\gend
=
\gbeg{1}{4}
\got{1}{B}\gnl
\gcu{1}\gnl
\gu{1}\gnl
\gob{1}{B}
\gend
\hspace{2mm}{\rm and}\hspace{2mm}
\gbeg{4}{7}
\got{2}{A}\gnl
\gcmu\gu{1}\gnl
\gcl{1}\gbrbox\gvac{2}\gu{1}\gnl
\gcl{1}\gcl{1}\gsbox{2}\gnl
\gcl{1}\gbrc\gcu{1}\gnl
\gmu\gcu{1}\gnl
\gob{2}{A}
\gend
=
\gbeg{1}{4}
\got{1}{A}\gnl
\gcu{1}\gnl
\gu{1}\gnl
\gob{1}{A}
\gend\hspace{1mm}.
\end{equation}
This means that $\Id_B$ has a left inverse in ${\rm Hom}(B, B)$ and that
$\Id_A$ has a right inverse in ${\rm Hom}(A, A)$. At this moment, it remains unclear to us
whether these one-sided inverses are inverses. We will see in \seref{strHopfalgwithprof} that 
this is true in the case of a smash (co)product Hopf algebra. 
\end{remark}
\section{Cross product bialgebras and Hopf data}\selabel{crossprodvsHopfdatum}
\setcounter{equation}{0}
If $(A,B,\psi,\phi)$ is a cross product algebra-coalgebra datum, then $A$ is a left $B$-module and a left
$B$-comodule, and $B$ is a right $A$-module and a right $A$-comodule, see Lemmas \ref{le:action}
and \ref{le:coaction}. Now we can ask the following question: suppose that $A$ and $B$ are algebras
and coalgebras, that $A$ is a left $B$-module and a left
$B$-comodule, and $B$ is a right $A$-module and a right $A$-comodule. Is there a list of necessary
and sufficient conditions that these actions and coactions need to satisfy, so that they give rise
to bialgebra admissible tuple?\\
This question was partially answered in \cite{bespdrab1}. In \cite[Def. 2.5]{bespdrab1}, a list of axioms
is proposed, we call this list the Bespalov-Drabant list. If these axioms are satisfied, then $(A,B)$
is a Hopf pair. If $(A,B,\psi,\phi)$ is a bialgebra admissible tuple, then $(A,B)$, with actions and
coactions given by (\ref{eq:action1}-\ref{eq:coaction1}), is a Hopf pair, see \cite[Prop. 2.7]{bespdrab1}.
Moreover, the - crucial - conditions (\ref{eq:neccconds}.g,c) show that $\psi$ and $\phi$ can be
recovered from the actions and coactions.\\
Conversely, given a Hopf pair, we can produce a cross product algebra-coalgebra datum
$(A,B,\psi,\phi)$, but we don't know whether it is a bialgebra admissible tuple, see \cite[Prop. 2.6]{bespdrab1}.
Otherwise stated, we obtain a cross product algebra and coalgebra, but we don't know whether it is a bialgebra.
We could also say the following: the Bespalov-Drabant list is necessary, but not sufficient.
Using the results of \seref{crossprodbialg}, we are able to present an alternative list of necessary and
sufficient conditions. Basically, this is a - technical - restatement of \thref{firstsetequivcond}.
The computations will turned out to be quite lengthy, and this
is why we decided to divide them over several Lemmas.

\begin{lemma}\lelabel{implic1}
Let $A, B$ be algebras and coalgebras such that 
$\un{\va}_X\circ \un{\eta}_X=\Id_{\un{1}}$, 
$\un{\va}_X\circ \un{m}_X=\un{\va}_X\ot \un{\va}_X$ and $\un{\Delta}_X\circ \un{\eta}_X=\un{\eta}_X\ot \un{\eta}_X$, 
for all $X\in \{A, B\}$. Furthermore, assume that 
$\psi: B\ot A\ra A\ot B$ and $\phi : A\ot B\ra B\ot A$ are morphisms in $\Cc$ satisfying 
(\ref{eq:crossprodalg}.c-d) and (\ref{eq:crossprodcoalg}.c-d). Then\\
(i) 
$
\gbeg{3}{5}
\gvac{1}\got{1}{B}\got{1}{A}\gnl
\gvac{1}\gbrc\gnl
\gvac{1}\gcn{1}{1}{1}{0}\gcl{1}\gnl
\gcmu\gcl{1}\gnl
\gob{1}{A}\gob{1}{A}\gob{1}{B}
\gend
=
\gbeg{5}{7}
\gvac{1}\got{2}{B}\got{2}{A}\gnl
\gu{1}\gcmu\gcmu\gnl
\gbrbox\gvac{2}\gbr\gcl{1}\gnl
\gcl{1}\gbr\gbrc\gnl
\gbrc\gmu\gcl{1}\gnl
\gcl{1}\gcu{1}\gcn{1}{1}{2}{2}\gvac{1}\gcl{1}\gnl
\gob{1}{A}\gvac{1}\gob{2}{A}\gob{1}{B}
\gend
$ 
if and only if (\ref{eq:neccconds}.g) and (\ref{eq:BespDrabComp}.e) hold;\\
(ii) 
$
\gbeg{3}{5}
\got{1}{B}\got{1}{A}\gnl
\gbrc\gnl
\gcl{1}\gcn{1}{1}{1}{2}\gnl
\gcl{1}\gcmu\gnl
\gob{1}{A}\gob{1}{B}\gob{1}{B}
\gend
=
\gbeg{5}{7}
\got{2}{B}\got{2}{A}\gnl
\gcmu\gcmu\gu{1}\gnl
\gcl{1}\gbr\gbrbox\gnl
\gbrc\gbr\gcl{1}\gnl
\gcl{1}\gmu\gbrc\gnl
\gcl{1}\gcn{1}{1}{2}{2}\gvac{1}\gcu{1}\gcl{1}\gnl
\gob{1}{A}\gob{2}{B}\gvac{1}\gob{1}{B}
\gend
$ 
if and only if (\ref{eq:neccconds}.g) and (\ref{eq:BespDrabComp}.f) hold;\\
(iii) 
$
\gbeg{3}{5}
\got{1}{A}\got{1}{A}\got{1}{B}\gnl
\gmu\gcl{1}\gnl
\gcn{1}{1}{2}{3}\gvac{1}\gcl{1}\gnl
\gvac{1}\gbrbox\gnl
\gvac{1}\gob{1}{B}\gob{1}{A}
\gend
=
\gbeg{5}{7}
\got{1}{A}\gvac{1}\got{2}{A}\got{1}{B}\gnl
\gcl{1}\gu{1}\gcmu\gcl{1}\gnl
\gbrbox\gvac{2}\gcl{1}\gbrbox\gnl
\gcl{1}\gbr\gcl{1}\gcl{1}\gnl
\gbrc\gbr\gcl{1}\gnl
\gcu{1}\gmu\gmu\gnl
\gvac{1}\gob{2}{B}\gob{2}{A}
\gend
$ 
if and only if (\ref{eq:neccconds}.c) and (\ref{eq:BespDrabComp}.c) hold;\\
(iv) 
$
\gbeg{3}{5}
\got{1}{A}\got{1}{B}\got{1}{B}\gnl
\gcl{1}\gmu\gnl
\gcl{1}\gcn{1}{1}{2}{1}\gnl
\gbrbox\gnl
\gob{1}{B}\gob{1}{A}
\gend
=
\gbeg{5}{7}
\got{1}{A}\got{2}{B}\gvac{1}\got{1}{B}\gnl
\gcl{1}\gcmu\gu{1}\gcl{1}\gnl
\gbrbox\gvac{2}\gcl{1}\gbrbox\gnl
\gcl{1}\gcl{1}\gbr\gcl{1}\gnl
\gcl{1}\gbr\gbrc\gnl
\gmu\gmu\gcu{1}\gnl
\gob{2}{B}\gob{2}{A}
\gend
$ 
if and only if (\ref{eq:neccconds}.c) and (\ref{eq:BespDrabComp}.d) hold. 
\end{lemma}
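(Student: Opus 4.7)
The plan is to prove each of the four equivalences by showing both directions separately, and to observe that parts (ii), (iii), (iv) reduce to part (i) by formal duality: (iii) is the left-right reflected/coalgebra-dualized version of (i) (swap $A\leftrightarrow B$ roles in the position of multiplications and the condition on $\phi$ versus $\psi$), while (ii) and (iv) are obtained from (i) and (iii) by a similar mirroring. Thus the central work is in part (i); the remaining three will follow by symmetric diagrammatic manipulations using the dual hypotheses \equuref{crossprodalg}{c-d} and \equuref{crossprodcoalg}{c-d}.

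For the forward direction of (i): assume the displayed identity of (i) holds. To extract \equuref{neccconds}{g}, compose both sides to the left with $\Id_A\ot\un{\va}_A\ot\Id_B$. On the LHS the counit kills one output leg of $\un{\Delta}_A$, leaving just $\psi$ on the left side of (i). On the RHS, using $\un{\va}_A\un{m}_A=\un{\va}_A\ot\un{\va}_A$ (since $\un{\va}_A$ is an algebra morphism, a consequence of \equref{multcounitcomp}) together with counit axioms and (\ref{eq:crossprodcoalg}.c--d), the $\phi$ at the top left collapses to produce exactly the right-hand side of \equuref{neccconds}{g}. To extract \equuref{BespDrabComp}{e}, compose instead to the left with $\Id_{A\ot A}\ot\un{\va}_B$ and simplify: the $\un{\va}_B$ passes through $\un{m}_B$ to annihilate the two $B$-outputs on the RHS via $\un{\va}_B\un{m}_B=\un{\va}_B\ot\un{\va}_B$, so only the remaining $\phi$-$\psi$ structure survives and one recognizes precisely (\ref{eq:BespDrabComp}.e).

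For the backward direction of (i): start from the LHS of (i), substitute the expression for $\psi$ given by \equuref{neccconds}{g} to convert it into a composite of $\phi$'s, $\un{\Delta}_A$, $\un{\Delta}_B$ and counits; then apply the comodule-algebra compatibility \equuref{BespDrabComp}{e} to reorganize one of the two $\phi$'s appearing, using the naturality of the braiding and coassociativity of $\un{\Delta}_A,\un{\Delta}_B$ to manipulate the diagram until one reaches the RHS of (i). The manipulation is analogous in spirit to the chain of braided rewritings in the proof of \prref{4.5}: successive applications of \equref{nat1cup}, \equref{nat2cup}, unit/counit cancellations (using $\un{\va}_X\un{\eta}_X=\Id_{\un{1}}$), and coassociativity suffice to pass between the two sides.

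The main obstacle is not conceptual but purely bookkeeping: the diagrams contain many strands and many braidings, so great care is needed in tracking which copy of $\un{\Delta}_A$ or $\un{m}_B$ one is manipulating at each step. The key structural observation that makes the argument go through is that \equuref{neccconds}{g} and \equuref{neccconds}{c} play the role of \emph{definitional equations} for $\psi$ and $\phi$ respectively in terms of the actions and coactions induced by Lemmas \ref{le:action}--\ref{le:coaction}; once one has these at hand, the compatibility relations \equuref{BespDrabComp}{c}--\equuref{BespDrabComp}{f} are exactly what is needed to promote the pointwise identities to the full braided identities in (i)--(iv). Parts (ii), (iii), (iv) are then obtained by the mirror/duality procedure described at the outset, each requiring the analogous compatibility (\ref{eq:BespDrabComp}.f), (\ref{eq:BespDrabComp}.c), (\ref{eq:BespDrabComp}.d) together with the corresponding structural equation from \equref{neccconds}.
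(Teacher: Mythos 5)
Your overall strategy coincides with the paper's: prove (i) and deduce (ii)--(iv) by mirroring and duality; obtain the direct implication by composing the identity with counits on suitable legs; obtain the converse by expanding $\psi$ via \equuref{neccconds}{g}, invoking \equuref{BespDrabComp}{e}, and rewriting with naturality and coassociativity. Your choice of leg for extracting \equuref{neccconds}{g}, namely $\Id_A\ot\un{\va}_A\ot\Id_B$, is in fact the correct one: the right-hand side then collapses using the hypotheses $\un{\va}_A\un{m}_A=\un{\va}_A\ot\un{\va}_A$, \equuref{crossprodcoalg}{c} and $\un{\va}_A\un{\eta}_A=\Id_{\un{1}}$ (the paper's own text says $\un{\va}_A\ot\Id_{A\ot B}$, which does not simplify under the stated hypotheses, so your version is if anything cleaner). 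One small slip in the forward direction: for \equuref{BespDrabComp}{e} you speak of $\un{\va}_B$ ``passing through $\un{m}_B$'', but the right-hand side of (i) contains no $\un{m}_B$ (its only multiplication is an $\un{m}_A$); composing with $\Id_{A\ot A}\ot\un{\va}_B$ yields \equuref{BespDrabComp}{e} essentially on the nose.

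The genuine gap is the converse of (i), which is the substance of the lemma: the paper devotes a multi-step graphical computation to it, while you only assert that successive applications of \equref{nat1cup}, \equref{nat2cup}, unit/counit cancellations and coassociativity suffice. Moreover your description of that chase is inaccurate in ways suggesting the intermediate diagrams were not traced: substituting \equuref{neccconds}{g} does not convert the left-hand side into ``a composite of $\phi$'s'' --- it produces two copies of $\psi$ together with $\un{\Delta}_B\ot\un{\Delta}_A$ and counits; the morphism $\phi$ only enters after you apply \equuref{BespDrabComp}{e} to the resulting composite $\un{\Delta}_A\circ(\Id_A\ot\un{\va}_B)\psi$, and the computation is closed by using \equuref{neccconds}{g} a second time (in the opposite direction), together with \equref{crossprodcoalg} and \equref{nat2cup}, to recombine the two $\psi$--counit blocks into a single $\psi$. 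As written, the converse is a plausibility claim rather than a proof; you need to exhibit this chain of rewritings (and its mirrored/dual versions for (ii)--(iv)) to complete the argument.
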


\begin{proof}
We only prove (i). The proof of (ii), (iii) and (iv) is similar. Actually (iii) and (iv) follow
from (i) and (ii) by duality arguments.\\
The direct implication in (i) follows easily by composing the given equality to the left
with $\un{\va}_A\ot \Id_{A\ot B}$, to obtain (\ref{eq:neccconds}.g), and
with $\Id_{A\ot A}\ot \un{\va}_B$, to obtain (\ref{eq:BespDrabComp}.e).\\
To prove the converse, we compute
\begin{eqnarray*}
&&\hspace*{-3mm}
\gbeg{3}{5}
\gvac{1}\got{1}{B}\got{1}{A}\gnl
\gvac{1}\gbrc\gnl
\gvac{1}\gcn{1}{1}{1}{0}\gcl{1}\gnl
\gcmu\gcl{1}\gnl
\gob{1}{A}\gob{1}{A}\gob{1}{B}
\gend
\equal{(\ref{eq:neccconds}.g)}
\gbeg{5}{7}
\gvac{1}\got{2}{B}\got{2}{A}\gnl
\gvac{1}\gcmu\gcmu\gnl
\gvac{1}\gcl{1}\gbr\gcl{1}\gnl
\gvac{1}\gbrc\gbrc\gnl
\gvac{1}\gcn{1}{1}{1}{0}\gcu{1}\gcu{1}\gcl{1}\gnl
\gcmu\gvac{2}\gcl{1}\gnl
\gob{1}{A}\gob{1}{A}\gvac{2}\gob{1}{B}
\gend
\equal{(\ref{eq:BespDrabComp}.e)}
\gbeg{7}{10}
\gvac{3}\got{2}{B}\got{2}{A}\gnl
\gvac{3}\gcmu\gcmu\gnl
\gvac{3}\gcn{1}{1}{1}{-2}\gbr\gcl{1}\gnl
\gvac{1}\gcmu\gvac{1}\gcn{1}{1}{1}{0}\gbrc\gnl
\gu{1}\gcl{1}\gcl{1}\gcmu\gcu{1}\gcl{1}\gnl
\gbrbox\gvac{2}\gbr\gcl{1}\gvac{1}\gcl{1}\gnl
\gcl{1}\gbr\gbrc\gvac{1}\gcl{1}\gnl
\gbrc\gmu\gcu{1}\gvac{1}\gcl{1}\gnl
\gcl{1}\gcu{1}\gcn{1}{1}{2}{2}\gvac{3}\gcl{1}\gnl
\gob{1}{A}\gvac{1}\gob{2}{A}\gvac{2}\gob{1}{B}
\gend\\
&&\hspace*{3mm}
=
\gbeg{7}{12}
\gvac{1}\got{2}{B}\gvac{2}\got{2}{A}\gnl
\gu{1}\gcmu\gvac{2}\gcmu\gnl
\gbrbox\gvac{2}\gcn{1}{1}{1}{4}\gvac{2}\gcl{1}\gcl{1}\gnl
\gcl{1}\gcl{1}\gvac{1}\gcmu\gcl{1}\gcl{1}\gnl
\gcl{1}\gcl{1}\gvac{1}\gcl{1}\gbr\gcl{1}\gnl
\gcl{1}\gcl{1}\gvac{1}\gcn{1}{2}{1}{-1}\gcn{1}{1}{1}{0}\gbrc\gnl
\gcl{1}\gcl{1}\gvac{1}\gcmu\gcu{1}\gcl{5}\gnl
\gcl{1}\gcl{1}\gbr\gcl{1}\gnl
\gcl{1}\gbr\gbrc\gnl
\gbrc\gmu\gcu{1}\gnl
\gcl{1}\gcu{1}\gcn{1}{1}{2}{2}\gnl
\gob{1}{A}\gvac{1}\gob{2}{A}\gvac{2}\gob{1}{B}
\gend
\equal{\equref{nat2cup}}
\gbeg{7}{10}
\gvac{1}\got{2}{B}\gvac{2}\got{2}{A}\gnl
\gu{1}\gcmu\gvac{2}\gcmu\gnl
\gbrbox\gvac{2}\gcn{1}{1}{1}{2}\gvac{2}\gcn{1}{1}{1}{0}\gcl{1}\gnl
\gcl{1}\gcl{1}\gcmu\gcmu\gcl{1}\gnl
\gcl{1}\gcl{1}\gcl{1}\gbr\gcl{1}\gcl{1}\gnl
\gcl{1}\gcl{1}\gbr\gbr\gcl{1}\gnl
\gcl{1}\gbr\gbrc\gbrc\gnl
\gbrc\gmu\gcu{1}\gcu{1}\gcl{1}\gnl
\gcl{1}\gcu{1}\gcn{1}{1}{2}{2}\gvac{3}\gcl{1}\gnl
\gob{1}{A}\gvac{1}\gob{2}{A}\gvac{2}\gob{1}{B}
\gend
=
\gbeg{7}{10}
\gvac{1}\got{2}{B}\gvac{1}\got{2}{A}\gnl
\gu{1}\gcmu\gvac{1}\gcmu\gnl
\gbrbox\gvac{2}\gcn{1}{1}{1}{2}\gvac{1}\gcl{1}\gcn{1}{1}{1}{2}\gnl
\gcl{1}\gcl{1}\gcmu\gcl{1}\gcmu\gnl
\gcl{1}\gcl{1}\gcl{1}\gbr\gcl{1}\gcl{1}\gnl
\gcl{1}\gcl{1}\gbr\gbr\gcl{1}\gnl
\gcl{1}\gbr\gbrc\gbrc\gnl
\gbrc\gmu\gcu{1}\gcu{1}\gcl{1}\gnl
\gcl{1}\gcu{1}\gcn{1}{1}{2}{2}\gvac{3}\gcl{1}\gnl
\gob{1}{A}\gvac{1}\gob{2}{A}\gvac{2}\gob{1}{B}
\gend\\
&&\hspace{3mm}
\equal{\equref{nat2cup}}
\gbeg{7}{10}
\gvac{1}\got{2}{B}\got{2}{A}\gnl
\gu{1}\gcmu\gcmu\gnl
\gbrbox\gvac{2}\gbr\gcn{1}{1}{1}{4}\gnl
\gcl{1}\gcl{1}\gcl{1}\gcn{1}{1}{1}{2}\gvac{1}\gcmu\gnl
\gcl{1}\gbr\gcmu\gcl{1}\gcl{1}\gnl
\gbrc\gcl{1}\gcl{1}\gbr\gcl{1}\gnl
\gcl{1}\gcu{1}\gcl{1}\gbrc\gbrc\gnl
\gcl{1}\gvac{1}\gmu\gcu{1}\gcu{1}\gcl{1}\gnl
\gcl{1}\gvac{1}\gcn{1}{1}{2}{2}\gvac{3}\gcl{1}\gnl
\gob{1}{A}\gvac{1}\gob{2}{A}\gvac{2}\gob{1}{B}
\gend
\equal{(\ref{eq:neccconds}.g)}
\gbeg{5}{7}
\gvac{1}\got{2}{B}\got{2}{A}\gnl
\gu{1}\gcmu\gcmu\gnl
\gbrbox\gvac{2}\gbr\gcl{1}\gnl
\gcl{1}\gbr\gbrc\gnl
\gbrc\gmu\gcl{1}\gnl
\gcl{1}\gcu{1}\gcn{1}{1}{2}{2}\gvac{1}\gcl{1}\gnl
\gob{1}{A}\gvac{1}\gob{2}{A}\gob{1}{B}
\gend\hspace{2mm},
\end{eqnarray*}  
as required. Note that we used 
the coassociativity of $\un{\Delta}_B$ and $\un{\Delta}_A$ in the third and the fifth equality.
\end{proof}

Our next aim is to show that (\ref{eq:crossprodalg}.a-b) are satisfied if
(\ref{eq:neccconds}.g), (\ref{eq:BespDrabComp}.a,b,e,f), and
\begin{eqnarray}
&&\hspace{2mm}\mbox{\rm (a)}\hspace{2mm}
\gbeg{3}{6}
\got{1}{B}\got{1}{B}\got{1}{A}\gnl
\gmu\gcl{1}\gnl
\gcn{1}{1}{2}{3}\gvac{1}\gcl{1}\gnl
\gvac{1}\gbrc\gnl
\gvac{1}\gcl{1}\gcu{1}\gnl
\gvac{1}\gob{1}{A}
\gend
=
\gbeg{3}{5}
\got{1}{B}\got{1}{B}\got{1}{A}\gnl
\gcl{1}\gbrc\gnl
\gbrc\gcu{1}\gnl
\gcl{1}\gcu{1}\gnl
\gob{1}{A}
\gend
\hspace{2mm},\hspace{2mm}\mbox{\rm (b)}\hspace{2mm}
\gbeg{3}{6}
\got{1}{B}\got{1}{B}\got{1}{A}\gnl
\gmu\gcl{1}\gnl
\gcn{1}{1}{2}{3}\gvac{1}\gcl{1}\gnl
\gvac{1}\gbrc\gnl
\gvac{1}\gcu{1}\gcl{1}\gnl
\gvac{2}\gob{1}{B}
\gend
=
\gbeg{3}{5}
\got{1}{B}\got{1}{B}\got{1}{A}\gnl
\gcl{1}\gbrc\gnl
\gbrc\gcl{1}\gnl
\gcu{1}\gmu\gnl
\gvac{1}\gob{2}{B}
\gend
\hspace{2mm},\nonumber\\
&&\eqlabel{crossprodalg2}\\
&&\hspace{2mm}\mbox{\rm (c)}\hspace{2mm}
\gbeg{3}{6}
\got{1}{B}\got{1}{A}\got{1}{A}\gnl
\gcl{1}\gmu\gnl
\gcl{1}\gcn{1}{1}{2}{1}\gnl
\gbrc\gnl
\gcu{1}\gcl{1}\gnl
\gvac{1}\gob{1}{B}
\gend
=
\gbeg{3}{5}
\got{1}{B}\got{1}{A}\got{1}{A}\gnl
\gbrc\gcl{1}\gnl
\gcu{1}\gbrc\gnl
\gvac{1}\gcu{1}\gcl{1}\gnl
\gvac{2}\gob{1}{B}
\gend
\hspace{2mm},\hspace{2mm}\mbox{\rm (d)}\hspace{2mm}
\gbeg{3}{6}
\got{1}{B}\got{1}{A}\got{1}{A}\gnl
\gcl{1}\gmu\gnl
\gcl{1}\gcn{1}{1}{2}{1}\gnl
\gbrc\gnl
\gcl{1}\gcu{1}\gnl
\gob{1}{A}
\gend
=
\gbeg{3}{5}
\got{1}{B}\got{1}{A}\got{1}{A}\gnl
\gbrc\gcl{1}\gnl
\gcl{1}\gbrc\gnl
\gmu\gcu{1}\gnl
\gob{2}{A}
\gend
\hspace{2mm}\nonumber
\end{eqnarray}
are satisfied. More precisely, we have the following result.

\begin{lemma}\lelabel{implic2}
Under the same hypotheses as in \leref{implic1}, we have\\
(i) (\ref{eq:neccconds}.g), (\ref{eq:BespDrabComp}.b,e) and (\ref{eq:crossprodalg2}.a-c) imply
(\ref{eq:crossprodalg}.a);\\
(i) (\ref{eq:neccconds}.g), (\ref{eq:BespDrabComp}.a,f) and (\ref{eq:crossprodalg2}.a,c,d) imply
(\ref{eq:crossprodalg}.b).
\end{lemma}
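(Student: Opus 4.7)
The plan is to prove (i) by a direct diagrammatic computation; (ii) will follow by the parallel argument with the roles of multiplications and coactions swapped. The strategy is dictated by the very shape of (\ref{eq:neccconds}.g): it expresses $\psi$ as the composite of $\un{\Delta}_A$, $\un{\Delta}_B$, the ordinary braiding $c$, two occurrences of $\phi$ and two counits $\un{\va}_A,\un{\va}_B$. Thus the first step is to substitute (\ref{eq:neccconds}.g) on every occurrence of $\psi$ appearing in either side of (\ref{eq:crossprodalg}.a). Both sides then become diagrams built only from the structure morphisms of $A$ and $B$ and the single ``primitive'' morphism $\phi$, and the problem is reduced to proving an identity in that smaller alphabet.

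On the left-hand side, the substitution produces a configuration in which $\un{m}_B$ sits directly below a $\un{\Delta}_B$. This is precisely the pattern resolved by the algebra-coalgebra compatibility (\ref{eq:BespDrabComp}.b), which rewrites $\un{\Delta}_B\circ\un{m}_B$ (in the presence of the auxiliary $\phi$-data) as two separate $\un{\Delta}_B$'s with an interior $\un{m}_B$ conjugated by $\phi$ and $c$. After this rewrite, the internal subdiagrams that appear have the shape of the left-hand sides of (\ref{eq:crossprodalg2}.a) and (\ref{eq:crossprodalg2}.b): one factor of $\un{m}_B$ meeting $\psi$ and closed off respectively with $\un{\va}_A$ or $\un{\va}_B$. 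Applying (\ref{eq:crossprodalg2}.a--b) at these positions distributes the $\un{m}_B$ across two copies of $\psi$.

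At this point the remaining diagram still contains an interior $\un{m}_A$-type fragment coming from the second $\psi$ on the right-hand side; this is handled by (\ref{eq:crossprodalg2}.c). Finally, the auxiliary module-coalgebra compatibility (\ref{eq:BespDrabComp}.e) and a second application of (\ref{eq:neccconds}.g), read in reverse, re-assemble the resulting coaction/counit fragments back into a pair of $\psi$'s; repeated use of coassociativity of $\un{\Delta}_A,\un{\Delta}_B$ and the naturality equations \equref{nat1cup}--\equref{nat2cup} allow the output strands to be matched with those of the expansion of the right-hand side of (\ref{eq:crossprodalg}.a), completing (i). For (ii), the mirror argument applies: expand $\psi$ via (\ref{eq:neccconds}.g), use (\ref{eq:BespDrabComp}.a) to split the interior $\un{\Delta}_A\circ \un{m}_A$, apply (\ref{eq:crossprodalg2}.c--d) to distribute $\un{m}_A$ across the two $\psi$'s, and then re-collect using (\ref{eq:BespDrabComp}.f), (\ref{eq:crossprodalg2}.a) and (\ref{eq:neccconds}.g).

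The main obstacle is not conceptual but organizational: each substitution of $\psi$ via (\ref{eq:neccconds}.g) adds four extra strands and an extra $\un{\Delta}$ on each of $A,B$, so the intermediate diagrams are large and several valid simplifications compete at each step. The key is to apply (\ref{eq:BespDrabComp}.b) (respectively (\ref{eq:BespDrabComp}.a)) immediately after the expansion in order to localize the interior $\un{m}_B$ (respectively $\un{m}_A$) so that the hypotheses (\ref{eq:crossprodalg2}.a--c) (respectively (\ref{eq:crossprodalg2}.a,c,d)) become directly applicable; coassociativity and naturality of $c$ are used freely to align the strands, but no new identity beyond those listed in the statement is required.
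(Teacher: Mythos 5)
Your plan follows essentially the same route as the paper's proof: expand the outer $\psi$ in (\ref{eq:crossprodalg}.a) (resp.\ (\ref{eq:crossprodalg}.b)) by means of (\ref{eq:neccconds}.g), resolve the resulting composite $\un{\Delta}_B\circ\un{m}_B$ (resp.\ $\un{\Delta}_A\circ\un{m}_A$) with the algebra-coalgebra compatibility (\ref{eq:BespDrabComp}.b) (resp.\ (\ref{eq:BespDrabComp}.a)), distribute the multiplication over the copies of $\psi$ using (\ref{eq:crossprodalg2}.a--c) (resp.\ (\ref{eq:crossprodalg2}.a,c,d)) together with naturality and (co)associativity, and reassemble at the end using (\ref{eq:BespDrabComp}.e) (resp.\ (\ref{eq:BespDrabComp}.f)) and (\ref{eq:neccconds}.g). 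That reassembly is precisely the identity established in \leref{implic1}, which the paper invokes at the step marked $(*)$; since the hypotheses of the present lemma are those of \leref{implic1}, you can simply cite it rather than re-derive it, but either way the argument is the same. (The paper writes out part (ii) and leaves (i) as the mirror image; you do the reverse, which is immaterial.)

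One correction to your framing, however: (\ref{eq:neccconds}.g) does \emph{not} express $\psi$ through $\phi$. Its right-hand side is built from $\un{\Delta}_B\ot\un{\Delta}_A$, the braiding $c$, and two counit-truncated copies of $\psi$ itself, i.e.\ the two actions of \leref{action}; the morphism $\phi$ does not occur in it at all. Consequently the claim that after substituting (\ref{eq:neccconds}.g) ``both sides become diagrams built only from the structure morphisms of $A$ and $B$ and the single primitive morphism $\phi$'' is false -- the intermediate diagrams still contain $\psi$ (indeed the goal (\ref{eq:crossprodalg}.a) is itself a pure-$\psi$ identity), and $\phi$ only enters through the compatibilities (\ref{eq:BespDrabComp}.a,b,e,f) used along the way, cancelling out again at the end. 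This misreading does not derail the computation, because every subsequent rewrite you list is stated in terms of the correct hypotheses, but the purported ``reduction to a $\phi$-only alphabet'' should be dropped from the argument.
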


\begin{proof}
We prove (ii), the proof of (i) is similar. The proof of (ii) works as follows.
\begin{eqnarray*}
&&\hspace*{-3mm}
\gbeg{3}{5}
\got{1}{B}\got{1}{A}\got{1}{A}\gnl
\gcl{1}\gmu\gnl
\gcl{1}\gcn{1}{1}{2}{1}\gnl
\gbrc\gnl
\gob{1}{A}\gob{1}{B}
\gend
\equal{(\ref{eq:neccconds}.g)}
\gbeg{4}{7}
\got{2}{B}\got{1}{A}\got{1}{A}\gnl
\gcmu\gmu\gnl
\gcl{1}\gcl{1}\gcmu\gnl
\gcl{1}\gbr\gcl{1}\gnl
\gbrc\gbrc\gnl
\gcl{1}\gcu{1}\gcu{1}\gcl{1}\gnl
\gob{1}{A}\gvac{2}\gob{1}{B}
\gend
\equal{(\ref{eq:BespDrabComp}.a)}
\gbeg{7}{11}
\got{2}{B}\got{2}{A}\gvac{1}\got{2}{A}\gnl
\gcmu\gcmu\gu{1}\gcmu\gnl
\gcl{1}\gcl{1}\gcl{1}\gbrbox\gvac{2}\gcl{1}\gcl{1}\gnl
\gcl{1}\gcl{1}\gcl{1}\gcl{1}\gbr\gcl{1}\gnl
\gcl{1}\gcl{1}\gcl{1}\gbrc\gmu\gnl
\gcl{1}\gcl{1}\gmu\gcu{1}\gcn{1}{2}{2}{2}\gnl
\gcl{1}\gcl{1}\gcn{1}{1}{2}{1}\gnl
\gcl{1}\gbr\gcn{1}{1}{6}{1}\gnl
\gbrc\gbrc\gnl
\gcl{1}\gcu{1}\gcu{1}\gcl{1}\gnl
\gob{1}{A}\gvac{2}\gob{1}{B}
\gend
\equal{(\ref{eq:crossprodalg2}.c)}
\gbeg{7}{12}
\got{2}{B}\got{2}{A}\gvac{1}\got{2}{A}\gnl
\gcn{1}{3}{2}{2}\gvac{1}\gcmu\gu{1}\gcmu\gnl
\gvac{2}\gcl{1}\gbrbox\gvac{2}\gcl{1}\gcl{1}\gnl
\gvac{2}\gcl{1}\gcl{1}\gbr\gcl{1}\gnl
\gcmu\gcl{1}\gbrc\gcl{1}\gcl{1}\gnl
\gcl{1}\gcl{1}\gmu\gcu{1}\gcl{1}\gcl{1}\gnl
\gcl{1}\gcl{1}\gcn{1}{1}{2}{1}\gvac{2}\gcl{1}\gcl{1}\gnl
\gcl{1}\gbr\gvac{2}\gcn{1}{1}{1}{-3}\gcn{1}{2}{1}{-3}\gnl
\gbrc\gbrc\gnl
\gcl{1}\gcu{1}\gcu{1}\gbrc\gnl
\gcl{1}\gvac{2}\gcu{1}\gcl{1}\gnl
\gob{1}{A}\gvac{3}\gob{1}{B}
\gend\\
&&
\equal{\equref{nat1cup}}
\gbeg{7}{12}
\got{2}{B}\got{2}{A}\gvac{1}\got{2}{A}\gnl
\gcn{1}{3}{2}{2}\gvac{1}\gcmu\gu{1}\gcmu\gnl
\gvac{2}\gcl{1}\gbrbox\gvac{2}\gcl{1}\gcl{1}\gnl
\gvac{2}\gcl{1}\gcl{1}\gbr\gcl{1}\gnl
\gcmu\gcl{1}\gbrc\gcl{1}\gcl{1}\gnl
\gcl{1}\gbr\gcl{1}\gcu{1}\gcl{1}\gcl{1}\gnl
\gcl{1}\gcl{1}\gbr\gvac{1}\gcn{1}{1}{1}{-1}\gcn{1}{2}{1}{-1}\gnl
\gcl{1}\gmu\gbrc\gnl
\gcl{1}\gcn{1}{1}{2}{1}\gvac{1}\gcu{1}\gbrc\gnl
\gbrc\gvac{2}\gcu{1}\gcl{2}\gnl
\gcl{1}\gcu{1}\gnl
\gob{1}{A}\gvac{4}\gob{1}{B}
\gend
\equal{(\ref{eq:crossprodalg2}.d)}
\gbeg{7}{11}
\got{2}{B}\got{2}{A}\gvac{1}\got{2}{A}\gnl
\gcmu\gcmu\gu{1}\gcmu\gnl
\gcl{1}\gbr\gbrbox\gvac{2}\gcl{1}\gcl{1}\gnl
\gcl{1}\gcl{1}\gcl{1}\gcl{1}\gbr\gcl{1}\gnl
\gcl{1}\gcl{1}\gcl{1}\gbrc\gcl{1}\gcl{1}\gnl
\gbrc\gbr\gcu{1}\gcn{1}{2}{1}{-1}\gcn{1}{3}{1}{-1}\gnl
\gcl{1}\gbrc\gcl{1}\gnl
\gmu\gcu{1}\gbrc\gnl
\gcn{1}{2}{2}{2}\gvac{2}\gcu{1}\gbrc\gnl
\gvac{4}\gcu{1}\gcl{1}\gnl
\gob{2}{A}\gvac{3}\gob{1}{B}
\gend
\equal{\equref{nat1cup}}
\gbeg{7}{9}
\got{2}{B}\got{2}{A}\gvac{1}\got{2}{A}\gnl
\gcmu\gcmu\gu{1}\gcmu\gnl
\gcl{1}\gbr\gbrbox\gvac{2}\gcl{1}\gcl{1}\gnl
\gbrc\gbr\gbr\gcl{1}\gnl
\gcl{1}\gcl{1}\gcl{1}\gbr\gcl{1}\gcl{1}\gnl
\gcl{1}\gcl{1}\gbrc\gbrc\gcl{1}\gnl
\gcl{1}\gbrc\gcu{1}\gcu{1}\gbrc\gnl
\gmu\gcu{1}\gvac{2}\gcu{1}\gcl{1}\gnl
\gob{2}{A}\gvac{4}\gob{1}{B}
\gend\\
&&
\equal{(\ref{eq:crossprodalg2}.a)}
\gbeg{7}{9}
\got{2}{B}\got{2}{A}\gvac{1}\got{2}{A}\gnl
\gcmu\gcmu\gu{1}\gcmu\gnl
\gcl{1}\gbr\gbrbox\gvac{2}\gcl{1}\gcl{1}\gnl
\gbrc\gbr\gbr\gcl{1}\gnl
\gcl{1}\gmu\gbr\gcl{1}\gcl{1}\gnl
\gcl{1}\gcn{1}{1}{2}{1}\gvac{1}\gcn{1}{1}{1}{-1}\gbrc\gcl{1}\gnl
\gcl{1}\gbrc\gvac{1}\gcu{1}\gbrc\gnl
\gmu\gcu{1}\gvac{2}\gcu{1}\gcl{1}\gnl
\gob{2}{A}\gvac{4}\gob{1}{B}
\gend
\equal{\equref{nat1cup}}
\gbeg{7}{10}
\got{2}{B}\got{2}{A}\gvac{1}\got{2}{A}\gnl
\gcmu\gcmu\gu{1}\gcmu\gnl
\gcl{1}\gbr\gbrbox\gvac{2}\gcl{1}\gcl{1}\gnl
\gbrc\gbr\gcl{1}\gcl{1}\gcl{1}\gnl
\gcl{1}\gmu\gbrc\gcl{1}\gcl{1}\gnl
\gcl{2}\gcn{1}{2}{2}{1}\gvac{1}\gcu{1}\gbr\gcl{1}\gnl
\gvac{4}\gcn{1}{1}{1}{-3}\gbrc\gnl
\gcl{1}\gbrc\gvac{2}\gcu{1}\gcl{2}\gnl
\gmu\gcu{1}\gnl
\gob{2}{A}\gvac{4}\gob{1}{B}
\gend
\equal{(*)}
\gbeg{5}{8}
\got{1}{B}\got{1}{A}\gvac{1}\got{2}{A}\gnl
\gbrc\gvac{1}\gcn{1}{1}{2}{2}\gnl
\gcl{1}\gcn{1}{1}{1}{2}\gvac{1}\gcmu\gnl
\gcl{1}\gcmu\gcl{1}\gcl{1}\gnl
\gcl{1}\gcl{1}\gbr\gcl{1}\gnl
\gcl{1}\gbrc\gbrc\gnl
\gmu\gcu{1}\gcu{1}\gcl{1}\gnl
\gob{2}{A}\gvac{2}\gob{1}{B}
\gend
\equal{(\ref{eq:neccconds}.g)}
\gbeg{3}{5}
\got{1}{B}\got{1}{A}\got{1}{A}\gnl
\gbrc\gcl{1}\gnl
\gcl{1}\gbrc\gnl
\gmu\gcl{1}\gnl
\gob{2}{A}\gob{1}{B}
\gend
\hspace{1mm}.
\end{eqnarray*}
(*): we used \leref{implic1} (ii). 
\end{proof}

As the reader might expect, we have a dual version of \leref{implic2}. To this end, 
we need 
the dual versions of the equations in \equref{crossprodalg2}, namely,
\begin{eqnarray}
&&\hspace{2mm}\mbox{\rm (a)}\hspace{2mm}
\gbeg{3}{6}
\gvac{2}\got{1}{B}\gnl
\gvac{1}\gu{1}\gcl{1}\gnl
\gvac{1}\gbrbox\gnl
\gvac{1}\gcn{1}{1}{1}{0}\gcl{1}\gnl
\gcmu\gcl{1}\gnl
\gob{1}{B}\gob{1}{B}\gob{1}{A}
\gend
=
\gbeg{3}{5}
\gvac{1}\got{2}{B}\gnl
\gu{1}\gcmu\gnl
\gbrbox\gvac{2}\gcl{1}\gnl
\gcl{1}\gbrbox\gnl
\gob{1}{B}\gob{1}{B}\gob{1}{A}
\gend
\hspace{2mm},\hspace{2mm}\mbox{\rm (b)}\hspace{2mm}
\gbeg{3}{6}
\gvac{1}\got{1}{A}\gnl
\gvac{1}\gcl{1}\gu{1}\gnl
\gvac{1}\gbrbox\gnl
\gcn{1}{1}{3}{2}\gvac{1}\gcl{1}\gnl
\gcmu\gcl{1}\gnl
\gob{1}{B}\gob{1}{B}\gob{1}{A}
\gend
=
\gbeg{3}{5}
\got{1}{A}\gnl
\gcl{1}\gu{1}\gnl
\gbrbox\gvac{2}\gu{1}\gnl
\gcl{1}\gbrbox\gnl
\gob{1}{B}\gob{1}{B}\gob{1}{A}
\gend\hspace{2mm},\nonumber\\
&&\eqlabel{crossprodcoalg2}\\
&&\hspace{2mm}\mbox{\rm (c)}\hspace{2mm}
\gbeg{3}{6}
\gvac{1}\got{1}{B}\gnl
\gu{1}\gcl{1}\gnl
\gbrbox\gnl
\gcl{1}\gcn{1}{1}{1}{2}\gnl
\gcl{1}\gcmu\gnl
\gob{1}{B}\gob{1}{A}\gob{1}{A}
\gend
=
\gbeg{3}{5}
\gvac{2}\got{1}{B}\gnl
\gvac{1}\gu{1}\gcl{1}\gnl
\gu{1}\gbrbox\gnl
\gbr\gvac{-1}\gnot{\hspace*{-4mm}\Box}\gvac{1}\gcl{1}\gnl
\gob{1}{B}\gob{1}{A}\gob{1}{A}
\gend
\hspace{2mm},\hspace{2mm}\mbox{\rm (d)}\hspace{2mm}
\gbeg{3}{6}
\got{1}{A}\gnl
\gcl{1}\gu{1}\gnl
\gbrbox\gnl
\gcl{1}\gcn{1}{1}{1}{2}\gnl
\gcl{1}\gcmu\gnl
\gob{1}{B}\gob{1}{A}\gob{1}{A}
\gend
=
\gbeg{3}{5}
\got{2}{A}\gnl
\gcmu\gu{1}\gnl
\gcl{1}\gbrbox\gnl
\gbr\gvac{-1}\gnot{\hspace*{-4mm}\Box}\gvac{1}\gcl{1}\gnl
\gob{1}{B}\gob{1}{A}\gob{1}{A}
\gend
\hspace{2mm}.\nonumber
\end{eqnarray}
The proof of \leref{implic2p} is omitted, as it can be obtained from the proof
of \leref{implic2} using duality arguments.

\begin{lemma}\lelabel{implic2p}
Under the same hypotheses as in \leref{implic1}, we have\\
(i) (\ref{eq:neccconds} .c), (\ref{eq:BespDrabComp}.b,c), and 
(\ref{eq:crossprodcoalg2}.a-c) imply (\ref{eq:crossprodcoalg}.a);\\
(ii) (\ref{eq:neccconds} .c), (\ref{eq:BespDrabComp}.a,d), and 
(\ref{eq:crossprodcoalg2}.b-d) imply (\ref{eq:crossprodcoalg}.b).
\end{lemma}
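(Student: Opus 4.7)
The plan is to obtain Lemma~\ref{le:implic2p} as the formal dual of Lemma~\ref{le:implic2}. The key observation is that whenever $(A,B,\psi,\phi)$ is a cross product algebra-coalgebra datum in the braided monoidal category $\Cc$, the tuple $(A,B,\phi,\psi)$ is a cross product algebra-coalgebra datum in $\Cc^{\rm op}$ (equipped with the inverse braiding), where now the original comultiplications of $A,B$ play the role of multiplications and vice versa, counits become units and coactions become actions. In graphical terms, this simply corresponds to reading every diagram used in the proof of Lemma~\ref{le:implic2} upside-down; each composable diagram remains composable in the new category, and each equality of morphisms remains an equality.

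To execute this, I would first record the dictionary induced by turning the diagrams upside-down. Under this dictionary:
\begin{itemize}
\item the defining relations (\ref{eq:crossprodalg}.c-d) swap with (\ref{eq:crossprodcoalg}.c-d);
\item (\ref{eq:neccconds}.g) is the dual of (\ref{eq:neccconds}.c);
\item (\ref{eq:BespDrabComp}.a) and (\ref{eq:BespDrabComp}.b) are each self-dual, being symmetric under the up/down flip;
\item (\ref{eq:BespDrabComp}.e) is the dual of (\ref{eq:BespDrabComp}.c), and (\ref{eq:BespDrabComp}.f) is the dual of (\ref{eq:BespDrabComp}.d);
\item the four identities (\ref{eq:crossprodalg2}.a-d) are the duals of the four identities (\ref{eq:crossprodcoalg2}.a-d), modulo the permutation that sends the triple $\{$a,c,d$\}$ used in Lemma~\ref{le:implic2}(ii) to the triple $\{$b,c,d$\}$ required by Lemma~\ref{le:implic2p}(ii);
\item finally, (\ref{eq:crossprodalg}.a) and (\ref{eq:crossprodalg}.b) dualize respectively to (\ref{eq:crossprodcoalg}.a) and (\ref{eq:crossprodcoalg}.b).
\end{itemize}
Under this dictionary, the hypotheses and conclusion of Lemma~\ref{le:implic2}(i) become exactly the hypotheses and conclusion of Lemma~\ref{le:implic2p}(i), and similarly for part (ii).

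Having set up the dictionary, the proof of Lemma~\ref{le:implic2p} is obtained by taking the chain of equalities in the proof of Lemma~\ref{le:implic2} and reinterpreting each intermediate diagram upside-down. The associativity of $\un{m}_X$ becomes the coassociativity of $\un{\Delta}_X$, the naturality identity \equref{nat1cup} becomes \equref{nat2cup}, and each citation of an equation from the hypotheses is replaced by its dual via the dictionary above. No new diagrammatic computation is required.

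The only real obstacle is bookkeeping: one must verify that the dictionary entries listed above are correct and that the labelling of the three conditions from (\ref{eq:crossprodcoalg2}) invoked in parts (i) and (ii) of Lemma~\ref{le:implic2p} matches the duals of those invoked from (\ref{eq:crossprodalg2}) in the corresponding parts of Lemma~\ref{le:implic2}. Once this is checked, the dualized chain of equalities yields (\ref{eq:crossprodcoalg}.a) in part (i) and (\ref{eq:crossprodcoalg}.b) in part (ii), completing the proof.
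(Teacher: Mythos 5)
Your proposal is correct and is essentially the paper's own treatment: the paper omits the proof of \leref{implic2p}, stating that it follows from \leref{implic2} by duality, and your dictionary — \equuref{neccconds}{g} dual to \equuref{neccconds}{c}, (\ref{eq:BespDrabComp}.e,f) dual to (\ref{eq:BespDrabComp}.c,d), (\ref{eq:BespDrabComp}.a,b) self-dual, and (\ref{eq:crossprodalg2}.a,b,c,d) dual to (\ref{eq:crossprodcoalg2}.b,a,c,d), so that the hypothesis sets of \leref{implic2}(i),(ii) dualize exactly to those of \leref{implic2p}(i),(ii) — is precisely the required bookkeeping. One small convention point: for the flipped equations to involve the braiding $c$ itself rather than $c^{-1}$, the opposite category should be equipped with the braiding $\tilde{c}_{X,Y}=(c_{Y,X})^{\rm op}$ rather than with the inverse braiding, a harmless adjustment that does not affect the argument.
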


\begin{theorem}\thlabel{crossprobialasactandcoact}
Let $A$ and $B$ be algebras and coalgebras.
There exist $\psi: B\ot A\ra A\ot B$ and $\phi: A\ot B\ra B\ot A$
such that $(A,B,\psi,\phi)$ is a bialgebra admissible tuple, that is,
$A\#_\psi^\phi B$ is a cross product bialgebra
if and only if the following assertions
hold.\\
(i) $\un{\va}_X\circ \un{\eta}_X=\Id_{\un{1}}$, $\un{\va}_X\circ \un{m}_X=\un{\va}_X\ot \un{\va}_X$ and $\un{\Delta}_X\circ \un{\eta}_X=
\un{\eta}_X\ot \un{\eta}_X$, for all $X\in \{A, B\}$;\\
(ii) $A\in {}_B\Cc$ via 
$
\gbeg{2}{3}
\got{1}{B}\got{1}{A}\gnl
\glm\gnl
\gvac{1}\gob{1}{A}
\gend
$ 
satisfying 
$
\gbeg{2}{4}
\got{1}{B}\gnl
\gcl{1}\gu{1}\gnl
\glm\gnl
\gvac{1}\gob{1}{A}
\gend
=
\gbeg{2}{3}
\got{1}{B}\gnl
\gcu{1}\gu{1}\gnl
\gvac{1}\gob{1}{A}
\gend
$ 
and 
$
\gbeg{2}{4}
\got{1}{B}\got{1}{A}\gnl
\glm\gnl
\gvac{1}\gcu{1}\gnl
\gob{2}{\un{1}}
\gend
=
\gbeg{2}{3}
\got{1}{B}\got{1}{A}\gnl
\gcu{1}\gcu{1}\gnl
\gob{2}{\un{1}}
\gend
$;\\
(iii) $A\in {}^B\Cc$ via 
$
\gbeg{2}{3}
\gvac{1}\got{1}{A}\gnl
\glcm\gnl
\gob{1}{B}\gob{1}{A}
\gend 
$ 
obeying 
$
\gbeg{2}{4}
\gvac{1}\got{1}{\un{1}}\gnl
\gvac{1}\gu{1}\gnl
\glcm\gnl
\gob{1}{B}\gob{1}{A}
\gend
=
\gbeg{2}{3}
\got{2}{\un{1}}\gnl
\gu{1}\gu{1}\gnl
\gob{1}{B}\gob{1}{A}
\gend
$ 
and 
$
\gbeg{2}{4}
\gvac{1}\got{1}{A}\gnl
\glcm\gnl
\gcl{1}\gcu{1}\gnl
\gob{1}{B}
\gend
=
\gbeg{2}{3}
\gvac{1}\got{1}{A}\gnl
\gu{1}\gcu{1}\gnl
\gob{1}{B}
\gend
$;\\
(iv) $B\in \Cc_A$ via 
$
\gbeg{2}{3}
\got{1}{B}\got{1}{A}\gnl
\grm\gnl
\gob{1}{B}
\gend
$ 
such that 
$
\gbeg{2}{4}
\gvac{1}\got{1}{A}\gnl
\gu{1}\gcl{1}\gnl
\grm\gnl
\gob{1}{B}
\gend
=
\gbeg{2}{3}
\gvac{1}\got{1}{A}\gnl
\gu{1}\gcu{1}\gnl
\gob{1}{B}
\gend
$ 
and 
$\gbeg{2}{4}
\got{1}{B}\got{1}{A}\gnl
\grm\gnl
\gcu{1}\gnl
\gob{2}{\un{1}}
\gend 
=
\gbeg{2}{3}
\got{1}{B}\got{1}{A}\gnl
\gcu{1}\gcu{1}\gnl
\gob{2}{\un{1}}
\gend$;\\
(v) $B\in \Cc^A$ via 
$
\gbeg{2}{3}
\got{1}{B}\gnl
\grcm\gnl
\gob{1}{B}\gob{1}{A}
\gend
$ 
such that 
$
\gbeg{2}{4}
\got{2}{\un{1}}\gnl
\gu{1}\gnl
\grcm\gnl
\gob{1}{B}\gob{1}{A}
\gend
=
\gbeg{2}{3}
\got{2}{\un{1}}\gnl
\gcu{1}\gcu{1}\gnl
\gob{1}{B}\gob{1}{A}
\gend
$ 
and 
$
\gbeg{2}{4}
\got{1}{B}\gnl
\grcm\gnl
\gcu{1}\gcl{1}\gnl
\gvac{1}\gob{1}{A}
\gend 
=
\gbeg{2}{3}
\got{1}{B}\gnl
\gcu{1}\gu{1}\gnl
\gvac{1}\gob{1}{A}
\gend
$;\\
(vi) these actions and coactions are compatible in the sense that
\begin{eqnarray*}
&&\hspace*{-2mm}
\gbeg{3}{5}
\got{1}{B}\got{1}{A}\got{1}{A}\gnl
\gcl{1}\gmu\gnl
\gcl{1}\gcn{1}{1}{2}{1}\gnl
\glm\gnl
\gvac{1}\gob{1}{A}
\gend
=
\gbeg{5}{8}
\got{2}{B}\got{2}{A}\got{1}{A}\gnl
\gcmu\gcmu\gcl{1}\gnl
\gcl{1}\gbr\gcl{1}\gcl{1}\gnl
\glm\grm\gcn{1}{1}{1}{-1}\gnl
\gvac{1}\gcl{1}\glm\gnl
\gvac{1}\gcn{1}{1}{1}{3}\gvac{1}\gcl{1}\gnl
\gvac{2}\gmu\gnl
\gvac{2}\gob{2}{A}
\gend
\hspace{1mm},\hspace{1mm}
\gbeg{3}{5}
\gvac{1}\got{1}{A}\gnl
\glcm\gnl
\gcl{1}\gcn{1}{1}{1}{2}\gnl
\gcl{1}\gcmu\gnl
\gob{1}{B}\gob{1}{A}\gob{1}{A}
\gend
=
\gbeg{5}{8}
\gvac{1}\got{2}{A}\gnl
\gvac{1}\gcmu\gnl
\glcm\gcn{1}{1}{1}{3}\gnl
\gcl{1}\gcl{1}\glcm\gnl
\gcl{1}\gcl{1}\grcm\gcn{1}{1}{-1}{1}\gnl
\gcl{1}\gbr\gcl{1}\gcl{1}\gnl
\gmu\gmu\gcl{1}\gnl
\gob{2}{B}\gob{2}{A}\gob{1}{A}
\gend
\hspace{1mm},\hspace{1mm}
\gbeg{2}{4}
\got{1}{A}\got{1}{A}\gnl
\gmu\gnl
\gcmu\gnl
\gob{1}{A}\gob{1}{A}
\gend
=
\gbeg{5}{9}
\got{2}{A}\gvac{1}\got{2}{A}\gnl
\gcmu\gvac{1}\gcmu\gnl
\gcl{1}\gcn{1}{1}{1}{3}\gvac{1}\gcl{1}\gcl{1}\gnl
\gcl{1}\glcm\gcl{1}\gcl{1}\gnl
\gcl{1}\gcl{1}\gbr\gcl{1}\gnl
\gcl{1}\glm\gmu\gnl
\gcl{1}\gvac{1}\gcn{1}{1}{1}{-1}\gcn{1}{2}{2}{2}\gnl
\gmu\gnl
\gob{2}{A}\gvac{1}\gob{2}{A}
\gend 
\hspace{1mm},\\
&&
\hspace*{-2mm}
\gbeg{3}{5}
\got{1}{B}\got{1}{B}\got{1}{A}\gnl
\gmu\gcl{1}\gnl
\gcn{1}{1}{2}{3}\gvac{1}\gcl{1}\gnl
\gvac{1}\grm\gnl
\gvac{1}\gob{1}{B}
\gend
=
\gbeg{5}{7}
\got{1}{B}\got{2}{B}\got{2}{A}\gnl
\gcl{1}\gcmu\gcmu\gnl
\gcl{1}\gcl{1}\gbr\gcl{1}\gnl
\gcn{1}{1}{1}{3}\glm\grm\gnl
\gvac{1}\grm\gcn{1}{1}{1}{-1}\gnl
\gvac{1}\gmu\gnl
\gvac{1}\gob{2}{B}
\gend
\hspace{1mm},\hspace{1mm}
\gbeg{3}{5}
\gvac{1}\got{1}{B}\gnl
\gvac{1}\grcm\gnl
\gvac{1}\gcn{1}{1}{1}{0}\gcl{1}\gnl
\gcmu\gcl{1}\gnl
\gob{1}{B}\gob{1}{B}\gob{1}{A}
\gend
=
\gbeg{5}{8}
\gvac{2}\got{2}{B}\gnl
\gvac{2}\gcmu\gnl
\gvac{2}\gcn{1}{1}{1}{-1}\gcl{1}\gnl
\gvac{1}\grcm\grcm\gnl
\gvac{1}\gcn{1}{1}{1}{-1}\gvac{-1}\glcm\gcl{1}\gcl{1}\gnl
\gcl{1}\gcl{1}\gbr\gcl{1}\gnl
\gcl{1}\gmu\gmu\gnl
\gob{1}{B}\gob{2}{A}\gob{2}{A}
\gend
\hspace{1mm},\hspace{1mm}
\gbeg{2}{4}
\got{1}{B}\got{1}{B}\gnl
\gmu\gnl
\gcmu\gnl
\gob{1}{B}\gob{1}{B}
\gend
=
\gbeg{5}{8}
\got{2}{B}\got{2}{B}
\gnl
\gcmu\gcmu\gnl
\gcl{1}\gcl{1}\grcm\gcn{1}{1}{-1}{1}\gnl
\gcl{1}\gbr\gcl{1}\gcl{1}\gnl
\gmu\grm\gcl{1}\gnl
\gcn{1}{2}{2}{2}\gvac{1}\gcl{1}\gcn{1}{1}{3}{1}\gnl
\gvac{2}\gmu\gnl
\gob{2}{B}\gob{2}{B}
\gend
\hspace{1mm};
\end{eqnarray*}
(vii) one of the four following sets of three equations is satisfied:
$(vii.1)=\{\equref{modulecomodule1}, \equref{additional1}\}$, 
$(vii.2)=\{\equref{modulecomodule2},\equref{additional2}\}$, 
$(vii.3)=\{\equref{modulecomodule3}, \equuref{additional1}{b},\equuref{additional2}{a}\}$, 
$(vii.4)=\{\equref{modulecomodule4},\equuref{additional1}{a},\equuref{additional2}{b}\}$.
These four sets are equivalent if (i-vi) are satisfied.
\begin{eqnarray}
\eqlabel{modulecomodule1}&&\hspace*{6mm}
\gbeg{6}{10}
\gvac{1}\got{2}{B}\got{2}{A}\gnl
\gvac{1}\gcmu\gcmu\gnl
\gvac{1}\gcl{1}\gbr\gcl{1}\gnl
\gvac{1}\glm\grm\gnl
\gvac{1}\gcn{1}{1}{3}{2}\gvac{1}\gcn{1}{1}{1}{2}\gnl
\gvac{1}\gcmu\gcmu\gnl
\gcn{1}{1}{3}{1}\glcm\grcm\gcn{1}{1}{-1}{1}\gnl
\gcl{1}\gcl{1}\gbr\gcl{1}\gcl{1}\gnl
\gcl{1}\gmu\gmu\gcl{1}\gnl
\gob{1}{A}\gob{2}{B}\gob{2}{A}\gob{1}{B}
\gend
=
\gbeg{10}{12}
\gvac{1}\got{2}{B}\gvac{4}\got{2}{A}\gnl
\gvac{1}\gcmu\gvac{4}\gcmu\gnl
\gvac{1}\gcl{1}\gcn{1}{1}{1}{5}\gvac{3}\gcn{1}{1}{3}{-1}\gvac{1}\gcl{1}\gnl
\gvac{1}\grcm\gvac{1}\gbr\gvac{1}\glcm\gnl
\gvac{1}\gcn{1}{2}{1}{0}\gcn{1}{1}{1}{3}\gvac{1}\gcl{1}\gcl{1}\gcn{1}{1}{3}{1}\gvac{1}\gcn{1}{2}{1}{2}\gnl
\gvac{3}\gbr\gbr\gnl
\gcmu\gvac{1}\gcn{1}{1}{1}{0}\gbr\gcn{1}{1}{1}{2}\gvac{1}\gcmu\gnl
\gcl{1}\gcl{1}\gcmu\gcl{1}\gcl{1}\gcmu\gcl{1}\gcl{1}\gnl
\gcl{1}\gbr\gcl{1}\gcl{1}\gcl{1}\gcl{1}\gbr\gcl{1}\gnl
\glm\grm\gcn{1}{1}{1}{-1}\gcn{1}{1}{1}{3}\glm\grm\gnl
\gvac{1}\gcl{1}\gmu\gvac{2}\gmu\gcl{1}\gnl
\gvac{1}\gob{1}{A}\gob{2}{B}\gvac{2}\gob{2}{A}\gob{1}{B}
\gend
\hspace{1mm};\\
\eqlabel{modulecomodule2}&&\hspace*{6mm}
\gbeg{6}{10}
\got{1}{A}\got{2}{B}\got{2}{A}\got{1}{B}\gnl
\gcl{1}\gcmu\gcmu\gcl{1}\gnl
\gcl{1}\gcl{1}\gbr\gcl{1}\gcl{1}\gnl
\gcn{1}{1}{1}{3}\glm\grm\gcn{1}{1}{1}{-1}\gnl
\gvac{1}\gmu\gmu\gnl
\gvac{1}\gcn{1}{1}{2}{3}\gvac{1}\gcn{1}{1}{2}{1}\gnl
\gvac{1}\glcm\grcm\gnl
\gvac{1}\gcl{1}\gbr\gcl{1}\gnl
\gvac{1}\gmu\gmu\gnl
\gvac{1}\gob{2}{B}\gob{2}{A}
\gend
=
\gbeg{10}{12}
\gvac{1}\got{1}{A}\gvac{1}\got{2}{B}\got{2}{A}\gvac{1}\got{1}{B}\gnl
\glcm\gvac{1}\gcmu\gcmu\gvac{1}\grcm\gnl
\gcl{1}\gcl{1}\gvac{1}\gcn{1}{1}{1}{-1}\gcl{1}\gcl{1}\gcn{1}{1}{1}{3}\gvac{1}\gcl{1}\gcl{1}\gnl
\gcl{1}\gcl{1}\grcm\gcl{1}\gcl{1}\glcm\gcl{1}\gcl{1}\gnl
\gcl{1}\gbr\gcl{1}\gcl{1}\gcl{1}\gcl{1}\gbr\gcl{1}\gnl
\gmu\gmu\gcl{1}\gcl{1}\gmu\gmu\gnl
\gcn{1}{1}{2}{5}\gvac{1}\gcn{1}{1}{2}{3}\gvac{1}\gbr\gcn{1}{1}{2}{1}\gcn{1}{1}{4}{1}\gnl
\gvac{2}\gcl{1}\gbr\gbr\gcl{1}\gnl
\gvac{2}\grm\gbr\glm\gnl
\gvac{2}\gcn{1}{1}{1}{3}\gvac{1}\gcl{1}\gcl{1}\gvac{1}\gcn{1}{1}{1}{-1}\gnl
\gvac{3}\gmu\gmu\gnl
\gvac{3}\gob{2}{B}\gob{2}{A}
\gend
\hspace{1mm};\\
\eqlabel{modulecomodule3}&&\hspace*{6mm}
\gbeg{5}{11}
\got{1}{A}\got{2}{B}\got{2}{A}\gnl
\gcl{1}\gcmu\gcmu\gnl
\gcl{1}\gcl{1}\gbr\gcl{1}\gnl
\gcn{1}{1}{1}{3}\glm\grm\gnl
\gvac{1}\gmu\gcn{1}{1}{1}{0}\gnl
\gvac{1}\gcn{1}{1}{2}{1}\gcmu\gnl
\glcm\gcl{1}\gcn{1}{1}{1}{3}\gnl
\gcl{1}\gcl{1}\grcm\gcl{1}\gnl
\gcl{1}\gbr\gcl{1}\gcl{1}\gnl
\gmu\gmu\gcl{1}\gnl
\gob{2}{B}\gob{2}{A}\gob{1}{B}
\gend
=
\gbeg{10}{12}
\gvac{1}\got{1}{A}\got{2}{B}\gvac{1}\got{2}{A}\gnl
\gvac{1}\gcl{1}\gcmu\gvac{1}\gcmu\gnl
\glcm\gcl{1}\gcn{1}{1}{1}{3}\gvac{1}\gcl{1}\gcn{1}{1}{1}{3}\gnl
\gcl{1}\gcl{1}\grcm\gbr\glcm\gnl
\gcl{1}\gbr\gcl{1}\gcl{1}\gbr\gcl{1}\gnl
\gmu\gmu\gcl{1}\gcl{1}\gcn{1}{1}{1}{2}\gcn{1}{1}{1}{4}\gnl
\gcn{1}{2}{2}{5}\gvac{1}\gcn{1}{1}{2}{3}\gvac{1}\gcl{1}\gcl{1}\gcmu\gcmu\gnl
\gvac{3}\gbr\gcl{1}\gcl{1}\gbr\gcl{1}\gnl
\gvac{2}\grm\gbr\glm\grm\gnl
\gvac{2}\gcn{1}{1}{1}{3}\gvac{1}\gcl{1}\gcl{1}\gvac{1}\gcn{1}{1}{1}{-1}\gcl{2}\gnl
\gvac{3}\gmu\gmu\gnl
\gvac{3}\gob{2}{B}\gob{2}{A}\gvac{1}\gob{1}{B}
\gend
\hspace{1mm};\\
\eqlabel{modulecomodule4}&&\hspace*{6mm}
\gbeg{6}{11}
\gvac{1}\got{2}{B}\got{2}{A}\got{1}{B}\gnl
\gvac{1}\gcmu\gcmu\gcl{1}\gnl
\gvac{1}\gcl{1}\gbr\gcl{1}\gcl{1}\gnl
\gvac{1}\glm\grm\gcn{1}{1}{1}{-1}\gnl
\gvac{2}\gcn{1}{1}{1}{0}\gmu\gnl
\gvac{1}\gcmu\gcn{1}{1}{2}{1}\gnl
\gvac{1}\gcn{1}{1}{1}{-1}\gcl{1}\grcm\gnl
\gcl{1}\glcm\gcl{1}\gcl{1}\gnl
\gcl{1}\gcl{1}\gbr\gcl{1}\gnl
\gcl{1}\gmu\gmu\gnl
\gob{1}{A}\gob{2}{B}\gob{2}{A}
\gend
=
\gbeg{10}{12}
\gvac{2}\got{2}{B}\gvac{2}\got{2}{A}\got{1}{B}\gnl
\gvac{2}\gcmu\gvac{2}\gcmu\gcl{1}\gnl
\gvac{2}\gcl{1}\gcn{1}{1}{1}{3}\gvac{2}\gcn{1}{1}{1}{-1}\gcl{1}\gcl{1}\gnl
\gvac{2}\grcm\gbr\glcm\grcm\gnl
\gvac{2}\gcl{1}\gbr\gcl{1}\gcl{1}\gbr\gcl{1}\gnl
\gvac{2}\gcn{1}{1}{1}{-2}\gcn{1}{1}{1}{0}\gcl{3}\gcl{1}\gmu\gmu\gnl
\gcmu\gcmu\gvac{1}\gcl{1}\gcn{1}{1}{2}{1}\gvac{1}\gcn{1}{1}{2}{-1}\gnl
\gcl{1}\gbr\gcl{1}\gvac{1}\gbr\gcl{1}\gnl
\glm\grm\gbr\glm\gnl
\gvac{1}\gcl{1}\gcl{1}\gvac{1}\gcn{1}{1}{1}{-1}\gcn{1}{1}{1}{3}\gvac{1}\gcl{1}\gnl
\gvac{1}\gcl{1}\gmu\gvac{2}\gmu\gnl
\gvac{1}\gob{1}{A}\gob{2}{B}\gvac{2}\gob{2}{A}
\gend
\hspace{1mm};\\
\eqlabel{additional1}
&&(a)~~~~
\gbeg{2}{5}
\got{1}{A}\got{1}{A}\gnl
\gmu\gnl
\gcn{1}{1}{2}{3}\gnl
\glcm\gnl
\gob{1}{B}\gob{1}{A}
\gend
=
\gbeg{5}{8}
\gvac{1}\got{1}{A}\got{2}{A}\gnl
\glcm\gcmu\gnl
\gcl{1}\gcl{1}\gcl{1}\gcn{1}{1}{1}{3}\gnl
\gcl{1}\gbr\glcm\gnl
\grm\gbr\gcl{1}\gnl
\gcl{1}\gvac{1}\gcn{1}{1}{1}{-1}\gmu\gnl
\gmu\gvac{1}\gcn{1}{1}{2}{2}\gnl
\gob{2}{B}\gvac{1}\gob{2}{A}
\gend
~~;~~(b)~~~~
\gbeg{2}{5}
\got{1}{B}\got{1}{B}\gnl
\gmu\gnl
\gcn{1}{1}{2}{1}\gnl
\grcm\gnl
\gob{1}{B}\gob{1}{A}
\gend
=
\gbeg{5}{8}
\got{2}{B}\gvac{1}\got{1}{B}\gnl
\gcmu\gvac{1}\grcm\gnl
\grcm\gcn{1}{1}{-1}{1}\gcl{1}\gcl{1}\gnl
\gcl{1}\gcl{1}\gbr\gcl{1}\gnl
\gcl{1}\gbr\glm\gnl
\gmu\gcl{1}\gvac{1}\gcn{1}{1}{1}{-1}\gnl
\gcn{1}{1}{2}{2}\gvac{1}\gmu\gnl
\gob{2}{B}\gob{2}{A}
\gend
\hspace{1mm};\\
\eqlabel{additional2}
&&(a)~~~~
\gbeg{2}{5}
\got{1}{B}\got{1}{A}\gnl
\glm\gnl
\gvac{1}\gcn{1}{1}{1}{0}\gnl
\gcmu\gnl
\gob{1}{A}\gob{1}{A}
\gend
=
\gbeg{5}{8}
\got{2}{B}\gvac{1}\got{2}{A}\gnl
\gcmu\gvac{1}\gcmu\gnl
\grcm\gcn{1}{1}{-1}{1}\gcl{1}\gcl{1}\gnl
\gcl{1}\gcl{1}\gbr\gcl{1}\gnl
\gcl{1}\gbr\glm\gnl
\glm\gcl{1}\gvac{1}\gcn{1}{1}{1}{-1}\gnl
\gvac{1}\gcl{1}\gmu\gnl
\gvac{1}\gob{1}{A}\gob{2}{A}
\gend
~~;~~(b)~~~~
\gbeg{2}{5}
\got{1}{B}\got{1}{A}\gnl
\grm\gnl
\gcn{1}{1}{1}{2}\gnl
\gcmu\gnl
\gob{1}{B}\gob{1}{B}
\gend
=
\gbeg{5}{8}
\got{2}{B}\got{2}{A}\gnl
\gcmu\gcmu\gnl
\gcl{1}\gbr\gcn{1}{1}{1}{3}\gnl
\grm\gcl{1}\glcm\gnl
\gcl{1}\gvac{1}\gbr\gcl{1}\gnl
\gcl{1}\gvac{1}\gcn{1}{1}{1}{-1}\grm\gnl
\gmu\gvac{1}\gcl{1}\gnl
\gob{2}{B}\gvac{1}\gob{1}{B}
\gend
\hspace{1mm}.
\end{eqnarray}
\end{theorem}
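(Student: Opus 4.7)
The plan is to recognize that \thref{crossprobialasactandcoact} is essentially a reformulation of \thref{firstsetequivcond}, obtained by replacing the local braidings $\psi$ and $\phi$ with the induced actions and coactions. The proof therefore naturally splits into a necessity and a sufficiency direction, and throughout it is crucial that under (\ref{eq:neccconds}.c,g) the morphisms $\psi$ and $\phi$ are completely recovered from the actions and coactions of \equref{action1} and \equref{coaction1}.

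For the necessity direction, assume $A\#_\psi^\phi B$ is a cross product bialgebra. By \leref{action} and \leref{coaction} we obtain the four actions and coactions appearing in (ii)--(v), and the unit/counit normalizations stated there are consequences of (\ref{eq:crossprodalg}.c-d), (\ref{eq:crossprodcoalg}.c-d) together with \equref{comultunitcomp} and \equref{multcounitcomp}, all of which hold by \thref{firstsetequivcond}. Condition (i) is immediate. The six identities in (vi) are the Bespalov--Drabant compatibilities (\ref{eq:BespDrabComp}.a-f) rewritten in terms of the actions and coactions via the recoveries (\ref{eq:neccconds}.c,g). Each of the sets (vii.1)--(vii.4) mirrors one of the equivalent characterizations (iv)--(vii) of \thref{firstsetequivcond}: the corresponding members of (\ref{eq:neccconds}) and (\ref{eq:BespDrabComp}) translate directly into the displayed diagrammatic equations once the substitutions given by the recoveries are performed.

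For the sufficiency direction, I define $\psi$ by the right-hand side of (\ref{eq:neccconds}.g) and $\phi$ by the right-hand side of (\ref{eq:neccconds}.c). The normalization hypotheses in (ii)--(v) then yield (\ref{eq:crossprodalg}.c-d) and (\ref{eq:crossprodcoalg}.c-d) immediately. The nontrivial associativity conditions (\ref{eq:crossprodalg}.a-b) and coassociativity conditions (\ref{eq:crossprodcoalg}.a-b) are supplied by \leref{implic2} and \leref{implic2p}, once the auxiliary identities \equref{crossprodalg2} and \equref{crossprodcoalg2} are deduced from (vi) and (vii) by composition with appropriate units and counits. With the cross product algebra-coalgebra datum in hand, the bialgebra axioms are verified by applying whichever equivalent set of conditions in \thref{firstsetequivcond} corresponds to the particular choice of (vii.1)--(vii.4) that is assumed; the alternative presentations (\ref{eq:twoanothYDconds}.a,b) in characterizations (vi) and (vii) of \thref{firstsetequivcond} match the two mixed sets (vii.3) and (vii.4).

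The main obstacle, and the reason the theorem requires four alternative sets (vii.1)--(vii.4), is showing that all of them encode equivalent presentations of the bialgebra compatibility under (i)--(vi). This reduces to transferring the equivalences among \thref{firstsetequivcond}(iv)--(vii) into the actions/coactions formalism; the key technical input here is \leref{someechivcond}, which records the implications between (\ref{eq:neccconds}.a,b,e,f) and the items of \equref{BespDrabComp} under (\ref{eq:neccconds}.c,g). The graphical computations involved in each translation step are lengthy but strictly parallel to those already carried out in the proofs of \leref{implic1}, \leref{implic2} and \leref{implic2p}, so no genuinely new braided-diagrammatic argument is needed beyond careful bookkeeping.
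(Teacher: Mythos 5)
Your route is the paper's own: necessity via \leref{action}, \leref{coaction} and the recovery of $\psi,\phi$ from the actions and coactions, sufficiency via \leref{implic2}, \leref{implic2p} and then \thref{firstsetequivcond}, and the equivalence of (vii.1)--(vii.4) transferred from the equivalences (iv)--(vii) of \thref{firstsetequivcond}. (One small point of wording: the literal right-hand sides of \equuref{neccconds}{g} and \equuref{neccconds}{c} still contain $\psi$ and $\phi$; the correct definitions are the action/coaction expressions \equref{psiphiasactcoact}, which is clearly what you intend.) However, there is a concrete slip in your translation dictionary, and it sits exactly where the sufficiency direction is delicate. The six identities in (vi) are \emph{not} the full list \equuref{BespDrabComp}{a}--(f): under \equref{psiphiasactcoact} they are \equuref{BespDrabComp}{a,b} together with the module--algebra and comodule--coalgebra compatibilities, i.e.\ (\ref{eq:crossprodalg2}.b,d) and (\ref{eq:crossprodcoalg2}.a,d). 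The comodule--algebra and module--coalgebra compatibilities \equuref{BespDrabComp}{c}--(f) are carried by \equref{additional1} and \equref{additional2}, so they live inside condition (vii), not (vi).

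This matters when you invoke \leref{implic2} and \leref{implic2p} to build the cross product algebra--coalgebra datum: besides \equref{crossprodalg2} and \equref{crossprodcoalg2}, these lemmas require all of \equuref{BespDrabComp}{c}--(f), and a single assumed set (vii.k) contains only two of these four directly (via its ``additional'' equations); the other two must first be produced by composing the remaining module--comodule type equation of that set with suitable counits/units, exactly as in the proof of \coref{BDCond} (for instance \equref{modulecomodule1} is \equuref{neccconds}{d} in disguise and yields \equuref{BespDrabComp}{e,f}; \equref{modulecomodule3} is \equuref{twoanothYDconds}{a} and yields \equuref{BespDrabComp}{c,f}, etc.). You cannot bypass this extraction step by appealing to the equivalence of the four sets, because in your plan that equivalence is obtained from \thref{firstsetequivcond}, whose conditions presuppose that $(A,B,\psi,\phi)$ is already a cross product algebra--coalgebra datum --- precisely what is being constructed at this stage. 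Once this step is inserted, your outline closes and coincides with the paper's proof.
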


\begin{proof}
Suppose that there exist $\psi$ and $\phi$ such that 
$A\#_\psi^\phi B$ is a cross product bialgebra. 
Then $A$ is a left $B$-module and $B$-comodule, and $B$ is a right $A$-module and $A$-comodule,
see (\ref{eq:action1},\ref{eq:coaction1}). 
(ii)-(v) follow from 
\equref{comultunitcomp}, \equref{multcounitcomp}, (\ref{eq:crossprodalg}c,d), (\ref{eq:crossprodcoalg}c,d),
and the fact 
that $\un{\va}_X\circ \un{\eta}_X=\Id_{\un{1}}$, for all $X=A,B$.\\
Now observe that it follows from (\ref{eq:neccconds}.g,c) that $\psi$ and $\phi$ can be recovered from
the actions and coactions
\begin{equation}\eqlabel{psiphiasactcoact}
\psi=
\gbeg{4}{5}
\got{2}{B}\got{2}{A}\gnl
\gcmu\gcmu\gnl
\gcl{1}\gbr\gcl{1}\gnl
\glm\grm\gnl
\gvac{1}\gob{1}{A}\gob{1}{B}
\gend
~~\mbox{and}~~
\phi
=
\gbeg{4}{5}
\gvac{1}\got{1}{A}\got{1}{B}\gnl
\glcm\grcm\gnl
\gcl{1}\gbr\gcl{1}\gnl
\gmu\gmu\gnl
\gob{2}{B}\gob{2}{A}
\gend
\hspace{1mm}.
\end{equation}
Then the six formulas in (vi) are reformulations of
\equuref{crossprodalg2}{d}, \equuref{crossprodcoalg2}{d}, \equuref{BespDrabComp}{a}, 
\equuref{crossprodalg2}{b}, \equuref{crossprodcoalg2}{a} and \equuref{BespDrabComp}{b}.
In a similar fashion, we have that
\begin{itemize}
\item (vii.1) is a reformulation of \equuref{BespDrabComp}{c,d} and \equuref{neccconds}{d};
\item (vii.2) is the reformulation of \equuref{BespDrabComp}{e,f} and \equuref{neccconds}{h}
in terms of actions and coactions
\item (vii.3) follows from \equuref{BespDrabComp}{d}, \equuref{BespDrabComp}{e} and 
\equuref{twoanothYDconds}{a};
\item (vii.4) is a reformulation of \equuref{BespDrabComp}{c}, \equuref{BespDrabComp}{f} and 
\equuref{twoanothYDconds}{b}.
\end{itemize} 
It follows from \thref{firstsetequivcond} that these sets of conditions are equivalent.\\
Conversely, assume that $A$ is a left $B$-module and $B$-comodule, and that $B$ is
a right $A$-module and $A$-comodule, satisfying all the conditions of the Theorem. Then we
define $\psi$ and $\phi$ using \equref{psiphiasactcoact}. The actions and coactions are then
given by (\ref{eq:action1},\ref{eq:coaction1}) because of the unit-counit conditions in (ii-v).
A simple verification tells us that $\psi$ and $\phi$ satisfy \equuref{neccconds}{g,c}.
As in the proof of the direct implication, we show that
 (i-vi) imply that $\un{\va}_X\circ \un{\eta}_X=\Id_{\un{1}}$ for $X=A,B$, and
 \equref{comultunitcomp}, \equref{multcounitcomp}, \equuref{crossprodalg}{c,d}, \equuref{crossprodcoalg}{c,d},
\equref{crossprodalg2} and \equref{crossprodcoalg2}.
 In addition, the last equalities in (vii.1-vii.4) turn out to be \equuref{neccconds}{d}, \equuref{neccconds}{h}, 
\equuref{twoanothYDconds}{a} and \equuref{twoanothYDconds}{b}. \equref{BespDrabComp} follows
immediately from (vi), using \equref{psiphiasactcoact}.
It then follows from
Lemmas \ref{le:implic2} and \ref{le:implic2p} that $(A, B, \psi, \phi)$ is a cross product algebra-coalgebra datum. 
The result now follows from the equivalence of the conditions (iv-vii) in \thref{firstsetequivcond},
verification of the details is left to the reader.
\end{proof}

Let us compare the conditions in \thref{crossprobialasactandcoact} with the Bespalov-Drabant list.
Conditions (i-v) appear in the Bespalov-Drabant list. Conditions (vi) are also in the Bespalov-Drabant
list, namely they are the module-algebra, the comodule-coalgebra, and the algebra-coalgebra compatibility.
The remaining conditions in the Bespalov-Drabant list are the module-comodule, module-coalgebra
and comodule-algebra compatibility. In order to obtain sufficient conditions, these
three conditons have to be replaced
by our condition (vii), which appears in four equivalent sets of three equations. Each of the
four equations (\ref{eq:modulecomodule1}-\ref{eq:modulecomodule2})
can be regarded as the appropriate substitute of the module-comodule compatibility.\\

We end this Section with a reformulation of \prref{whenacrossprodisHA} in terms of actions and coactions.
The proof is left to the reader.

\begin{proposition}\prlabel{5.5}
Let $A\times_\psi^\phi B$ be a cross product bialgebra. If $\Id_A$ and $\Id_B$ have convolution inverses
$\un{S}$ and $\un{s}$, 
then $A\times_\psi^\phi B$ is a Hopf algebra with antipode      
\[
\gbeg{4}{11}
\gvac{1}\got{1}{A}\got{1}{B}\gnl
\glcm\grcm\gnl
\gcl{1}\gbr\gcl{1}\gnl
\gmu\gmu\gnl
\gcn{1}{1}{2}{1}\gvac{1}\gcn{1}{1}{2}{1}\gnl
\gmp{\un{s}}\gvac{1}\gmp{\un{S}}\gnl
\gcn{1}{1}{1}{2}\gvac{1}\gcn{1}{1}{1}{2}\gnl
\gcmu\gcmu\gnl
\gcl{1}\gbr\gcl{1}\gnl
\glm\grm\gnl
\gvac{1}\gob{1}{A}\gob{1}{B}
\gend
\hspace{1mm}.
\]
\end{proposition}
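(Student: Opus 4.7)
The plan is to invoke \prref{whenacrossprodisHA} directly and then rewrite the antipode it produces using the expressions for $\psi$ and $\phi$ in terms of actions and coactions. By \prref{whenacrossprodisHA}, the convolution invertibility of $\Id_A$ and $\Id_B$ implies that $A\#_\psi^\phi B$ is a Hopf algebra whose antipode is
\[
\gbeg{2}{5}
\got{1}{A}\got{1}{B}\gnl
\gbrbox\gnl
\gmp{\un{s}}\gmp{\un{S}}\gnl
\gbrc\gnl
\gob{1}{A}\gob{1}{B}
\gend
\hspace{1mm},
\]
so no new checks of the antipode axiom are required; that part is already done in \prref{whenacrossprodisHA}.

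What remains is purely a rewriting step. Because $A\#_\psi^\phi B$ is a cross product bialgebra, the induced actions and coactions from Lemmas \ref{le:action} and \ref{le:coaction} are defined, and the key identities (\ref{eq:neccconds}.g) and (\ref{eq:neccconds}.c) (already shown in \prref{4.1} to be necessary conditions) tell us that $\psi$ and $\phi$ can be recovered from them via the formulas \equref{psiphiasactcoact} established inside the proof of \thref{crossprobialasactandcoact}, namely
\[
\psi=
\gbeg{4}{5}
\got{2}{B}\got{2}{A}\gnl
\gcmu\gcmu\gnl
\gcl{1}\gbr\gcl{1}\gnl
\glm\grm\gnl
\gvac{1}\gob{1}{A}\gob{1}{B}
\gend
\hspace{2mm}\text{and}\hspace{2mm}
\phi=
\gbeg{4}{5}
\gvac{1}\got{1}{A}\got{1}{B}\gnl
\glcm\grcm\gnl
\gcl{1}\gbr\gcl{1}\gnl
\gmu\gmu\gnl
\gob{2}{B}\gob{2}{A}
\gend
\hspace{1mm}.
\]

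Substituting the diagram for $\phi$ in place of the top $\gbrbox$ box and the diagram for $\psi$ in place of the bottom $\gbrc$ box of the antipode produced by \prref{whenacrossprodisHA} immediately yields the required expression. No verification beyond this substitution is needed: the unit and counit axioms for the actions and coactions recorded in parts (ii)--(v) of \thref{crossprobialasactandcoact} are already built into the definition, and the compatibilities needed for $\psi,\phi$ to satisfy the cross product axioms are guaranteed because we started from a bialgebra admissible tuple. The only potential obstacle is the purely graphical one of confirming that the substitution matches the shape of the diagram displayed in the statement; this is routine since the top and bottom halves of the displayed antipode are exactly the diagrams of $\phi$ and $\psi$ above, with $\un{s}\ot \un{S}$ sandwiched in between.
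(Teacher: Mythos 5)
Your proposal is correct and is exactly the argument the paper intends: the paper explicitly describes \prref{5.5} as a reformulation of \prref{whenacrossprodisHA} in terms of actions and coactions (leaving the details to the reader), and your substitution of the expressions \equref{psiphiasactcoact} for $\psi$ and $\phi$ -- valid by (\ref{eq:neccconds}.c,g) for any bialgebra admissible tuple -- into the antipode $\psi\circ(\un{s}\ot\un{S})\circ\phi$ of \prref{whenacrossprodisHA} yields precisely the displayed diagram.
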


\section{Smash cross (co)product bialgebras}\selabel{smashcross(co)prodbialgs}
\setcounter{equation}{0}
As a general conclusion so far, we can conclude that there are essentially three ways to
describe cross product bialgebras:
\begin{enumerate}
\item by bialgebra admissible tuples, these are characterized in \thref{firstsetequivcond};
\item by actions and coactions, this is discussed in \thref{crossprobialasactandcoact};
\item by injections and projections, this result will be recalled in \prref{DrBespExtVers}.
\end{enumerate}
The second and third description are not entirely satisfactory in the following sense. As we have
remarked above, the substitute of the module-comodule compatibility in
\thref{crossprobialasactandcoact} appears in four different forms, which are equivalent if
some other conditions are satisfied. What is missing is a
kind of  unified module-comodule
compatibility. The objection to the injection/projection description is that
we need two algebras/coalgebras and two projections. In some classical results, see
a brief survey in the introduction,
one projection is sufficient.\\
In this Section, we will characterize smash product bialgebras and smash coproduct bialgebras,
and we will see that the four module-comodule 
compatibility relations unify in this case.\\
As applications, we will see that if a cross product bialgebra comes with a tensor product (co)algebra 
structure then it is necessarily a double cross (co)product bialgebra 
in the sense of Majid \cite{majbip}. When we apply this result to the category of sets, then
we obtain that the only cross product Hopf algebra structure
is the bicross product of groups introduced by Takeuchi in \cite{tak}. 
We will also describe the cross product bialgebras that are a biproduct in 
the sense of Radford \cite{rad}.\\
The second objection can be overcome if we restrict attention to smash (co)product Hopf algebras;
then it turns out that one projection suffices, the
other one can be recovered from it. This will be the topic of \seref{strHopfalgwithprof}.\\

First we will establish that smash product bialgebras and smash coproduct bialgebras
are completely determined by normality properties of the morphisms $\psi$ and $\phi$.  
This is mainly due to the crucial relations \equuref{BespDrabComp}{c,g}.  

\begin{definition}\delabel{normal}
Let $A, B$ be algebras and coalgebras and $\psi: B\ot A\ra A\ot B$, 
$\phi: A\ot B\ra B\ot A$ morphisms in $\Cc$. 
\begin{itemize}
\item[(i)] $\psi$ is called left (right) conormal if 
$
\gbeg{2}{4}
\got{1}{B}\got{1}{A}\gnl
\gbrc\gnl
\gcu{1}\gcl{1}\gnl
\gvac{1}\gob{1}{B}
\gend
=
\gbeg{2}{3}
\got{1}{B}\got{1}{A}\gnl
\gcl{1}\gcu{1}\gnl
\gob{1}{B}
\gend
$ 
$\left(
\gbeg{2}{4}
\got{1}{B}\got{1}{A}\gnl
\gbrc\gnl
\gcl{1}\gcu{1}\gnl
\gob{1}{A}
\gend
=
\gbeg{2}{3}
\got{1}{B}\got{1}{A}\gnl
\gcu{1}\gcl{1}\gnl
\gvac{1}\gob{1}{A}
\gend
\right)
$. 
\item[(ii)] $\phi$ is called left (right) normal if 
$
\gbeg{2}{4}
\gvac{1}\got{1}{B}\gnl
\gu{1}\gcl{1}\gnl
\gbrbox\gnl
\gob{1}{B}\gob{1}{A}\gnl
\gend
=
\gbeg{2}{3}
\got{1}{B}\gnl
\gcl{1}\gu{1}\gnl
\gob{1}{B}\gob{1}{A}
\gend
$ 
$
\left(
\gbeg{2}{4}
\got{1}{A}\gnl
\gcl{1}\gu{1}\gnl
\gbrbox\gnl
\gob{1}{B}\gob{1}{A}\gnl
\gend
=
\gbeg{2}{3}
\gvac{1}\got{1}{A}\gnl
\gu{1}\gcl{1}\gnl
\gob{1}{B}\gob{1}{A}
\gend
\right)
$.  
\end{itemize}
\end{definition}

\begin{lemma}\lelabel{normalsmash}
Let $A\times _\psi^\phi B$ be a cross product bialgebra. 
$\psi$ is left (right) conormal if and only if $A\# _\psi B$ is a left (right) 
smash product algebra.
$\phi$ is left (right) normal if and only if $A\#^\phi B$ is a 
left (right) smash coproduct coalgebra.  
\end{lemma}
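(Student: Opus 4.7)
The plan leverages the decomposition \equref{psiphiasactcoact} of $\psi$ and $\phi$ established in \thref{crossprobialasactandcoact}: inside any cross product bialgebra $A\times_\psi^\phi B$, the braiding $\psi$ is determined entirely by the left $B$-action on $A$ and the right $A$-action on $B$ from \leref{action}, while $\phi$ is analogously determined by the two coactions from \leref{coaction}. I would use these formulas as the starting point for all four implications in the lemma.

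For the first biconditional, assume $\psi$ is left conormal. Composing \equref{psiphiasactcoact} on the left with $\un{\va}_A\ot\Id_B$ and invoking the counit compatibility $\un{\va}_A\circ(\text{left }B\text{-action})=\un{\va}_B\ot\un{\va}_A$ from part (ii) of \thref{crossprobialasactandcoact}, together with the counit axioms for the comultiplications of $A$ and $B$, the entire expression collapses to the right $A$-action evaluated on $b\ot a$. Left conormality equates this with $\un{\va}_A(a)b$, which forces the right $A$-module structure on $B$ to be trivial. Substituting this triviality back into the decomposition of $\psi$ yields exactly \equref{psismashprod}, and \prref{crossprodissmashprodalg} then identifies $A\#_\psi B$ as the left smash product algebra built from the $B$-module algebra structure on $A$. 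Conversely, if $\psi$ has the smash form \equref{psismashprod}, a two-line computation using $(\un{\va}_A\ot\un{\va}_B)\circ\psi=\un{\va}_B\ot\un{\va}_A$ from \equuref{multcounitcomp}{c} together with the counit axiom of $B$ verifies left conormality.

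The right-conormal/right-smash assertion is handled symmetrically with the two actions interchanged: right conormality of $\psi$ forces the left $B$-action on $A$ to be trivial, so the decomposition of $\psi$ reduces to the right-handed analog of \equref{psismashprod}. The two coalgebra statements follow by dualizing: starting from the decomposition of $\phi$ in terms of the two coactions, left (respectively right) normality of $\phi$ annihilates the right $A$-coaction on $B$ (respectively the left $B$-coaction on $A$) via the corresponding unit axiom, leaving exactly the expression \equref{phismashprod} (respectively its right-handed analog); the converse again reduces to invoking the unit-counit identities. The only mild bookkeeping obstacle is keeping track of which (co)action trivializes under which (co)normality condition; once that is fixed, every direction reduces to a short diagrammatic collapse driven entirely by the relevant (co)unit axiom and the general formula \equref{psiphiasactcoact}.
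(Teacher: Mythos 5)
Your route is the one the paper takes: in a cross product bialgebra the identities \equuref{neccconds}{g} and \equuref{neccconds}{c} (equivalently \equref{psiphiasactcoact}) reconstruct $\psi$ and $\phi$ from the (co)actions of \leref{action} and \leref{coaction}; left conormality of $\psi$ is precisely triviality of the right $A$-action on $B$ (indeed, that action is by definition $(\un{\va}_A\ot\Id_B)\circ\psi$, so your ``collapse'' argument for this step is harmless but circular); substituting the trivial (co)action into the reconstruction gives \equref{psismashprod}, respectively \equref{phismashprod}, and the converse is the composition with $\un{\va}_A\ot\Id_B$ using \equuref{multcounitcomp}{c} (dually with \equuref{comultunitcomp}{c}), exactly as in the paper.

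There is, however, a genuine gap: you never establish that $B$ is a bialgebra. Both \prref{crossprodissmashprodalg} and \prref{crossprodissmashprodcoalg} carry the hypothesis that $B$ is a bialgebra, and the conclusion of the lemma only makes sense under that hypothesis, since a (left) smash product algebra or smash coproduct coalgebra requires $A$ to be an algebra, resp. coalgebra, in the monoidal category ${}_B\Cc$, resp. ${}^B\Cc$. In a general cross product bialgebra $B$ is merely an algebra and a coalgebra; multiplicativity of $\un{\Delta}_B$ is not an axiom, and what holds instead is the mixed compatibility \equuref{BespDrabComp}{b}, which involves both $\psi$ and $\phi$. The missing step is to note that once the right $A$-action on $B$ is trivial (left conormality of $\psi$), or once the right $A$-coaction on $B$ is trivial (left normality of $\phi$), the relation \equuref{BespDrabComp}{b} collapses --- using \equuref{crossprodcoalg}{c} in the first case and \equuref{crossprodalg}{c} in the second, together with $\un{\va}_A\un{\eta}_A=\Id_{\un{1}}$ --- to $\un{\Delta}_B\circ\un{m}_B=(\un{m}_B\ot\un{m}_B)\circ(\Id_B\ot c_{B,B}\ot\Id_B)\circ(\un{\Delta}_B\ot\un{\Delta}_B)$; combined with \equuref{comultunitcomp}{b}, \equuref{multcounitcomp}{b} and $\un{\va}_B\un{\eta}_B=\Id_{\un{1}}$ this makes $B$ a bialgebra, and only then are you entitled to invoke \prref{crossprodissmashprodalg} and \prref{crossprodissmashprodcoalg}. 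Until that verification is added, the forward implications in all four cases are not licensed.
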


\begin{proof}
Since $A\times_\psi^\phi B$ is a cross product bialgebra the equalities \equuref{neccconds}{g,c} and
\equuref{BespDrabComp}{b} hold.
Thus if $\psi$ is left conormal then $B$ is a bialgebra in $\Cc$ and $\psi$ satisfies \equref{psismashprod}. It then follows from 
\prref{crossprodissmashprodalg} that $A\times_\psi B$ is a smash product. 
Conversely, if $A\times_\psi B$ is  a left smash product algebra, then $B$ is a bialgebra in $\Cc$ and 
$\psi$ satisfies \equref{psismashprod}, see \prref{crossprodissmashprodalg}. Compose 
\equref{psismashprod} to the left 
with $\un{\va}_A\ot \Id_B$; using \equuref{multcounitcomp}{c}, it follows that $\psi$ is left conormal.   
The proof of the right handed version is similar, and the second assertion
is the dual of the first one. 
\end{proof}

\begin{corollary}\colabel{Radford}
A cross product bialgebra $A\#_\psi^\phi B$ is a left (right) Radford biproduct (this
means that $A\# _\psi B$ is a left (right) smash product algebra and $A\#^\phi B$ is a 
left (right) smash coproduct coalgebra)
 if and only if $\psi$ is left (right) conormal and 
$\phi$ is left (right) normal. If, moreover, $B$ is a Hopf algebra and $\Id_A$ is convolution invertible, then 
$A\#_\psi^\phi B$ is a Hopf algebra.   
\end{corollary}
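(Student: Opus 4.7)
The plan is to simply combine two results already at hand. For the first assertion, I would apply \leref{normalsmash} twice: the equivalence of ``$A\#_\psi B$ is a left (right) smash product algebra'' with ``$\psi$ is left (right) conormal'' gives half the biconditional, and the equivalence of ``$A\#^\phi B$ is a left (right) smash coproduct coalgebra'' with ``$\phi$ is left (right) normal'' gives the other half. Since by definition $A\#_\psi^\phi B$ is a (left or right) Radford biproduct precisely when both the underlying smash product algebra and smash coproduct coalgebra structures are present, the biconditional is immediate.

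For the Hopf algebra claim, I would observe that once $B$ is assumed to be a Hopf algebra, $\Id_B$ is automatically convolution invertible with convolution inverse equal to the antipode $\un{S}_B$ (by the very definition of a braided Hopf algebra, see \equref{braidedantipode}). Combined with the hypothesis that $\Id_A$ is convolution invertible in ${\rm Hom}_\Cc(A,A)$, the conditions of \prref{whenacrossprodisHA} are met, and that proposition directly yields an antipode for $A\#_\psi^\phi B$.

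I expect no genuine obstacle here, as both assertions reduce to quoting earlier results; the only minor point to record is that in the Radford biproduct situation $B$ acquires a bialgebra structure automatically (this is already part of \prref{crossprodissmashprodalg}, and hence of \leref{normalsmash}), so the hypothesis ``$B$ is a Hopf algebra'' in the second half amounts to adding an antipode to this bialgebra, which is exactly what makes $\Id_B$ convolution invertible. Thus the proof is essentially a one-line appeal to \leref{normalsmash} followed by a one-line appeal to \prref{whenacrossprodisHA}.
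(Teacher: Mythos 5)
Your proof is correct and is exactly the argument the paper intends: the equivalence is an immediate double application of \leref{normalsmash}, and the Hopf algebra statement follows from \prref{whenacrossprodisHA} once one notes that the antipode of $B$ is precisely the convolution inverse of $\Id_B$ (the paper states the corollary without proof for this reason). Your remark that $B$ already carries the requisite bialgebra structure via \prref{crossprodissmashprodalg} is the right point to record.
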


Our next aim is to describe smash cross product bialgebras, these are
cross product bialgebras with a smash product algebra as underlying algebra. Obviously
Radford biproducts are special cases, and this is why we did not provide an explicit construction
of the Radford biproduct. \thref{strsmashcrossprodHa} is a generalization of \cite[Theorem 4.5]{cimz},
where the special case where $A$ and $B$ are bialgebras is discussed.

\begin{theorem}\thlabel{strsmashcrossprodHa}
Let $A, B$ be algebras and coalgebras, and $\psi: B\ot A\ra A\ot B$ and $\phi: A\ot B\ra B\ot A$ 
morphisms in $\Cc$ such that $\psi$ is left normal. The following assertions are equivalent:\\
(i) $A\#_\psi^\phi B$ is a cross product bialgebra (and therefore a smash cross product bialgebra,
by \leref{normalsmash}).\\
(ii) $B$ is a bialgebra in $\Cc$, $A$ is a left $B$-module algebra and a left $B$-comodule algebra, $B$ is a right 
$A$-module and comodule and the following relations hold:
\begin{eqnarray*}
&&
\gbeg{3}{5}
\gvac{1}\got{1}{A}\gnl
\glcm\gnl
\gcl{1}\gcn{1}{1}{1}{2}\gnl
\gcl{1}\gcmu\gnl
\gob{1}{B}\gob{1}{A}\gob{1}{A}
\gend
=
\gbeg{5}{8}
\gvac{1}\got{2}{A}\gnl
\gvac{1}\gcmu\gnl
\glcm\gcn{1}{1}{1}{3}\gnl
\gcl{1}\gcl{1}\glcm\gnl
\gcl{1}\gcl{1}\grcm\gcn{1}{1}{-1}{1}\gnl
\gcl{1}\gbr\gcl{1}\gcl{1}\gnl
\gmu\gmu\gcl{1}\gnl
\gob{2}{B}\gob{2}{A}\gob{1}{A}
\gend
\hspace{1mm},\hspace{1mm}
\gbeg{2}{4}
\got{1}{A}\got{1}{A}\gnl
\gmu\gnl
\gcmu\gnl
\gob{1}{A}\gob{1}{A}
\gend
=
\gbeg{5}{9}
\got{2}{A}\gvac{1}\got{2}{A}\gnl
\gcmu\gvac{1}\gcmu\gnl
\gcl{1}\gcn{1}{1}{1}{3}\gvac{1}\gcl{1}\gcl{1}\gnl
\gcl{1}\glcm\gcl{1}\gcl{1}\gnl
\gcl{1}\gcl{1}\gbr\gcl{1}\gnl
\gcl{1}\glm\gmu\gnl
\gcl{1}\gvac{1}\gcn{1}{1}{1}{-1}\gcn{1}{2}{2}{2}\gnl
\gmu\gnl
\gob{2}{A}\gvac{1}\gob{2}{A}
\gend 
\hspace{1mm},\hspace{1mm}
\gbeg{2}{4}
\got{1}{B}\got{1}{A}\gnl
\glm\gnl
\gvac{1}\gcu{1}\gnl
\gob{2}{\un{1}}
\gend
=
\gbeg{2}{3}
\got{1}{B}\got{1}{A}\gnl
\gcu{1}\gcu{1}\gnl
\gob{2}{\un{1}}
\gend
\hspace{1mm},\\
&&
\gbeg{3}{5}
\gvac{1}\got{1}{B}\gnl
\gvac{1}\grcm\gnl
\gvac{1}\gcn{1}{1}{1}{0}\gcl{1}\gnl
\gcmu\gcl{1}\gnl
\gob{1}{B}\gob{1}{B}\gob{1}{A}
\gend
=
\gbeg{5}{8}
\gvac{2}\got{2}{B}\gnl
\gvac{2}\gcmu\gnl
\gvac{2}\gcn{1}{1}{1}{-1}\gcl{1}\gnl
\gvac{1}\grcm\grcm\gnl
\gvac{1}\gcn{1}{1}{1}{-1}\gvac{-1}\glcm\gcl{1}\gcl{1}\gnl
\gcl{1}\gcl{1}\gbr\gcl{1}\gnl
\gcl{1}\gmu\gmu\gnl
\gob{1}{B}\gob{2}{A}\gob{2}{A}
\gend
\hspace{1mm},\hspace{1mm}
\gbeg{2}{5}
\got{1}{B}\got{1}{A}\gnl
\glm\gnl
\gvac{1}\gcn{1}{1}{1}{0}\gnl
\gcmu\gnl
\gob{1}{A}\gob{1}{A}
\gend
=
\gbeg{5}{8}
\got{2}{B}\gvac{1}\got{2}{A}\gnl
\gcmu\gvac{1}\gcmu\gnl
\grcm\gcn{1}{1}{-1}{1}\gcl{1}\gcl{1}\gnl
\gcl{1}\gcl{1}\gbr\gcl{1}\gnl
\gcl{1}\gbr\glm\gnl
\glm\gcl{1}\gvac{1}\gcn{1}{1}{1}{-1}\gnl
\gvac{1}\gcl{1}\gmu\gnl
\gvac{1}\gob{1}{A}\gob{2}{A}
\gend
\hspace{1mm},\hspace{1mm}
\gbeg{2}{4}
\gvac{1}\got{1}{A}\gnl
\glcm\gnl
\gcl{1}\gcu{1}\gnl
\gob{1}{B}
\gend
=
\gbeg{2}{3}
\gvac{1}\got{1}{A}\gnl
\gu{1}\gcu{1}\gnl
\gob{1}{B}
\gend
\hspace{1mm},\\
&&
\gbeg{2}{5}
\got{1}{B}\got{1}{B}\gnl
\gmu\gnl
\gcn{1}{1}{2}{1}\gnl
\grcm\gnl
\gob{1}{B}\gob{1}{A}
\gend
=
\gbeg{5}{8}
\got{2}{B}\gvac{1}\got{2}{B}\gnl
\gcmu\gvac{1}\grcm\gnl
\grcm\gcn{1}{1}{-1}{1}\gcl{1}\gcl{1}\gnl
\gcl{1}\gcl{1}\gbr\gcl{1}\gnl
\gcl{1}\gcl{1}\gcl{1}\glm\gnl
\gcl{1}\gbr\gvac{1}\gcn{1}{1}{1}{-1}\gnl
\gmu\gmu\gnl
\gob{2}{B}\gob{2}{A}
\gend
\hspace{1mm},\hspace{1mm}
\gbeg{4}{8}
\got{2}{B}\got{1}{A}\gnl
\gcmu\gcl{1}\gnl
\gcl{1}\gbr\gnl
\glm\grcm\gnl
\glcm\gcl{1}\gcl{1}\gnl
\gcl{1}\gbr\gcl{1}\gnl
\gmu\gmu\gnl
\gob{2}{B}\gob{2}{A}
\gend
=
\gbeg{5}{8}
\got{2}{B}\gvac{2}\got{1}{A}\gnl
\gcmu\gvac{1}\glcm\gnl
\grcm\gcn{1}{1}{-1}{1}\gcl{1}\gcl{1}\gnl
\gcl{1}\gcl{1}\gbr\gcl{1}\gnl
\gcl{1}\gbr\glm\gnl
\gmu\gcl{1}\gcn{1}{1}{3}{1}\gnl
\gcn{1}{1}{2}{2}\gvac{1}\gmu\gnl
\gob{2}{B}\gob{2}{A}
\gend
\hspace{1mm}.
\end{eqnarray*}    
If $\Id_A$
has a convolution inverse $\un{S}$ and $B$ is a 
Hopf algebra with antipode $\un{s}$, then $A\times_\psi^\phi B$ is a Hopf algebra 
in $\Cc$ with antipode
\[
\gbeg{4}{11}
\gvac{1}\got{1}{A}\got{1}{B}\gnl
\glcm\grcm\gnl
\gcl{1}\gbr\gcl{1}\gnl
\gmu\gmu\gnl
\gcn{1}{1}{2}{1}\gvac{1}\gcn{1}{1}{2}{1}\gnl
\gmp{\un{s}}\gvac{1}\gmp{\un{S}}\gnl
\gcn{1}{1}{1}{2}\gvac{1}\gcl{1}\gnl
\gcmu\gcl{1}\gnl
\gcl{1}\gbr\gnl
\glm\gcl{1}\gnl
\gvac{1}\gob{1}{A}\gob{1}{B}
\gend
\hspace{1mm}.
\] 
\end{theorem}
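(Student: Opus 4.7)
The strategy is to reduce the statement to the already-established characterization in \thref{crossprobialasactandcoact}, using that the left conormality of $\psi$ forces the underlying algebra to be a left smash product. First I would invoke \leref{normalsmash} together with \prref{crossprodissmashprodalg}: $\psi$ left conormal means (once $A\#^\phi_\psi B$ is a cross product bialgebra) that $A\#_\psi B$ is a left smash product algebra, so $B$ is a bialgebra in $\Cc$, $A$ is a left $B$-module algebra, and $\psi$ is fully determined by the $B$-action via \equref{psismashprod}. Conversely, given the data in (ii), I would \emph{define} $\psi$ by \equref{psismashprod} from the $B$-module structure on $A$ and define $\phi$ from the coactions by \equref{psiphiasactcoact}; the left conormality of this $\psi$ is automatic from the counit axiom of the $B$-module $A$.

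\textbf{Direction (i) $\Rightarrow$ (ii).} Once we know $B$ is a bialgebra and $A$ is a left $B$-module algebra, I would read off the remaining items on the list of (ii) directly from \thref{crossprobialasactandcoact}. The unit/counit compatibilities follow from items (ii)--(v) of that theorem; the comodule-coalgebra and algebra-coalgebra compatibilities are precisely two of the six equations in item (vi). The coaction unit relation for $B$ (``right $A$-comodule algebra'' type) is another of those six equations. The two equations relating $\glm$ to $\gcmu$ on $A$ and to $\grcm$ on $B$ are obtained by rewriting conditions (\ref{eq:additional2}.a) and (\ref{eq:additional2}.b) under the smash-product form of $\psi$: substituting \equuref{psismashprod}{} and using left conormality collapses the right-hand sides of those identities to the ones stated. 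The final identity of (ii) (the rectangular one mixing $\glm,\grcm,\glcm$ on the left with $\grcm,\glm$ on the right) is what the module-comodule compatibility from any of the four equivalent sets (\ref{eq:modulecomodule1})--(\ref{eq:modulecomodule4}) of \thref{crossprobialasactandcoact}(vii) becomes once $\psi$ is replaced by its smash form; the key simplification is that the ``$\grm$'' leg of $\psi$ is trivialized (equals a counit) by left conormality, so all four a-priori different compatibilities degenerate to the same unified identity.

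\textbf{Direction (ii) $\Rightarrow$ (i).} With $\psi,\phi$ defined as above, I would verify the hypotheses of \thref{crossprobialasactandcoact}. Items (i)--(v) there are immediate from the unit/counit hypotheses in (ii) (and the fact that $B$ is a bialgebra). The six equations of (vi) split into: (a) the two module-algebra / comodule-coalgebra compatibilities, which are exactly two items of (ii); (b) the two algebra-coalgebra compatibilities for $A$ and $B$, one of which is in (ii) and the other of which is automatic since $B$ is a bialgebra; (c) two more compatibilities which, after substituting the smash form of $\psi$ and using that $A$ is a $B$-module algebra plus left conormality, reduce to items stated in (ii). Finally, one of the sets (vii.1)--(vii.4) must be checked; I would use (vii.1) and verify that its three identities all reduce to the last equation of (ii) under the smash form of $\psi$. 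Then \thref{crossprobialasactandcoact} yields that $A\#^\phi_\psi B$ is a cross product bialgebra.

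\textbf{Hopf algebra part.} Given $\un{S}=\Id_A^{-1}$ in convolution and the antipode $\un{s}$ of $B$ (which is $\Id_B^{-1}$ in convolution), \prref{whenacrossprodisHA} (or equivalently \prref{5.5}) gives an antipode expressed through $\psi,\phi,\un{S},\un{s}$. Substituting the smash-product form \equref{psismashprod} of $\psi$ into that antipode formula, and applying the counit axioms together with left conormality of $\psi$, the upper ``$\psi$-cap'' collapses, leaving exactly the diagram stated in the theorem.

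\textbf{Main obstacle.} The genuinely nontrivial step is reconciling the four module-comodule compatibilities (\ref{eq:modulecomodule1})--(\ref{eq:modulecomodule4}) of \thref{crossprobialasactandcoact}(vii): one has to show that, once $\psi$ has the smash form, they all collapse to the single rectangular identity on the bottom right of the list in (ii), and to check carefully that no extra hypothesis beyond the list in (ii) is silently being used. This is essentially a diagrammatic bookkeeping exercise—tedious rather than conceptually hard—made manageable by the fact that the smash form of $\psi$ lets one freely slide the $B$-comultiplication and the braiding through the $\grm$-strand, which becomes a counit.
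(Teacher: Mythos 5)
Your overall strategy is the paper's own: reduce everything to \thref{crossprobialasactandcoact}, using that left conormality of $\psi$ trivializes the induced right $A$-action on $B$ and forces the smash form \equref{psismashprod}, and obtain the antipode by specializing \prref{whenacrossprodisHA}. The direction (i)$\Rightarrow$(ii) and the Hopf-algebra part are handled exactly as in the paper, so there is no issue there.

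The gap is in your treatment of condition (vii). Left conormality kills only the right-action leg of $\psi$; the right $A$-coaction on $B$ survives and appears throughout, and the module--comodule identity does not collapse by mere substitution to the last displayed equation of (ii). What the paper actually proves (working with (vii.2) rather than your (vii.1)) is that, under left conormality, \equref{modulecomodule2} is \emph{equivalent to the conjunction} of the seventh equation of (ii), the eighth (``rectangular'') equation, and the multiplicativity of the left $B$-coaction on $A$. Extracting these three pieces from \equref{modulecomodule2} is easy (compose with units and counits), but the converse -- reassembling \equref{modulecomodule2} from those three pieces, which is exactly what your direction (ii)$\Rightarrow$(i) needs -- is the substantive computation occupying most of the paper's proof (a long chain using associativity, coassociativity, naturality of the braiding and the comodule-algebra property), and your proposal neither supplies it nor sets it up correctly. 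Moreover, for your chosen set (vii.1) the auxiliary identities do not behave as you claim: under left conormality \equuref{additional1}{a} reduces to the statement that $A$ is a left $B$-comodule algebra and \equuref{additional1}{b} reduces (essentially) to the seventh equation of (ii), not to the eighth; so the sentence ``verify that its three identities all reduce to the last equation of (ii)'' is false as stated. The plan is repairable -- follow the paper and prove the stated equivalence for the module--comodule identity by an explicit diagrammatic argument -- but as written the central implication from the list (ii) back to condition (vii) is asserted, misdescribed, and left unproved, and it is precisely the non-routine part of the theorem.
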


\begin{proof} 
$A\#_\psi^\phi B$ is a cross product bialgebra if and only if
conditions (i-vi) and (vii.2) from \thref{crossprobialasactandcoact} are fulfilled.
Using the left normality of $\psi$, it follows easily that these conditions reduce to
condition (ii) in \thref{strsmashcrossprodHa}, with one exception: we will show that
the third equality in (vii.2) is equivalent to the seventh and eighth compatibility condition in
\thref{strsmashcrossprodHa} and the fact that $A$ is a left $B$-comodule algebra.
Indeed, using the left normality of $\psi$, the third equality in (vii.2) takes the form
\[
\gbeg{5}{11}
\got{1}{A}\got{2}{B}\got{1}{A}\got{1}{B}\gnl
\gcl{1}\gcmu\gcl{1}\gcl{1}\gnl
\gcl{1}\gcl{1}\gbr\gcl{1}\gnl
\gcl{1}\glm\gmu\gnl
\gcl{1}\gcn{1}{1}{3}{1}\gvac{1}\gcn{1}{1}{2}{-1}\gnl
\gmu\grcm\gnl
\gcn{1}{1}{2}{3}\gvac{1}\gcl{1}\gcl{1}\gnl
\glcm\gcl{1}\gcl{1}\gnl
\gcl{1}\gbr\gcl{1}\gnl
\gmu\gmu\gnl
\gob{2}{B}\gob{2}{A}
\gend
=
\gbeg{9}{10}
\gvac{1}\got{1}{A}\got{2}{B}\gvac{2}\got{1}{A}\got{1}{B}\gnl
\glcm\gcmu\gvac{1}\glcm\grcm\gnl
\gcl{1}\gcl{1}\grcm\gcn{1}{1}{-1}{1}\gcl{1}\gbr\gcl{1}\gnl
\gcl{1}\gbr\gcl{1}\gcl{1}\gmu\gmu\gnl
\gmu\gmu\gcl{1}\gcn{1}{1}{2}{1}\gcn{1}{1}{4}{1}\gnl
\gcn{1}{2}{2}{5}\gvac{1}\gcn{1}{1}{2}{3}\gvac{1}\gbr\gcl{1}\gnl
\gvac{3}\gbr\glm\gnl
\gvac{2}\gcl{1}\gcl{1}\gcl{1}\gcn{1}{1}{3}{1}\gnl
\gvac{2}\gmu\gmu\gnl
\gvac{2}\gob{2}{B}\gob{2}{A}
\gend
\hspace{1mm}.
\] 
Composing this equality to the right with $\un{\eta}_A\ot \Id_B\ot \un{\eta}_A\ot \Id_B$,
we obtain the seventh compatibility condition. Composing it to the right with
$\Id_A\ot \un{\eta}_B\ot \Id_A\ot \un{\eta}_B$, we find that the left $B$-coaction on
$A$ is a morphism in ${}^B\Cc$. Together with
$
\gbeg{2}{4}
\gvac{1}\got{1}{\un{1}}\gnl
\gvac{1}\gu{1}\gnl
\glcm\gnl
\gob{1}{B}\gob{1}{A}
\gend
=
\gbeg{2}{3}
\got{2}{\un{1}}\gnl
\gu{1}\gu{1}\gnl
\gob{1}{B}\gob{1}{A}
\gend~,
$ 
this tells us that $A$ is a left $B$-comodule algebra. Finally, composition to the right
with $\un{\eta}_A\ot \Id_{B\ot A}\ot \un{\eta}_B$ gives the eighth compatibility condition.
The proof on the converse implication is based on a direct computation:
\begin{eqnarray*}
&&\hspace*{-1cm}
\gbeg{5}{11}
\got{1}{A}\got{2}{B}\got{1}{A}\got{1}{B}\gnl
\gcl{1}\gcmu\gcl{1}\gcl{1}\gnl
\gcl{1}\gcl{1}\gbr\gcl{1}\gnl
\gcl{1}\glm\gmu\gnl
\gcl{1}\gcn{1}{1}{3}{1}\gvac{1}\gcn{1}{1}{2}{-1}\gnl
\gmu\grcm\gnl
\gcn{1}{1}{2}{3}\gvac{1}\gcl{1}\gcl{1}\gnl
\glcm\gcl{1}\gcl{1}\gnl
\gcl{1}\gbr\gcl{1}\gnl
\gmu\gmu\gnl
\gob{2}{B}\gob{2}{A}
\gend
\equal{(*_1)}
\gbeg{9}{13}
\gvac{1}\got{1}{A}\got{2}{B}\got{1}{A}\gvac{2}\got{1}{B}\gnl
\glcm\gcmu\gcl{1}\gvac{2}\grcm\gnl
\gcl{1}\gcl{1}\gcl{1}\gbr\gvac{2}\gcl{1}\gcl{1}\gnl
\gcl{1}\gcl{1}\glm\gcn{1}{1}{1}{2}\gvac{2}\gcl{1}\gcl{1}\gnl
\gcl{1}\gcl{1}\glcm\gcmu\gvac{1}\gcl{1}\gcl{1}\gnl
\gcl{1}\gbr\gcl{1}\grcm\gcn{1}{1}{-1}{1}\gcl{1}\gcl{1}\gnl
\gmu\gmu\gcl{1}\gcl{1}\gbr\gcl{1}\gnl
\gcn{1}{1}{2}{5}\gvac{1}\gcn{1}{1}{2}{3}\gvac{1}\gcl{1}\gbr\glm\gnl
\gvac{2}\gcl{1}\gcl{1}\gmu\gcl{1}\gvac{1}\gcn{1}{1}{1}{-1}\gnl
\gvac{2}\gcl{1}\gcl{1}\gcn{1}{1}{2}{1}\gvac{1}\gmu\gnl
\gvac{2}\gcl{1}\gbr\gvac{1}\gcn{1}{1}{2}{-1}\gnl
\gvac{2}\gmu\gmu\gnl
\gvac{2}\gob{2}{B}\gob{2}{A}
\gend
\equalupdown{(*_2)}{\equref{nat2cup}}
\gbeg{9}{16}
\gvac{1}\got{1}{A}\gvac{1}\got{2}{B}\gvac{1}\got{1}{A}\got{1}{B}\gnl
\glcm\gvac{1}\gcmu\gvac{1}\gcl{1}\grcm\gnl
\gcl{1}\gcl{1}\gvac{1}\gcn{1}{1}{1}{0}\gcn{1}{1}{1}{3}\gvac{1}\gcl{1}\gcl{1}\gcl{1}\gnl
\gcl{1}\gcl{1}\gcmu\gvac{1}\gbr\gcl{1}\gcl{1}\gnl
\gcl{1}\gcl{1}\gcl{1}\gcl{1}\gvac{1}\gcn{1}{1}{1}{-1}\gbr\gcl{1}\gnl
\gcl{1}\gcl{1}\gcl{1}\gbr\gvac{1}\gcl{1}\glm\gnl
\gcl{1}\gcl{1}\glm\grcm\gcl{1}\gcn{1}{1}{3}{1}\gnl
\gcl{1}\gcl{1}\glcm\gcl{1}\gbr\gcl{1}\gnl
\gcl{1}\gbr\gcl{1}\gmu\gcl{1}\gcl{1}\gnl
\gmu\gmu\gcn{1}{2}{2}{1}\gvac{1}\gcl{1}\gcl{1}\gnl
\gcn{1}{2}{2}{5}\gvac{1}\gcn{1}{1}{2}{3}\gvac{3}\gcn{1}{2}{1}{-1}\gcn{1}{4}{1}{-1}\gnl
\gvac{3}\gbr\gvac{1}\gnl
\gvac{2}\gmu\gmu\gnl
\gvac{2}\gcn{1}{2}{2}{2}\gvac{1}\gcn{1}{1}{2}{3}\gnl
\gvac{5}\gmu\gnl
\gvac{2}\gob{2}{B}\gvac{1}\gob{2}{A}
\gend
\\
&&
\equalupdown{\rm (*_3)}{\equref{nat1cup}\times 2}
\gbeg{9}{18}
\gvac{1}\got{1}{A}\gvac{1}\got{2}{B}\gvac{1}\got{1}{A}\got{1}{B}\gnl
\glcm\gvac{1}\gcmu\gvac{1}\gcl{1}\grcm\gnl
\gcl{1}\gcl{1}\gvac{1}\gcn{1}{1}{1}{0}\gcn{1}{1}{1}{3}\gvac{1}\gcl{1}\gcl{1}\gcl{1}\gnl
\gcl{1}\gcl{1}\gcmu\gvac{1}\gbr\gcl{1}\gcl{1}\gnl
\gcl{1}\gcl{1}\gcl{1}\gcl{1}\gvac{1}\gcn{1}{1}{1}{-1}\gbr\gcl{1}\gnl
\gcl{1}\gcl{1}\gcl{1}\gbr\gvac{1}\gcl{1}\glm\gnl
\gcl{1}\gcl{1}\glm\grcm\gcl{1}\gcn{1}{1}{3}{1}\gnl
\gcl{1}\gcl{1}\glcm\gcl{1}\gbr\gcl{1}\gnl
\gcl{1}\gcl{1}\gcl{1}\gcl{1}\gmu\gcl{1}\gcl{1}\gnl
\gcl{1}\gcl{1}\gcl{1}\gcl{1}\gcn{1}{1}{2}{1}\gvac{1}\gcl{1}\gcl{1}\gnl
\gcl{1}\gcl{1}\gcl{1}\gbr\gvac{1}\gcn{1}{1}{1}{-1}\gcl{1}\gnl
\gcl{1}\gcl{1}\gmu\gmu\gvac{1}\gcn{1}{5}{1}{-5}\gnl
\gcl{1}\gcl{1}\gcn{1}{1}{2}{1}\gvac{1}\gcn{1}{2}{2}{-1}\gnl
\gcl{1}\gbr\gnl
\gmu\gmu\gnl
\gcn{1}{2}{2}{2}\gcn{1}{1}{4}{5}\gnl
\gvac{3}\gmu\gnl
\gob{2}{B}\gvac{1}\gob{2}{A}
\gend
\equalupdown{\rm (*_4)}{\equref{nat1cup}\times 2}
\gbeg{9}{19}
\gvac{1}\got{1}{A}\gvac{1}\got{2}{B}\gvac{1}\got{1}{A}\got{1}{B}\gnl
\glcm\gvac{1}\gcmu\gvac{1}\gcl{1}\grcm\gnl
\gcl{1}\gcl{1}\gvac{1}\gcn{1}{1}{1}{0}\gcn{1}{1}{1}{3}\gvac{1}\gcl{1}\gcl{1}\gcl{1}\gnl
\gcl{1}\gcl{1}\gcmu\gvac{1}\gbr\gcl{1}\gcl{1}\gnl
\gcl{1}\gcl{1}\gcl{1}\gcl{1}\gvac{1}\gcn{1}{1}{1}{-1}\gbr\gcl{1}\gnl
\gcl{1}\gcl{1}\gcl{1}\gbr\gvac{1}\gcl{1}\glm\gnl
\gcl{1}\gcl{1}\glm\grcm\gcl{1}\gcn{1}{1}{3}{1}\gnl
\gcl{1}\gcl{1}\glcm\gcl{1}\gcl{1}\gcl{1}\gcl{1}\gnl
\gcl{1}\gcl{1}\gcl{1}\gbr\gcl{1}\gcl{1}\gcl{1}\gnl
\gcl{1}\gcl{1}\gmu\gmu\gcl{1}\gcl{1}\gnl
\gcl{1}\gcl{1}\gcn{1}{2}{2}{5}\gvac{1}\gcn{1}{1}{2}{3}\gvac{1}\gcl{1}\gcl{1}\gnl
\gcl{1}\gcn{1}{3}{1}{5}\gvac{1}\gvac{2}\gbr\gcl{1}\gnl
\gcn{1}{3}{1}{5}\gvac{3}\gmu\gcl{1}\gcl{1}\gnl
\gvac{4}\gcn{1}{1}{2}{1}\gvac{1}\gcl{1}\gcl{1}\gnl
\gvac{3}\gbr\gvac{1}\gcn{1}{1}{1}{-1}\gcn{1}{3}{1}{-1}\gnl
\gvac{2}\gmu\gmu\gnl
\gvac{2}\gcn{1}{2}{2}{2}\gvac{1}\gcn{1}{1}{2}{3}\gnl
\gvac{5}\gmu\gnl
\gvac{2}\gob{2}{B}\gvac{1}\gob{2}{A}
\gend\\
&&
\equalupdown{(*_5)}{\equref{nat2cup}}
\gbeg{10}{18}
\gvac{1}\got{1}{A}\gvac{1}\got{2}{B}\gvac{2}\got{1}{A}\got{1}{B}\gnl
\glcm\gvac{1}\gcmu\gvac{1}\glcm\grcm\gnl
\gcl{1}\gcl{1}\gvac{1}\gcn{1}{1}{1}{-1}\gcn{1}{1}{1}{2}\gvac{1}\gcl{1}\gcl{1}\gcl{1}\gcl{1}\gnl
\gcl{1}\gcl{1}\grcm\gcmu\gcl{1}\gcl{1}\gcl{1}\gcl{1}\gnl
\gcl{1}\gcl{1}\gcl{1}\gcl{1}\gcl{1}\gbr\gcl{1}\gcl{1}\gcl{1}\gnl
\gcl{1}\gcl{1}\gcl{1}\gcl{1}\gbr\gbr\gcl{1}\gcl{1}\gnl
\gcl{1}\gcl{1}\gcl{1}\gbr\glm\gbr\gcl{1}\gnl
\gcl{1}\gcl{1}\gmu\gcl{1}\gvac{1}\gcn{1}{1}{1}{-1}\gcl{1}\glm\gnl
\gcl{1}\gcl{1}\gcn{1}{3}{2}{5}\gvac{1}\gmu\gvac{1}\gcn{1}{2}{1}{-1}\gvac{1}\gcl{5}\gnl
\gcl{1}\gcl{1}\gvac{2}\gcn{1}{1}{2}{3}\gnl
\gcl{1}\gcn{1}{3}{1}{5}\gvac{3}\gbr\gnl
\gcl{1}\gvac{3}\gmu\gcn{1}{3}{1}{-1}\gnl
\gcn{1}{2}{1}{5}\gvac{3}\gcn{1}{1}{2}{1}\gnl
\gvac{3}\gbr\gvac{4}\gcn{1}{3}{1}{-5}\gnl
\gvac{2}\gmu\gmu\gnl
\gvac{2}\gcn{1}{2}{2}{2}\gvac{1}\gcn{1}{1}{2}{3}\gnl
\gvac{5}\gmu\gnl
\gvac{2}\gob{2}{B}\gvac{1}\gob{2}{A}
\gend
\equalupdown{(*_6)}{(\ref{eq:nat1cup},\ref{eq:nat2cup})}
\gbeg{10}{18}
\gvac{1}\got{1}{A}\gvac{1}\got{2}{B}\gvac{2}\got{1}{A}\got{1}{B}\gnl
\glcm\gvac{1}\gcmu\gvac{1}\glcm\grcm\gnl
\gcl{1}\gcl{1}\gcn{1}{1}{3}{1}\gvac{1}\gcl{1}\gcn{1}{1}{3}{1}\gvac{1}\gcl{1}\gcl{1}\gcl{1}\gnl
\gcl{1}\gcl{1}\grcm\gbr\gvac{1}\gcl{1}\gcl{1}\gcl{1}\gnl
\gcl{1}\gcl{1}\gcl{1}\gcl{1}\gcl{1}\gcn{1}{1}{1}{2}\gvac{1}\gcl{1}\gcl{1}\gcl{1}\gnl
\gcl{1}\gcl{1}\gcl{1}\gbr\gcmu\gcl{1}\gcl{1}\gcl{1}\gnl
\gcl{1}\gcl{1}\gcl{1}\gcl{1}\gcl{1}\gcl{1}\gbr\gcl{1}\gcl{1}\gnl
\gcl{1}\gcl{1}\gcl{1}\gcl{1}\gcl{1}\glm\gbr\gcl{1}\gnl
\gcl{1}\gcl{1}\gcl{1}\gcl{1}\gcl{1}\gvac{1}\gbr\glm\gnl
\gcl{1}\gcl{1}\gcl{1}\gcl{1}\gcl{1}\gvac{1}\gcn{1}{1}{1}{-1}\gcl{1}\gcn{1}{1}{3}{1}\gnl
\gcl{1}\gcl{1}\gcl{1}\gcl{1}\gbr\gvac{1}\gmu\gnl
\gcl{1}\gcl{1}\gcl{1}\gmu\gcl{1}\gcn{1}{1}{4}{1}\gnl
\gcl{1}\gcl{1}\gcl{1}\gcn{1}{1}{2}{1}\gvac{1}\gmu\gnl
\gcl{1}\gcl{1}\gmu\gvac{1}\gcn{1}{3}{2}{-3}\gnl
\gcl{1}\gcl{1}\gcn{1}{1}{2}{1}\gnl
\gcl{1}\gbr\gnl
\gmu\gmu\gnl
\gob{2}{B}\gob{2}{A}
\gend\\
&&
\equalupdown{(*_7)}{\equref{nat1cup}}
\gbeg{9}{13}
\gvac{1}\got{1}{A}\got{2}{B}\gvac{2}\got{1}{A}\got{1}{B}\gnl
\glcm\gcmu\gvac{1}\glcm\grcm\gnl
\gcl{1}\gcl{1}\grcm\gcn{1}{1}{-1}{1}\gcl{1}\gcl{1}\gcl{1}\gcl{1}\gnl
\gcl{1}\gcl{1}\gcl{1}\gcl{1}\gbr\gbr\gcl{1}\gnl
\gcl{1}\gcl{1}\gcl{1}\gcl{1}\gcl{1}\gbr\gmu\gnl
\gcl{1}\gcl{1}\gcl{1}\gcl{1}\gmu\gcl{1}\gcn{1}{1}{2}{1}\gnl
\gcl{1}\gcl{1}\gcl{1}\gcl{1}\gcn{1}{1}{2}{1}\gvac{1}\glm\gnl
\gcl{1}\gcl{1}\gcl{1}\gbr\gvac{2}\gcn{1}{1}{1}{-3}\gnl
\gcl{1}\gcl{1}\gmu\gmu\gnl
\gcl{1}\gcl{1}\gcn{1}{1}{2}{1}\gcn{1}{2}{4}{1}\gnl
\gcl{1}\gbr\gnl
\gmu\gmu\gnl
\gob{2}{B}\gob{2}{A}
\gend
\equalupdown{(*_8)}{\equref{nat1cup}\times 3}
\gbeg{9}{10}
\gvac{1}\got{1}{A}\got{2}{B}\gvac{2}\got{1}{A}\got{1}{B}\gnl
\glcm\gcmu\gvac{1}\glcm\grcm\gnl
\gcl{1}\gcl{1}\grcm\gcn{1}{1}{-1}{1}\gcl{1}\gbr\gcl{1}\gnl
\gcl{1}\gbr\gcl{1}\gcl{1}\gmu\gmu\gnl
\gmu\gmu\gcl{1}\gcn{1}{1}{2}{1}\gcn{1}{1}{4}{1}\gnl
\gcn{1}{2}{2}{5}\gvac{1}\gcn{1}{1}{2}{3}\gvac{1}\gbr\gcl{1}\gnl
\gvac{3}\gbr\glm\gnl
\gvac{2}\gcl{1}\gcl{1}\gcl{1}\gcn{1}{1}{3}{1}\gnl
\gvac{2}\gmu\gmu\gnl
\gvac{2}\gob{2}{B}\gob{2}{A}
\gend
\hspace{1mm},
\end{eqnarray*}
We used the following properties:
At $(*_1)$: $A$ is a left $B$-comodule algebra, and the seventh compatibility condition;
at $(*_2)$: $\un{\Delta}_B$ is coassociative and $\un{m}_A$ is associative; 
at $(*_3)$ and $(*_8)$: $\un{m}_A$ and $\un{m}_B$ are associative;
at $(*_4)$ and $(*_6)$: $\un{m}_B$ is associative;
at $(*_5)$: $\un{\Delta}_B$ coassociative, and the eigth compatibility condition; 
at $(*_7)$: naturality of the braiding, $A$ is a left $B$-comodule algebra, and the fact that 
$
\gbeg{3}{5}
\got{2}{B}\got{1}{A}\gnl
\gcmu\gcl{1}\gnl
\gcl{1}\gbr\gnl
\glm\gcl{1}\gnl
\gvac{1}\gob{1}{A}\gob{1}{B}
\gend 
$ 
is a morphism in $\Cc$. The assertion concerning the antipode of $A\times_\psi^\phi B$ follows easily from \prref{whenacrossprodisHA}.
\end{proof}

Obviously we also have a right handed version of \thref{strsmashcrossprodHa}.

\begin{corollary}\colabel{doublecrosscoprodbialg}
If $A\times_\psi^\phi B$ is a cross product bialgebra, and
$\psi$ is left and right conormal 
then $A\times_\psi^\phi B=A\blacktriangleright\hspace*{-1mm}\blacktriangleleft B$ is a double cross coproduct bialgebra. 
If $A$ and $B$ are Hopf algebras, then $A\blacktriangleright\hspace*{-1mm}\blacktriangleleft B$ is also a
Hopf algebra.
\end{corollary}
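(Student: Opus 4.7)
My plan is to deduce from the two conormality hypotheses that $\psi$ coincides with the braiding $c_{B,A}$, making the algebra structure on $A\#_\psi B$ the tensor product algebra; the resulting bialgebra, together with the cross coproduct coalgebra $A\#^\phi B$, is by definition Majid's double cross coproduct $A\blacktriangleright\hspace*{-1mm}\blacktriangleleft B$.

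First I would apply \leref{normalsmash} twice. Left conormality of $\psi$ gives that $A\#_\psi B$ is a left smash product algebra, so by \prref{crossprodissmashprodalg}, $B$ is a bialgebra in $\Cc$, $A$ is a left $B$-module algebra with action $b\tr a=(\Id_A\ot \un{\va}_B)\circ\psi(b\ot a)$, and $\psi$ takes the specific form \equref{psismashprod}. By the right-hand analogue of \leref{normalsmash} and \prref{crossprodissmashprodalg}, right conormality of $\psi$ yields symmetrically that $A$ is a bialgebra in $\Cc$, $B$ is a right $A$-module algebra, and $\psi$ admits a dual expression involving $\un{\Delta}_A$.

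Next I would extract the two actions. Composing \equref{psismashprod} with $\Id_A\ot\un{\va}_B$ on the output reproduces the left action $b\tr a$; comparing with the right conormality identity $(\Id_A\ot\un{\va}_B)\circ\psi=\un{\va}_B\ot\Id_A$ forces $b\tr a=\un{\va}_B(b)a$, so the left $B$-action on $A$ is trivial. Plugging this triviality back into \equref{psismashprod} and collapsing via the counit axiom on $B$ together with the naturality of the braiding, the self-referential formula reduces to $\psi=c_{B,A}$. The right-hand version of the argument gives the same equality, consistently. Consequently the multiplication on $A\#_\psi B$ becomes $(\un{m}_A\ot\un{m}_B)\circ(\Id_A\ot c_{B,A}\ot\Id_B)$, i.e.\ the tensor product algebra multiplication in $\Cc$.

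With the algebra identified as a tensor product and the coalgebra as the cross coproduct $A\#^\phi B$, the bialgebra compatibilities guaranteed by \thref{firstsetequivcond} specialise to exactly Majid's defining axioms for a double cross coproduct: both $A$ and $B$ are bialgebras, $B$ coacts on $A$ and $A$ coacts on $B$ through the data encoded by $\phi$ as in \leref{coaction}, and these coactions satisfy the required compatibilities with the tensor product algebra structure. Hence $A\#_\psi^\phi B=A\blacktriangleright\hspace*{-1mm}\blacktriangleleft B$. For the final assertion, if $A$ and $B$ are Hopf algebras, then their antipodes are convolution inverses of $\Id_A$ and $\Id_B$, so \prref{whenacrossprodisHA} supplies an antipode for $A\#_\psi^\phi B$. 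The main technical point is the explicit graphical collapse of \equref{psismashprod} to $c_{B,A}$ once the action is trivial; while this is essentially a counit-unit manipulation combined with naturality, care is required because \equref{psismashprod} encodes $\psi$ in terms of itself.
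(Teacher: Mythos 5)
Your argument is correct and follows essentially the same route as the paper: both conormality hypotheses force $\psi$ to coincide with the braiding $c_{B,A}$ (the paper reads this off \equuref{neccconds}{g}, while you obtain it by collapsing \equref{psismashprod} once right conormality makes the induced left action trivial), after which the algebra is the tensor product algebra, the structure is identified as Majid's double cross coproduct, and the antipode in the Hopf case comes from \prref{whenacrossprodisHA}. The only cosmetic difference is that the paper records the resulting comodule-algebra compatibilities by specializing \thref{strsmashcrossprodHa}(ii), whereas you appeal to \thref{firstsetequivcond}.
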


\begin{proof}
First observe that $\psi$ is left and right conormal if and only if $\psi$ is equal to the braiding of $B$
and $A$ in $\Cc$: one implication follows from \equuref{neccconds}{g}, and the other one is immediate.
Then (ii) in \thref{strsmashcrossprodHa} takes the form:\\
1) $A, B$ are bialgebras such that $A$ is a left $B$-comodule algebra and $B$ is a right 
$A$-comodule algebra;\\
2) The following equalities hold:
\[
\gbeg{3}{5}
\gvac{1}\got{1}{A}\gnl
\glcm\gnl
\gcl{1}\gcn{1}{1}{1}{2}\gnl
\gcl{1}\gcmu\gnl
\gob{1}{B}\gob{1}{A}\gob{1}{A}
\gend
=
\gbeg{5}{8}
\gvac{1}\got{2}{A}\gnl
\gvac{1}\gcmu\gnl
\glcm\gcn{1}{1}{1}{3}\gnl
\gcl{1}\gcl{1}\glcm\gnl
\gcl{1}\gcl{1}\grcm\gcn{1}{1}{-1}{1}\gnl
\gcl{1}\gbr\gcl{1}\gcl{1}\gnl
\gmu\gmu\gcl{1}\gnl
\gob{2}{B}\gob{2}{A}\gob{1}{A}
\gend
\hspace{1mm},\hspace{1mm}
\gbeg{3}{5}
\gvac{1}\got{1}{B}\gnl
\gvac{1}\grcm\gnl
\gvac{1}\gcn{1}{1}{1}{0}\gcl{1}\gnl
\gcmu\gcl{1}\gnl
\gob{1}{B}\gob{1}{B}\gob{1}{A}
\gend
=
\gbeg{5}{8}
\gvac{2}\got{2}{B}\gnl
\gvac{2}\gcmu\gnl
\gvac{2}\gcn{1}{1}{1}{-1}\gcl{1}\gnl
\gvac{1}\grcm\grcm\gnl
\gvac{1}\gcn{1}{1}{1}{-1}\gvac{-1}\glcm\gcl{1}\gcl{1}\gnl
\gcl{1}\gcl{1}\gbr\gcl{1}\gnl
\gcl{1}\gmu\gmu\gnl
\gob{1}{B}\gob{2}{A}\gob{2}{A}
\gend
\hspace{1mm},\hspace{1mm}
\gbeg{4}{6}
\gvac{1}\got{1}{B}\got{1}{A}\gnl
\gvac{1}\gbr\gnl
\glcm\grcm\gnl
\gcl{1}\gbr\gcl{1}\gnl
\gmu\gmu\gnl
\gob{2}{B}\gob{2}{A}
\gend
=
\gbeg{4}{5}
\got{1}{B}\gvac{2}\got{1}{A}\gnl
\grcm\glcm\gnl
\gcl{1}\gbr\gcl{1}\gnl
\gmu\gmu\gnl
\gob{2}{B}\gob{2}{A}
\gend
\hspace{1mm}.
\]
In this situation, 
the algebra structure of $A\times_\psi^\phi B$ is the tensor product algebra 
of $A$ and $B$, while the coalgebra structure is given by 
$$\un{\Delta}_{A\times _\psi^\phi B}=
\gbeg{6}{6}
\gvac{1}\got{2}{A}\got{2}{B}\gnl
\gvac{1}\gcmu\gcmu\gnl
\gcn{1}{1}{3}{1}\glcm\grcm\gcn{1}{1}{-1}{1}\gnl
\gcl{1}\gcl{1}\gbr\gcl{1}\gcl{1}\gnl
\gcl{1}\gmu\gmu\gcl{1}\gnl
\gob{1}{A}\gob{2}{B}\gob{2}{A}\gob{1}{B}
\gend
$$
and $\un{\va}_{A\times_\psi^\phi B}=\un{\va}_A\ot \un{\va}_B$. This tells us that
$A\times_\psi^\phi B=A\blacktriangleright\hspace*{-1mm}\blacktriangleleft B$ is a double cross
 coproduct bialgebra.\\
 Finally, if $A$ and $B$ are Hopf algebras with antipodes $\un{S}$ and $\un{s}$, 
then 
$A\blacktriangleright\hspace*{-1mm}\blacktriangleleft B$ is a Hopf algebra with antipode
\[
\gbeg{4}{8}
\gvac{1}\got{1}{A}\got{1}{B}\gnl
\glcm\grcm\gnl
\gcl{1}\gbr\gcl{1}\gnl
\gmu\gmu\gnl
\gcn{1}{1}{2}{1}\gvac{1}\gcn{1}{1}{2}{-1}\gnl
\gmp{\un{s}}\gmp{\un{S}}\gnl
\gbr\gnl
\gob{1}{A}\gob{1}{B}
\gend
\hspace{1mm}.
\]
\end{proof}

Now we investigate the dual situation. A cross product bialgebra $A\#_\psi^\phi B$ is called
a smash cross coproduct bialgebra if $A\#^\phi B$ is a smash product coalgebra.

\begin{theorem}\thlabel{strsmashcrosscoprHa}
Let $A, B$ be algebras and coalgebras, and $\psi: B\ot A\ra A\ot B$ and 
$\phi: A\ot B\ra B\ot A$  morphisms in $\Cc$ such that $\phi$ is left normal. Then the following assertions are 
equivalent.\\
(i) $A\#_\psi^\phi B$ is a cross product bialgebra (and therefore a smash cross coproduct bialgebra,
by \leref{normalsmash}).\\
(ii) $B$ is a bialgebra, $A$ is a left $B$-comodule coalgebra, a left $B$-module coalgebra,  a right $B$-module 
and a right $B$-comodule and the following compatibility relations hold:
\begin{eqnarray*}
&&
\gbeg{3}{5}
\got{1}{B}\got{1}{A}\got{1}{A}\gnl
\gcl{1}\gmu\gnl
\gcl{1}\gcn{1}{1}{2}{1}\gnl
\glm\gnl
\gvac{1}\gob{1}{A}
\gend
=
\gbeg{5}{8}
\got{2}{B}\got{2}{A}\got{1}{A}\gnl
\gcmu\gcmu\gcl{1}\gnl
\gcl{1}\gbr\gcl{1}\gcl{1}\gnl
\glm\grm\gcn{1}{1}{1}{-1}\gnl
\gvac{1}\gcl{1}\glm\gnl
\gvac{1}\gcn{1}{1}{1}{3}\gvac{1}\gcl{1}\gnl
\gvac{2}\gmu\gnl
\gvac{2}\gob{2}{A}
\gend
\hspace{1mm},\hspace{1mm}
\gbeg{2}{4}
\got{1}{A}\got{1}{A}\gnl
\gmu\gnl
\gcmu\gnl
\gob{1}{A}\gob{1}{A}
\gend
=
\gbeg{5}{9}
\got{2}{A}\gvac{1}\got{2}{A}\gnl
\gcmu\gvac{1}\gcmu\gnl
\gcl{1}\gcn{1}{1}{1}{3}\gvac{1}\gcl{1}\gcl{1}\gnl
\gcl{1}\glcm\gcl{1}\gcl{1}\gnl
\gcl{1}\gcl{1}\gbr\gcl{1}\gnl
\gcl{1}\glm\gmu\gnl
\gcl{1}\gvac{1}\gcn{1}{1}{1}{-1}\gcn{1}{2}{2}{2}\gnl
\gmu\gnl
\gob{2}{A}\gvac{1}\gob{2}{A}
\gend 
\hspace{1mm},\hspace{1mm}
\gbeg{2}{4}
\gvac{1}\got{1}{\un{1}}\gnl
\gvac{1}\gu{1}\gnl
\glcm\gnl
\gob{1}{B}\gob{1}{A}
\gend
=
\gbeg{2}{3}
\got{2}{\un{1}}\gnl
\gu{1}\gu{1}\gnl
\gob{1}{B}\gob{1}{A}
\gend\hspace*{1mm},\\
&&
\gbeg{3}{5}
\got{1}{B}\got{1}{B}\got{1}{A}\gnl
\gmu\gcl{1}\gnl
\gcn{1}{1}{2}{3}\gvac{1}\gcl{1}\gnl
\gvac{1}\grm\gnl
\gvac{1}\gob{1}{B}
\gend
=
\gbeg{5}{7}
\got{1}{B}\got{2}{B}\got{2}{A}\gnl
\gcl{1}\gcmu\gcmu\gnl
\gcl{1}\gcl{1}\gbr\gcl{1}\gnl
\gcn{1}{1}{1}{3}\glm\grm\gnl
\gvac{1}\grm\gcn{1}{1}{1}{-1}\gnl
\gvac{1}\gmu\gnl
\gvac{1}\gob{2}{B}
\gend
\hspace{1mm},\hspace{1mm}
\gbeg{2}{5}
\got{1}{A}\got{1}{A}\gnl
\gmu\gnl
\gcn{1}{1}{2}{3}\gnl
\glcm\gnl
\gob{1}{B}\gob{1}{A}
\gend
=
\gbeg{5}{8}
\gvac{1}\got{1}{A}\got{2}{A}\gnl
\glcm\gcmu\gnl
\gcl{1}\gcl{1}\gcl{1}\gcn{1}{1}{1}{3}\gnl
\gcl{1}\gbr\glcm\gnl
\grm\gbr\gcl{1}\gnl
\gcl{1}\gvac{1}\gcn{1}{1}{1}{-1}\gmu\gnl
\gmu\gvac{1}\gcn{1}{1}{2}{2}\gnl
\gob{2}{B}\gvac{1}\gob{2}{A}
\gend
\hspace{1mm},\hspace{1mm}
\gbeg{2}{4}
\got{1}{B}\gnl
\gcl{1}\gu{1}\gnl
\glm\gnl
\gvac{1}\gob{1}{A}
\gend
=
\gbeg{2}{3}
\got{1}{B}\gnl
\gcu{1}\gu{1}\gnl
\gvac{1}\gob{1}{A}
\gend
\hspace{1mm},\\
&&
\gbeg{2}{5}
\got{1}{B}\got{1}{A}\gnl
\grm\gnl
\gcn{1}{1}{1}{2}\gnl
\gcmu\gnl
\gob{1}{B}\gob{1}{B}
\gend
=
\gbeg{5}{8}
\got{2}{B}\gvac{1}\got{2}{A}\gnl
\gcmu\gvac{1}\gcmu\gnl
\gcl{1}\gcl{1}\gcn{1}{1}{3}{1}\glcm\gnl
\gcl{1}\gbr\gcl{1}\gcl{1}\gnl
\grm\gbr\gcl{1}\gnl
\gcl{1}\gvac{1}\gcn{1}{1}{1}{-1}\grm\gnl
\gmu\gvac{1}\gcl{1}\gnl
\gob{2}{B}\gvac{1}\gob{1}{B}
\gend
\hspace{1mm},\hspace{1mm}
\gbeg{4}{8}
\got{2}{B}\got{2}{A}\gnl
\gcmu\gcmu\gnl
\gcl{1}\gbr\gcl{1}\gnl
\glm\grm\gnl
\glcm\gcl{1}\gnl
\gcl{1}\gbr\gnl
\gmu\gcl{1}\gnl
\gob{2}{B}\gob{1}{A}
\gend
=
\gbeg{5}{8}
\got{2}{B}\gvac{1}\got{2}{A}\gnl
\gcmu\gvac{1}\gcmu\gnl
\gcl{1}\gcl{1}\gcn{1}{1}{3}{1}\glcm\gnl
\gcl{1}\gbr\gcl{1}\gcl{1}\gnl
\grm\gbr\gcl{1}\gnl
\gcl{1}\gvac{1}\gcn{1}{1}{1}{-1}\glm\gnl
\gmu\gvac{2}\gcl{1}\gnl
\gob{2}{B}\gvac{2}\gob{1}{A}
\gend
\hspace{1mm}.
\end{eqnarray*}
If $B$ is a Hopf algebra with
antipode $\un{s}$ and $\Id_A$ has a convolution inverse $\un{S}$, then
$A\#_\psi^\phi B$ is a Hopf algebra with antipode
\[
\gbeg{4}{11}
\gvac{1}\got{1}{A}\got{1}{B}\gnl
\glcm\gcl{1}\gnl
\gcl{1}\gbr\gnl
\gmu\gmp{\un{S}}\gnl
\gcn{1}{1}{2}{1}\gvac{1}\gcn{1}{3}{1}{2}\gnl
\gmp{\un{s}}\gnl
\gcn{1}{1}{1}{2}\gnl
\gcmu\gcmu\gnl
\gcl{1}\gbr\gcl{1}\gnl
\glm\grm\gnl
\gvac{1}\gob{1}{A}\gob{1}{B}
\gend
\hspace{1mm}.
\]
makes $A\times_\psi^\phi B$ a Hopf algebra in $\Cc$.     
\end{theorem}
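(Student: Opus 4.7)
The plan is to reduce to \thref{crossprobialasactandcoact} and exploit the left normality of $\phi$ to collapse the list of conditions. First I would invoke \leref{normalsmash} together with \prref{crossprodissmashprodcoalg}: left normality of $\phi$ forces $B$ to be a bialgebra in $\Cc$ and forces $\phi$ to have the smash form
\[
\phi = \gbeg{4}{5}
\gvac{1}\got{1}{A}\got{1}{B}\gnl
\glcm\gcl{1}\gnl
\gcl{1}\gbr\gnl
\gmu\gcl{1}\gnl
\gob{2}{B}\gob{1}{A}
\gend\hspace{1mm},
\]
where the left $B$-coaction on $A$ is the one induced via \equuref{coaction1}{a}. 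A direct computation shows that the right $A$-coaction on $B$ defined by \equuref{coaction1}{b} becomes trivial: it sends $b\mapsto b\ot \un{\eta}_A$. This is the crucial simplification that drives the rest of the proof.

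Next I would apply \thref{crossprobialasactandcoact}. Under triviality of the right $A$-coaction on $B$, conditions (i)--(v) of that theorem reduce to the unit/counit normalizations stated in (ii) here. The six compatibility relations of (vi) collapse as follows: the fourth (algebra-coalgebra for $B$) becomes multiplicativity of $\un{\Delta}_B$ (already known since $B$ is a bialgebra); the fifth (comodule-coalgebra for $B$) becomes automatic; the remaining four turn into precisely the module-algebra, comodule-coalgebra, module-coalgebra and algebra-coalgebra-type relations displayed in (ii). I would then choose the set (vii.1). Under the trivial right $A$-coaction, \equref{additional1} and \equref{additional2} simplify: \equuref{additional1}{b} and \equuref{additional2}{b} collapse to known compatibilities, while \equuref{additional1}{a} and \equuref{additional2}{a} become the first and second relations of the second and third rows in (ii). The first equation of \equref{modulecomodule1} reduces to the ``mixed'' multiplicativity relation appearing as the last compatibility in (ii) (the module/comodule interaction diagram).

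The converse direction is where the main computational obstacle lies. Given the data of (ii), I would define $\psi$ and $\phi$ via \equref{psiphiasactcoact}; the trivial right $A$-coaction on $B$ makes $\phi$ collapse to the left-normal smash form above. One then has to verify \equref{modulecomodule1}, which after substitution becomes a diagrammatic equality roughly parallel to the long string of equalities $(*_1)$--$(*_8)$ in the proof of \thref{strsmashcrossprodHa}, but with the roles of product and coproduct interchanged. This computation uses coassociativity of $\un{\Delta}_A$ and $\un{\Delta}_B$, the comodule-coalgebra compatibility of $A$ over $B$, and the naturality of the braiding under \equref{nat1cup}--\equref{nat2cup}. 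The hard part will be maintaining bookkeeping control of the diagrams; the overall strategy mirrors the proof of \thref{strsmashcrossprodHa} by duality, swapping the roles of algebras/coalgebras.

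For the antipode statement, I would apply \prref{5.5}. Substituting the trivial right $A$-coaction on $B$ and the explicit form of $\phi$ into the antipode formula of \prref{5.5}, the right half of the diagram (the portion using $\phi$ and $\grcm$) collapses to the expression
\[
\gbeg{3}{6}
\got{1}{A}\got{1}{B}\gnl
\glcm\gcl{1}\gnl
\gcl{1}\gbr\gnl
\gmu\gcl{1}\gnl
\gob{2}{B}\gob{1}{A}
\gend
\]
applied before the antipode of $B$ is performed, yielding exactly the displayed antipode of \thref{strsmashcrosscoprHa}. The fact that $\un{s}$ is the full antipode of $B$ (rather than only a one-sided convolution inverse) is essential here, echoing the discussion in \reref{antpartneccconds}.
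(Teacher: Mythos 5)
Your proposal follows essentially the same route as the paper, whose own proof is omitted as ``the dual of \thref{strsmashcrossprodHa}'': left normality of $\phi$ makes the right $A$-coaction on $B$ trivial, so conditions (i)--(vi) and (vii.1) of \thref{crossprobialasactandcoact} collapse to the eight compatibilities in (ii), the only substantial work being the dualized analogue of the $(*_1)$--$(*_8)$ computation, and the antipode statement follows from \prref{5.5} after the $\grcm$-leg of its formula collapses. One bookkeeping caution: dually to the explicit argument in the proof of \thref{strsmashcrossprodHa}, under left normality \equref{modulecomodule1} does not reduce to just the last displayed compatibility of (ii) but is equivalent to the seventh and eighth conditions together with the requirement that $\un{\Delta}_A$ be a morphism of left $B$-modules (i.e. $A$ a left $B$-module coalgebra), and the seventh condition of (ii) is in fact \equuref{additional2}{b} (not in (vii.1), but recovered via the equivalence of the four sets), so your assignment of \equuref{additional2}{a} there would need correcting when the diagrams are actually carried out.
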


\begin{proof}
We omit the proof, as it is merely a dual version of the proof of \thref{strsmashcrossprodHa}.
Let us just mention that the left normality of $\phi$ implies that the conditions
(i-vi) and (vii.1) 
in \thref{crossprobialasactandcoact} are equivalent to the eight compatibility conditions
in the present Theorem.
\end{proof}

We invite the reader to state the right handed version of \thref{strsmashcrosscoprHa}. Combining
the left and right handed versions of \thref{strsmashcrosscoprHa}, we can characterize cross
product bialgebras having the property that $\phi$ is left and right normal.

\begin{corollary}\colabel{6.7}
Let $A\# _\psi^\phi B$ be a cross product bialgebra such that $\phi$ is left and 
right normal. Then $(A, B)$ is a right-left matched pair and 
$A\#_\psi^\phi B =A\Join B$, the double cross product bialgebra associated to $(A, B)$. 
If $A$ and $B$ are Hopf algebras, then $A\Join B$ is also a Hopf algebra, with antipode   
\[
\gbeg{4}{8}
\got{1}{A}\got{1}{B}\gnl
\gbr\gnl
\gmp{\un{s}}\gmp{\un{S}}\gnl
\gcn{1}{1}{1}{2}\gcn{1}{1}{1}{4}\gnl
\gcmu\gcmu\gnl
\gcl{1}\gbr\gcl{1}\gnl
\glm\grm\gnl
\gvac{1}\gob{1}{A}\gob{1}{B}
\gend
\hspace{1mm}.
\]
\end{corollary}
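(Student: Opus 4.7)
The plan is to carry out the proof in complete analogy with \coref{doublecrosscoprodbialg}, replacing conormality of $\psi$ everywhere by normality of $\phi$, and exchanging the roles of multiplications and comultiplications. The crux is to establish that $\phi$ is left and right normal if and only if $\phi$ coincides with the braiding $c_{A,B}$ of $\mathcal{C}$. The reverse implication is immediate from the naturality of $c$ together with $c_{\underline{1},X}=c_{X,\underline{1}}=\Id_X$. For the forward implication, I would appeal to the key decomposition \equuref{neccconds}{c}, which expresses $\phi$ in terms of the left $B$-coaction on $A$ and the right $A$-coaction on $B$ of \leref{coaction}; both coactions are defined by composing $\phi$ with the relevant units, so the two normality conditions force each coaction to be trivial, and \equuref{neccconds}{c} then collapses to $c_{A,B}$.

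Once $\phi=c_{A,B}$ is known, the coalgebra $A\#^\phi B$ is the tensor product coalgebra $A\ot B$. I would then invoke both the left and right handed versions of \thref{strsmashcrosscoprHa} simultaneously. In this setting the trivial coactions eliminate every hypothesis referring to the $B$-comodule coalgebra structure on $A$ or the right $A$-comodule structure on $B$; what remains is precisely that $A$ and $B$ are bialgebras (this uses \equuref{BespDrabComp}{a,b}), together with a left $B$-module algebra structure on $A$ and a right $A$-module algebra structure on $B$, and the last compatibility condition of \thref{strsmashcrosscoprHa} collapses (using that both coactions are trivial) to Majid's matched pair axiom relating the two actions through the coproducts. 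This identifies $(A,B)$ as a right-left matched pair, and the multiplication on $A\#_\psi B$ rewrites, via the formula for $\psi$ from \equref{psiphiasactcoact} (with trivial coactions, $\psi$ reduces to the expression $(\un{m}_A\ot\un{m}_B)\circ(\Id\ot c\ot\Id)\circ(\lambda\ot\rho)$ featuring only the two actions), as exactly Majid's double cross product multiplication $A\Join B$.

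For the Hopf algebra claim, I would apply \prref{5.5}: the antipodes $\un{S}$ of $A$ and $\un{s}$ of $B$ are convolution inverses of $\Id_A$ and $\Id_B$, so $A\Join B$ is a Hopf algebra. Specializing the formula in \prref{5.5} to the present situation, the outer $\phi$-dependent layer trivializes because both the left $B$-coaction on $A$ and the right $A$-coaction on $B$ reduce to unit maps, and the inner occurrence of $\phi$ becomes the braiding; this gives precisely the stated antipode, with the braiding $c_{A,B}$ appearing as the top crossing and the matched-pair actions producing the bottom portion. The principal obstacle is not any single computation but the bookkeeping required to verify that the eight compatibility relations of \thref{strsmashcrosscoprHa} (and their right-handed counterparts) really do collapse to the two standard Majid matched pair axioms; however, since the coactions vanish identically, most of these relations become trivially satisfied after applying counits, so the verification is essentially mechanical.
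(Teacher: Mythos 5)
Your proposal is correct and follows the paper's own route: the identity \equuref{neccconds}{c}, combined with the two normality conditions (which say exactly that the coactions of \leref{coaction} are trivial), forces $\phi=c_{A,B}$, so $A\#^\phi B$ is the tensor product coalgebra; the hypotheses of the left and right handed versions of \thref{strsmashcrosscoprHa} then collapse to Majid's right-left matched pair axioms, giving $A\#_\psi^\phi B=A\Join B$, and the antipode is obtained by specializing \prref{5.5} with trivial coactions — precisely the argument of the paper, mirroring \coref{doublecrosscoprodbialg}. One small inaccuracy in your bookkeeping: what survives the collapse is \emph{not} a left $B$-module algebra structure on $A$ and a right $A$-module algebra structure on $B$, but rather module \emph{coalgebra} actions (the comultiplicativity of the right action comes from the seventh compatibility of \thref{strsmashcrosscoprHa} once the coaction is trivial, and that of the left action is a stated hypothesis there) together with the twisted multiplicativity conditions, which are the first and fourth compatibilities of \thref{strsmashcrosscoprHa}, contain no coactions at all, and are verbatim the matched pair axioms; since you invoke the theorem in full, this mislabelling does not affect the validity of the argument.
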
 

\begin{proof}
It can be easily seen from \equuref{neccconds}{c} that $\phi$ is left and right normal if and only
if it is equal to the braiding of $A$ and $B$. The rest of the proof is then similar to the proof
of \coref{doublecrosscoprodbialg}. We obtain relations that tell us that $(A, B)$ is a 
right-left matched pair. Moreover, $A\#^\phi B$ is the tensor product coalgebra, and
$A\#_\psi^\phi B$ is a double cross product bialgebra.
\end{proof}

We refer to \cite{kas,tak} for detail on the bicross product of two groups.

\begin{corollary}\colabel{6.8}
A cross product Hopf algebra in the category of sets is a bicross product of two groups.
\end{corollary}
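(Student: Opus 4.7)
The plan is to use the rigidity of coalgebras in Sets to force the local braiding $\phi$ to coincide with the twist, and then invoke \coref{6.7}.

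First I would recall that in the cartesian monoidal category of sets, with braiding given by the flip, every object $X$ admits a unique coalgebra structure: the comultiplication must be the diagonal $x\mapsto(x,x)$ and the counit the unique map to the terminal object. Consequently every set map is automatically a coalgebra morphism, the counit axioms \equuref{crossprodcoalg}{c,d} impose nothing on $\phi$, and a Hopf algebra in Sets is exactly a group.

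Writing $\phi(a,b)=(\phi_1(a,b),\phi_2(a,b))\in B\times A$, I would then compute $\un{\Delta}_H$ directly from the cross coproduct definition: applying $\un{\Delta}_A\otimes\un{\Delta}_B$ to $(a,b)$ gives $(a,a,b,b)$, and $\phi$ on the middle two slots produces
\[
\un{\Delta}_H(a,b)=\bigl((a,\phi_1(a,b)),(\phi_2(a,b),b)\bigr)\in (A\times B)\times(A\times B).
\]
But uniqueness of coalgebra structures on $H$ forces $\un{\Delta}_H(a,b)=((a,b),(a,b))$, whence $\phi_1(a,b)=b$ and $\phi_2(a,b)=a$. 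Thus $\phi$ equals the categorical twist, and in particular is left and right normal in the sense of \deref{normal}.

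I would then invoke \coref{6.7} to conclude that $H=A\Join B$ is the double cross product bialgebra associated to a right-left matched pair of monoids $(A,B)$, with multiplication of the form $(a,b)(a',b')=(a(b\tr a'),(b\tl a')b')$ coming from the actions $\tr,\tl$ encoded by $\psi$. Unitality of these actions forces the inverse of $(a,1_B)$ inside the group $H$ to lie in $A\times\{1_B\}$, showing that $A$ is closed under inversion and is therefore a group; the symmetric argument gives the same for $B$. Hence $(A,B)$ is a matched pair of groups and $H$ is precisely Takeuchi's bicrossed product from \cite{tak}. The argument has no real obstacle: the rigidity of coalgebra data in Sets immediately restricts $\phi$ to the braiding, collapsing the cross product construction onto \coref{6.7}, and the only further check is the short unitality argument that $A$ and $B$ are groups.
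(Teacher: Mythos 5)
Your proposal is correct and follows essentially the same route as the paper: uniqueness of the (counital) coalgebra structure in $\un{\rm Sets}$ forces the cross coproduct to be the tensor product coalgebra, i.e.\ $\phi$ is the flip and hence left and right normal, after which \coref{6.7} identifies $H$ with a double cross product of a matched pair, which in $\un{\rm Sets}$ is Takeuchi's bicrossed product. Your explicit unitality argument that $A$ and $B$ are themselves groups is a welcome detail that the paper leaves implicit in the remark that a monoid is a Hopf algebra in $\un{\rm Sets}$ if and only if it is a group.
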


\begin{proof}
It is well-known that an algebra in $\un{\rm Sets}$ is a monoid, and that 
any set $X$ has a unique coalgebra structure given by the comultiplication 
$\un{\Delta}_X(x)=(x, x)$, for all $x\in X$, and the counit $\un{\va}_X=*$, where the singleton 
$\{*\}$ is the unit object of the monoidal category $\un{\rm Sets}$. In this way any monoid $M$ is a bialgebra 
in $\un{\rm Sets}$ and it is, moreover, a Hopf algebra if and only if $M$ is a group. Consequently, 
the only cross coproduct in $\un{\rm Sets}$ is the tensor product coalgebra, and the statement
then follows from \coref{6.7}.  
\end{proof}

\section{The structure of a Hopf algebra with an appropriate projection}\selabel{strHopfalgwithprof}
\setcounter{equation}{0}
As we have already mentioned several times, cross product bialgebras can be characterized using
injections and projections. We now recall this classical result, see \cite[Prop. 2.2]{bespdrab1}, \cite[Theorem 4.3]{cimz}, 
with a sketch of proof.

\begin{proposition}\prlabel{DrBespExtVers}
For  a bialgebra $H$, the following statements are equivalent:\\
(i) $H$ is isomorphic to a cross product bialgebra;\\
(ii) There exist algebras and coalgebras $A, B$ and 
morphisms $i:\ B\to H$, $\pi:\ H\to B$, $j:\ A\to H$, $p:\ H\to A$
such that 
\begin{itemize}
\item $i, j$ are algebra morphisms, $p, \pi$ are coalgebra morphisms 
and $pj=\Id_A$ and $\pi i=\Id_B$;
\item $\zeta= \un{m}_H(j\ot i): A\ot B\ra H$ is an isomorphism in $\Cc$ with inverse 
$\zeta^{-1}=(p\ot \pi)\un{\Delta}_H: H\ra A\ot B$.  
\end{itemize}
\end{proposition}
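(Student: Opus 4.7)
For $(i)\Rightarrow (ii)$, given $H = A\#_\psi^\phi B$, the canonical candidates are $j = \Id_A \ot \un{\eta}_B$, $i = \un{\eta}_A \ot \Id_B$, $p = \Id_A \ot \un{\va}_B$, $\pi = \un{\va}_A \ot \Id_B$. That $j, i$ are algebra morphisms follows from the unit conditions (\ref{eq:crossprodalg}.c-d) on $\psi$ together with (\ref{eq:comultunitcomp}.a-b); dually, $p, \pi$ are coalgebra morphisms by (\ref{eq:crossprodcoalg}.c-d) and (\ref{eq:multcounitcomp}.a-b). The identities $pj = \Id_A$ and $\pi i = \Id_B$ reduce to $\un{\va}_X \un{\eta}_X = \Id_{\un{1}}$. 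A direct computation shows $\zeta = \un{m}_H(j \ot i) = \Id_{A \ot B}$, because by (\ref{eq:crossprodalg}.c-d) inserting units into $\psi$ acts trivially; dually $\zeta^{-1} = (p \ot \pi)\un{\Delta}_H = \Id_{A \ot B}$ by (\ref{eq:crossprodcoalg}.c-d).

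For $(ii)\Rightarrow (i)$, the plan is to use $\zeta$ to transport the bialgebra structure of $H$ onto $A \ot B$ and identify the result as a cross product. Define
$$\psi := \zeta^{-1} \circ \un{m}_H \circ (i \ot j), \qquad \phi := (\pi \ot p) \circ \un{\Delta}_H \circ \zeta.$$
The transported multiplication on $A \ot B$ is $\zeta^{-1}\un{m}_H(\zeta \ot \zeta)$; expanding and using the associativity of $\un{m}_H$ together with the algebra-map property of $j, i$, the middle factor $i(b)j(a')$ is recognised as $\zeta(\psi(b \ot a'))$, and the whole expression collapses to the cross product multiplication $(\un{m}_A \ot \un{m}_B)(\Id_A \ot \psi \ot \Id_B)$. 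Because the transported structure automatically inherits associativity and unitality from $H$, $\psi$ must satisfy the cross product algebra axioms \equref{crossprodalg}; hence $A\#_\psi B$ is a cross product algebra and $\zeta$ is an algebra isomorphism.

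The coalgebra side is dual but more delicate. One first checks the preliminary identities $\pi j = \un{\eta}_B \un{\va}_A$ and $p i = \un{\eta}_A \un{\va}_B$, which follow from $\zeta^{-1}(j(a)) = \zeta^{-1}\zeta(a \ot \un{\eta}_B) = a \ot \un{\eta}_B$ by postcomposing with $\un{\va}_A \ot \Id_B$. Combined with $p, \pi$ being coalgebra maps (so that $\un{\Delta}_A = (p \ot p)\un{\Delta}_H j$ and $\un{\Delta}_B = (\pi \ot \pi)\un{\Delta}_H i$), coassociativity, and the multiplicativity of $\un{\Delta}_H$ (which is where the braiding in $H \ot H$ enters), the transported comultiplication $(\zeta^{-1}\ot\zeta^{-1})\un{\Delta}_H\zeta$ is rearranged into the cross product form $(\Id_A \ot \phi \ot \Id_B)(\un{\Delta}_A \ot \un{\Delta}_B)$; this forces $\phi$ to satisfy \equref{crossprodcoalg}. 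Since $\zeta$ is simultaneously an algebra and coalgebra isomorphism and $H$ is a bialgebra, $A\#_\psi^\phi B$ is a cross product bialgebra isomorphic to $H$. The main obstacle is precisely this coalgebra verification: the iterated expression $(p \ot \pi \ot p \ot \pi)$ applied to the threefold comultiplication of $j(a) i(b)$ must be separated into a piece depending only on $a$ and a piece depending only on $b$, with $\phi$ encoding the middle braiding, and it is precisely the identities $\pi j = \un{\eta}_B\un{\va}_A$ and $p i = \un{\eta}_A\un{\va}_B$ that annihilate the unwanted cross terms in this separation.
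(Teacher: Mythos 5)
Your route is the same as the paper's: the paper itself only records the definitions \equref{psiphifromexts} (with which your $\psi=\zeta^{-1}\un{m}_H(i\ot j)$ and $\phi=(\pi\ot p)\un{\Delta}_H\zeta$ coincide) and defers the full verification to \cite{bespdrab1}, and your plan of transporting the bialgebra structure of $H$ along $\zeta$ is exactly what that verification does. The direction $(i)\Rightarrow(ii)$ is correct, as is the algebra half of $(ii)\Rightarrow(i)$: there you rightly use $\zeta\zeta^{-1}=\Id_H$ to recognise $\un{m}_H(i\ot j)$ as $\zeta\psi$, and the fact that $j,i$ are algebra morphisms. The auxiliary identities $\pi j=\un{\eta}_B\un{\va}_A$ and $p i=\un{\eta}_A\un{\va}_B$ are also correctly derived.

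The gap is in the decisive coalgebra step. You assert that, after expanding $(p\ot\pi\ot p\ot\pi)$ applied to the threefold comultiplication of $j(a)i(b)$ by multiplicativity of $\un{\Delta}_H$, it is ``precisely'' the identities $\pi j=\un{\eta}_B\un{\va}_A$ and $p i=\un{\eta}_A\un{\va}_B$ that annihilate the cross terms. But in that expansion $p$ and $\pi$ are only ever applied to products of comultiplication legs of $j(a)$ and $i(b)$; the composites $\pi j$ and $p i$ never occur, so these identities cannot effect the separation, and as written the step does not close. What actually drives it --- and what your sketch never invokes there --- is the other half of the invertibility, $\zeta\zeta^{-1}=\Id_H$, i.e.\ $\un{m}_H(jp\ot i\pi)\un{\Delta}_H=\Id_H$: the target expression $(\Id_A\ot\phi\ot\Id_B)(\un{\Delta}_A\ot\un{\Delta}_B)\zeta^{-1}$ contains $jp\ot i\pi$ inserted in its middle legs through the definition of $\phi$, and this relation is the only tool that removes it. Concretely, writing $\Theta:=(\Id_A\ot\phi\ot\Id_B)(\un{\Delta}_A\ot\un{\Delta}_B)$ and using $\un{\Delta}_Ap=(p\ot p)\un{\Delta}_H$, $\un{\Delta}_B\pi=(\pi\ot\pi)\un{\Delta}_H$ and coassociativity, one gets $\Theta\zeta^{-1}=\bigl(p\ot(\pi\ot p)\un{\Delta}_H\un{m}_H(jp\ot i\pi)\un{\Delta}_H\ot\pi\bigr)(\Id_H\ot\un{\Delta}_H)\un{\Delta}_H=(p\ot\pi\ot p\ot\pi)(\un{\Delta}_H\ot\un{\Delta}_H)\un{\Delta}_H$, and applying $\zeta\ot\zeta$ and the same relation twice more yields $(\zeta\ot\zeta)\Theta\zeta^{-1}=\un{\Delta}_H$, i.e.\ $\Theta=(\zeta^{-1}\ot\zeta^{-1})\un{\Delta}_H\zeta$. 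Note that neither the multiplicativity of $\un{\Delta}_H$ nor $\pi j$, $p i$ enters this identification (the counit identification $\un{\va}_H\zeta=\un{\va}_A\ot\un{\va}_B$ also follows from $\zeta^{-1}\zeta=\Id$ alone); the bialgebra axiom of $H$ is needed only afterwards, to conclude that the transported structure, now identified with $A\#_\psi^\phi B$, is a bialgebra and $\zeta$ a bialgebra isomorphism. Replacing your separation argument by this use of $\un{m}_H(jp\ot i\pi)\un{\Delta}_H=\Id_H$ repairs the proof.
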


\begin{proof}
For the complete proof, we refer to \cite{bespdrab1}. For later reference, we give a brief
sketch of the proof of $(ii)\Rightarrow (i)$. $\psi$ and $\phi$ are defined by the formulas
\begin{equation}\eqlabel{psiphifromexts}
\psi=
\gbeg{2}{6}
\got{1}{B}\got{1}{A}\gnl
\gmp{i}\gmp{j}\gnl
\gmu\gnl
\gcmu\gnl
\gmp{p}\gmp{\pi}\gnl
\gob{1}{A}\gob{1}{B}
\gend
\hspace{2mm}{\rm and}\hspace{2mm}
\phi=
\gbeg{2}{6}
\got{1}{A}\got{1}{B}\gnl
\gmp{j}\gmp{i}\gnl
\gmu\gnl
\gcmu\gnl
\gmp{\pi}\gmp{p}\gnl
\gob{1}{B}\gob{1}{A}
\gend
\hspace{1mm}.
\end{equation}
Then we show that $A\#_\psi^\phi B$ is a cross product bialgebra, and that $\zeta$ is an isomorphism
of bialgebras.
\end{proof}

In \prref{DrBespExtVers}, we need two data, namely $(A,p,j)$ and $(B,\pi,i)$. We will see that
one of the two data can be recovered from the other one if some additional conditions are
satisfied.

\begin{lemma}\lelabel{scpbextprop}
Let $H=A\#_\psi^\phi B$ be a (left) smash cross product bialgebra and 
$\pi=
\gbeg{2}{3}
\got{2}{H}\gnl
\gcu{1}\gcl{1}\gnl
\gvac{1}\gob{1}{B}
\gend
$ 
and  
$
i=
\gbeg{2}{3}
\gvac{1}\got{1}{B}\gnl
\gu{1}\gcl{1}\gnl
\gob{2}{H}
\gend
$ 
the canonical morphisms. Then the following assertions hold. 

(i) $\pi$ is a bialgebra morphism, $i$ is an algebra morphism, $\pi i=\Id_B$ and 
\begin{equation}\eqlabel{piiscpb}
(a)\hspace{1mm}
\gbeg{2}{6}
\got{1}{B}\gnl
\gmp{i}\gnl
\gcn{1}{1}{1}{2}\gnl
\gcmu\gnl
\gcl{1}\gmp{\pi}\gnl
\gob{1}{H}\gob{1}{B}
\gend
=
\gbeg{2}{4}
\got{2}{B}\gnl
\gcmu\gnl
\gmp{i}\gcl{1}\gnl
\gob{1}{H}\gob{1}{B}\gnl
\gend
\hspace{2mm}{\rm and}\hspace{2mm}(b)\hspace{1mm}
\gbeg{3}{7}
\gvac{1}\got{2}{B}\gnl
\gvac{1}\gcmu\gnl
\gvac{1}\gmp{i}\gcl{1}\gnl
\gvac{1}\gcn{1}{1}{1}{0}\gmp{\un{s}}\gnl
\gcmu\gmp{i}\gnl
\gcl{1}\gmu\gnl
\gob{1}{H}\gob{2}{H}
\gend
=
\gbeg{3}{11}
\got{1}{B}\gnl
\gmp{i}\gnl
\gcn{1}{1}{1}{2}\gnl
\gcmu\gnl
\gcl{1}\gcn{1}{1}{1}{2}\gnl
\gmp{\pi}\gcmu\gnl
\gmp{i}\gcl{1}\gmp{\pi}\gnl
\gcl{1}\gcl{1}\gmp{\un{s}}\gnl
\gcl{1}\gcl{1}\gmp{i}\gnl
\gcl{1}\gmu\gnl
\gob{1}{H}\gob{2}{H}
\gend
\hspace{1mm}.
\end{equation}
For \equuref{piiscpb}{b}, we need the additional assumption that $B$
is a Hopf algebra, with antipode $\un{s}$.\\
(ii) If $H$ is a Hopf algebra with antipode 
$\un{\mathcal{S}}=
\gbeg{2}{5}
\got{1}{A}\got{1}{B}\gnl
\gcl{1}\gcl{1}\gnl
\gsbox{2}\gnl
\gcl{1}\gcl{1}\gnl
\got{1}{A}\got{1}{B}
\gend
$ 
then $B$ is a Hopf algebra with antipode $\un{s}$ defined in \equuref{antscpb}{a},
satisfying \equuref{antscpb}{b}
\begin{equation}\eqlabel{antscpb}
(a)\hspace*{1cm}
\un{s}=
\gbeg{2}{5}
\gvac{1}\got{1}{B}\gnl
\gu{1}\gcl{1}\gnl
\gsbox{2}\gnl
\gcu{1}\gcl{1}\gnl
\gvac{1}\gob{1}{B}
\gend
~~;\hspace*{1cm}(b)\hspace*{1cm}
\gbeg{2}{8}
\got{1}{B}\gnl
\gmp{i}\gnl
\gcn{1}{1}{1}{2}\gnl
\gcmu\gnl
\gmp{\pi}\gmp{\un{\mathcal{S}}}\gnl
\gmp{i}\gcl{1}\gnl
\gmu\gnl
\gob{2}{H}
\gend
=
\gbeg{1}{4}
\got{1}{B}\gnl
\gcu{1}\gnl
\gu{1}\gnl
\gob{1}{B}
\gend
\hspace{1mm}.
\end{equation}
\end{lemma}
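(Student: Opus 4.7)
The plan is to dispatch part (i) by direct graphical calculations exploiting the smash structure on the algebra, and then to bootstrap to part (ii) by applying $\pi$ to the Hopf axioms of $H$. For part (i), first $\pi i=\Id_B$ is immediate from $\un{\va}_A\un{\eta}_A=\Id_{\un{1}}$. That $i$ is an algebra morphism follows from $\un{m}_A(\un{\eta}_A\ot\un{\eta}_A)=\un{\eta}_A$ together with the unit condition \equuref{crossprodalg}{c}, which gives $\psi(b\ot\un{\eta}_A)=\un{\eta}_A\ot b$. Multiplicativity of $\pi$ is where the smash hypothesis enters: by \leref{normalsmash} the assumption that $A\#_\psi B$ is a smash product algebra is equivalent to $\psi$ being left conormal, and this combined with $\un{\va}_A\un{m}_A=\un{\va}_A\ot\un{\va}_A$ gives $\pi\un{m}_H=\un{m}_B(\pi\ot\pi)$. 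Comultiplicativity of $\pi$ is a short calculation using \equuref{crossprodcoalg}{c} (i.e.\ $(\Id_B\ot\un{\va}_A)\phi=\un{\va}_A\ot\Id_B$) and the counit axiom for $\un{\Delta}_A$, so $\pi$ is a bialgebra morphism.

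For \equuref{piiscpb}{a} I would unfold $\un{\Delta}_H i(b)$ via $\un{\Delta}_A\un{\eta}_A=\un{\eta}_A\ot\un{\eta}_A$ as $\sum(\un{\eta}_A\ot\chi)\ot(\xi\ot b_{(2)})$, where $\phi(\un{\eta}_A\ot b_{(1)})=\sum\chi\ot\xi$, and then apply $\Id_H\ot\pi$; \equuref{crossprodcoalg}{c} collapses the right tensorand to produce $\sum i(b_{(1)})\ot b_{(2)}$. The crucial structural by-product of this calculation is that the first tensorand of $\un{\Delta}_H i(b)$ always lies in $i(B)$, because its $A$-component is forced to be $\un{\eta}_A$; hence $(i\pi\ot\Id_H)\un{\Delta}_H i=\un{\Delta}_H i$. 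Combining this with coassociativity and with \equuref{piiscpb}{a} applied after $\un{\Delta}_H$ on the first slot, both sides of \equuref{piiscpb}{b} reduce to the same expression $\sum i(b_{(1)})_{(1)}\ot i(b_{(1)})_{(2)}\cdot i\un{s}(b_{(2)})$.

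For part (ii) set $\un{s}:=\pi\un{\mathcal{S}} i$. Equation \equuref{antscpb}{b} is immediate: its left-hand side is $\un{m}_H(i\pi\ot\un{\mathcal{S}})\un{\Delta}_H i(b)$, and using $(i\pi\ot\Id_H)\un{\Delta}_H i=\un{\Delta}_H i$ this equals $\un{m}_H(\Id_H\ot\un{\mathcal{S}})\un{\Delta}_H i(b)=\un{\eta}_H\un{\va}_H i(b)=\un{\eta}_H\un{\va}_B(b)$ by the antipode axiom of $H$. For $\un{s}*\Id_B=\un{\eta}_B\un{\va}_B$ I would apply $\pi$ to $\un{m}_H(\un{\mathcal{S}}\ot\Id_H)\un{\Delta}_H i(b)=\un{\eta}_H\un{\va}_B(b)$ and use multiplicativity of $\pi$ to obtain $\sum\pi\un{\mathcal{S}}(i(b)_{(1)})\cdot\pi(i(b)_{(2)})=\un{\eta}_B\un{\va}_B(b)$, which \equuref{piiscpb}{a} rewrites as $\sum\un{s}(b_{(1)})b_{(2)}=\un{\eta}_B\un{\va}_B(b)$.

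The main obstacle will be the opposite convolution identity $\Id_B*\un{s}=\un{\eta}_B\un{\va}_B$, for which the direct analogue of the previous argument is unavailable: a symmetric form of \equuref{piiscpb}{a} with $\pi$ on the first tensorand of $\un{\Delta}_H i$ would amount to left normality of $\phi$, a smash cross coproduct condition which is not assumed here. The plan is to show instead that $\pi\un{\mathcal{S}}$ factors through $\pi$, equivalently that $\un{\mathcal{S}}$ stabilises $\Ker\pi$. Every $h\in H$ admits a smash decomposition as a sum of terms $i_A(a)\cdot i(b)$ with $i_A(a):=a\ot\un{\eta}_B$, and the anti-algebra property of $\un{\mathcal{S}}$ in the braided category then reduces the factorisation question to analysing $\pi\un{\mathcal{S}} i_A$; since $i_A$ is an algebra morphism, $\pi\un{\mathcal{S}} i_A:A\to B$ is an anti-algebra morphism sending $\un{\eta}_A$ to $\un{\eta}_B$. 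Evaluating the antipode axiom of $H$ at $i_A(a)$ and applying $\pi$ yields a convolution-style relation for $\pi\un{\mathcal{S}} i_A$ involving $\phi$ and the $B$-module algebra structure on $A$, and feeding this back through the smash decomposition then gives $\Id_B*\un{s}=\un{\eta}_B\un{\va}_B$. Once both convolution identities are in hand, $\un{s}$ is the antipode of $B$, and the antipode formula of \prref{whenacrossprodisHA} specializes to the expression in \equuref{antscpb}{a}.
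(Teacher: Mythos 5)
Your proposal is correct in everything it works out, and for the antipode part it takes a genuinely different route from the paper's. The paper leaves (i) to the reader, and for (ii) it argues as follows: it quotes \reref{antpartneccconds} to get \emph{some} left convolution inverse of $\Id_B$, and then proves $\Id_B*\un{s}=\un{\eta}_B\un{\va}_B$ by a long diagrammatic computation starting from \equuref{defantcpHa}{b}, using left conormality of $\psi$, anti-multiplicativity of $\un{\mathcal{S}}$, and the auxiliary identity $(\un{\va}_A\ot\Id_B)\un{\mathcal{S}}(\Id_A\ot\un{\eta}_B)=\un{\eta}_B\un{\va}_A$ extracted from \equuref{defantcpHa}{a}; \equuref{antscpb}{b} is not verified explicitly. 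You instead lean on the observation $(i\pi\ot\Id_H)\un{\Delta}_Hi=\un{\Delta}_Hi$ (which only needs $\un{\Delta}_A\un{\eta}_A=\un{\eta}_A\ot\un{\eta}_A$); this gives \equuref{piiscpb}{a}, \equuref{piiscpb}{b} and \equuref{antscpb}{b} in a couple of lines, and pushing the antipode axiom of $H$ through the bialgebra morphism $\pi$ gives $\un{s}*\Id_B=\un{\eta}_B\un{\va}_B$ directly, so you never need \reref{antpartneccconds}. What each approach buys: yours is shorter and makes transparent that everything follows from $\pi$ being a bialgebra map (this is where conormality enters) together with the right $B$-colinearity \equuref{piiscpb}{a} of $i$; the paper's computation stays entirely at the level of $\psi$, $\phi$, $\un{\mathcal{S}}$ and needs no smash decomposition of $\Id_H$.

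The one place you are vague is precisely the step the paper makes sharp, and it deserves to be pinned down, because the choice of antipode axiom matters. Your reduction of $\Id_B*\un{s}$ via $\Id_H=\un{m}_H(i_A\ot i)$ and braided anti-multiplicativity needs the clean identity $\pi\un{\mathcal{S}}i_A=\un{\eta}_B\un{\va}_A$ (not merely a ``convolution-style relation''; no $B$-module structure appears). It does follow exactly as you announce, but from the $\un{\mathcal{S}}$-on-the-left axiom: apply $\pi$ to $\un{m}_H(\un{\mathcal{S}}\ot\Id_H)\un{\Delta}_Hi_A=\un{\eta}_H\un{\va}_A$, use multiplicativity of $\pi$ and the counit condition $(\Id_B\ot\un{\va}_A)\phi=\un{\va}_A\ot\Id_B$ of \equuref{crossprodcoalg}{c} to collapse the $\phi$-leg; this recovers the paper's auxiliary identity quoted above. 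By contrast, the $\Id_H\ot\un{\mathcal{S}}$ axiom at $i_A$ only yields $\un{m}_B(\Id_B\ot\pi\un{\mathcal{S}}i_A)\phi(\Id_A\ot\un{\eta}_B)=\un{\eta}_B\un{\va}_A$, which does not by itself determine $\pi\un{\mathcal{S}}i_A$. With the clean identity in hand, naturality of the braiding gives $\pi\un{\mathcal{S}}=\un{m}_B(\un{s}\ot\pi\un{\mathcal{S}}i_A)c_{A,B}=\un{s}\pi$, and applying $\pi$ to $\un{m}_H(\Id_H\ot\un{\mathcal{S}})\un{\Delta}_Hi=\un{\eta}_H\un{\va}_B$ then yields $\Id_B*\un{s}=\un{\eta}_B\un{\va}_B$, closing your argument; your remark that the mirror-image shortcut would require left normality of $\phi$ is also correct. (The closing appeal to \prref{whenacrossprodisHA} is unnecessary: \equuref{antscpb}{a} is just your definition $\un{s}=\pi\un{\mathcal{S}}i$.)
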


\begin{proof}
The proof of (i) is straightforward, and is left to the reader. Observe that the conormality of
$\psi$ is needed in order to show that $\pi$ is a bialgebra morphism, but is not needed in the proof of
\equref{piiscpb}. \equuref{piiscpb}{a} tells us that $i:\ B\to H$ is right $B$-colinear.
Here $H\in \Cc^B$ via $\pi\circ \un{\Delta}_H$ and $B\in \Cc^B$ via $\un{\Delta}_B$.\\

We will only prove that the morphism $\un{s}$ as defined in \equref{antscpb} is antipode for $B$.
We have seen in \reref{antpartneccconds} that $\Id_B$ 
has always a left convolution inverse.
We prove that it also has a right inverse.
Compose \equuref{defantcpHa}{a} to the left with $\un{\va}_A\ot \Id_B$ and to the right with 
$\Id_A\ot \un{\eta}_B$. Using the left conormality of $\psi$ we obtain that 
$
\gbeg{2}{5}
\got{1}{A}\gnl
\gcl{1}\gu{1}\gnl
\gsbox{2}\gnl
\gcu{1}\gcl{1}\gnl
\gvac{1}\gob{1}{B}
\gend
=
\gbeg{1}{4}
\got{1}{A}\gnl
\gcu{1}\gnl
\gu{1}\gnl
\gob{1}{B}
\gend
$. 
Now compose \equuref{defantcpHa}{b} to the left with $\un{\va}_A\ot \Id_B$ and to the right
with $\un{\eta}_A\ot \Id_B$. Again using the left conormality of $\psi$, we now find that
\[
\gbeg{1}{4}
\got{1}{B}\gnl
\gcu{1}\gnl
\gu{1}\gnl
\gob{1}{B}
\gend
=
\gbeg{3}{8}
\gvac{1}\got{2}{B}\gnl
\gu{1}\gcmu\gnl
\gbrbox\gvac{2}\gcl{1}\gnl
\gcl{1}\gsbox{2}\gnl
\gcl{1}\gcu{1}\gcl{1}\gnl
\gcl{1}\gvac{1}\gcn{1}{1}{1}{-1}\gnl
\gmu\gnl
\gob{2}{B}
\gend
=
\gbeg{5}{12}
\gvac{1}\got{2}{B}\gnl
\gu{1}\gcmu\gnl
\gbrbox\gvac{2}\gcn{1}{1}{1}{5}\gnl
\gcl{1}\gcl{1}\gu{1}\gu{1}\gcl{1}\gnl
\gcl{1}\gcl{1}\gbrc\gcl{1}\gnl
\gcl{1}\gmu\gmu\gnl
\gcl{1}\gcn{1}{1}{2}{1}\gvac{1}\gcn{1}{1}{2}{-1}\gnl
\gcl{1}\gsbox{2}\gnl
\gcl{1}\gcu{1}\gcl{1}\gnl
\gcl{1}\gcn{1}{1}{3}{1}\gnl
\gmu\gnl
\gob{2}{B}
\gend
\equal{\equref{antiac}}
\gbeg{6}{12}
\gvac{2}\got{2}{B}\gnl
\gvac{1}\gu{1}\gcmu\gnl
\gvac{1}\gbrbox\gvac{2}\gcl{1}\gnl
\gvac{1}\gcn{1}{1}{1}{-1}\gbr\gnl
\gcl{1}\gu{1}\gcl{1}\gvac{1}\gcn{1}{1}{-1}{1}\gu{1}\gnl
\gcl{1}\gsbox{2}\gvac{3}\gsbox{2}\gnl
\gcl{1}\gcu{1}\gcl{1}\gvac{1}\gcu{1}\gcl{1}\gnl
\gcn{1}{3}{1}{3}\gvac{1}\gcl{1}\gvac{1}\gcn{1}{1}{3}{-1}\gnl
\gvac{2}\gmu\gnl
\gvac{2}\gcn{1}{1}{2}{1}\gnl
\gvac{1}\gmu\gnl
\gvac{1}\gob{2}{B}
\gend
=
\gbeg{3}{9}
\got{2}{B}\gnl
\gcmu\gnl
\gcl{1}\gcn{1}{1}{1}{3}\gnl
\gcl{1}\gu{1}\gcl{1}\gnl
\gcl{1}\gsbox{2}\gnl
\gcl{1}\gcu{1}\gcl{1}\gnl
\gcl{1}\gvac{1}\gcn{1}{1}{1}{-1}\gnl
\gmu\gnl
\gob{2}{B}
\gend
\hspace{2mm}.
\] 
This shows that $\un{s}$, as defined in \equuref{antscpb}{a}, is a right inverse for
$\Id_B$ in ${\rm Hom}(B, B)$.
\end{proof}

In \thref{strofHopfwithcertproj} we will show that \leref{scpbextprop} has a converse,
at least if some additional technical
assumptions are satisfied. In the sequel, we assume that 
$B$ is a Hopf algebra, $H$ is a bialgebra and    
$
\xymatrix{
B \ar[r]<2pt>^i &\ar[l]<2pt>^{\pi} H
}
$ 
are morphisms in $\Cc$ such that $\pi$ is a bialgebra morphism, $i$ is an algebra morphism, $\pi i=\Id_B$ and
\equuref{piiscpb}{a,b} hold. At some places, we will consider the situation where $H$ is also a Hopf algebra, and then
we will assume that \equuref{antscpb}{b} holds as well. 
In addition, we assume that $(\Id_H\ot \pi)\un{\Delta}_H,~\Id_H\ot \un{\eta}_B:~H\to H\ot B$
have a equalizer in $\Cc$. This means that there exists $A\in \Cc$ and $j:\ A\to H$ such that
\begin{equation}\eqlabel{defofA}
\gbeg{2}{6}
\got{1}{A}\gnl
\gmp{j}\gnl
\gcn{1}{1}{1}{2}\gnl
\gcmu\gnl
\gcl{1}\gmp{\pi}\gnl
\gob{1}{H}\gob{1}{B}
\gend
=
\gbeg{2}{3}
\got{1}{A}\gnl
\gmp{j}\gu{1}\gnl
\gob{1}{H}\gob{1}{B}
\gend
\hspace{1mm}.
\end{equation}
$(A,j)$ is universal in the following sense: if $f: X\ra H$ is such that 
$(\Id_H\ot \pi)\un{\Delta}_Hf=(\Id_H\ot \un{\eta}_B)f$, there is a unique morphism $\tilde{f}: X\ra A$ such that 
$j\tilde{f}=f$. Under these assumptions, we can show that $H$ is (isomorphic to) a smash cross
product bialgebra. The proof of \thref{strofHopfwithcertproj} consists of several steps, and we have
divided them over the subsequent Lemmas.

\begin{lemma}\lelabel{algstrA}
Let $B, H, \pi, i$ be as above. Then $A$ has an algebra structure such that $j: A\ra H$ is an 
algebra morphism. 
\end{lemma}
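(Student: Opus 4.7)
The plan is to exploit the universal property of the equalizer $(A,j)$ to transport both the unit and the multiplication from $H$ to $A$, and then to derive the algebra axioms on $A$ from monicity of $j$ together with the corresponding axioms on $H$.

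First, I would construct $\un{\eta}_A:\un{1}\to A$. The natural candidate lift is $\un{\eta}_H:\un{1}\to H$: since $\pi$ is a bialgebra morphism, $\pi\un{\eta}_H=\un{\eta}_B$ and $\un{\Delta}_H\un{\eta}_H=\un{\eta}_H\ot\un{\eta}_H$, so $(\Id_H\ot\pi)\un{\Delta}_H\un{\eta}_H=\un{\eta}_H\ot\un{\eta}_B=(\Id_H\ot\un{\eta}_B)\un{\eta}_H$. Hence the universal property of the equalizer \equref{defofA} produces a unique $\un{\eta}_A:\un{1}\to A$ with $j\un{\eta}_A=\un{\eta}_H$.

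Next, I would define $\un{m}_A$ by showing that $\un{m}_H\circ(j\ot j):A\ot A\to H$ factors through $j$. For this I need to verify that $\un{m}_H(j\ot j)$ equalizes the pair $\bigl((\Id_H\ot\pi)\un{\Delta}_H,\,\Id_H\ot\un{\eta}_B\bigr)$. Using the bialgebra compatibility in $H$ (the last equation in \equref{braidedbialgebra}) together with the fact that $\pi$ is an algebra morphism, one has
\[
(\Id_H\ot\pi)\un{\Delta}_H\un{m}_H(j\ot j)=(\un{m}_H\ot\un{m}_B)(\Id_H\ot c_{H,B}\ot\Id_B)\bigl(((\Id_H\ot\pi)\un{\Delta}_H j)\ot((\Id_H\ot\pi)\un{\Delta}_H j)\bigr),
\]
where I also used naturality of the braiding to push $\pi\ot\pi$ past $c$. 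Substituting the defining equation $(\Id_H\ot\pi)\un{\Delta}_H j=j\ot\un{\eta}_B$ of the equalizer and using $c_{B,H}(\un{\eta}_B\ot\Id_H)=\Id_H\ot\un{\eta}_B$ (because $c_{\un{1},H}=\Id_H$), the right hand side collapses to $\un{m}_H(j\ot j)\ot\un{\eta}_B$. The universal property then yields a unique $\un{m}_A:A\ot A\to A$ with $j\un{m}_A=\un{m}_H(j\ot j)$.

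Finally, the algebra axioms for $A$ are obtained from those of $H$ and from the fact that $j$, being an equalizer, is monic. For associativity, a short computation
\[
j\un{m}_A(\un{m}_A\ot\Id_A)=\un{m}_H(\un{m}_H\ot\Id_H)(j\ot j\ot j)=\un{m}_H(\Id_H\ot\un{m}_H)(j\ot j\ot j)=j\un{m}_A(\Id_A\ot\un{m}_A)
\]
combined with monicity gives $\un{m}_A(\un{m}_A\ot\Id_A)=\un{m}_A(\Id_A\ot\un{m}_A)$. The unit axioms $\un{m}_A(\un{\eta}_A\ot\Id_A)=\Id_A=\un{m}_A(\Id_A\ot\un{\eta}_A)$ are checked in the same way from $j\un{m}_A(\un{\eta}_A\ot\Id_A)=\un{m}_H(\un{\eta}_H\ot j)=j$, and symmetrically. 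No step is genuinely obstructive: the only nontrivial verification is the equalizer condition for $\un{m}_H(j\ot j)$, and it relies only on naturality of $c$ and the fact that $\pi$ is a bialgebra morphism, so none of the more delicate hypotheses \equref{piiscpb} or \equref{antscpb} (which play their role later, when equipping $A$ with a coalgebra structure) is needed at this stage.
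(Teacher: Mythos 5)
Your proposal is correct and follows essentially the same route as the paper: lift $\un{\eta}_H$ and $\un{m}_H(j\ot j)$ through the universal property of the equalizer $(A,j)$ and deduce the algebra axioms from those of $H$ via monicity of $j$; you merely spell out the equalizing verification that the paper leaves to inspection, and your observation that \equref{piiscpb} and \equref{antscpb} are not needed here matches the paper. The only blemish is notational: the middle braiding in your displayed computation acts on $B\ot H$, so it should be written $c_{B,H}$ rather than $c_{H,B}$, which does not affect the argument.
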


\begin{proof}
Applying the universal property of the equalizer $(A,j)$, we find unique morphisms
$\un{m}_A: A\ot A\ra A$ and $\un{\eta}: \un{1}\ra A$ morphisms in $\Cc$ making the
diagrams
\begin{equation}\eqlabel{algstrA}
\xymatrix{
A\ar[r]^j &H\ar[rr]<2pt>^{(\Id_H\ot \pi)\un{\Delta}_H} 
   \ar[rr]<-2pt>_{\Id_H\ot \un{\eta}_B} &&H\ot B \\
   & A\ot A \ar[u]_-{\un{m}_H(j\ot j)} \ar@{.>}[ul]^-{\un{m}_A} && 
}
~~{\rm and}~~
\xymatrix{
A\ar[r]^j &H\ar[rr]<2pt>^{(\Id_H\ot \pi)\un{\Delta}_H} 
   \ar[rr]<-2pt>_{\Id_H\ot \un{\eta}_B} && H\ot B \\
   & \un{1} \ar[u]_-{\un{\eta}_H} \ar@{.>}[ul]^-{\un{\eta}_A} &&
}
~.
\end{equation}
commutative, which means that $j\un{m}_A=\un{m}_H(j\ot j)$ and $j\un{\eta}_A=\un{\eta}_H$. Furthermore, a simple inspection shows that 
$j\un{m}_A(\un{m}_A\ot \Id_A)=j\un{m}_A(\Id_A\ot \un{m}_A)$ and $j\un{m}_A(\Id_A\ot \un{\eta}_A)=j=j(\un{\eta}_A\ot \Id_A)$. 
Since $j$ is a monomorphism in $\Cc$ we deduce that $\un{m}_A$ is associative and $\un{\eta}_A$ has the unit 
property. This shows that
$A$ is an algebra and $j$ is an algebra morphism.  
\end{proof}

The next step is more complicated, and consists in proving that $A$ also has a coalgebra structure. 
We will need an extra assumption, namely that $A\in \Cc$ is flat: $-\ot A$ and $A\ot-$ preserve
equalizers. Actually, we need that $\Id_A\ot j$ and 
$j\ot \Id_A$ are monomorphisms, in order to obtain that $j\ot j$ is a monomorphism.

\begin{lemma}\lelabel{coalgstrA}
Let $A, B, H, \pi, i, j$ be as above, and consider  $\tilde{p}=\un{m}_H(\Id_H\ot i\un{s}\pi)\un{\Delta}_H:\ H\to H$.
Then there exists a morphism $p: H\ra A$ such that $jp=\tilde{p}$ and $pj=\Id_A$. 
Furthermore, $A$ is a coalgebra, and $p$ is a coalgebra morphism.  
\end{lemma}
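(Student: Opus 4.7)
The plan is to produce $p$ via the universal property of the equalizer $(A,j)$, then check $pj=\Id_A$, and finally equip $A$ with a coalgebra structure rendering $p$ a coalgebra morphism.

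First I would verify that $\tilde{p}=\un{m}_H(\Id_H\ot i\un{s}\pi)\un{\Delta}_H$ equalises the pair $(\Id_H\ot\pi)\un{\Delta}_H$ and $\Id_H\ot\un{\eta}_B$; equivalently, $(\Id_H\ot\pi)\un{\Delta}_H\tilde{p}=\tilde{p}\ot\un{\eta}_B$. The ingredients are multiplicativity of $\un{\Delta}_H$ (so that it distributes over the product defining $\tilde{p}$), hypothesis \equuref{piiscpb}{a} which controls $(\Id_H\ot\pi)\un{\Delta}_H\circ i$, the anti-coalgebra property \equuref{antiac}{b} of $\un{s}$, and the antipode axiom \equref{braidedantipode} applied to $\pi(h_{(2)})\in B$, which collapses the residual $B$-component to $\un{\eta}_B\un{\va}_B\pi(h_{(2)})$; the counit axiom for $H$ then leaves exactly $\tilde{p}\ot\un{\eta}_B$. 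The universal property of the equalizer then delivers a unique $p:H\to A$ with $jp=\tilde{p}$.

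Next I would prove $pj=\Id_A$ by exploiting that $j$ is a monomorphism, so it suffices to show $\tilde{p}\circ j=j$. Since any $j(a)$ satisfies the equalizer condition \equref{defofA}, applying $\Id_H\ot i\un{s}\pi$ to $\un{\Delta}_H j$ yields $j\ot i\un{s}\un{\eta}_B$; using $\un{s}\un{\eta}_B=\un{\eta}_B$ from \equuref{antiac}{a} and the unitality of $i$, this reduces to $j\ot\un{\eta}_H$, and multiplication gives $\tilde{p}\circ j=j$.

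For the coalgebra structure on $A$, I would set $\un{\va}_A:=\un{\va}_H\circ j$ and $\un{\Delta}_A:=(p\ot p)\un{\Delta}_H j$, then show that $p$ is a coalgebra morphism. The counit relation $\un{\va}_A p=\un{\va}_H$ is equivalent to $\un{\va}_H\tilde{p}=\un{\va}_H$, which follows from $\un{\va}_H i=\un{\va}_B$ (a consequence of $\pi i=\Id_B$ and $\un{\va}_B\pi=\un{\va}_H$) together with the identity $\un{\va}_B\un{s}=\un{\va}_B$. Comultiplicativity $(p\ot p)\un{\Delta}_H=\un{\Delta}_A p$ reduces to the identity $(p\ot p)\un{\Delta}_H\tilde{p}=(p\ot p)\un{\Delta}_H$, and it is here that the hypothesis \equuref{piiscpb}{b} is used in an essential way: this identity controls the iterated comultiplication of $i(b)$ in $H$ in a way that allows the inner $i\un{s}\pi$-piece of $\tilde{p}$ to be neutralised after post-composing with $p\ot p$. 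Once $p$ is established as a coalgebra morphism, coassociativity and the counit axioms for $(A,\un{\Delta}_A,\un{\va}_A)$ follow by applying $p^{\ot 3}$, respectively $p$, to the corresponding axioms for $H$ and using $pj=\Id_A$.

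The main obstacle is precisely this last comultiplicativity step. The easy consequences $p\tilde{p}=p$ and $\un{\va}_H\tilde{p}=\un{\va}_H$ are not sufficient, and one genuinely needs the sophisticated hypothesis \equuref{piiscpb}{b}, which was built into \leref{scpbextprop} exactly for this purpose. Intuitively $\un{\Delta}_A$ must detect not only how $j(a)$ sits inside $H$ but also how the $B$-component of $\un{\Delta}_H(j(a))$ interacts with $p$, and this is what \equuref{piiscpb}{b} encodes.
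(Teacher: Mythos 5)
Your overall architecture is sound and close to the paper's: you obtain $p$ from the universal property of the equalizer $(A,j)$ by checking that $\tilde{p}$ equalizes $(\Id_H\ot \pi)\un{\Delta}_H$ and $\Id_H\ot \un{\eta}_B$ (with essentially the same ingredients the paper uses), you get $pj=\Id_A$ from $\tilde{p}j=j$ via \equref{defofA}, and your direct definitions $\un{\Delta}_A=(p\ot p)\un{\Delta}_H j$ and $\un{\va}_A=\un{\va}_H j$ are equivalent to the paper's. The paper instead produces $\un{\Delta}_A$ and $\un{\va}_A$ from the universal property of a coequalizer, proving along the way that $(A,p)$ coequalizes $\un{m}_H(\Id_H\ot i)$ and $\Id_H\ot \un{\va}_B$; your route loses that extra fact, but the Lemma as stated does not require it. Your counit verification and the derivation of coassociativity and the counit axioms for $A$ from the fact that $p$ is a split epi coalgebra morphism are correct.

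The genuine gap is the step you yourself single out: the identity $(p\ot p)\un{\Delta}_H\tilde{p}=(p\ot p)\un{\Delta}_H$ is asserted, not proved. Saying that \equuref{piiscpb}{b} ``allows the inner $i\un{s}\pi$-piece to be neutralised'' names the hypothesis but supplies no argument, and this identity is precisely the hard core of the lemma -- in the paper it is the longest computation of the section. To close it you would need (a) the auxiliary identities $\tilde{p}\,\un{m}_H(\Id_H\ot i)=\tilde{p}(\Id_H\ot \un{\va}_B)$ and $\tilde{p}i=\un{\eta}_H\un{\va}_B$, i.e.\ \equref{p1oftildep} and \equref{p2oftildep}, which your proposal never establishes; (b) the actual diagrammatic computation of $(\tilde{p}\ot\tilde{p})\un{\Delta}_H$ composed with $\tilde{p}$ (or, as in the paper, with $\un{m}_H(\Id_H\ot i)$ versus $\Id_H\ot\un{\va}_B$), using \equuref{piiscpb}{a}, \equuref{piiscpb}{b}, the anti-(co)multiplicativity of $\un{s}$ and naturality of the braiding; and (c) cancellation of $j\ot j$, which requires the standing flatness assumption on $A$ so that $j\ot j$ is a monomorphism -- a hypothesis you never invoke, although any proof of the comultiplicativity of $p$ along these lines needs it, since all information about $p$ passes through $jp=\tilde{p}$. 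Without (a)--(c) the comultiplicativity of $p$, and hence the lemma, remains unproven.
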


\begin{proof}
We compute that
\[
\gbeg{2}{9}
\got{2}{H}\gnl
\gcmu\gnl
\gcl{1}\gmp{\pi}\gnl
\gcl{1}\gmp{\un{s}}\gnl
\gcl{1}\gmp{i}\gnl
\gmu\gnl
\gcmu\gnl
\gcl{1}\gmp{\pi}\gnl
\gob{1}{H}\gob{1}{B}
\gend
\equal{\equref{braidedbialgebra}}
\gbeg{3}{13}
\got{2}{H}\gnl
\gcmu\gnl
\gcl{1}\gcn{1}{1}{1}{3}\gnl
\gcl{1}\gvac{1}\gmp{\pi}\gnl
\gcl{1}\gvac{1}\gmp{\un{s}}\gnl
\gcn{1}{1}{1}{2}\gvac{1}\gmp{i}\gnl
\gcmu\gcn{1}{1}{1}{2}\gnl
\gcl{1}\gcl{1}\gcmu\gnl
\gcl{1}\gbr\gcl{1}\gnl
\gmu\gmu\gnl
\gcn{1}{2}{2}{2}\gvac{1}\gcn{1}{1}{2}{1}\gnl
\gvac{2}\gmp{\pi}\gnl
\gob{2}{H}\gob{1}{B}
\gend
\equal{\equuref{piiscpb}{a}}
\gbeg{4}{11}
\got{2}{H}\gnl
\gcmu\gnl
\gcl{1}\gcn{1}{1}{1}{3}\gnl
\gcl{1}\gvac{1}\gmp{\pi}\gnl
\gcn{1}{1}{1}{2}\gvac{1}\gmp{\un{s}}\gnl
\gcmu\gcn{1}{1}{1}{2}\gnl
\gcl{1}\gcl{1}\gcmu\gnl
\gcl{1}\gbr\gcl{1}\gnl
\gcl{1}\gmp{i}\gmp{\pi}\gcl{1}\gnl
\gmu\gmu\gnl
\gob{2}{H}\gob{2}{B}
\gend
\equal{\equref{antiac}}
\gbeg{4}{13}
\got{2}{H}\gnl
\gcmu\gnl
\gcl{1}\gcn{1}{1}{1}{2}\gnl
\gcl{1}\gcmu\gnl
\gcl{1}\gmp{\pi}\gmp{\pi}\gnl
\gcl{1}\gcl{1}\gcn{1}{1}{1}{2}\gnl
\gcl{1}\gcl{1}\gcmu\gnl
\gcl{1}\gcl{1}\gmp{\un{s}}\gmp{\un{s}}\gnl
\gcl{1}\gcl{1}\gbr\gnl
\gcl{1}\gbr\gcl{1}\gnl
\gcl{1}\gmp{i}\gcl{1}\gcl{1}\gnl
\gmu\gmu\gnl
\gob{2}{H}\gob{2}{B}
\gend
\equal{\equref{nat1cup}}
\gbeg{4}{14}
\gvac{1}\got{2}{H}\gnl
\gvac{1}\gcmu\gnl
\gvac{1}\gcn{1}{1}{1}{-1}\gmp{\pi}\gnl
\gcl{1}\gvac{1}\gcn{1}{1}{1}{2}\gnl
\gcl{1}\gvac{1}\gcmu\gnl
\gcl{1}\gvac{1}\gcn{1}{1}{1}{0}\gcl{1}\gnl
\gcl{1}\gcmu\gmp{\un{s}}\gnl
\gcl{1}\gcl{1}\gmp{\un{s}}\gcl{1}\gnl
\gcl{1}\gmu\gcl{1}\gnl
\gcl{1}\gcn{1}{1}{2}{3}\gvac{1}\gcl{1}\gnl
\gcn{1}{1}{1}{3}\gvac{1}\gbr\gnl
\gvac{1}\gcl{1}\gmp{i}\gcl{1}\gnl
\gvac{1}\gmu\gcl{1}\gnl
\gvac{1}\gob{2}{H}\gob{1}{B}
\gend
\equal{\equref{braidedantipode}}
\gbeg{3}{7}
\got{2}{H}\gnl
\gcmu\gnl
\gcl{1}\gmp{\pi}\gnl
\gcl{1}\gmp{\un{s}}\gnl
\gcl{1}\gmp{i}\gnl
\gmu\gu{1}\gnl
\gob{2}{H}\gob{1}{B}
\gend
\hspace{1mm},
\]
It follows from the universal property of the equalizer $(A,j)$ that there exists a unique morphism
$p:\ H\to A$ such that $jp=\tilde{p}$. Then $jpj=\tilde{p}j$. From \equref{defofA}, we deduce
that $\tilde{p}j=j$, so $jpj=j$, hence $pj=\Id_A$.\\
Now we construct the coalgebra structure on $A$. We claim that $(A,p)$ is the coequalizer
of $\un{m}_H(\Id_H\ot i),~\Id_H\ot \un{\va}_B:~H\ot B\to B$. First of all, we have that
\[
jp(\un{m}_H(\Id_H\ot i))=\tilde{p}(\un{m}_H(\Id_H\ot i))=
\tilde{p}(\Id_H\ot \un{\va}_B)=jp(\Id_H\ot \un{\va}_B),
\]
since
\begin{equation}\eqlabel{p1oftildep}
\gbeg{2}{9}
\got{1}{H}\got{1}{B}\gnl
\gcl{1}\gmp{i}\gnl
\gmu\gnl
\gcmu\gnl
\gcl{1}\gmp{\pi}\gnl
\gcl{1}\gmp{\un{s}}\gnl
\gcl{1}\gmp{i}\gnl
\gmu\gnl
\gob{2}{H}
\gend
~\equalupdown{\equref{braidedbialgebra}}{\equuref{piiscpb}{a}}~
\gbeg{4}{11}
\got{2}{H}\got{2}{B}\gnl
\gcmu\gcmu\gnl
\gcl{1}\gcl{1}\gmp{i}\gcl{1}\gnl
\gcl{1}\gbr\gcl{1}\gnl
\gmu\gmp{\pi}\gcl{1}\gnl
\gcn{1}{1}{2}{3}\gvac{1}\gmu\gnl
\gvac{1}\gcl{1}\gcn{1}{1}{2}{1}\gnl
\gvac{1}\gcl{1}\gmp{\un{s}}\gnl
\gvac{1}\gcl{1}\gmp{i}\gnl
\gvac{1}\gmu\gnl
\gvac{1}\gob{2}{H}
\gend
~\equalupdown{\equref{antiac}}{\equref{nat2cup}}~
\gbeg{4}{11}
\gvac{1}\got{2}{H}\got{1}{B}\gnl
\gvac{1}\gcmu\gcl{1}\gnl
\gvac{1}\gcn{1}{1}{1}{-1}\gbr\gnl
\gcl{1}\gvac{1}\gcn{1}{1}{1}{0}\gmp{\pi}\gnl
\gcl{1}\gcmu\gmp{\un{s}}\gnl
\gcl{1}\gmp{i}\gmp{\un{s}}\gcl{1}\gnl
\gmu\gmu\gnl
\gcn{1}{1}{2}{3}\gvac{1}\gcn{1}{1}{2}{1}\gnl
\gvac{1}\gcl{1}\gmp{i}\gnl
\gvac{1}\gmu\gnl
\gvac{1}\gob{2}{H}
\gend
=
\gbeg{4}{13}
\gvac{1}\got{2}{H}\got{1}{B}\gnl
\gvac{1}\gcmu\gcl{1}\gnl
\gvac{1}\gcn{1}{1}{1}{-1}\gbr\gnl
\gcl{1}\gvac{1}\gcn{1}{1}{1}{0}\gmp{\pi}\gnl
\gcl{1}\gcmu\gmp{\un{s}}\gnl
\gcl{1}\gcl{1}\gmp{\un{s}}\gmp{i}\gnl
\gcl{1}\gmu\gcl{1}\gnl
\gcl{1}\gcn{1}{1}{2}{3}\gvac{1}\gcl{1}\gnl
\gcn{1}{3}{1}{3}\gvac{1}\gmp{i}\gcl{1}\gnl
\gvac{2}\gmu\gnl
\gvac{2}\gcn{1}{1}{2}{1}\gnl
\gvac{1}\gmu\gnl
\gvac{1}\gob{2}{H}
\gend
~\equal{\equref{braidedantipode}}~
\gbeg{3}{7}
\got{2}{H}\got{1}{B}\gnl
\gcmu\gcu{1}\gnl
\gcl{1}\gmp{\pi}\gnl
\gcl{1}\gmp{\un{s}}\gnl
\gcl{1}\gmp{i}\gnl
\gmu\gnl
\gob{2}{H}
\gend
\hspace{1mm}.
\end{equation}
Secondly, we need to prove the universal property. Assume that $\tilde{f}: H\ra X$
is such that $\tilde{f}\un{m}_H(\Id_H\ot i)=\tilde{f}(\Id_H\ot \un{\va}_B)$. We have to
show that there is a unique $f:\ A\to X$ such that $fp=\tilde{f}$. If $f$ exists, then
$f=f\Id_A=fpj=\tilde{f}j$, hence $f$ is unique. To prove the existence, let $f=\tilde{f}j$,
then
\[
fp=\tilde{f}jp=\tilde{f}\tilde{p}=\tilde{f}\un{m}_H(\Id_H\ot i)
(\Id_H\ot \un{s}\pi)\un{\Delta}_H=\tilde{f}(\Id_H\ot \un{\va}_B\un{s}\pi)\un{\Delta}_H=\tilde{f},
\] 
as required.\\
Now we use the universal property of the coequalizer to construct the comultiplication on
$A$. First observe that
\begin{equation}\eqlabel{p2oftildep}
\tilde{p}i=
\gbeg{2}{9}
\got{1}{B}\gnl
\gmp{i}\gnl
\gcn{1}{1}{1}{2}\gnl
\gcmu\gnl
\gcl{1}\gmp{\pi}\gnl
\gcl{1}\gmp{\un{s}}\gnl
\gcl{1}\gmp{i}\gnl
\gmu\gnl
\gob{2}{H}
\gend
~\equal{\equuref{piiscpb}{a}}~
\gbeg{2}{6}
\got{2}{B}\gnl
\gcmu\gnl
\gmp{i}\gmp{\un{s}}\gnl
\gcl{1}\gmp{i}\gnl
\gmu\gnl
\gob{2}{H}
\gend
=
\gbeg{2}{7}
\got{2}{B}\gnl
\gcmu\gnl
\gcl{1}\gmp{\un{s}}\gnl
\gmu\gnl
\gcn{1}{1}{2}{1}\gnl
\gmp{i}\gnl
\gob{1}{H}
\gend
~\equal{\equref{braidedantipode}}~
\gbeg{1}{4}
\got{1}{B}\gnl
\gcu{1}\gnl
\gu{1}\gnl
\gob{1}{H}
\gend
\hspace{1mm}.
\end{equation}
$\tilde{f}=(p\ot p)\un{\Delta}_H: H\ra A\ot A$ satisfies the equality
\begin{eqnarray*}
&&\hspace*{-2cm}
(j\ot j)\tilde{f}\un{m}_H(\Id_H\ot i)=(\tilde{p}\ot \tilde{p})\un{\Delta}_H\un{m}_H(\Id_H\ot i)\\
&\equal{\equref{braidedbialgebra}}&
\gbeg{5}{16}
\got{2}{H}\gvac{1}\got{1}{B}\gnl
\gcmu\gvac{1}\gmp{i}\gnl
\gcl{1}\gcl{1}\gvac{1}\gcn{1}{1}{1}{2}\gnl
\gcl{1}\gcl{1}\gvac{1}\gcmu\gnl
\gcl{1}\gcl{1}\gvac{1}\gcn{1}{1}{1}{-1}\gcn{1}{3}{1}{0}\gnl
\gcl{1}\gbr\gnl
\gmu\gcn{1}{1}{1}{0}\gnl
\gcn{1}{1}{2}{1}\gcmu\gcmu\gnl
\gmp{\tilde{p}}\gcl{1}\gbr\gcl{1}\gnl
\gcl{1}\gmu\gmu\gnl
\gcl{1}\gcn{1}{1}{2}{3}\gvac{1}\gcn{1}{1}{2}{1}\gnl
\gcl{1}\gvac{1}\gcl{1}\gmp{\pi}\gnl
\gcl{1}\gvac{1}\gcl{1}\gmp{\un{s}}\gnl
\gcl{1}\gvac{1}\gcl{1}\gmp{i}\gnl
\gcl{1}\gvac{1}\gmu\gnl
\gob{1}{H}\gvac{1}\gob{2}{H}
\gend
~\equal{\equref{nat2cup}\times 2}
\gbeg{6}{12}
\gvac{1}\got{2}{H}\gvac{1}\got{1}{B}\gnl
\gvac{1}\gcmu\gvac{1}\gmp{i}\gnl
\gvac{1}\gcn{1}{1}{1}{0}\gcn{1}{1}{1}{3}\gvac{1}\gcn{1}{1}{1}{2}\gnl
\gcmu\gvac{1}\gcl{1}\gcmu\gnl
\gcl{1}\gcl{1}\gvac{1}\gbr\gcl{1}\gnl
\gcl{1}\gcl{1}\gvac{1}\gcn{1}{1}{1}{0}\gmu\gnl
\gcl{1}\gcl{1}\gcmu\gcn{1}{1}{2}{1}\gnl
\gcl{1}\gbr\gcl{1}\gmp{\pi}\gnl
\gmu\gmu\gmp{\un{s}}\gnl
\gcn{1}{1}{2}{3}\gvac{1}\gcn{1}{1}{2}{3}\gvac{1}\gmp{i}\gnl
\gvac{1}\gmp{\tilde{p}}\gvac{1}\gmu\gnl
\gvac{1}\gob{1}{H}\gvac{1}\gob{2}{H}
\gend
~\equalupdown{\equuref{piiscpb}{a}}{\equref{antiac}}~
\gbeg{6}{12}
\gvac{1}\got{2}{H}\gvac{1}\got{2}{B}\gnl
\gvac{1}\gcmu\gvac{1}\gcmu\gnl
\gvac{1}\gcn{1}{1}{1}{0}\gcn{1}{1}{1}{3}\gvac{1}\gcl{1}\gcl{1}\gnl
\gcmu\gvac{1}\gbr\gcl{1}\gnl
\gcl{1}\gcl{1}\gvac{1}\gmp{i}\gbr\gnl
\gcl{1}\gcl{1}\gvac{1}\gcn{1}{1}{1}{0}\gmp{\un{s}}\gmp{\pi}\gnl
\gcl{1}\gcl{1}\gcmu\gmp{i}\gmp{\un{s}}\gnl
\gcl{1}\gbr\gcl{1}\gcl{1}\gmp{i}\gnl
\gmu\gmu\gmu\gnl
\gcn{1}{1}{2}{1}\gvac{1}\gcn{1}{1}{2}{3}\gvac{1}\gcn{1}{1}{2}{1}\gnl
\gmp{\tilde{p}}\gvac{2}\gmu\gnl
\gob{1}{H}\gvac{2}\gob{2}{H}
\gend\\
&
~\equalupdown{\equref{nat2cup}}{\equuref{piiscpb}{b}}~&
\gbeg{6}{18}
\gvac{1}\got{2}{H}\gvac{1}\got{1}{B}\gnl
\gvac{1}\gcmu\gvac{1}\gcl{1}\gnl
\gvac{1}\gcn{1}{1}{1}{0}\gcn{1}{1}{1}{3}\gvac{1}\gcl{1}\gnl
\gcmu\gvac{1}\gbr\gnl
\gcl{1}\gcl{1}\gvac{1}\gmp{i}\gmp{\pi}\gnl
\gcl{1}\gcl{1}\gvac{1}\gcn{1}{1}{1}{0}\gmp{\un{s}}\gnl
\gcl{1}\gcl{1}\gcmu\gmp{i}\gnl
\gcl{1}\gcl{1}\gmp{\pi}\gcn{1}{1}{1}{2}\gcn{1}{1}{1}{3}\gnl
\gcl{1}\gbr\gcmu\gcl{1}\gnl
\gcl{1}\gmp{i}\gcl{1}\gcl{1}\gmp{\pi}\gcl{1}\gnl
\gmu\gcl{1}\gcl{1}\gmp{\un{s}}\gcl{1}\gnl
\gcn{1}{1}{2}{1}\gvac{1}\gcl{1}\gcl{1}\gmp{i}\gcl{1}\gnl
\gmp{\tilde{p}}\gvac{1}\gcl{1}\gmu\gcl{1}\gnl
\gcl{1}\gvac{1}\gcn{1}{1}{1}{3}\gcn{1}{1}{2}{3}\gvac{1}\gcl{1}\gnl
\gcl{1}\gvac{2}\gmu\gcl{1}\gnl
\gcl{1}\gvac{2}\gcn{1}{1}{2}{3}\gvac{1}\gcl{1}\gnl
\gcl{1}\gvac{3}\gmu\gnl
\gob{1}{H}\gvac{3}\gob{2}{H}
\gend
~\equal{\equref{p1oftildep}}~
\gbeg{4}{9}
\gvac{1}\got{2}{H}\got{1}{B}\gnl
\gvac{1}\gcmu\gcl{1}\gnl
\gvac{1}\gcn{1}{1}{1}{0}\gbr\gnl
\gcmu\gmp{i}\gmp{\pi}\gnl
\gcl{1}\gcl{1}\gmp{\tilde{p}}\gmp{\un{s}}\gnl
\gcl{1}\gmu\gmp{i}\gnl
\gmp{\tilde{p}}\gcn{1}{1}{2}{3}\gvac{1}\gcl{1}\gnl
\gcl{1}\gvac{1}\gmu\gnl
\gob{1}{H}\gvac{1}\gob{2}{H}
\gend
~\equal{\equref{p2oftildep}}~
\gbeg{4}{9}
\got{2}{H}\gvac{1}\got{1}{B}\gnl
\gcmu\gvac{1}\gcu{1}\gnl
\gcl{1}\gcn{1}{1}{1}{2}\gnl
\gcl{1}\gcmu\gnl
\gcl{1}\gcl{1}\gmp{\pi}\gnl
\gmp{\tilde{p}}\gcl{1}\gmp{\un{s}}\gnl
\gcl{1}\gcl{1}\gmp{i}\gnl
\gcl{1}\gmu\gnl
\gob{1}{H}\gob{2}{H}
\gend~,
\end{eqnarray*}
where we freely used associativity and coassociativity of the multiplications and comultiplications
that are involved, and the fact the $\pi$ is a bialgebra morphism and $i$ is an algebra morphism.
So we have shown that
\[
(j\ot j)\tilde{f}\un{m}_H(\Id_H\ot i)
=(\tilde{p}\ot \tilde{p})\un{\Delta}_H(\Id_H\ot \un{\va}_B)=
(j\ot j)\tilde{f}(\Id_H\ot \un{\va}_B).
\]
Now $j\ot j$ is a monomorphism in $\Cc$, see the notes preceding the Lemma, and it follows that
$\tilde{f}\un{m}_H(\Id_H\ot i)=\tilde{f}(\Id_H\ot \un{\va}_B)$.
Applying the universal property of the coaequalizer $(A, p)$, we find a unique morphism
$\un{\Delta}_A:\ A\ra A\ot A$ such that  $\un{\Delta}_A p=(p\ot p)\un{\Delta}_H$.
Arguments dual to those 
presented in the proof of \leref{algstrA} show that $\un{\Delta}_A$ is coassociative.\\
Finally, $\tilde{f}=\un{\va}_H: H\ra\un{1}$ satisfies $\tilde{f}\un{m}_H(\Id_H\ot i)=\tilde{f}(\Id_H\ot \un{\va}_B)$. 
Applying the universal property again, we find a unique morphism
$\un{\va}_A: A\ra \un{1}$ such that $\un{\va}_Ap=\un{\va}_H$. It is immediate 
that $\un{\va}_A$ is a counit for $\un{\Delta}_A$, and hence $(A, \un{\Delta}_A, \un{\va}_A)$ 
is a coalgebra in $\Cc$. The construction of $\un{\Delta}_A$ and $\un{\va}_A$ is such that
$p$ is a coalgebra morphism, and this finishes the proof.
\end{proof}

Applying the formulas that we obtained above, we easily see that
$\un{\Delta}_A=\un{\Delta}_Apj=(p\ot p)\un{\Delta}_Hj$. Furthermore
\begin{equation}\eqlabel{deltasimplified}
\gbeg{2}{6}
\got{1}{A}\gnl
\gmp{j}\gnl
\gcn{1}{1}{1}{2}\gnl
\gcmu\gnl
\gmp{p}\gmp{\tilde{p}}\gnl
\gob{1}{A}\gob{1}{H}
\gend
=
\gbeg{3}{11}
\got{1}{A}\gnl
\gmp{j}\gnl
\gcn{1}{1}{1}{2}\gnl
\gcmu\gnl
\gcl{1}\gcn{1}{1}{1}{2}\gnl
\gcl{1}\gcmu\gnl
\gcl{1}\gcl{1}\gmp{\pi}\gnl
\gmp{p}\gcl{1}\gmp{\un{s}}\gnl
\gcl{1}\gcl{1}\gmp{i}\gnl
\gcl{1}\gmu\gnl
\gob{1}{A}\gob{2}{H}
\gend
=
\gbeg{3}{9}
\gvac{1}\got{1}{A}\gnl
\gvac{1}\gmp{j}\gnl
\gvac{1}\gcn{1}{1}{1}{2}\gnl
\gvac{1}\gcmu\gnl
\gvac{1}\gcn{1}{1}{1}{0}\gmp{\pi}\gnl
\gcmu\gmp{\un{s}}\gnl
\gmp{p}\gcl{1}\gmp{i}\gnl
\gcl{1}\gmu\gnl
\gob{1}{A}\gob{2}{H}
\gend
\equal{\equref{defofA}}
\gbeg{2}{6}
\got{1}{A}\gnl
\gmp{j}\gnl
\gcn{1}{1}{1}{2}\gnl
\gcmu\gnl
\gmp{p}\gcl{1}\gnl
\gob{1}{A}\gob{1}{H}
\gend
\hspace{2mm}.
\end{equation}
These formulas will be used in \leref{antipforA}.

\begin{lemma}\lelabel{antipforA}
Let $A, B, H, \pi, i, \tilde{p}, j$ be as in \leref{coalgstrA}. Assume that $H$ is a Hopf algebra
with antipode $\un{\mathcal{S}}$, and that
\equuref{antscpb}{b} is fulfilled. Then $\Id_A$ is convolution invertible.
\end{lemma}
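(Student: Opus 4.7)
The plan is to take $\un{S}_A:=p\circ\un{\mathcal{S}}\circ j:A\to A$ as the candidate convolution inverse of $\un{\Id}_A$. Since $pj=\un{\Id}_A$, the morphism $j$ is a monomorphism split by $p$, so in order to prove $\un{m}_A\circ(\un{S}_A\ot\un{\Id}_A)\circ\un{\Delta}_A=\un{\eta}_A\un{\va}_A$ it suffices to check this equality after composing with $j$. Using $j\un{m}_A=\un{m}_H(j\ot j)$ from \leref{algstrA}, $\un{\Delta}_A=(p\ot p)\un{\Delta}_H\,j$ from \leref{coalgstrA}, and the shorthand $\tilde{p}=jp$, one computes
\[
j\circ\un{m}_A\circ(\un{S}_A\ot\un{\Id}_A)\circ\un{\Delta}_A
=\un{m}_H\circ(\tilde{p}\un{\mathcal{S}}\tilde{p}\ot\tilde{p})\circ\un{\Delta}_H\circ j.
\]
Tensoring \equref{deltasimplified} (i.e.\ $(p\ot\tilde{p})\un{\Delta}_H j=(p\ot\un{\Id}_H)\un{\Delta}_H j$) with $j$ on the left factor yields $(\tilde{p}\ot\tilde{p})\un{\Delta}_H j=(\tilde{p}\ot\un{\Id}_H)\un{\Delta}_H j$, which allows the outer $\tilde{p}$ on the right slot to be dropped. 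Since $\un{\va}_A=\un{\va}_Hj$ and $j\un{\eta}_A=\un{\eta}_H$, the antipode axiom of $H$ gives $j\un{\eta}_A\un{\va}_A=\un{m}_H(\un{\mathcal{S}}\ot\un{\Id}_H)\un{\Delta}_H j$, so the whole problem reduces to establishing
\[
\un{m}_H(\tilde{p}\un{\mathcal{S}}\tilde{p}\ot\un{\Id}_H)\un{\Delta}_H\,j=\un{m}_H(\un{\mathcal{S}}\ot\un{\Id}_H)\un{\Delta}_H\,j.
\]

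To prove this identity I would expand both copies of $\tilde{p}$ as $\un{m}_H(\un{\Id}_H\ot i\un{s}\pi)\un{\Delta}_H$, then systematically push $\un{\mathcal{S}}$ past the resulting multiplications and comultiplications using the anti-algebra and anti-coalgebra properties \equref{antiac} of $\un{\mathcal{S}}$, invoke \equuref{piiscpb}{a,b} to rewrite the $i,\pi$-parts via $\un{\Delta}_B$ and $\un{s}$, apply coassociativity and the bialgebra axioms \equref{braidedbialgebra} of $H$, and finally use the skew-antipode relation \equuref{antscpb}{b} (in the form $\un{m}_H(i\pi\ot\un{\mathcal{S}})\un{\Delta}_H i=\un{\eta}_H\un{\va}_B$) to collapse the extraneous $i\pi$--$\un{\mathcal{S}}$ contributions to $\un{\eta}_H\un{\va}_B$. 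Naturality of the braiding of $\Cc$ is used throughout to permute morphisms past each other, and the antipode axiom of $H$ provides the final cancellation. Having proved the identity in $H$, composing with $p$ and using $p\un{\eta}_H=\un{\eta}_A$ descends it to $A$.

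The right convolution identity $\un{m}_A(\un{\Id}_A\ot\un{S}_A)\un{\Delta}_A=\un{\eta}_A\un{\va}_A$ is handled by an analogous argument: one first checks that $j$ applied to the left-hand side equals $\un{m}_H(\tilde{p}\ot\tilde{p}\un{\mathcal{S}}\tilde{p})\un{\Delta}_H\,j$, then uses \equref{deltasimplified} again to eliminate one $\tilde{p}$, and runs a mirror-image braided diagrammatic computation, now invoking the right antipode axiom of $H$ together with \equuref{antscpb}{b}.

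The main obstacle is the diagrammatic computation itself. Because $i$ is \emph{not} assumed to be a coalgebra morphism, the classical Radford-style shortcut $\un{\mathcal{S}} i=i\un{s}$ is unavailable, and the precise role of the hypothesis \equuref{antscpb}{b} is to serve as its substitute at exactly the step where $\un{\mathcal{S}}$ has to be moved past $i$. Combined with the braiding of $\Cc$, which makes the intermediate expressions genuinely non-trivial to manipulate, this means that the proof is essentially an extended braided string-diagram calculation whose only conceptual content is: \equuref{piiscpb}{a,b} and \equuref{antscpb}{b} together encode precisely the Hopf-theoretic compatibilities needed to make $p\un{\mathcal{S}}j$ convolution invert $\un{\Id}_A$, with no further assumption on $A$.
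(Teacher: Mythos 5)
Your candidate for the convolution inverse is where the argument fails, and the failure is fatal rather than technical: $\un{S}_A:=p\,\un{\mathcal{S}}\,j$ is \emph{not} a convolution inverse of $\Id_A$ under the hypotheses of the Lemma, and the identity you reduce everything to, $\un{m}_H(\tilde{p}\un{\mathcal{S}}\tilde{p}\ot \Id_H)\un{\Delta}_H\,j=\un{m}_H(\un{\mathcal{S}}\ot \Id_H)\un{\Delta}_H\,j$, is simply false, so no amount of diagrammatic manipulation with \equref{antiac}, \equref{piiscpb} and \equuref{antscpb}{b} can establish it. A counterexample already occurs in the classical Radford situation, where all hypotheses of the Lemma hold because $i$ is a Hopf algebra morphism (see the proof of \coref{biprodfromproj}): take $\Cc$ the category of vector spaces over $k$ with ${\rm char}\,k\neq 2$, $H$ Sweedler's four-dimensional Hopf algebra with grouplike $g$ and $x$ satisfying $g^2=1$, $x^2=0$, $xg=-gx$, $\un{\Delta}_H(x)=x\ot 1+g\ot x$, $\un{\mathcal{S}}(x)=-gx$, and $B=k\mathbb{Z}_2=k[g]$, $i$ the inclusion, $\pi(g)=g$, $\pi(x)=0$. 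Then $A$ is spanned by $1$ and $x$, with $\un{\Delta}_A(x)=x\ot 1+1\ot x$, while $\tilde{p}(gx)=gx\cdot i\un{s}(\pi(g))=gxg=-x$ (the other term vanishes because $\pi(gx)=0$), so $\un{S}_A(x)=p(\un{\mathcal{S}}(x))=p(-gx)=x$: your candidate fixes $x$, and both convolution products $\un{S}_A*\Id_A$ and $\Id_A*\un{S}_A$ send $x$ to $2x\neq 0=\un{\eta}_A\un{\va}_A(x)$, whereas the genuine convolution inverse must send $x$ to $-x$. Your preliminary reductions (testing the identities after composing with the split monomorphism $j$, and using \equref{deltasimplified} to drop one $\tilde p$) are fine; the problem lies entirely in the choice of $\un{S}_A$.

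The paper works with a different candidate, the braided analogue of Radford's formula. It first checks that $\tilde{f}:=\un{m}_H(i\pi\ot \un{\mathcal{S}})\un{\Delta}_H$ equalizes $(\Id_H\ot\pi)\un{\Delta}_H$ and $\Id_H\ot\un{\eta}_B$, so that by the universal property of $(A,j)$ it factors as $\tilde{f}=j\tilde{\un{S}}_A$, and then proves that $\un{S}:=\tilde{\un{S}}_A j$ (so that $j\un{S}=\un{m}_H(i\pi\ot\un{\mathcal{S}})\un{\Delta}_H\,j$, informally $\un{S}(a)=i\pi(a_{(1)})\un{\mathcal{S}}(a_{(2)})$) is a two-sided convolution inverse of $\Id_A$, again by composing with $j$ and using \equref{deltasimplified}, \equuref{piiscpb}{a}, \equref{antiac}, \equref{braidedbialgebra}, \equuref{antscpb}{b} and the antipode axioms of $H$ and $B$. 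Since $i$ is not a coalgebra morphism, $\un{\mathcal{S}}$ does not interact well with the splitting $jp$; the correcting factor $i\pi$ on the first tensorand, rather than an outer application of $p$, is precisely what \equuref{antscpb}{b} is there to control. In the Sweedler example this formula gives $i\pi(g)\un{\mathcal{S}}(x)=g(-gx)=-x$, as it should.
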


\begin{proof}
$\tilde{f}=\un{m}_H(i\pi\ot \un{\mathcal{S}})\un{\Delta}_H:\ H\ra H$
satisfies the equation
\begin{eqnarray*}
&&\hspace*{-1cm}(\Id_H\ot \pi)\un{\Delta}_H\tilde{f}\\
&=&
\gbeg{2}{8}
\got{2}{H}\gnl
\gcmu\gnl
\gmp{\pi}\gmp{\un{\mathcal{S}}}\gnl
\gmp{i}\gcl{1}\gnl
\gmu\gnl
\gcmu\gnl
\gcl{1}\gmp{\pi}\gnl
\gob{1}{H}\gob{1}{B}
\gend
\equal{\equref{braidedbialgebra}}
\gbeg{4}{10}
\gvac{1}\got{2}{H}\gnl
\gvac{1}\gcmu\gnl
\gvac{1}\gmp{\pi}\gmp{\un{\mathcal{S}}}\gnl
\gvac{1}\gmp{i}\gcn{1}{1}{1}{2}\gnl
\gvac{1}\gcn{1}{1}{1}{0}\gcmu\gnl
\gcmu\gcl{1}\gmp{\pi}\gnl
\gcl{1}\gmp{\pi}\gcl{1}\gcl{1}\gnl
\gcl{1}\gbr\gcl{1}\gnl
\gmu\gmu\gnl
\gob{2}{H}\gob{2}{B}
\gend
\equal{\equref{piiscpb}(a)}
\gbeg{4}{10}
\got{2}{H}\gnl
\gcmu\gnl
\gcl{1}\gcn{1}{1}{1}{2}\gnl
\gcl{1}\gcmu\gnl
\gmp{\pi}\gmp{\pi}\gmp{\un{\mathcal{S}}}\gnl
\gmp{i}\gcl{1}\gcn{1}{1}{1}{2}\gnl
\gcl{1}\gcl{1}\gcmu\gnl
\gcl{1}\gbr\gmp{\pi}\gnl
\gmu\gmu\gnl
\gob{2}{H}\gob{2}{B}
\gend
\equalupdown{\equref{antiac}}{\equref{nat1cup}}
\gbeg{4}{12}
\gvac{1}\got{2}{H}\gnl
\gvac{1}\gcmu\gnl
\gvac{1}\gcn{1}{1}{1}{-1}\gcn{1}{1}{1}{2}\gnl
\gmp{\pi}\gvac{1}\gcmu\gnl
\gcl{1}\gvac{1}\gcn{1}{1}{1}{0}\gcl{1}\gnl
\gmp{i}\gcmu\gmp{\un{\mathcal{S}}}\gnl
\gcl{1}\gcl{1}\gmp{\un{\mathcal{S}}}\gcl{1}\gnl
\gcl{1}\gmu\gcl{1}\gnl
\gcn{1}{2}{1}{3}\gcn{1}{1}{2}{3}\gvac{1}\gcl{1}\gnl
\gvac{2}\gbr\gnl
\gvac{1}\gmu\gmp{\pi}\gnl
\gvac{1}\gob{2}{H}\gob{1}{B}
\gend
\equal{\equref{braidedantipode}}
\gbeg{3}{7}
\got{2}{H}\gnl
\gcmu\gnl
\gmp{\pi}\gmp{\un{\mathcal{S}}}\gnl
\gmp{i}\gcl{1}\gnl
\gmu\gnl
\gcn{1}{1}{2}{2}\gvac{1}\gu{1}\gnl
\gob{2}{H}\gob{1}{B}
\gend\\
&=&(\Id_H\ot \un{\eta}_B)\tilde{f}.
\end{eqnarray*}
Applying the universal property of the equalizer $(A,j)$, we obtain a unique morphism
$\tilde{\un{S}}_A: H\ra A$ such that $j\tilde{\un{S}}_A=\tilde{f}$.
We will show that 
$\un{S}=\tilde{\un{S}}_A j$ is the convolution inverse of $\Id_A$.
\begin{eqnarray*}
\gbeg{2}{7}
\got{2}{A}\gnl
\gcmu\gnl
\gmp{\un{S}}\gcl{1}\gnl
\gmu\gnl
\gcn{1}{1}{2}{1}\gnl
\gmp{j}\gnl
\gob{1}{H}
\gend
=
\gbeg{3}{13}
\gvac{1}\got{1}{A}\gnl
\gvac{1}\gmp{j}\gnl
\gvac{1}\gcn{1}{1}{1}{2}\gnl
\gvac{1}\gcmu\gnl
\gvac{1}\gmp{\tilde{p}}\gcl{1}\gnl
\gvac{1}\gcn{1}{1}{1}{0}\gcl{1}\gnl
\gcmu\gcl{1}\gnl
\gmp{\pi}\gmp{\un{\mathcal{S}}}\gmp{\tilde{p}}\gnl
\gmp{i}\gcl{1}\gcl{1}\gnl
\gmu\gcl{1}\gnl
\gcn{1}{1}{2}{3}\gvac{1}\gcl{1}\gnl
\gvac{1}\gmu\gnl
\gvac{1}\gob{2}{H}
\gend
\equal{\equref{deltasimplified}}
\gbeg{3}{17}
\gvac{1}\got{1}{A}\gnl
\gvac{1}\gmp{j}\gnl
\gvac{1}\gcn{1}{1}{1}{2}\gnl
\gvac{1}\gcmu\gnl
\gvac{1}\gcn{1}{1}{1}{0}\gcl{1}\gnl
\gcmu\gcl{1}\gnl
\gcl{1}\gmp{\pi}\gcl{1}\gnl
\gcl{1}\gmp{\un{s}}\gcl{1}\gnl
\gcl{1}\gmp{i}\gcl{1}\gnl
\gmu\gcl{1}\gnl
\gcmu\gcl{1}\gnl
\gmp{\pi}\gmp{\un{\mathcal{S}}}\gcl{1}\gnl
\gmp{i}\gcl{1}\gcl{1}\gnl
\gmu\gcl{1}\gnl
\gcn{1}{1}{2}{3}\gvac{1}\gcl{1}\gnl
\gvac{1}\gmu\gnl
\gvac{1}\gob{2}{H}
\gend
\equalupdown{\equref{braidedbialgebra}}{\equref{antiac}}
\gbeg{5}{23}
\gvac{1}\got{1}{A}\gnl
\gvac{1}\gmp{j}\gnl
\gvac{1}\gcn{1}{1}{1}{2}\gnl
\gvac{1}\gcmu\gnl
\gvac{1}\gcn{1}{1}{1}{0}\gcn{1}{1}{1}{3}\gnl
\gcmu\gvac{1}\gcl{1}\gnl
\gcl{1}\gcn{1}{1}{1}{2}\gvac{1}\gcl{1}\gnl
\gcl{1}\gcmu\gcl{1}\gnl
\gmp{\pi}\gcl{1}\gmp{\pi}\gcl{1}\gnl
\gcl{1}\gcl{1}\gmp{\un{s}}\gcl{1}\gnl
\gcl{1}\gcl{1}\gmp{i}\gcl{1}\gnl
\gcl{1}\gcl{1}\gcn{1}{1}{1}{2}\gcn{1}{1}{1}{3}\gnl
\gcl{1}\gcl{1}\gcmu\gcl{1}\gnl
\gcl{1}\gcl{1}\gmp{\pi}\gcl{1}\gcl{1}\gnl
\gcl{1}\gbr\gcl{1}\gcl{1}\gnl
\gmu\gbr\gcl{1}\gnl
\gcn{1}{1}{2}{3}\gvac{1}\gmp{\un{\mathcal{S}}}\gmp{\un{\mathcal{S}}}\gcl{1}\gnl
\gvac{1}\gmp{i}\gmu\gcl{1}\gnl
\gvac{1}\gcl{1}\gcn{1}{1}{2}{1}\gvac{1}\gcl{1}\gnl
\gvac{1}\gmu\gvac{1}\gcl{1}\gnl
\gvac{1}\gcn{1}{1}{2}{3}\gvac{2}\gcn{1}{1}{1}{-1}\gnl
\gvac{2}\gmu\gnl
\gvac{2}\gob{2}{H}
\gend
\equalupdown{\equref{nat2cup}}{\equref{antscpb}}
\gbeg{3}{11}
\got{1}{A}\gnl
\gmp{j}\gnl
\gcn{1}{1}{1}{2}\gnl
\gcmu\gnl
\gcl{1}\gcn{1}{1}{1}{2}\gnl
\gcl{1}\gcmu\gnl
\gmp{\pi}\gmp{\un{\mathcal{S}}}\gcl{1}\gnl
\gmp{i}\gmu\gnl
\gcl{1}\gcn{1}{1}{2}{1}\gnl
\gmu\gnl
\gob{2}{H}
\gend
\equal{\equref{braidedantipode}}
\gbeg{1}{5}
\got{1}{A}\gnl
\gmp{j}\gnl
\gmp{\pi}\gnl
\gmp{i}\gnl
\gob{1}{H}
\gend
\equal{\equref{defofA}}
\gbeg{1}{5}
\got{1}{A}\gnl
\gcu{1}\gnl
\gu{1}\gnl
\gmp{j}\gnl
\gob{1}{H}
\gend~.
\end{eqnarray*}
The first equality in \equref{braidedantipode}  now follows from the fact that
$j$ is a monomorphism. The second one follows in a similar way from the following
computation:
\begin{eqnarray*}
\gbeg{2}{7}
\got{2}{A}\gnl
\gcmu\gnl
\gcl{1}\gmp{\un{S}}\gnl
\gmu\gnl
\gcn{1}{1}{2}{1}\gnl
\gmp{j}\gnl
\gob{1}{H}
\gend
=
\gbeg{3}{13}
\got{1}{A}\gnl
\gmp{j}\gnl
\gcn{1}{1}{1}{2}\gnl
\gcmu\gnl
\gmp{\tilde{p}}\gmp{\tilde{p}}\gnl
\gcl{1}\gcn{1}{1}{1}{2}\gnl
\gcl{1}\gcmu\gnl
\gcl{1}\gmp{\pi}\gmp{\un{\mathcal{S}}}\gnl
\gcl{1}\gmp{i}\gcl{1}\gnl
\gcl{1}\gmu\gnl
\gcl{1}\gcn{1}{1}{2}{1}\gnl
\gmu\gnl
\gob{2}{H}
\gend
\equal{\equref{deltasimplified}}
\gbeg{4}{17}
\gvac{1}\got{1}{A}\gnl
\gvac{1}\gmp{j}\gnl
\gvac{1}\gcn{1}{1}{1}{2}\gnl
\gvac{1}\gcmu\gnl
\gvac{1}\gcn{1}{1}{1}{0}\gcl{1}\gnl
\gcmu\gcl{1}\gnl
\gcl{1}\gcn{1}{1}{1}{2}\gcn{1}{1}{1}{3}\gnl
\gcl{1}\gcmu\gcl{1}\gnl
\gcl{1}\gmp{\pi}\gmp{\pi}\gcl{1}\gnl
\gcl{1}\gmp{\un{s}}\gmp{i}\gmp{\un{\mathcal{S}}}\gnl
\gcl{1}\gmp{i}\gcl{1}\gcl{1}\gnl
\gcl{1}\gmu\gcl{1}\gnl
\gcn{1}{1}{1}{3}\gcn{1}{1}{2}{3}\gvac{1}\gcl{1}\gnl
\gvac{1}\gmu\gcl{1}\gnl
\gvac{1}\gcn{1}{1}{2}{3}\gvac{1}\gcl{1}\gnl
\gvac{2}\gmu\gnl
\gvac{2}\gob{2}{H}
\gend
\equal{\equref{braidedantipode}}
\gbeg{2}{7}
\got{1}{A}\gnl
\gmp{j}\gnl
\gcn{1}{1}{1}{2}\gnl
\gcmu\gnl
\gcl{1}\gmp{\un{\mathcal{S}}}\gnl
\gmu\gnl
\gob{2}{H}
\gend
\equal{\equref{braidedantipode}}
\gbeg{1}{4}
\got{1}{A}\gnl
\gcu{1}\gnl
\gu{1}\gnl
\gob{1}{H}
\gend
=
\gbeg{1}{5}
\got{1}{A}\gnl
\gcu{1}\gnl
\gu{1}\gnl
\gmp{j}\gnl
\gob{1}{H}
\gend~.
\end{eqnarray*}
\end{proof}

\begin{theorem}\thlabel{strofHopfwithcertproj}
Let $\Cc$ be a braided monoidal category with equalizers in which every object is flat. 
For a Hopf algebra $H$, the following assertions are equivalent.
\begin{itemize}
\item[(i)] $H$ is isomorphic to a smash cross product Hopf algebra;
\item[(ii)] there exist a Hopf algebra $B$, an algebra morphism $i:\ B\to H$
and a Hopf algebra morphism $\pi:\ H\to B$
such that $\pi i=\Id_B$ 
and the conditions \equref{piiscpb} and \equuref{antscpb}{b} are fulfilled.
\end{itemize}  
\end{theorem}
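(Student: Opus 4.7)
The plan is to establish the two directions by entirely different mechanisms. For $(i)\Rightarrow(ii)$, everything has already been done: if $H\cong A\#_\psi^\phi B$ as a smash cross product Hopf algebra, then Lemma~\ref{le:scpbextprop} provides $i$ and $\pi$ with the required properties, including \equref{piiscpb} and \equuref{antscpb}{b}. So I will dispose of this direction in a single sentence.

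The content is in $(ii)\Rightarrow(i)$. First, I will invoke the equalizer $(A,j)$ defined in \equref{defofA}, and assemble its structure from the preparatory lemmas: Lemma~\ref{le:algstrA} makes $A$ an algebra with $j$ an algebra morphism; Lemma~\ref{le:coalgstrA} produces a coalgebra structure on $A$ and a coalgebra retract $p\colon H\to A$ satisfying $pj=\Id_A$ and $jp=\widetilde{p}=\un{m}_H(\Id_H\ot i\un{s}\pi)\un{\Delta}_H$; and Lemma~\ref{le:antipforA} gives a convolution inverse $\un{S}$ of $\Id_A$. With $(A,p,j)$ and $(B,\pi,i)$ in hand, I will define $\psi$ and $\phi$ by \equref{psiphifromexts} and aim to apply \prref{DrBespExtVers}.

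The central technical step, and the one I expect to be the main obstacle, is to verify that $\zeta=\un{m}_H(j\ot i)\colon A\ot B\to H$ is an isomorphism with inverse $\zeta^{-1}=(p\ot\pi)\un{\Delta}_H$. For $\zeta\circ\zeta^{-1}=\Id_H$ the computation combines $jp=\widetilde{p}$ with the antipode equation for $\un{s}$, the multiplicativity of $\un{\Delta}_H$, and the fact that $i$ and $\pi$ are (co)algebra morphisms; one telescopes the middle $\un{m}_B(\un{s}\ot\Id_B)\un{\Delta}_B$ to $\un{\eta}_B\un{\va}_B$ to recover the counit axiom on $H$. For $\zeta^{-1}\circ\zeta=\Id_{A\ot B}$, the crucial input is the defining equalizer property \equref{defofA}, namely $(\Id_H\ot\pi)\un{\Delta}_H j=j\ot\un{\eta}_B$, again combined with $\un{\Delta}_H$ being an algebra morphism, and the fact that $\pi i=\Id_B$; this forces both factors to collapse onto $\Id_A\ot\Id_B$. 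Once $\zeta$ is an isomorphism, \prref{DrBespExtVers} yields $H\cong A\#_\psi^\phi B$ as a cross product bialgebra.

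It then remains to check that the resulting cross product is of smash type, i.e.\ that $\psi$ is left conormal in the sense of \deref{normal}. For this I will compute $(\un{\va}_A\ot\Id_B)\psi$; using $\un{\va}_A p=\un{\va}_H$ (which follows from the coequalizer description of $\un{\va}_A$ in Lemma~\ref{le:coalgstrA}) together with $\pi$ being a bialgebra morphism and the relation $\pi j=\un{\eta}_B\un{\va}_A$ (a counit-applied form of \equref{defofA}), this reduces cleanly to $\un{\va}_A\ot\Id_B$. By \leref{normalsmash}, this upgrades the cross product bialgebra structure to a smash cross product. Finally, since $\Id_A$ is convolution invertible by Lemma~\ref{le:antipforA} and $B$ is a Hopf algebra by hypothesis, \prref{whenacrossprodisHA} promotes the bialgebra $A\#_\psi^\phi B$ to a Hopf algebra, and $\zeta$ is automatically a Hopf algebra isomorphism by uniqueness of antipodes.
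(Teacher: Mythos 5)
Your proposal is correct and follows essentially the same route as the paper: one sentence for $(i)\Rightarrow(ii)$ via \leref{scpbextprop}, then for $(ii)\Rightarrow(i)$ the equalizer $(A,j)$ with Lemmas \ref{le:algstrA}, \ref{le:coalgstrA}, \ref{le:antipforA}, the verification that $\zeta=\un{m}_H(j\ot i)$ and $(p\ot\pi)\un{\Delta}_H$ are mutually inverse (using \equref{defofA}, \equuref{piiscpb}{a}, multiplicativity of $\un{\Delta}_H$ and the antipode of $B$), the left conormality of $\psi$ from $\pi j=\un{\eta}_B\un{\va}_A$, and finally \prref{whenacrossprodisHA} to upgrade to a Hopf algebra isomorphism. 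This is exactly the paper's argument, so no further comparison is needed.
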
  

\begin{proof}
$\un{(i)\Rightarrow (ii)}$ follows from \leref{scpbextprop}.\\
$\un{(ii)\Rightarrow (i)}$. Let $(A,j)$ be the equalizer of
$(\Id_H\ot \pi)\un{\Delta}_H,~\Id_H\ot \un{\eta}_B:\ H\to H\ot B$. It follows from Lemmas
\ref{le:algstrA} and \ref{le:coalgstrA} that $A$ is an algebra and a coalgebra, and that
there is an algebra morphism $j: A\ra H$ and a coalgebra morphism $p: H\ra A$ such that
 $pj=\Id_A$. We claim that
 $\zeta=\un{m}_H(j\ot i): A\ot B\ra H$ and $\zeta^{-1}=(p\ot \pi)\un{\Delta}_H: H\ra A\ot B$
 are inverses. Indeed, 
 $\zeta^{-1}\zeta(j\ot \Id_B)$ is equal to 
\[
\gbeg{2}{6}
\got{1}{A}\got{1}{B}\gnl
\gmp{j}\gmp{i}\gnl
\gmu\gnl
\gcmu\gnl
\gmp{\tilde{p}}\gmp{\pi}\gnl
\gob{1}{H}\gob{1}{B}
\gend
\equal{\equref{braidedantipode}}
\gbeg{4}{10}
\got{1}{A}\gvac{1}\got{1}{B}\gnl
\gmp{j}\gvac{1}\gmp{i}\gnl
\gcn{1}{1}{1}{2}\gvac{1}\gcn{1}{1}{1}{2}\gnl
\gcmu\gcmu\gnl
\gcl{1}\gmp{\pi}\gcl{1}\gmp{\pi}\gnl
\gcl{1}\gbr\gcl{1}\gnl
\gmu\gmu\gnl
\gcn{1}{1}{2}{1}\gvac{1}\gcn{1}{2}{2}{2}\gnl
\gmp{\tilde{p}}\gnl
\gob{1}{H}\gvac{1}\gob{2}{B}
\gend
\equalupdown{\equref{defofA}}{\equuref{piiscpb}{a}}
\gbeg{3}{12}
\got{1}{A}\got{1}{B}\gnl
\gmp{j}\gcl{1}\gnl
\gcl{1}\gcn{1}{1}{1}{2}\gnl
\gcl{1}\gcmu\gnl
\gcl{1}\gmp{i}\gcl{1}\gnl
\gmu\gcl{1}\gnl
\gcmu\gcl{1}\gnl
\gcl{1}\gmp{\pi}\gcl{1}\gnl
\gcl{1}\gmp{\un{s}}\gcl{1}\gnl
\gcl{1}\gmp{i}\gcl{1}\gnl
\gmu\gcl{1}\gnl
\gob{2}{H}\gob{1}{B}
\gend
\equal{\equref{braidedbialgebra}}
\gbeg{5}{13}
\gvac{1}\got{1}{A}\gvac{1}\got{2}{B}\gnl
\gvac{1}\gmp{j}\gvac{1}\gcmu\gnl
\gvac{1}\gcn{1}{1}{1}{0}\gvac{1}\gmp{i}\gcl{1}\gnl
\gcmu\gcn{1}{1}{3}{2}\gvac{1}\gcl{1}\gnl
\gcl{1}\gcl{1}\gcmu\gcl{1}\gnl
\gcl{1}\gmp{\pi}\gcl{1}\gmp{\pi}\gcl{2}\gnl
\gcl{1}\gbr\gcl{1}\gnl
\gmu\gmu\gcl{1}\gnl
\gcn{1}{1}{2}{3}\gvac{1}\gcn{1}{1}{2}{1}\gvac{1}\gcl{4}\gnl
\gvac{1}\gcl{1}\gmp{\un{s}}\gnl
\gvac{1}\gcl{1}\gmp{i}\gnl
\gvac{1}\gmu\gnl
\gvac{1}\gob{2}{H}\gvac{1}\gob{1}{B}
\gend
\equalupdown{\equref{defofA}}{\equuref{piiscpb}{a}}
\gbeg{4}{10}
\got{1}{A}\gvac{1}\got{2}{B}\gnl
\gcl{1}\gvac{1}\gcmu\gnl
\gcl{1}\gvac{1}\gcn{1}{1}{1}{0}\gcl{1}\gnl
\gmp{j}\gcmu\gcl{1}\gnl
\gcl{1}\gcl{1}\gmp{\un{s}}\gcl{1}\gnl
\gcl{1}\gmu\gcl{4}\gnl
\gcl{1}\gcn{1}{1}{2}{1}\gnl
\gcl{1}\gmp{i}\gnl
\gmu\gnl
\gob{2}{H}\gvac{1}\gob{1}{B}
\gend
\equal{\equref{braidedantipode}}
j\ot\Id_B,
\]
and this implies that $\zeta^{-1}\zeta=\Id_{A\ot B}$. We also have that
\[
\zeta\zeta^{-1}=
\gbeg{2}{6}
\got{2}{H}\gnl
\gcmu\gnl
\gmp{p}\gmp{\pi}\gnl
\gmp{j}\gmp{i}\gnl
\gmu\gnl
\gob{2}{H}
\gend
=
\gbeg{3}{11}
\got{2}{H}\gnl
\gcmu\gnl
\gcl{1}\gmp{\pi}\gnl
\gcl{1}\gcn{1}{1}{1}{2}\gnl
\gcl{1}\gcmu\gnl
\gcl{1}\gmp{\un{s}}\gcl{1}\gnl
\gcl{1}\gmu\gnl
\gcl{1}\gcn{1}{1}{2}{1}\gnl
\gcl{1}\gmp{i}\gnl
\gmu\gnl
\gob{2}{H}
\gend
\equal{\equref{braidedantipode}}
\Id_H.
\]
Now we apply the implication $(ii)\Rightarrow (i)$ in \prref{DrBespExtVers},
and obtain that $\zeta^{-1}:\ H\to A\times_\psi^\phi B$ is a bialgebra isomorphism,
with $\psi$ and $\phi$ given by \equref{psiphifromexts}. Now
$
\gbeg{2}{4}
\got{1}{B}\got{1}{A}\gnl
\gbrc\gnl
\gcu{1}\gcl{1}\gnl
\gvac{1}\gob{1}{B}
\gend 
=
\gbeg{2}{6}
\got{1}{B}\got{1}{A}\gnl
\gmp{i}\gmp{j}\gnl
\gmu\gnl
\gcn{1}{1}{2}{1}\gnl
\gmp{\pi}\gnl
\gob{1}{B}
\gend
=
\gbeg{2}{5}
\got{1}{B}\got{1}{A}\gnl
\gmp{i}\gmp{j}\gnl
\gmp{\pi}\gmp{\pi}\gnl
\gmu\gnl
\gob{2}{B}
\gend
\equal{\equref{defofA}}
\gbeg{2}{3}
\got{1}{B}\got{1}{A}\gnl
\gcl{1}\gcu{1}\gnl
\gob{1}{B}
\gend 
$,
and we conclude that $\psi$ is left conormal, and that $H$ is isomorphic to a smash cross product bialgebra.\\
Finally, according to \leref{antipforA} $\Id_A$ is convolution invertible. Together with the fact that
$B$ is a Hopf algebra, this implies  that $A\times_\psi^\phi B$ is a Hopf algebra, see \prref{whenacrossprodisHA}. 
Then $\zeta^{-1}$ is a Hopf algebra isomorphism, completing the proof.
 \end{proof}
 
We leave it to the reader to formulate the right handed version of \thref{strofHopfwithcertproj}.
 
\begin{corollary}\colabel{biprodfromproj} {\bf \cite{rad}}
Let $\Cc$ be a braided monoidal category with equalizers and assume that 
every object of $\Cc$ is flat. Then a Hopf algebra $H$ is isomorphic to a biproduct Hopf algebra
if and only if there exist a Hopf algebra $B$ and Hopf algebra morphisms $i:\ B\to H$,
$\pi:\ H\to B$ such that $\pi i=\Id_B$. 
\end{corollary}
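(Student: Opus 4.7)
The forward implication is immediate: if $H=A\ast B$ is a biproduct Hopf algebra, then $\pi=\un{\va}_A\ot\Id_B:\ H\to B$ and $i=\un{\eta}_A\ot\Id_B:\ B\to H$ are Hopf algebra morphisms satisfying $\pi i=\Id_B$.

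For the converse, the plan is to apply \thref{strofHopfwithcertproj} to obtain a smash cross product decomposition of $H$ and then upgrade it to a biproduct using \coref{Radford}. First I would verify that the hypotheses of \thref{strofHopfwithcertproj}, namely \equref{piiscpb} and \equuref{antscpb}{b}, are automatic under the stronger assumption that $i$ is a coalgebra (hence Hopf algebra) morphism. Indeed, $(\Id_H\ot\pi)\un{\Delta}_H i=(i\ot\pi i)\un{\Delta}_B=(i\ot\Id_B)\un{\Delta}_B$, giving \equuref{piiscpb}{a}; \equuref{piiscpb}{b} reduces to a short Sweedler-style computation using $\un{\Delta}_H i=(i\ot i)\un{\Delta}_B$, $\pi i=\Id_B$, the fact that $i$ is an algebra morphism, and the antipode axiom in $B$; and \equuref{antscpb}{b} reduces to the identity $i\un{s}=\un{\mathcal{S}}i$, which is automatic for a Hopf algebra morphism between Hopf algebras.

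Invoking \thref{strofHopfwithcertproj} then yields an isomorphism $H\cong A\#_\psi^\phi B$ of Hopf algebras, where $(A,j)$ is the equalizer of $(\Id_H\ot\pi)\un{\Delta}_H$ and $\Id_H\ot\un{\eta}_B$, where $p:\ H\to A$ is characterized by $jp=\un{m}_H(\Id_H\ot i\un{s}\pi)\un{\Delta}_H$, and where $\psi$ is automatically left conormal. To conclude via \coref{Radford}, it suffices to show that $\phi$ is left normal. From the formula $\phi=(\pi\ot p)\un{\Delta}_H\un{m}_H(j\ot i)$ in \equref{psiphifromexts}, one computes $\phi\circ(\un{\eta}_A\ot\Id_B)=(\pi\ot p)\un{\Delta}_H i=(\Id_B\ot pi)\un{\Delta}_B$ (using $i$ coalgebra morphism and $\pi i=\Id_B$), so left normality of $\phi$ is equivalent to $pi=\un{\eta}_A\un{\va}_B$. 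Since $j$ is monic, this reduces to checking $jpi=\un{\eta}_H\un{\va}_B$, and a short computation gives $jpi=\un{m}_H(\Id_H\ot i\un{s}\pi)\un{\Delta}_H i=\un{m}_H(i\ot i\un{s})\un{\Delta}_B=i\un{m}_B(\Id_B\ot\un{s})\un{\Delta}_B=i\un{\eta}_B\un{\va}_B=\un{\eta}_H\un{\va}_B$, using in turn $\un{\Delta}_H i=(i\ot i)\un{\Delta}_B$, $\pi i=\Id_B$, the fact that $i$ is an algebra morphism, and the antipode axiom for $B$.

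There is no serious obstacle: all the substantive work has already been done in \thref{strofHopfwithcertproj}, and the role of the extra hypothesis that $i$ be a coalgebra morphism is precisely to force $\phi$ to be left normal, in direct parallel to the way \thref{strofHopfwithcertproj} uses $\pi$ being a coalgebra morphism to force $\psi$ to be left conormal. The main point to verify carefully is the identity $jpi=\un{\eta}_H\un{\va}_B$ above, since everything else is a direct application of earlier results.
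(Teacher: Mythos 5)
Your proposal is correct and follows essentially the same route as the paper: both directions hinge on observing that \equref{piiscpb} and \equuref{antscpb}{b} are automatic when $i$ is a Hopf algebra morphism, invoking \thref{strofHopfwithcertproj} to get a smash cross product decomposition, and then checking that $\phi$ is left normal. Your reduction of that last step to $jpi=\un{m}_H(\Id_H\ot i\un{s}\pi)\un{\Delta}_Hi=\un{\eta}_H\un{\va}_B$ is just a repackaged form of the paper's graphical computation of $(\Id_B\ot j)\phi(\un{\eta}_A\ot\Id_B)$, so no new content is needed.
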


\begin{proof}
A biproduct Hopf algebra $A\times_\psi^\phi B$ is a smash cross product Hopf algebra 
for which $\phi$ is left normal. In this case it is easy to see that the canonical morphism $i: B\ra H$ 
is a coalgebra morphism, and therefore a Hopf algebra morphism. Then it can be easily checked that  
\equref{piiscpb} and \equuref{antscpb}{b} are automatically satisfied.\\
Conversely, if $i$, $\pi$ and $B$ are given as in the Theorem, then
condition (ii) of \thref{strofHopfwithcertproj} is fulfilled since $i$ is a Hopf algebra
morphism. Hence $H$ is isomorphic to a smash cross product Hopf algebra. 
It follows from \equref{psiphifromexts} that
\[
\phi=
\gbeg{2}{6}
\got{1}{A}\got{1}{B}\gnl
\gmp{j}\gmp{i}\gnl
\gmu\gnl
\gcmu\gnl
\gmp{\pi}\gmp{p}\gnl
\gob{1}{B}\gob{1}{A}
\gend
~~\mbox{, hence}~~
\gbeg{2}{5}
\gvac{1}\got{1}{B}\gnl
\gu{1}\gcl{1}\gnl
\gbrbox\gnl
\gcl{1}\gmp{j}\gnl
\gob{1}{B}\gob{1}{H}
\gend
=
\gbeg{2}{6}
\got{1}{B}\gnl
\gmp{i}\gnl
\gcn{1}{1}{1}{2}\gnl
\gcmu\gnl
\gmp{\pi}\gmp{\tilde{p}}\gnl
\gob{1}{B}\gob{1}{H}
\gend
=
\gbeg{3}{10}
\got{2}{B}\gnl
\gcmu\gnl
\gmp{i}\gmp{i}\gnl
\gmp{\pi}\gcn{1}{1}{1}{2}\gnl
\gcl{1}\gcmu\gnl
\gcl{1}\gcl{1}\gmp{\pi}\gnl
\gcl{1}\gcl{1}\gmp{\un{s}}\gnl
\gcl{1}\gcl{1}\gmp{i}\gnl
\gcl{1}\gmu\gnl
\gob{1}{B}\gob{2}{H}
\gend
=
\gbeg{3}{9}
\got{2}{B}\gnl
\gcmu\gnl
\gcl{1}\gcn{1}{1}{1}{2}\gnl
\gcl{1}\gcmu\gnl
\gcl{1}\gcl{1}\gmp{\un{s}}\gnl
\gcl{1}\gmu\gnl
\gcl{1}\gcn{1}{1}{2}{1}\gnl
\gcl{1}\gmp{i}\gnl
\gob{1}{B}\gob{1}{H}
\gend
\equal{\equref{braidedantipode}}
\gbeg{2}{4}
\got{1}{B}\gnl
\gcl{1}\gu{1}\gnl
\gcl{1}\gmp{j}\gnl
\gob{1}{B}\gob{1}{H}
\gend~.
\]
We conclude that $\phi$ is left normal, and therefore $H$ is isomorphic to a biproduct 
Hopf algebra.    
\end{proof}

Now we focus attention to double cross coproduct Hopf algebras. Recall that $X\in \Cc$
is called right (left) coflat if $X\ot -$ (resp. $-\ot X$) preserves coequalizers. $X$ is coflat if it is left and 
right coflat.

\begin{corollary}\colabel{dccopasproj} \cite{majbip}
Let $\Cc$ be a braided monoidal category with equalizers and assume that 
every object in $\Cc$ is flat and right coflat. A Hopf algebra
$H$ is isomorphic to a double cross coproduct Hopf algebra if and only if
there exist a Hopf algebra $B$ and  Hopf algebra morphisms $i:\ B\to H$ and $\pi:\ H\to B$
such that $\pi i=\Id_B$ and \equref{doblecrosscoprproj} holds, i.e., the left adjoint action 
of $B$ on $H$ induced by $i$ is trivial on the image of $\tilde{p}$.
\begin{equation}\eqlabel{doblecrosscoprproj}
\gbeg{4}{12}
\got{2}{B}\got{2}{H}\gnl
\gcmu\gcmu\gnl
\gcl{1}\gcl{1}\gcl{1}\gmp{\pi}\gnl
\gcl{1}\gmp{\un{s}}\gcl{1}\gmp{\un{s}}\gnl
\gmp{i}\gmp{i}\gcl{1}\gmp{i}\gnl
\gcl{1}\gcl{1}\gmu\gnl
\gcl{1}\gcl{1}\gcn{1}{1}{2}{1}\gnl
\gcl{1}\gbr\gnl
\gmu\gcl{1}\gnl
\gcn{1}{1}{2}{3}\gvac{1}\gcl{1}\gnl
\gvac{1}\gmu\gnl
\gvac{1}\gob{2}{H}
\gend
=
\gbeg{3}{7}
\got{1}{B}\got{2}{H}\gnl
\gcu{1}\gcmu\gnl
\gvac{1}\gcl{1}\gmp{\pi}\gnl
\gvac{1}\gcl{1}\gmp{\un{s}}\gnl
\gvac{1}\gcl{1}\gmp{i}\gnl
\gvac{1}\gmu\gnl
\gvac{1}\gob{2}{H}
\gend~.
\end{equation} 
\end{corollary}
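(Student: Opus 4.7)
The strategy is to reduce the statement to \thref{strofHopfwithcertproj} and then show that \equref{doblecrosscoprproj} is exactly the condition that upgrades the resulting smash cross product Hopf algebra to a double cross coproduct Hopf algebra via \coref{doublecrosscoprodbialg}. More precisely, recall that by \coref{doublecrosscoprodbialg}, if $H=A\#_\psi^\phi B$ is a cross product Hopf algebra with $\psi$ left \emph{and} right conormal, then $\psi$ coincides with the braiding $c_{B,A}$ and $H\cong A\blacktriangleright\hspace{-1mm}\blacktriangleleft B$ is a double cross coproduct Hopf algebra. So the core of the proof is to match \equref{doblecrosscoprproj} with the right conormality of $\psi$.

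For the reverse implication, I would argue as follows. Since $i$ is assumed to be a coalgebra morphism, the relation $(\Id_H\ot\pi)\un{\Delta}_H i=(i\ot\Id_B)\un{\Delta}_B$ holds, which gives \equuref{piiscpb}{a} at once; \equuref{piiscpb}{b} and \equuref{antscpb}{b} then follow from the antipode axioms for $B$ and $H$ together with the fact that $\pi$ is an anti-algebra/anti-coalgebra compatible morphism (as a bialgebra morphism). Hence all hypotheses of \thref{strofHopfwithcertproj} are satisfied and $H$ is isomorphic to a smash cross product Hopf algebra $A\#_\psi^\phi B$, with $\psi$ as in \equref{psiphifromexts}; in particular $\psi$ is left conormal. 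It remains only to verify that $\psi$ is also right conormal.

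The key calculation translates \equref{doblecrosscoprproj} into right conormality of $\psi$. By definition, right conormality reads $(\Id_A\ot\un{\va}_B)\circ\psi=\un{\va}_B\ot\Id_A$, which via \equref{psiphifromexts} and the fact that $\un{\va}_H=\un{\va}_B\circ\pi$ becomes $p\circ\un{m}_H\circ(i\ot j)=\un{\va}_B\ot\Id_A$. Composing with $j$ on the left (a monomorphism since $A$ is flat) and using $jp=\tilde{p}$, this amounts to $\tilde{p}(i(b)\cdot j(a))=\un{\va}_B(b)\,j(a)$. Expanding $\tilde{p}$, substituting $\un{\Delta}_H i=(i\ot i)\un{\Delta}_B$ and using $\tilde{p}\circ j=j$ (which follows from \equref{defofA} via \equref{p2oftildep}, since $\tilde{p}(j(a))=j(a)_{(1)}\,i\un{s}\pi(j(a)_{(2)})=j(a)_{(1)}\,i\un{s}(\un{\va}_A(a)\un{\eta}_B)=j(a)$), one rewrites this precisely as the specialization of \equref{doblecrosscoprproj} at $b\ot j(a)$. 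The general form of \equref{doblecrosscoprproj} then follows because every $h\in H$ decomposes as $h=\tilde{p}(h_{(1)})\cdot i\pi(h_{(2)})$ (this being the inverse of the isomorphism $\zeta$ in \prref{DrBespExtVers}, so the image of $\tilde{p}$ ``spans'' $H$ modulo $i(B)$), and both sides of \equref{doblecrosscoprproj} factor through $\tilde{p}$ by construction.

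For the forward implication, if $H\cong A\blacktriangleright\hspace{-1mm}\blacktriangleleft B$ then, by \coref{doublecrosscoprodbialg}, the algebra structure on $H$ is the tensor product algebra, $\psi=c_{B,A}$, and the coproduct is given by the explicit formula involving the two coactions. The canonical $i:B\to A\ot B$, $b\mapsto\un{\eta}_A\ot b$, is an algebra morphism (tensor product algebra) and a coalgebra morphism (by the formula of \coref{doublecrosscoprodbialg} together with the unit-coaction conditions $\un{\rho}(\un{\eta}_A)=\un{\eta}_A\ot\un{\eta}_B$ and $\un{\lambda}(\un{\eta}_B)=\un{\eta}_B\ot\un{\eta}_A$ that come from the fact that $A$ is a $B$-comodule algebra and $B$ is an $A$-comodule algebra); similarly for $\pi$. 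Finally, \equref{doblecrosscoprproj} follows from the tensor product algebra structure: conjugating an element of the $A$-factor by $i(b)$ yields $\un{\va}_B(b)$ times that element, which matches the right-hand side.

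The main obstacle is the diagrammatic translation between \equref{doblecrosscoprproj} and right conormality of $\psi$; all other steps are either direct appeals to already-proved theorems or routine verifications using the structural identities from \leref{scpbextprop}, \leref{coalgstrA} and the fact that $i$ is now a full Hopf algebra morphism rather than merely an algebra morphism.
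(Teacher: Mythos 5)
Your converse direction is essentially the paper's own argument in different packaging: you route through \thref{strofHopfwithcertproj} and then \coref{doublecrosscoprodbialg} instead of citing \coref{biprodfromproj}, and you extract right conormality of $\psi$ by restricting \equref{doblecrosscoprproj} along $\Id_B\ot j$, using $\tilde{p}j=j$ and the fact that $j$ is a monomorphism, whereas the paper precomposes with $\Id_B\ot p$ and cancels using that $\Id_B\ot p$ is an epimorphism and $j$ is a monomorphism; the underlying computation (rewriting $\tilde{p}(i(b)j(a))$ as the adjoint action of $i(b)$ on $\tilde{p}(j(a))$) is the same, and your verification of \equref{piiscpb} and \equuref{antscpb}{b} from $i$ being a coalgebra morphism is exactly what the paper does inside \coref{biprodfromproj}. (Minor slip: $\tilde{p}j=j$ follows directly from \equref{defofA}, not from \equref{p2oftildep}, which concerns $\tilde{p}i$.)

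The gap is in the forward direction. You claim that the canonical $i:B\to A\ot B$, $i=\un{\eta}_A\ot\Id_B$, is a coalgebra morphism ``by the formula of \coref{doublecrosscoprodbialg} together with the unit-coaction conditions''. It is not: with the comultiplication of \coref{doublecrosscoprodbialg} one computes $\un{\Delta}(i(b))=(\un{\eta}_A\ot b_1{}^{[0]})\ot(b_1{}^{[1]}\ot b_2)$, where $\rho(b)=b^{[0]}\ot b^{[1]}$ is the right $A$-coaction on $B$, and this equals $(i\ot i)\un{\Delta}_B(b)$ only when $\rho$ is trivial, i.e.\ only when $\phi$ is left normal. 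The unit conditions you invoke (which are also mis-typed: $\rho$ is defined on $B$ and $\lambda$ on $A$, so ``$\rho(\un{\eta}_A)$'' does not parse) only control $\lambda(\un{\eta}_A)$ and $\rho(\un{\eta}_B)$ and say nothing about $\rho(b)$ for general $b$; the morphism $\pi$ is indeed a bialgebra map, but $i$ need not be a coalgebra map when the right $A$-coaction on $B$ is nontrivial. The paper's proof carries exactly this point in its opening sentence, ``a double cross coproduct Hopf algebra is a biproduct Hopf algebra for which $\psi$ is right conormal,'' i.e.\ it asserts left normality of $\phi$ and then invokes \coref{biprodfromproj} to produce the Hopf algebra morphisms $i$ and $\pi$; that assertion is precisely the input your argument is missing, and without it (or some substitute for it) the Hopf-algebra-morphism property of $i$, and hence the whole forward implication, is not established by what you wrote.
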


\begin{proof}
A double cross coproduct Hopf algebra is a 
biproduct Hopf algebra for which $\psi$ is right conormal. By \coref{biprodfromproj},
it suffices to verify  \equref{doblecrosscoprproj}. This follows directly from the 
definitions, we leave the details to the reader.\\
Conversely, assume that $i$ and $\pi$ are given.
According to \coref{biprodfromproj}, $H$ is isomorphic to a biproduct Hopf algebra 
$A\times_\psi^\phi B$. The biproduct Hopf algebra is actually a double cross coproduct Hopf algebra 
since $\psi$ is right conormal. Indeed, we have
\[
\gbeg{2}{5}
\got{1}{B}\got{1}{H}\gnl
\gcl{1}\gmp{p}\gnl
\gbrc\gnl
\gmp{j}\gcu{1}\gnl
\gob{1}{H}
\gend
=
\gbeg{2}{9}
\got{1}{B}\got{1}{H}\gnl
\gmp{i}\gmp{\tilde{p}}\gnl
\gmu\gnl
\gcmu\gnl
\gcl{1}\gmp{\pi}\gnl
\gcl{1}\gmp{\un{s}}\gnl
\gcl{1}\gmp{i}\gnl
\gmu\gnl
\gob{2}{H}
\gend
\equal{\equref{braidedbialgebra}}
\gbeg{4}{12}
\got{1}{B}\gvac{1}\got{1}{H}\gnl
\gmp{i}\gvac{1}\gmp{\tilde{p}}\gnl
\gcn{1}{1}{1}{2}\gvac{1}\gcn{1}{1}{1}{2}\gnl
\gcmu\gcmu\gnl
\gcl{1}\gmp{\pi}\gcl{1}\gmp{\pi}\gnl
\gcl{1}\gbr\gcl{1}\gnl
\gmu\gmu\gnl
\gcn{1}{1}{2}{1}\gcn{1}{1}{4}{1}\gnl
\gcl{1}\gmp{\un{s}}\gnl
\gcl{1}\gmp{i}\gnl
\gmu\gnl
\gob{2}{H}
\gend
\equalupdown{\equref{defofA}}{\equuref{piiscpb}{a}}
\gbeg{4}{12}
\got{2}{B}\got{2}{H}\gnl
\gcmu\gcmu\gnl
\gmp{i}\gmp{\un{s}}\gcl{1}\gmp{\pi}\gnl
\gcl{1}\gmp{i}\gcl{1}\gmp{\un{s}}\gnl
\gcl{1}\gcl{1}\gcl{1}\gmp{i}\gnl
\gcl{1}\gcl{1}\gmu\gnl
\gcl{1}\gcl{1}\gcn{1}{1}{2}{1}\gnl
\gcl{1}\gbr\gnl
\gmu\gcl{1}\gnl
\gcn{1}{1}{2}{3}\gvac{1}\gcl{1}\gnl
\gvac{1}\gmu\gnl
\gvac{1}\gob{2}{H}
\gend
\equal{\equref{doblecrosscoprproj}}
\gbeg{3}{7}
\got{1}{B}\got{2}{H}\gnl
\gcu{1}\gcmu\gnl
\gvac{1}\gcl{1}\gmp{\pi}\gnl
\gvac{1}\gcl{1}\gmp{\un{s}}\gnl
\gvac{1}\gcl{1}\gmp{i}\gnl
\gvac{1}\gmu\gnl
\gvac{1}\gob{2}{H}
\gend
=
\gbeg{2}{4}
\got{1}{B}\got{1}{H}\gnl
\gcu{1}\gmp{p}\gnl
\gvac{1}\gmp{j}\gnl
\gvac{1}\gob{1}{H}
\gend. 
\]    
The right coflatness of $B$ implies that $\Id_B\ot p$ is an epimorphism.
Then the right conormality of $\psi$ follows from the fact that $j$ is a monomorphism.
\end{proof}

To make our story complete, we present the dual version of \thref{strofHopfwithcertproj}.
We need some Lemmas first. Most of the proofs are omitted, as they are dual versions of
proofs that we presented above.

\begin{lemma}\lelabel{dual1}
Let $H=A\times_\psi^\phi B$ be a (left) smash cross coproduct bialgebra and let $\pi : H\ra B$ and $i: B\ra H$ 
be the canonical morphisms.

(i) $\pi$ is a coalgebra morphism, $i$ is a bialgebra morphism, $\pi i=\Id_B$ 
and
\begin{equation}\eqlabel{carprofcccHa}
\gbeg{2}{6}
\got{1}{H}\got{1}{B}\gnl
\gcl{1}\gmp{i}\gnl
\gmu\gnl
\gcn{1}{1}{2}{1}\gnl
\gmp{\pi}\gnl
\gob{1}{B}
\gend
=
\gbeg{2}{4}
\got{1}{H}\got{1}{B}\gnl
\gmp{\pi}\gcl{1}\gnl
\gmu\gnl
\gob{2}{B}
\gend
\hspace{2mm}\mbox{\rm and}\hspace{2mm}
\gbeg{3}{7}
\got{1}{H}\got{2}{H}\gnl
\gcl{1}\gcmu\gnl
\gmu\gmp{\pi}\gnl
\gcn{1}{1}{2}{1}\gvac{1}\gmp{\un{s}}\gnl
\gmp{\pi}\gvac{1}\gcn{1}{1}{1}{-1}\gnl
\gmu\gnl
\gob{2}{B}
\gend
=
\gbeg{3}{11}
\got{1}{H}\got{2}{H}\gnl
\gcl{1}\gcmu\gnl
\gmp{\pi}\gcl{1}\gmp{\pi}\gnl
\gmp{i}\gcl{1}\gmp{\un{s}}\gnl
\gcl{1}\gcl{1}\gmp{i}\gnl
\gcl{1}\gmu\gnl
\gcl{1}\gcn{1}{1}{2}{1}\gnl
\gmu\gnl
\gcn{1}{1}{2}{1}\gnl
\gmp{\pi}\gnl
\gob{1}{B}
\gend~~.
\end{equation}
For the second equality, we need the additional assumption that $B$ is a Hopf algebra with antipode $\un{s}$.\\
(ii) If $H$ is a Hopf algebra with antipode $\un{\mathcal{S}}$ then $B$ is also a Hopf algebra and 
\begin{equation}\eqlabel{carprofcccHaAnt}
\gbeg{2}{8}
\got{2}{H}\gnl
\gcmu\gnl
\gmp{\pi}\gmp{\un{\mathcal{S}}}\gnl
\gmp{i}\gcl{1}\gnl
\gmu\gnl
\gcn{1}{1}{2}{1}\gnl
\gmp{\pi}\gnl
\gob{1}{B}
\gend
=
\gbeg{1}{4}
\got{1}{H}\gnl
\gcu{1}\gnl
\gu{1}\gnl
\gob{1}{B}
\gend~~.
\end{equation}
\end{lemma}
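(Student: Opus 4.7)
The plan is to mirror the proof of \leref{scpbextprop}, interchanging the roles of multiplication and comultiplication throughout. In this setting $A\#^\phi B$ is a smash coproduct coalgebra, so $\phi$ is left normal by \leref{normalsmash}. The canonical morphisms are $\pi=\un{\va}_A\ot\Id_B:H\to B$ and $i=\un{\eta}_A\ot\Id_B:B\to H$.

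For part (i), I would first verify the elementary facts: $\pi i=\Id_B$ comes from $\un{\va}_A\un{\eta}_A=\Id_{\un{1}}$; condition (\ref{eq:crossprodalg}.d) for $\psi$ makes $i$ an algebra morphism, and condition (\ref{eq:crossprodcoalg}.c) for $\phi$, together with the counit axiom, makes $\pi$ a coalgebra morphism. The fact that $i$ is additionally a coalgebra morphism, hence a bialgebra morphism, is exactly where the left normality of $\phi$ is used: substituting $i=\un{\eta}_A\ot\Id_B$ into the cross coproduct comultiplication and invoking left normality collapses the $\phi$-term to a trivial flip, producing $(i\ot i)\un{\Delta}_B$. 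The two identities in \equref{carprofcccHa} are the pointwise duals of \equuref{piiscpb}{a} and \equuref{piiscpb}{b}. The first says that $\pi$ is a right $B$-module morphism, where $H$ carries the right $B$-action induced by $i$; it follows directly from the formula for the multiplication on $A\#_\psi B$ together with the fact that $i$ is a bialgebra morphism. The second identity, more involved, would be established by the diagrammatic pattern used for \equuref{piiscpb}{b}, now relying on coassociativity of $\un{\Delta}_H$, the bialgebra axioms for $B$, the coalgebra morphism property of $\pi$, and the antipode axiom for $\un{s}$.

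For part (ii), the task is to exhibit an antipode $\un{s}$ for $B$ and verify \equref{carprofcccHaAnt}. Dualizing \equuref{antscpb}{a} suggests the candidate $\un{s}=\pi\circ\un{\mathcal{S}}\circ i$. By the dual of the argument sketched in \reref{antpartneccconds}, one convolution identity for $\un{s}$ comes essentially for free from the existence of $\un{\mathcal{S}}$ combined with the elementary compositions of the antipode equations with units and counits. The other identity is verified by an analogue of the final computation in the proof of \leref{scpbextprop}: starting from the one-sided identity already available, and using the bialgebra axioms, the anti-coalgebra property of $\un{\mathcal{S}}$ (see \equuref{antiac}{b}), and the left normality of $\phi$, one collapses the expression to $\un{\eta}_B\un{\va}_B$. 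Once $\un{s}$ is known to be the antipode of $B$, equation \equref{carprofcccHaAnt} follows from a short diagrammatic manipulation using the coalgebra morphism property of $\pi$, the bialgebra morphism property of $i$, and the antipode equations for $\un{\mathcal{S}}$.

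The principal obstacle is purely bookkeeping in the dualization: every diagrammatic step in the proof of \leref{scpbextprop} must be turned upside down and the graphical identities carefully re-verified with product and coproduct, unit and counit, interchanged. No genuinely new conceptual idea appears to be required, but the sheer number of diagrammatic substitutions makes the full proof lengthy.
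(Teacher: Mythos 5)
Your proposal is correct and follows essentially the same route as the paper: the paper gives no separate argument for this lemma, stating explicitly that it is the dual of \leref{scpbextprop}, and your plan is precisely that dualization (same canonical $\pi=\un{\va}_A\ot\Id_B$, $i=\un{\eta}_A\ot\Id_B$, the same candidate antipode $\un{s}=\pi\un{\mathcal{S}}i$, and the one-sided-inverse argument dual to \reref{antpartneccconds} and the final computation of \leref{scpbextprop}). One small correction: the identity making $i$ an algebra morphism is \equuref{crossprodalg}{c}, namely $\psi\circ(\Id_B\ot\un{\eta}_A)=\un{\eta}_A\ot\Id_B$, not \equuref{crossprodalg}{d}.
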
 

For the converse of \leref{dual1}, we need additional assumptions: $\Cc$ has coequalizers,
and every object of $\Cc$ is coflat, that is, it is left and right coflat.

\begin{lemma}\lelabel{dualconstrA}
Let $H$ be a bialgebra and let $B$ be a Hopf algebra with antipode $\un{s}$ and 
suppose that we have a bialgebra morphism $i:\ B\to H$ and a coalgebra morphism
$\pi:\ H\to B$ such that $\pi i=\Id_B$, such that \equref{carprofcccHa} holds.
Let $(A, p)$ be the coequalizer of $\un{m}_H(\Id_H\ot i),~\Id_H\ot \un{\va}_B:\
H\ot B\to B$. Then we have the following results.\\
(i) $A$ has a coalgebra structure such that $p: H\ra A$ is a coalgebra morphism in $\Cc$.\\
(ii) $(A, j)$ is the equalizer of $(\Id_H\ot \pi)\un{\Delta}_H,~\Id_H\ot \un{\eta}_H:\
H\to H\ot B$, where $\tilde{j}=\un{m}_H(\Id_H\ot i\un{s}\pi)\un{\Delta}_H$ and
$j$ is defined by commutativity of the diagram
\[
\xymatrix{
H\ot B \ar[rr]<3pt>^{\un{m}_H(\Id_H\ot i)} 
   \ar[rr]<-3pt>_{\Id_H\ot \un{\va}_B} && 
   ~~H\ar[d]_{\tilde{j}}\ar[r]^p & A \ar @{.>}[dl]^j .\\
   && H  & 
}
\]
Consequently $A$ is an algebra and $j: A\ra H$ 
is an algebra morphism.\\
(iii) If $H$ is a Hopf algebra with antipode $\un{\mathcal{S}}$ satisfying \equref{carprofcccHaAnt}, then 
$\Id_A$ is convolution invertible.  
\end{lemma}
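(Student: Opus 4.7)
The plan is to dualize, step by step, the arguments of \leref{algstrA}, \leref{coalgstrA} and \leref{antipforA}. Throughout we use coflatness to guarantee that $p\ot p$, $p\ot\Id$ and $\Id\ot p$ are coequalizers in $\Cc$; this is the analogue of using $j\ot j$, $j\ot\Id$ and $\Id\ot j$ as monomorphisms in the original arguments.

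For (i), I would apply the universal property of the coequalizer $(A,p)$ twice. The morphism $\un{\va}_H\colon H\to\un{1}$ coequalizes the parallel pair, since $\un{\va}_H\un{m}_H(\Id_H\ot i)=\un{\va}_H\ot\un{\va}_B i=\un{\va}_H\ot\un{\va}_B$, using that $i$ is a bialgebra morphism; this produces a unique $\un{\va}_A\colon A\to\un{1}$ with $\un{\va}_A p=\un{\va}_H$. For the comultiplication, one considers $\widetilde{f}=(p\ot p)\un{\Delta}_H\colon H\to A\ot A$ and verifies that $\widetilde{f}\un{m}_H(\Id_H\ot i)=\widetilde{f}(\Id_H\ot\un{\va}_B)$ by a diagrammatic calculation dual to the one displayed after equation \equref{p2oftildep}, using that $\pi$ is a coalgebra map together with the second formula in \equref{carprofcccHa}; the output, after composition with $p\ot p$, collapses as in that proof. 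Since $A$ is coflat, $p\ot p$ itself is a coequalizer of the induced parallel pair on $H\ot H\ot B$, so a unique $\un{\Delta}_A\colon A\to A\ot A$ with $\un{\Delta}_A p=(p\ot p)\un{\Delta}_H$ exists. Coassociativity and the counit property then descend from $H$ because $p$, $p\ot p$, and $p\ot p\ot p$ are epimorphisms. By construction $p$ is a coalgebra morphism.

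For (ii), I would first show that $\widetilde{j}=\un{m}_H(\Id_H\ot i\un{s}\pi)\un{\Delta}_H$ coequalizes the pair $(\un{m}_H(\Id_H\ot i),\Id_H\ot\un{\va}_B)$: the calculation is dual to the one producing $jp=\Id_A$ and $jp=\widetilde{p}$ in \leref{coalgstrA}, and uses precisely the second equality of \equref{carprofcccHa} together with the antipode axiom for $B$. This yields a unique $j\colon A\to H$ with $jp=\widetilde{j}$. Next I would verify the equalizer property $(\Id_H\ot\pi)\un{\Delta}_H\,\widetilde{j}=(\Id_H\ot\un{\eta}_B)\widetilde{j}$ by a computation dual to the one at the start of the proof of \leref{coalgstrA} (using that $\pi$ is a coalgebra map and the antipode axioms for $B$), which then gives $(\Id_H\ot\pi)\un{\Delta}_H\, j=(\Id_H\ot\un{\eta}_B)j$. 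To check the universal property, suppose $f\colon X\to H$ equalizes the pair; composing with $p$ one shows $pf$ is the unique factorization, using that on the image of $\widetilde{j}$ one has $\widetilde{j}f=f$, again by the antipode axiom for $B$ and $\pi i=\Id_B$. Once $(A,j)$ is recognized as this equalizer, the algebra structure and the assertion that $j$ is an algebra morphism are obtained exactly as in \leref{algstrA}, by lifting $\un{m}_H(j\ot j)$ and $\un{\eta}_H$ through the equalizer.

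For (iii), I would imitate \leref{antipforA} with the roles reversed. Consider $\widetilde{g}=\un{m}_H(\un{\mathcal{S}}\ot i\pi)\un{\Delta}_H\colon H\to H$ and verify, by a diagrammatic calculation dual to the one producing $\widetilde{S}_A$, that $\widetilde{g}$ coequalizes $\un{m}_H(\Id_H\ot i)$ and $\Id_H\ot\un{\va}_B$; the hypothesis \equref{carprofcccHaAnt} is precisely what is needed at the end of the computation. This yields $\widetilde{S}_A\colon H\to A$ with $\widetilde{S}_A=\un{S}_A\,p$ after an intermediate step, and then $\un{S}_A:=\widetilde{S}_A j\colon A\to A$ is the candidate convolution inverse. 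The two convolution identities $\un{m}_A(\un{S}_A\ot\Id_A)\un{\Delta}_A=\un{\eta}_A\un{\va}_A=\un{m}_A(\Id_A\ot\un{S}_A)\un{\Delta}_A$ are then obtained from the corresponding identities in $H$ by post-composing with $j$ (a monomorphism in a category where everything is flat) and unfolding the definitions using $pj=\Id_A$, exactly dual to the closing calculation of \leref{antipforA}.

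The main obstacle will be to organize the diagrammatic computations in (ii) so that the coequalizer condition for $\widetilde{j}$ and the equalizer condition for the resulting $j$ are both obtained cleanly from the two formulas in \equref{carprofcccHa}; once these are in place, (i) and (iii) follow by straightforward dualization of the patterns already established in Lemmas \ref{le:algstrA}, \ref{le:coalgstrA} and \ref{le:antipforA}.
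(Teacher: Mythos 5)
Your overall strategy -- dualizing Lemmas \ref{le:algstrA}, \ref{le:coalgstrA} and \ref{le:antipforA} step by step -- is exactly the route the paper takes, and your parts (i) and (ii) are essentially the intended argument. Two small inaccuracies there: in (i) neither coflatness nor \equref{carprofcccHa} is needed, since $(p\ot p)\un{\Delta}_H$ and $\un{\va}_H$ coequalize the defining pair already because $i$ is a coalgebra morphism and $p$ coequalizes (this is the straightforward dual of \leref{algstrA}); conversely, in (ii) the phrase ``exactly as in \leref{algstrA}'' hides a real difference, because there the equalizing of $\un{m}_H(j\ot j)$ used that $\pi$ is multiplicative, which is not assumed here -- one needs both formulas of \equref{carprofcccHa} and the antipode of $B$ (or, alternatively, one builds $\un{m}_A$ through the coequalizer using coflatness; this is where that hypothesis actually enters, not in (i)).

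The genuine gap is in (iii). The map you propose, $\widetilde{g}=\un{m}_H(\un{\mathcal{S}}\ot i\pi)\un{\Delta}_H$, does not coequalize $\un{m}_H(\Id_H\ot i)$ and $\Id_H\ot\un{\va}_B$ in general. Already in the category of vector spaces one computes, using the first formula of \equref{carprofcccHa}, that $\widetilde{g}(h\,i(b))=i\un{s}(b_1)\,\un{\mathcal{S}}(h_1)\,i\pi(h_2)\,i(b_2)$, so the required identity $\widetilde{g}(h\,i(b))=\un{\va}(b)\,\widetilde{g}(h)$ would force an adjoint-type invariance of the image of $\widetilde{g}$ under $i(B)$ -- a condition of the kind singled out in \coref{wpdoublecrossprod} (cf.\ \equref{doblecrosscoprproj}), not implied by the hypotheses. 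Concretely, take $H$ the four-dimensional Sweedler Hopf algebra, $B=k\mathbb{Z}_2$, $i$ the inclusion and $\pi$ the standard Hopf algebra projection (all hypotheses hold): then $\widetilde{g}(x)=-gx$ while $\widetilde{g}(x\,i(g))=-xg=gx\neq -gx$. The correct map is the one in the paper's proof, $\un{m}_H(i\pi\ot\un{\mathcal{S}})\un{\Delta}_H$, whose coequalizing property follows from the first formula of \equref{carprofcccHa}, anti-multiplicativity of $\un{\mathcal{S}}$, $\un{\mathcal{S}}i=i\un{s}$ and the antipode axiom of $B$ (the hypothesis \equref{carprofcccHaAnt} is used later, in checking the convolution identities). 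In addition your factorization runs in the wrong direction: a morphism $H\to H$ coequalizing the pair factors through $p$ as $\tilde{\un{S}}_A:A\to H$ (not $H\to A$), and the candidate inverse is $\un{S}:=p\,\tilde{\un{S}}_A:A\to A$; the two convolution identities are then verified after precomposition with the epimorphism $p$, dual to postcomposition with the monomorphism $j$ in \leref{antipforA}. With these corrections (iii) goes through as in the paper.
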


\begin{proof} 
(i) follows from \leref{algstrA} by duality arguments. (ii) We just mention that the algebra structure on $A$
is obtained using  \equref{algstrA}.\\
(iii) Applying the universal property of the coequalizer $(A, p)$, we find $\widetilde{\un{S}}_A: A\ra H$ such that 
the diagram 
\[
\xymatrix{
H\ot B \ar[rr]<3pt>^{\un{m}_H(\Id_H\ot i)} 
  \ar[rr]<-3pt>_{\Id_H\ot \un{\va}_B} && ~~H\ar@<1ex>[d]_{\un{m}_H(\pi i\ot \un{\mathcal{S}})\un{\Delta}_H}\ar[r]^p & A \ar @{.>}[dl]^{\tilde{\un{S}}_A} \\
   && H  & 
}
\]
commutes. A straightforward computation shows that $\un{S}:=p\tilde{\un{S}}_A$ is the convolution inverse
of $\Id_A$.
\end{proof}

\thref{strofHopfwithcertproj2} is the dual version of \thref{strofHopfwithcertproj}. The proof is based on
\leref{dualconstrA} and is omitted.

\begin{theorem}\thlabel{strofHopfwithcertproj2}
Let $\Cc$ be a braided monoidal category with coequalizers, in which 
every object of $\Cc$ is coflat. A Hopf algebra $H$
is isomorphic to a smash cross coproduct Hopf algebra if and only if
there exists a Hopf algebra $B$ and morphisms $i:\ B\to H$ and $\pi:\ H\to B$ in $\Cc$ such that $i$ is a Hopf algebra morphism, $\pi$ is a coalgebra morphism, $\pi i=\Id_B$ 
and (\ref{eq:carprofcccHa}-\ref{eq:carprofcccHaAnt}) are satisfied.
\end{theorem}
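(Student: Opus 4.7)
The proof will proceed by duality with Theorem \ref{th:strofHopfwithcertproj}. The direct implication $(i)\Rightarrow (ii)$ follows immediately from \leref{dual1}: if $H$ is isomorphic to a smash cross coproduct Hopf algebra $A\#_\psi^\phi B$, then the canonical projection $\pi:\ H\to B$ and injection $i:\ B\to H$ provide the required data, and parts (i)--(ii) of \leref{dual1} give precisely \equref{carprofcccHa} and \equref{carprofcccHaAnt}.

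For the converse $(ii)\Rightarrow (i)$, I would start by applying \leref{dualconstrA}: the coequalizer $(A,p)$ of $\un{m}_H(\Id_H\ot i)$ and $\Id_H\ot \un{\va}_B$ exists by assumption, $A$ inherits a coalgebra structure making $p$ a coalgebra morphism, the factorization $j:\ A\to H$ of $\tilde{j}=\un{m}_H(\Id_H\ot i\un{s}\pi)\un{\Delta}_H$ exhibits $(A,j)$ as the equalizer of $(\Id_H\ot \pi)\un{\Delta}_H$ and $\Id_H\ot \un{\eta}_B$, $A$ becomes an algebra with $j$ an algebra morphism, and $pj=\Id_A$. Then I would show that $\zeta=\un{m}_H(j\ot i):\ A\ot B\to H$ and $\zeta^{-1}=(p\ot \pi)\un{\Delta}_H:\ H\to A\ot B$ are mutually inverse; this is the dual computation of the one performed in the proof of \thref{strofHopfwithcertproj}, using \equref{carprofcccHa} and the antipode axioms for $B$ to verify $\zeta\zeta^{-1}=\Id_H$ (the right coflatness of $B$, encoded in the coflatness hypothesis, ensures that the relevant epimorphism $p\ot \Id_B$ can be cancelled when needed, dualizing the monomorphism argument in the original proof).

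Having established that $\zeta$ is an isomorphism, \prref{DrBespExtVers} yields that $\zeta^{-1}:\ H\to A\#_\psi^\phi B$ is a bialgebra isomorphism, with $\psi,\phi$ given by \equref{psiphifromexts}. I would then verify that $\phi$ is left normal: a direct calculation using $\un{\eta}_A=p\un{\eta}_H$ together with \equuref{carprofcccHa}{a} shows that
\[
\phi\circ (\un{\eta}_A\ot \Id_B)
=(\pi\ot p)\un{\Delta}_H\un{m}_H(\un{\eta}_H\ot i)
=(\pi\ot p)(i\ot \un{\eta}_H)\un{\Delta}_B
=\Id_B\ot \un{\eta}_A,
\]
which is exactly left normality of $\phi$. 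By \leref{normalsmash}, $A\#^\phi B$ is then a left smash coproduct coalgebra, so $H$ is isomorphic to a smash cross coproduct bialgebra.

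Finally, to upgrade to the Hopf algebra statement, I would invoke \leref{dualconstrA}(iii) together with \equref{carprofcccHaAnt} to conclude that $\Id_A$ is convolution invertible; then \prref{whenacrossprodisHA} produces an antipode on $A\#_\psi^\phi B$, and $\zeta^{-1}$ is a Hopf algebra isomorphism. The main obstacle is the verification of $\zeta\zeta^{-1}=\Id_H$, which dualizes a somewhat delicate chain of equalities involving naturality of the braiding, the antipode axiom, and \equref{carprofcccHa}; here the coflatness hypothesis is used essentially in the same way that flatness was used in the original theorem, but on the opposite side, and this is the only place where coflatness (rather than the weaker property needed for \leref{dualconstrA}) enters.
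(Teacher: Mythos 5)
Your proposal is correct and is essentially the paper's own (omitted) argument: the paper states that \thref{strofHopfwithcertproj2} is the dual of \thref{strofHopfwithcertproj} and rests on \leref{dualconstrA}, and your proof is exactly that dualization --- (i)$\Rightarrow$(ii) from \leref{dual1}, the converse via the coequalizer construction of $(A,p,j)$, mutual invertibility of $\zeta$ and $\zeta^{-1}$, \prref{DrBespExtVers}, left normality of $\phi$, and \leref{dualconstrA}(iii) together with \prref{whenacrossprodisHA} for the antipode. Two small slips that do not affect the strategy: in the dual setting $\zeta\zeta^{-1}=\Id_H$ is the direct computation (since $jp=\tilde{j}$ has the explicit formula $\un{m}_H(\Id_H\ot i\un{s}\pi)\un{\Delta}_H$), while the epi-cancellation against $p\ot\Id_B$ --- which is where coflatness enters --- is what is needed for $\zeta^{-1}\zeta=\Id_{A\ot B}$, so the roles are the opposite of what your parenthetical suggests; and the middle term of your displayed normality computation should read $(\pi\ot p)(i\ot i)\un{\Delta}_B$, after which the coequalizer relation $pi=\un{\eta}_A\un{\va}_B$ and the counit axiom give $\Id_B\ot\un{\eta}_A$, with \equuref{carprofcccHa}{a} not actually needed there.
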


If we specialize \thref{strofHopfwithcertproj2} to the case where $\psi$ is left conormal, then we
recover \coref{biprodfromproj}. This is due to the fact that $A$ can be defined as an equalizer or 
as a coequalizer. If we add the condition that $\phi$ is right conormal, then we obtain \coref{wpdoublecrossprod}.
The proof is similar to the proof of \coref{dccopasproj}. We just mention that the conditions in (ii) tell us 
that the left coadjoint coaction of $B$ on $H$ induced by $\pi$ is trivial on the 
image of $\tilde{j}=jp$.

\begin{corollary}\colabel{wpdoublecrossprod}
Let $\Cc$ be a braided monoidal category with coequalizers, in which 
every object  coflat and left flat. A Hopf algebra $H$ is isomorphic to a double cross product Hopf algebra
if and only if
there exist a Hopf algebra $B$ and Hopf algebra morphisms $i:\ B\to H$, $\pi:\ H\to B$
 such that $\pi i=\Id_B$ and 
\[
\gbeg{5}{12}
\gvac{1}\got{2}{H}\gnl
\gvac{1}\gcmu\gnl
\gvac{1}\gcn{1}{1}{1}{0}\gcl{1}\gnl
\gcmu\gcl{1}\gnl
\gmp{\pi}\gbr\gnl
\gcl{1}\gmp{\pi}\gcn{1}{1}{1}{2}\gnl
\gcl{1}\gmp{\un{s}}\gcmu\gnl
\gmu\gcl{1}\gmp{\pi}\gnl
\gcn{1}{3}{2}{2}\gvac{1}\gcl{1}\gmp{\un{s}}\gnl
\gvac{2}\gcl{1}\gmp{i}\gnl
\gvac{2}\gmu\gnl
\gob{2}{B}\gob{2}{H}
\gend
=
\gbeg{3}{7}
\gvac{1}\got{2}{H}\gnl
\gvac{1}\gcmu\gnl
\gvac{1}\gcl{1}\gmp{\pi}\gnl
\gvac{1}\gcl{1}\gmp{\un{s}}\gnl
\gvac{1}\gcl{1}\gmp{i}\gnl
\gu{1}\gmu\gnl
\gob{1}{B}\gob{2}{H}
\gend~.
\]
\end{corollary}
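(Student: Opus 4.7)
The strategy is to mirror the proof of \coref{dccopasproj}, swapping the roles of algebra/coalgebra and of equalizers/coequalizers; the bridge in both directions is \thref{strofHopfwithcertproj2}. Recall from \coref{6.7} that a double cross product Hopf algebra is precisely a smash cross coproduct Hopf algebra in which the morphism $\phi$ is, in addition, right normal; equivalently, $\phi$ coincides with the braiding of $A$ and $B$.

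For the direct implication, assume $H\cong A\Join B$ is a double cross product Hopf algebra. Then $H$ is in particular a smash cross coproduct Hopf algebra, so by \leref{dual1} the canonical morphisms $i:\ B\to H$ and $\pi:\ H\to B$ are such that $i$ is a Hopf algebra morphism, $\pi$ is a coalgebra morphism and $\pi i=\Id_B$. Because the underlying algebra of $A\Join B$ is the tensor product algebra of $A$ and $B$, a routine graphical computation shows that $\pi$ is moreover an algebra map (hence a Hopf algebra morphism). The displayed equality then becomes a direct identity: unfolding $\tilde{j}=jp$ with the formula for $\phi$ from \equref{psiphifromexts} and using the right normality $\phi=c_{A,B}$ together with the fact that $\un{s}$ is an anti-coalgebra map reduces both sides to $\un{\va}_H(-)\un{\eta}_B\ot\un{\Delta}_H$ of the second input, i.e.\ to the trivial coadjoint coaction on the image of $\tilde{j}$.

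For the converse, suppose $B$, $i$, $\pi$ are given with the stated properties. Since $i$ is a Hopf algebra morphism, conditions (\ref{eq:carprofcccHa}) and (\ref{eq:carprofcccHaAnt}) are immediate from the bialgebra and antipode axioms together with $\pi i=\Id_B$; indeed the first identity in \equref{carprofcccHa} expresses that $\pi$ is an algebra map, and the second one, as well as \equref{carprofcccHaAnt}, follow routinely because $i, \pi$ commute with antipodes. Hence \thref{strofHopfwithcertproj2} applies and yields an isomorphism of Hopf algebras $H\cong A\#_\psi^\phi B$, with $\psi,\phi$ recovered from $(i,\pi,j,p)$ through \equref{psiphifromexts}; in particular $\phi$ is already left normal. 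It remains to prove that $\phi$ is also right normal, for then \coref{6.7} identifies $A\#_\psi^\phi B$ with a double cross product Hopf algebra.

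To get the right normality of $\phi$, the main step is to compute $\phi\circ(\Id_A\ot \un{\eta}_B)$ by precomposing with $p\ot\Id_B$ (an epimorphism, since the coflatness hypothesis makes $-\ot B$ preserve the coequalizer $(A,p)$) and by postcomposing into $B\ot H$ via $\Id_B\ot j$ (a monomorphism, since $A\in\Cc$ is flat). Unfolding $\phi = (\pi\ot p)\un{\Delta}_H\un{m}_H(j\ot i)$, using $j p=\tilde{j}$, the multiplicativity of $\un{\Delta}_H$, the fact that $i,\pi$ are Hopf algebra morphisms with $\pi i=\Id_B$, and finally substituting the displayed hypothesis (which is exactly the triviality of the left coadjoint coaction of $B$ on $H$, induced by $\pi$, when restricted to the image of $\tilde{j}$), both sides will collapse to the braiding $c_{A,B}$ (up to the appropriate unit insertion). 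This is the step I expect to be the most delicate, as it requires careful bookkeeping of several applications of associativity, coassociativity, naturality of the braiding, and the antipode axiom \equref{braidedantipode}, together with the unit/counit compatibilities $\un{\va}_B\un{\eta}_B=\Id_{\un{1}}$ and $\pi\un{\eta}_H=\un{\eta}_B$, $\un{\va}_H=\un{\va}_B\pi$. Once we have $(\Id_B\ot j)\phi(\Id_A\ot\un{\eta}_B)(p\ot\Id_B)=(\Id_B\ot j)(\un{\eta}_B\ot\Id_A)(p\ot\Id_B)$, the mono/epi properties force $\phi(\Id_A\ot\un{\eta}_B)=\un{\eta}_B\ot\Id_A$, completing the proof.
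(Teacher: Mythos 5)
Your overall strategy is the same as the paper's: derive the corollary from \thref{strofHopfwithcertproj2} (equivalently \coref{biprodfromproj}), read the displayed hypothesis as triviality of the left coadjoint coaction of $B$ on $H$ on the image of $\tilde{j}=jp$, force right normality of $\phi$ by an epi/mono argument, and conclude with \coref{6.7}; this is exactly the route the paper indicates when it says the proof is ``similar to the proof of \coref{dccopasproj}''. However, your forward direction contains a genuine error: you claim that the underlying algebra of $A\Join B$ is the tensor product algebra and deduce ``routinely'' that $\pi$ is an algebra map. By \coref{6.7} it is the \emph{coalgebra} of $A\Join B$ that is the tensor product coalgebra ($\phi$ equals the braiding); the algebra is the two-sided crossed product built from both actions encoded in $\psi$. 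With the correct structure, $\pi=\un{\va}_A\ot\Id_B$ is multiplicative precisely when the right action of $A$ on $B$ is trivial, i.e.\ when $\psi$ is left conormal, and this is not automatic for a double cross product. The paper's own argument deals with this by viewing the double cross products in question as biproducts (mirroring the first sentence of the proof of \coref{dccopasproj}), so that $\pi$ is a Hopf algebra morphism by \coref{biprodfromproj}; your proof needs this ingredient (or an explicit restriction to that situation) rather than the tensor-algebra claim, and the verification of the displayed identity should then proceed as in \coref{dccopasproj}, ``directly from the definitions''.

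In the converse direction your plan is correct and coincides with the paper's, but the decisive computation is only announced, not carried out: one must actually show, using $\phi=(\pi\ot p)\un{\Delta}_H\un{m}_H(j\ot i)$ from \equref{psiphifromexts}, $\tilde{j}=jp$, the identity $\tilde{j}\un{m}_H(\Id_H\ot i)=\tilde{j}(\Id_H\ot\un{\va}_B)$ and the fact that $\un{s}$ is an anti-coalgebra map, that $(\pi\ot\tilde{j})\un{\Delta}_H\tilde{j}$ equals the left-hand side of the displayed hypothesis, so that the hypothesis yields $(\Id_B\ot j)\bigl((\pi\ot p)\un{\Delta}_H j\bigr)p=(\Id_B\ot j)(\un{\eta}_B\ot\Id_A)p$ and hence $\phi(\Id_A\ot\un{\eta}_B)=\un{\eta}_B\ot\Id_A$; this is the analogue of the displayed calculation in the proof of \coref{dccopasproj} and is the heart of the matter, so deferring it leaves the proof incomplete (though your outline of which tools enter is the right one). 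Two smaller points: the map you must trivialize has source $A\cong A\ot\un{1}$, so the precomposition is with $p$ alone, not $p\ot\Id_B$; and the computation should collapse to $\un{\eta}_B\ot\Id_A$ (right normality), not ``to the braiding $c_{A,B}$'' --- the identification $\phi=c_{A,B}$ only appears afterwards, as the conjunction of left and right normality used in \coref{6.7}.
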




\begin{thebibliography}{99}
\bibitem{amstAMS}
A. Ardizzoni, C. Menini, D. \c Stefan, {\sl A monoidal approach to splitting morphisms of bialgebras}, 
Trans. Amer. Math. Soc. {\bf 359} (2007), 991--1044.
\bibitem{amst}
A. Ardizzoni, C. Menini, D. \c Stefan, {\sl Weak projections onto a braided Hopf algebra}, 
J. Algebra {\bf 318} (2007), 180--201.
\bibitem{bespdrab1}
Y. Bespalov, B. Drabant, Cross product bialgebras I, J. Algebra {\bf 219} (1999), 466--505.
\bibitem{bcm}
R.J. Blattner, M. Cohen, S. Montgomery, {\sl Cross products and inner actions of Hopf algebras}, 
Trans. Amer. Math. Soc. {\bf 298} (1986), 671--711. 
\bibitem{bulacu}
D. Bulacu, ``Algebras and coalgebras in braided monoidal categories", Editura 
Universit\u a\c tii Bucure\c sti, 2009.
\bibitem{cimz}
S. Caenepeel, B. Ion, G. Militaru, S. Zhu, {\sl The factorization problem and the smash biproduct of algebras 
and coalgebras}, Algebr. Represent. Theory {\bf 3} (2000), 19--42.  
\bibitem{kas}
C. Kassel, ``Quantum Groups", Graduate Texts in Mathematics 155, Berlin:
Springer-Verlag, 1995.
\bibitem{majbip}
S. Majid, {\sl Physics for algebraists: Non-commutative and non-cocommutative Hopf algebras by a bicrossproduct 
construction}, J. Algebra {\bf 130} (1990), 17-64.   
\bibitem{m3}
S. Majid, {\sl Algebras and Hopf algebras in braided categories}, in
``Advances in Hopf Algebras", {\sl Lect. Notes Pure Appl. Math.} {\bf 158},
Dekker, New York, 1994, 55--105.
\bibitem{maj}
S. Majid, ``Foundations of quantum group theory", Cambridge University
Press, 1995.
\bibitem{2chg}
J. Park, {\sl Generalized biproduct Hopf algebras}, J. Chungcheong Math. Soc. {\bf 21} (2008), 301--320.
\bibitem{rad}
D.E. Radford, {\sl The structure of Hopf algebras with a projection}, J. Algebra 
{\bf 92} (1985), 322–-347.
\bibitem{schwproj}
P. Schauenburg, {\sl The structure of Hopf algebras with a weak projection}, Algebr. Represent. Theory {\bf 3} (2000), 
187--211.  
\bibitem{Schauenburg2003}
P. Schauenburg, {\sl Actions of monoidal categories and generalize smash products}, J. Algebra
{\bf 270} (2003), 521--563.
\bibitem{tak}
M. Takeuchi, {\sl Matched pairs of groups and bismash products of Hopf algebras}, Comm. Alg. {\bf 9} (1981), 841--822.
\bibitem{Tambara}
D. Tambara, {\sl The coendomorphism bialgebra of an algebra}, J. Fac. Sci. Univ. Tokyo Sect. IA Math. {\bf 37 (2)}
(1990), 425Ð456.
\bibitem{zhcn}
Shouchuan Zhang, Hui-Xiang Chen, {\sl The double bicrossproducts in braided tensor categories}, 
Comm. Alg. {\bf 29} (2001), 31-–66.

\end{thebibliography}
\end{document}